\DeclareMathOperator{\argmin}{argmin}
\begin{document}

\newcommand{\Rho}{P}
\newcommand{\IN}{\mathbb{N}}
\newcommand{\IQ}{\mathbb{Q}}
\newcommand{\IZ}{\mathbb{Z}}
\newcommand{\IR}{\mathbb{R}}
\newcommand{\IC}{\mathbb{C}}
\newcommand{\Ima}{\mbox{Im}}
\newcommand{\dif}{\ \mbox{d}}
\newcommand{\cov}{\mbox{cov}}

\newcommand{\Lp}{\mathcal{L}}
\newcommand{\sI}{\mathcal{I}}
\newcommand{\sA}{\mathcal{A}}
\newcommand{\sB}{\mathcal{B}}
\newcommand{\sP}{\mathcal{P}}
\newcommand{\sE}{\mathcal{E}}
\newcommand{\sF}{\mathcal{F}}
\newcommand{\sG}{\mathcal{G}}
\newcommand{\sH}{\mathcal{H}}
\newcommand{\sT}{\mathcal{T}}
\newcommand{\sV}{\mathcal{V}}
\newcommand{\sL}{\mathcal{L}}
\newcommand{\SUP}{\text{SUP }}

\renewcommand{\Re}{\mbox{Re }}
\renewcommand{\Im}{\mbox{Im }}

\newcommand{\reff}[1]{(\ref{#1})}

\newcommand{\IP}{\mathbb{P}}
\newcommand{\IE}{\mathbb{E}}
\newcommand{\Ii}{\mathbbm{1}}
\newcommand{\supp}{\mbox{supp}}
\newcommand{\Hess}{\mbox{Hess}}
\newcommand{\Var}{\mbox{Var}}
\newcommand{\sX}{\mathcal{X}}
\newcommand{\Kov}{\mbox{Kov}}
\newcommand{\Cov}{\mbox{Cov}}
\newcommand{\tr}{\mbox{tr}}
\newcommand{\gdw}{\Leftrightarrow}
\newcommand{\pto}{\overset{\IP}{\to}}
\newcommand{\fsto}{\overset{f.s.}{\to}}
\newcommand{\dto}{\overset{D}{\to}}
\newcommand{\lto}{\overset{L^2}{\to}}
\newcommand{\sD}{\mathcal{D}}
\newcommand{\iid}{\overset{\mbox{iid}}{\sim}}
\renewcommand{\l}{\ell}
\renewcommand{\i}{|}

\renewcommand{\supp}{\text{supp}}

\newcommand{\err}{\mbox{err}}
\newcommand{\bias}{\mbox{bias}}

\newcommand{\norm}[1]{\left\lVert#1\right\rVert}

\newtheorem{theorem}{Theorem}[section]
\newtheorem{corollary}[theorem]{Corollary}
\newtheorem{definition}[theorem]{Definition}
\newtheorem{proposition}[theorem]{Proposition}
\newtheorem{lemma}[theorem]{Lemma}
\newtheorem{remark}[theorem]{Remark}
\newtheorem{exampleremark}[theorem]{Examples/Remarks}
\newtheorem{example}[theorem]{Example}
\newtheorem{assumption}[theorem]{Assumption}

%%%%%%%%%%%TITLE PAGE%%%%%%%%%%%%

%\begin{frontmatter}

\title{Cross validation for locally stationary processes}
\author{Stefan Richter, Rainer Dahlhaus}
\maketitle
%\runtitle{Cross validation for locally stationary processes}

%\begin{aug}

%  \author{\fnms{Stefan} \snm{Richter}\ead[label=e2]{stefan.richter@iwr.uni-heidelberg.de}}
%	\and
%  \author{\fnms{Rainer}  \snm{Dahlhaus}\ead[label=e1]{dahlhaus@statlab.uni-heidelberg.de}}

%  \runauthor{R. Dahlhaus et al.}

%  \affiliation{Heidelberg University}

%  \address[a]{Institut f\"{u}r Angewandte Mathematik\\
%  Universit\"{a}t Heidelberg\\
 % Im Neuenheimer Feld 205\\
 % 69120 Heidelberg, Germany\\
 % \printead{e1,e2}}

%\end{aug}

\begin{abstract}

We propose an adaptive bandwidth selector via cross validation for local M-estimators in locally stationary processes. We prove asymptotic optimality of the procedure under mild conditions on the underlying parameter curves. The results are applicable to a wide range of locally stationary processes such linear and nonlinear processes. A simulation study shows that the method works fairly well also in misspecified situations.
\end{abstract}

%\begin{keyword}
%\kwd{Locally stationary processes}
%\kwd{cross validation}
%\kwd{adaptive bandwidth selection}
%\kwd{asymptotic optimality}
%\end{keyword}

%\end{frontmatter}

%\tableofcontents

\section{Introduction}

Inference for locally stationary time series models is strongly connected to the estimation of parameter curves which determine the degree of nonstationarity. The estimation of these curves was discussed for several specific models such as tvARMA processes (\cite{dahlhauspolonik2009}), the tvARCH and tvGARCH processes (\cite{suhasini2008}, \cite{suhasini2006}, \cite{dahlhaus2012}), and time-varying random coefficient models (\cite{suhasini2006b}). Of interest is also a time-varying TAR process which was considered in \cite{zhouwu2009}

Local estimators such as kernel estimators require the selection of a bandwidth. In opposite to nonparametric regression, there exist only very few theoretical results about adaptivity for locally stationary processes. We mention \cite{mallat1998} who discussed adaptive covariance estimation for a general class of locally stationary processes. Other results are constructed for specific models and are partly dependent on further tuning parameters: \cite{giraud2015} discussed online-adaptive forecasting of tvAR processes and \cite{arkoun2008}, \cite{arkoun2011} proposed methods for sequential and minimax-optimal bandwidth selection for tvAR processes of order 1.

In this paper we treat the problem for arbitrary locally stationary time series models determined by a time varying parameter curve. We focus on local M-estimators and use the functional dependence measure introduced in \cite{wu2005} to formulate mixing conditions. We propose an adaptive bandwidth selection procedure inspired by cross validation in the iid regression model which does not need any tuning parameters.
We discuss the theoretic behavior by proving asymptotic optimality of the selector (similar to \cite{haerdlemarron1985} where nonparametric regression has been treated). We also prove convergence towards the deterministic asymptotic optimal bandwidth. 

The technical core of the paper is martingaly theory applied in particular to the score function of the objective function and several bounds for moments of quadratic and cubic forms of locally stationary processes which are needed to provide convergence of expansions of the estimation error with suitable rates.

In Section \ref{sec2} we introduce the locally stationary time series model and formalize the separation of the process into a  parametric stationary process and unknown parameter curves. We define local M-estimators and the cross validation procedure. We introduce a Kullback-Leibler type distance measure which can be seen as an analogue to the averaged squared error in nonparametric regression.

In Section \ref{sec3} we prove asymptotic optimality of the cross validation procedure with respect to the Kullback-Leibler type distance measure and convergence of the cross validation bandwidth towards the deterministic asymptotic optimal bandwidth. The assumptions are stated in terms of a parametric stationary time series model which is connected to the locally stationary process. This allows for easy verification since most of the conditions are standard in M-estimation theory and were already shown for specific stationary models. 

In Section \ref{sec4} we discuss some processes where the main results are applicable. The performance of the method for different models such as tvAR, tvARCH and tvMA is studied in simulations.

In Section \ref{sec5} a short conclusion is drawn. All proofs are deferred without further reference to Section \ref{sec6} and the appendix.

\section{A cross validation method for locally stationary processes}
\label{sec2}

\subsection{The Model}
In this paper we discuss adaptive estimation of a multidimensional parameter curve $\theta_0:[0,1] \to \Theta \subset \IR^p$, i.e. we restrict to locally stationary processes $X_{t,n}$, $t = 1,...,n$ parameterized by curves. As usual we are working in the infill asymptotic framework with rescaled time $t/n \in [0,1]$, where $n$ denotes the number of observations.

Following the original idea of locally stationary processes, for fixed $u\in[0,1]$, $X_{t,n}$ should locally (i.e., for $|u-\frac{t}{n}| \ll 1$)  behave like a stationary process $\hat X_t(u)$. In this paper, we assume that the time dependence of the approximation $\hat X_t(u)$ is solely described by $\theta_0$, i.e. $\hat X_t(u) = \tilde X_t(\theta_0(u))$, where $\tilde X_t(\theta)$, $\theta \in \Theta$ is some family of parametric stationary processes. In this paper we will formulate the assumptions in terms of $\tilde X_t(\theta)$ instead of $\hat X_t(u)$ leading to a clear separation between the properties of the model class and the smoothness assumptions on $\theta_0$.
% and, additionally allows us to place our results in the context of nonparametric statistics.
We formalize this by

\begin{assumption}[Locally stationary time series model]\label{ass1}
Let $q \ge 1$ and $\|W\|_q := (\IE |W|^{q})^{1/q}$. Let $X_{t,n}$, $t = 1,...,n$ be a triangular array of observations. Suppose that for each $\theta \in \Theta$, there exists a stationary process $\tilde X_t(\theta)$, $t\in\IZ$ such that for all $q \ge 1$,
%\begin{enumerate}[label=(\arabic*),ref=(\arabic*)]
	%\item\label{ass1_s1}
	uniformly in $\theta,\theta' \in \Theta$,
	\begin{equation}
		\| \tilde X_t(\theta) - \tilde X_t(\theta')\|_{q} \le C_A |\theta-\theta'|_1, \quad\quad \sum_{t=1}^{n}\big\|X_{t,n} - \tilde X_t\big(\theta_0\big(\frac{t}{n}\big)\big)\big\|_{q} \le C_B,\label{ass1_s1_eq1}
	\end{equation}
	with some $C_A, C_B \ge 0$, and
	\[
		D_{q} := \max\{\sup_{\theta \in \Theta}\|\tilde X_0(\theta)\|_q, \sup_{n\in\IN}\sup_{t=1,...,n}\|X_{t,n}\|_{q}\} < \infty.
	\]
	%,
	%\item\label{ass1_s2} There exist $L > 0$, $1 \ge \beta > 0$ such that $\sup_{u\not=u'}\frac{|\theta_0(u) - \theta_0(u')|_1}{|u-u'|^{\beta}} \le L$ (Hoelder continuity) and $\theta_0$ has component-wise bounded variation $B_{\theta_0}$.
	%\item\label{ass1_s3} $\theta \mapsto \tilde X_t(\theta)$ has a $(l(\beta)+1)$-times differentiable modification such that each component fulfills $\sup_{\theta \in \Theta}\|\nabla^k \tilde X_t(\theta)\|_{M} < \infty$ for all $k = 1,...,l(\beta)+1$.
%\end{enumerate}
\end{assumption}

\begin{remark}
\begin{enumerate}
	\item[(i)] We conjecture that the assumption on the existence of all moments of $X_{t,n}$ and $\tilde X_t(\theta)$ can be dropped - but the calculations would be very tedious without much additional insight. The number of moments needed for the proofs increases if the Hoelder exponent of the unknown parameter curve decreases.
	\item[(ii)] In many models, the second condition in \reff{ass1_s1_eq1} basically means that the unknown parameter curve $\theta_0$ has bounded variation, see also Assumption \ref{ass3}.
	%\item[(ii)] Assumption \ref{ass1_s3} is needed to prove bias expansions. There are several results available for the existence of such differentiable modifications (cf. \cite{dahlhaus2012} for tvAR processes, \cite{suhasini2006} for tvARCH processes, or \cite{dahlhaus2017} for general recursively defined processes). In Examples ... and ... we show that Assumption \ref{ass1} is fulfilled in many interesting cases.
\end{enumerate}
\end{remark}

We first give some examples which are covered by our results. These include in particular several classical parametric time series models where the constant parameters have been replaced by time-dependent parameter curves.

\begin{example}\label{example_model}
	\begin{enumerate}
		\item[(i)] the tvARMA($r,s$) process: Given parameter curves $a_i, b_j, \sigma:[0,1] \to \IR$ $(i = 0,...,r$, $j = 0,...,s$) with $a_0(\cdot), b(\cdot) = 1$,
		\[
			\sum_{i=0}^{r}a_i\big(\frac{t}{n}\big)X_{t,n} = \sum_{j=0}^{s}b_j\big(\frac{t}{n}\big)\sigma\big(\frac{t}{n}\big)\varepsilon_t.
		\]
		\item[(ii)] the tvARCH($r$) process (cf. \cite{suhasini2006}): Given parameter curves $a_i:[0,1]\to \IR$ ($i = 0,...,r$),
		\[
			X_{t,n} = \big( a_0\big(\frac{t}{n}\big) + a_{1}\big(\frac{t}{n}\big)X_{t-1,n}^2 + ... + a_r\big(\frac{t}{n}\big) X_{t-r,n}^2\big)^{1/2}\varepsilon_t.
		\]
		\item[(iii)] the tvTAR($1$) process (cf. \cite{zhouwu2009}): Given parameter curves $a_1,a_2:[0,1]\to\IR$, define
		\[
			X_{t,n} = a_1\big(\frac{t}{n}\big)X_{t-1,n}^{+} +a_2\big(\frac{t}{n}\big)X_{t-1,n}^{-} + \varepsilon_t,
		\]
		where $x^{+} := \max\{x,0\}$ and $x^{-} := \max\{-x,0\}$.
	\end{enumerate}
\end{example}

As an estimator of $\theta_0 (\cdot)$ we consider local likelihood (or local M-) estimators weighted by kernels, that is
\begin{equation}
	\hat \theta_h(u) := \argmin_{\theta \in \Theta} L_{n,h}(u,\theta).\label{def_mle}
\end{equation}
where
\begin{equation}
	L_{n,h}(u,\theta) := \frac{1}{n}\sum_{t=1}^{n}K_h\Big(\frac{t}{n}-u\Big) \ell_{t,n}(\theta)\label{h4}
\end{equation}
and $\ell_{t,n}(\theta) := \ell(X_{t,n}, Y_{t-1,n}^c,\theta)$ with $Y_{t-1,n}^c := (X_{t-1,n},...,X_{1,n},0,0,...)$ consisting of the observed past, where $\ell$ is a given objective function (localized in $L_{n,h}(u,\theta)$ by the kernel $K$). $K: \IR \to \IR$ is nonnegative with $\int K = 1$,  and $h \in (0,\infty)$ is the bandwidth. For shortening the notation, we used $K_h(\cdot) := \frac{1}{h}K\big(\frac{\cdot}{h}\big)$. In practice, $\ell$ is often chosen to be the negative logarithm of the infinite past likelihood of $X_{t,n}$ given $Y_{t-1,n} := (X_{s,n}: s \le t-1)$,
\begin{equation}
	\ell(x,y,\theta) = -\log p_{\theta}(X_{t,n} = x| Y_{t-1,n} = y),\label{infinitepastlikelihood}
\end{equation}
assuming that $\theta_0(\cdot) = \theta \in \Theta$. In this paper, we allow for general objective functions $\ell$ which have to obey some smoothness conditions (see Assumption \ref{ass3}).

%\textbf{Remark:} $\,$\\
%(i) For example for tvAR($p$) processes one usually replaces $L_{n,h}(u,\theta)$ by $\frac{1}{nh}\sum_{t=p+1}^{n}K\left(\frac{\frac{t}{n}-u}{h}\right) l_{t,n}(\theta)$. The results of this paper also hold with this likelihood.\\[6pt]
%
%(ii) Using instead the finite past conditional likelihood $\log p_{\theta}(X_{t,n}|X_{t-1,n},...,X_{1,n})$ corresponds to the exact likelihood which usually is much more difficult to calculate and more difficult to investigate theoretically.
%\medskip

\subsection{Distance measures}
Define $\tilde Y_t(\theta) := (\tilde X_s(\theta): s \le t)$. In the following, we will use $\nabla$ to denote the derivative with respect to $\theta \in \Theta$, and $x'$ denotes the transpose of a vector or matrix $x$. As global distance measures we use the averaged and the integrated squared error (ASE/ISE) weighted by the Fisher information
\begin{equation}
	I(\theta) := \IE\big[\nabla \l(\tilde Y_{0}(a),\theta)\cdot \nabla \l(\tilde Y_{0}(a),\theta)'\big]\big|_{a=\theta}.\label{fisherinfo}
\end{equation}
and the misspecified Fisher information $V(\theta) := \IE \nabla^2 \l(\tilde Y_0(a),\theta) \big|_{a=\theta}$ of the corresponding stationary approximation.
%In some special cases, the Fisher information has a well-known form.
In addition the weight function $w_{n,h}(\cdot):= \Ii_{[\frac{h}{2},1-\frac{h}{2}]}(\cdot)$ is needed to exclude boundary effects. Since the proof is the same for other weights $w_{n,h}$ we allow in Assumption \ref{ass4} for more general weights.

More precisely we set (with $|x|_{A}^2 := x'A x$ for $x \in \IR^p$ and $A \in \IR^{p\times p}$)
\begin{equation}
	d_A(\hat \theta_h, \theta_0) := \frac{1}{n}\sum_{t=1}^{n} \Big| \hat \theta_h\big(\frac{t}{n}\big) - \theta_0\big(\frac{t}{n}\big)\Big|_{V(\theta_0(t/n))}^2 w_{n,h}\big(\frac{t}{n}\big) \label{da_distance}
\end{equation}
and
\begin{equation}
	d_I(\hat \theta_h, \theta_0) := \int_{0}^{1}\big| \hat \theta_h(u) - \theta_0(u)\big|_{V(\theta_0(u))}^2 w_{n,h}(u)\dif u.
\end{equation}

It can be shown that $2 d_{A}$ and $2 d_{I}$ are for $w \!\equiv \! 1$ an approximation of the Kullback-Leibler divergence between models with parameter curves $\hat \theta_h (\cdot)$ and $\theta_0 (\cdot)$.

In Theorem \ref{theorem3} we will prove that under suitable conditions, $d_{A}(\hat \theta_h, \theta_0)$ can be approximated uniformly in $h$ by a deterministic distance measure $d_{M}^{**}(h)$, which has a unique minimizer $h_0 = h_{0,n}\sim n^{-1/5}$. $h_0$ can be seen as the (deterministic) optimal bandwidth.

\subsection{The crossvalidation method}
We now choose the bandwidth $h$ by a generalized cross validation method. We define a 'quasi-leave-one-out' local likelihood
\begin{equation}
	L_{n,h,-s}(u,\theta) := \frac{1}{n}\sum_{t=1, t \not= s}^{n}K_h\Big(\frac{t}{n}-u\Big) \ell_{t,n}(\theta)
\end{equation}
and a 'quasi-leave-one-out' estimator of $\theta_0$ by
\begin{equation}
	\hat \theta_{h,-s}(u) := \argmin_{\theta \in \Theta} L_{n,h,-s}(u,\theta).
\end{equation}
Here, 'leave-one-out' does not mean that we ignore the $s$-th observation of the process $(X_{t,n})_{t=1,...,n}$, but that we ignore the term which is contributed by the likelihood $\ell_{t,n}$ at time step $s$. Because of that, we refer to the estimator as a quasi-leave-one-out method.

We then choose $\hat h$ via minimizing the cross validation functional
\begin{equation}
	CV(h) := \frac{1}{n}\sum_{s=1}^{n}\ell_{s,n}\big(\hat \theta_{h,-s}\big(\frac{s}{n}\big)\big) w_{n,h}\big(\frac{s}{n}\big).
\end{equation}
It is important to note that such a minimizer $\hat h$ of $CV(h)$ does not need to exist, because $CV(h)$ can not shown to be continuous. When $h$ varies it is possible that the location of the minimum of $L_{n,h,-s}(u,\theta)$ changes and therefore $\hat \theta_{h,-s}(u)$ makes a jump. For the mathematical considerations we therefore choose some $\hat h$ such that
\begin{equation}
	CV(\hat h) - \inf_{h \in H_n}CV(h) \le \frac{1}{n},
\end{equation}
where $H_n$ is a suitable subinterval of $(0, 1)$, see Assumption \ref{ass4}, which covers all relevant values of  $h$.\\

\section{Main results}
\label{sec3}

In this chapter we present our main results concerning the bandwidth $\hat h$ chosen by cross validation. We prove in Theorem \ref{theorem1} that $\hat h$ is asymptotically optimal with respect to $d_{A}$, i.e.
\[
		\lim_{n\to\infty} \frac{d_A(\hat \theta_{\hat h}, \theta_0)}{\inf_{h \in H_n} d_A(\hat \theta_h, \theta_0)} = 1,
\]
and in Theorem \ref{theorem2} that $\hat h$ is consistent in the sense that ${\hat h}/{h_0} \to 1$ a.s., where $h_0$ is the deterministic optimal bandwidth defined in \reff{h02optimal}. Recall that $d_A(\hat \theta_h,\theta_0)$ can be interpreted as a Kullback-Leibler-type distance between the two time series models associated to $\hat \theta_h$ and $\theta_0$. Thus, the cross validation procedure yields an estimator $\hat \theta_{\hat h}$ of $\theta_0$ such that the distributions of the associated time series coincide best.

%Assumptions, kurze Diskussion fehlt.

To prove asymptotic results, we have to state some mixing type conditions on the underlying process $X_{t,n}$. For this, we use the functional dependence measure introduced in \cite{wu2005}. Let $\varepsilon_t$, $t\in\IZ$ be a sequence of i.i.d. random variables. For $t \ge 0 \;$ let $\sF_t := (\varepsilon_t,\varepsilon_{t-1},...)$ be the shift process and $\sF_t^{*} := (\varepsilon_t,...,\varepsilon_{1},\varepsilon_0^{*},\varepsilon_{-1},...)$, where $\varepsilon_{0}^{*}$ is a random variable which has the same distribution as $\varepsilon_{0}$ and is independent of all $\varepsilon_t$, $t\in\IZ$. For a stationary process $W_t = H(\sF_t) \in L^q$ with deterministic $H:\IR^{\infty}\to\IR$ define $W_t^{*} := H_t(\sF_t^{*})$ and the functional dependence measure
\begin{equation}
	\delta^{W}_q(k) := \|W_t - W_t^{*}\|_q.\label{def_dependence}
\end{equation}

\begin{assumption}[Dependence assumption]\label{ass2}
	Suppose that for each $\theta \in \Theta$, there exists a representation
	%\begin{enumerate}
		%\item[(i)] For all $q > 0$, it holds that $\sup_{\theta \in \Theta}\|\tilde X_0(\theta)\|_q < \infty$.
		 $\tilde X_t(\theta) = H(\theta,\sF_t)$ with some measurable $H(\theta,\cdot)$ and $\delta_q(k) := \sup_{\theta \in \Theta}\delta_q^{\tilde X(\theta)}(k) = O(k^{-(3+\eta)})$ for some $\eta > 0$.
		%\item[(iii)] If $\beta > 1$, suppose that $\theta \mapsto \tilde X_t(\theta)$ has a $(l(\beta)+1)$-times differentiable modification such that each component fulfills $\sup_{\theta \in \Theta}\|\nabla \tilde X_t(\theta)\|_{M} < \infty$.
	%\end{enumerate}
	%Put $\delta(k) := \sup_{\theta \in \Theta}\delta_q^{\tilde X(\theta)}(k)$ and $\Delta_{k,q} := \sum_{i=k}^{\infty}\delta(i)$. Suppose that
	%\[
	%	\sum_{i=0}^{\infty}\sum_{k=i}^{\infty}\Delta_{k,q} < \infty.
	%\]
\end{assumption}

Note that we only need dependence conditions on the stationary approximations $\tilde X_t(\theta)$ and no further assumption on $X_{t,n}$.

To state smoothness conditions on the objective function $\ell$ in a concise way, we introduce the class of Lipschitz-continuous functions from $\IR^{\infty}$ to $\IR$ where we allow the Lipschitz constant to depend on the location at most polynomially.

\begin{definition}[The class $\sL(M,\chi, C)$]\label{DP_hoelder_def} We say that a function $g:\IR^{\infty} \times \Theta \to \IR$ is in the class $\sL(M,\chi,C)$ if $C = (C_1,C_2)$, $M \ge 1$, $\chi = (\chi_i)_{i=1,2,3,...} \in \IR_{\ge 0}^{\infty}$ and for all $z\in \IR^{\infty}$, $\theta \in \Theta$:
	\begin{equation}
		\sup_{z\not= z'}\frac{|g(z,\theta) - g(z',\theta)|}{|z-z'|_{\chi,1} (1+|z|_{\chi,1}^{M-1} + |z'|_{\chi,1}^{M-1})} \le C_{1},\quad \sup_{\theta\not=\theta'}\frac{|g(z,\theta) - g(z,\theta')|}{|\theta-\theta'|_1(1+|z|_{\chi,1}^{M})} \le C_2\label{DP_hoelder_prop_polynom}
	\end{equation}
	where $|z|_{\chi,1} := \sum_{i=1}^{\infty}\chi_i \cdot |z_i|$ and $\sum_{i=1}^{\infty}\chi_i < \infty$.
\end{definition}

In Assumption \ref{ass3}, we pose some standard conditions on the likelihood function $\ell$ which ensure the validity of basic results (such as Taylor expansions) from maximum likelihood theory. Again all conditions are formulated in terms of the stationary process $\tilde X_t(\theta)$ and therefore easily verifiable due to known results on stationary time series.

\begin{assumption}\label{ass3}
	Suppose that $\ell$ is three times differentiable with respect to $\theta$, and
	\begin{enumerate}[label=(\arabic*),ref=(\arabic*)]
		\item\label{ass3_m1} $\Theta \subset \IR^d$ is compact. For all $u\in [0,1]$, $\theta_0(u)$ lies in the interior of $\Theta$ and $\theta_0$ is Hoelder continuous with exponent $\beta > 0$ and has component-wise bounded variation $B_{\theta_0}$.
		\item\label{ass3_m2} $\theta_0(u)$ is the unique minimizer of $L(u,\theta) := \IE \l(\tilde Y_0(\theta_0(u)),\theta)$.
		\item\label{ass3_m3} the minimal eigenvalue of $V(\theta) := \IE[\nabla^2 \l(\tilde Y_t(\theta),\theta)]$ is bounded from below by some constant $\lambda_0$ uniformly in $\theta\in\Theta$.
		\item\label{ass3_m4} $\nabla \l(\tilde Y_t(\theta'),\theta)\big|_{\theta' = \theta}$ is a martingale difference sequence with respect to $\sF_t$ in each component.
		\item\label{ass3_m5} each component of $g \in \{\l, \nabla \l, \nabla^2,\nabla^3 \l\}$ lies in $\sL(M,\chi,C)$ for some $\chi = (\chi_j)_{j = 1,2,...}$, where $\chi_j = O(j^{-(3+\eta)})$ for some $\eta > 0$. 
		%We ask $\rho(t) := \sum_{j=t+1}^{\infty}\chi_j$ to fulfill
		%\[
		%	\sum_{j=1}^{\infty}j \chi_j < \infty, \quad\quad \sum_{t=1}^{\infty}\rho(t) < \infty, \quad\quad \rho(n) \le \frac{C_{\rho}}{n}.
		%\]
	\end{enumerate}
\end{assumption}

Finally, let us formalize the conditions on the set of bandwidths $H_n$, the localizing kernel $K$ appearing in the estimation procedure and the weight function $w_{n,h}$ which arises in the cross validation functional and the distance measures. 

\begin{assumption}\label{ass4} For $n \in \IN$ let $H_n = [\underline{h}, \overline{h}]$, where $\underline{h} \ge c_0 n^{\delta-1}$, $\overline{h} \le c_1 n^{-\delta}$ for some constants $c_0,c_1,\delta > 0$. Suppose that
	\begin{enumerate}[label=(\arabic*),ref=(\arabic*)]
		\item\label{ass4_p1} the kernel $K:\IR \to \IR$ has compact support $\subset [-\frac{1}{2},\frac{1}{2}]$, fulfills $\int K(x) \dif x = 1$ and is Lipschitz continuous with Lipschitz constant $L_K$.
		\item\label{ass4_p2} the weight function $w_{n,h}:[0,1]\to \IR_{\ge 0}$ is bounded by $|w|_{\infty}$, has bounded variation $B_{w}$ uniformly in $n,h$ and support $\subset [\frac{h}{2}, 1- \frac{h}{2}]$.\\
		For some $w:[0,1]\to \IR_{\ge 0}$ with support of Lebesgue measure greater than zero, assume that $\sup_{h\in H_n}\int |w_{n,h}(u) - w(u)| \dif u \to 0$.\\
		Furthermore, suppose that there exists some $C_w > 0$ such that
		\begin{eqnarray}
			\textstyle \frac{1}{n}\sum_{t=1}^{n}|w_{n,h}(t/n) - w_{n,h'}(t/n)| &\le& C_w|h-h'|,\nonumber\\
			\textstyle\int_{0}^{1}|w_{n,h}(u) - w_{n,h'}(u)| \dif u &\le& C_w |h-h'|.\label{ass4_eq1}
		\end{eqnarray}
	\end{enumerate}
\end{assumption}

\begin{remark}
	Note that all conditions in \ref{ass4_p2} are fulfilled by the indicator $w_{n,h}(\cdot) = \Ii_{[\frac{h}{2},1-\frac{h}{2}]}(\cdot)$ or $w_{n,h}(\cdot) = \Ii_{[\nu,1-\nu]}(\cdot)$ with some fixed $\nu > 0$.
\end{remark}

We now show that the cross validation bandwidth $\hat h$ is asymptotically optimal.

\begin{theorem}[Asymptotic optimality of cross validation]\label{theorem1}
	Under assumptions \ref{ass1}, \ref{ass2}, \ref{ass3} and \ref{ass4} the bandwidth $\hat h$ chosen by cross validation is asymptotically optimal in the sense that
	\[
		\lim_{n\to\infty} \frac{d(\hat \theta_{\hat h}, \theta_0)}{\inf_{h \in H_n} d(\hat \theta_h, \theta_0)} = 1,
	\]
	where $d$ is $d_A$ or $d_I$.
\end{theorem}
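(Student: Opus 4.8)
The strategy follows the classical template of \cite{haerdlemarron1985} for nonparametric regression, adapted to the locally stationary M-estimation setting. The key idea is to show that minimizing $CV(h)$ is asymptotically equivalent to minimizing $d(\hat\theta_h,\theta_0)$. First I would expand $\ell_{s,n}(\hat\theta_{h,-s}(s/n))$ around $\theta_0(s/n)$ using a third-order Taylor expansion (justified by Assumption \ref{ass3}); this produces a leading constant term $\frac{1}{n}\sum_s \ell_{s,n}(\theta_0(s/n)) w_{n,h}(s/n)$ which does not depend on $h$ and hence is irrelevant for the argmin, a linear term involving the score $\nabla\ell_{s,n}(\theta_0(s/n))$, a quadratic term with the Hessian, and a negligible cubic remainder. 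Because of the quasi-leave-one-out construction, $\hat\theta_{h,-s}(s/n)$ is (conditionally) nearly independent of the $s$-th score contribution, so the cross term that would otherwise create bias collapses in expectation — this is exactly the point of leaving out the $\ell_{s,n}$ summand rather than the observation.

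The next step is to replace the empirical Hessian $\frac{1}{n}\sum_s \nabla^2\ell_{s,n}(\cdot)\,w_{n,h}(s/n)$ by its deterministic counterpart $V(\theta_0(\cdot))$ and to show the quadratic term is, up to smaller-order terms, $d_A(\hat\theta_h,\theta_0)$ (and similarly that $d_A$ and $d_I$ differ only by lower-order terms, so the result for one gives the other). One then needs to control the difference $\hat\theta_{h,-s}(s/n) - \hat\theta_h(s/n)$, showing it is of order $O_p((nh)^{-1})$ uniformly, so that replacing the leave-one-out estimator by the full estimator in the quadratic form costs only a negligible amount. The remaining stochastic term, roughly $\frac{1}{n}\sum_s \nabla\ell_{s,n}(\theta_0(s/n))'(\hat\theta_h(s/n)-\theta_0(s/n))\, w_{n,h}(s/n)$ plus its cross-validation analogue, must be shown to be $o_p(d(\hat\theta_h,\theta_0) + \text{rate})$ \emph{uniformly over} $h\in H_n$; this is where the martingale structure of the score (Assumption \ref{ass3}\ref{ass3_m4}) and the functional dependence measure bounds (Assumption \ref{ass2}) enter, via maximal inequalities for martingales and moment bounds for quadratic and cubic forms in the locally stationary process. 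Discretizing $H_n$ on a fine grid (polynomial in $n$) and using the Lipschitz-in-$h$ control from Assumption \ref{ass4}\ref{ass4_p1} and \reff{ass4_eq1} upgrades pointwise-in-$h$ bounds to uniform ones.

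Putting these pieces together yields $CV(h) = \text{const} + d(\hat\theta_h,\theta_0)(1+o_p(1)) + R_n(h)$ with a remainder $R_n(h)$ that is uniformly negligible relative to $\inf_h d(\hat\theta_h,\theta_0)$; combined with a lower bound showing $\inf_h d(\hat\theta_h,\theta_0)$ does not decay faster than $n^{-4/5}$ (so that the remainder really is dominated), choosing $\hat h$ to nearly minimize $CV$ forces $d(\hat\theta_{\hat h},\theta_0)/\inf_h d(\hat\theta_h,\theta_0)\to 1$. I expect the main obstacle to be the uniform-in-$h$ control of the stochastic cross term and of the Hessian approximation: establishing the right rates for moments of the quadratic and cubic forms arising in the Taylor remainder — uniformly over the continuum of bandwidths — is the technical heart, and is presumably why the authors single out "martingale theory applied to the score function" and "bounds for moments of quadratic and cubic forms" as the technical core of the paper. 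A secondary subtlety is handling the non-continuity of $h\mapsto CV(h)$ and of $h\mapsto\hat\theta_{h,-s}$, which is why only an approximate minimizer is sought; care is needed that the $1/n$ slack in the definition of $\hat h$ is indeed of smaller order than $\inf_h d(\hat\theta_h,\theta_0)\asymp n^{-4/5}$.
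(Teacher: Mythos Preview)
Your plan is essentially the paper's proof: Taylor-expand $\ell_{t,n}(\hat\theta_{h,-t}(t/n))$ about $\theta_0(t/n)$, linearize $\hat\theta_{h,-t}-\theta_0$ via the score equation with a third-order remainder, replace empirical Hessians by $V(\theta_0(\cdot))$, and control the resulting linear, quadratic and cubic forms through the martingale property of the score and functional-dependence moment bounds, upgrading to uniformity in $h$ by discretizing $H_n$ and using Lipschitz control. The paper's organization differs only in that it routes every comparison through the deterministic reference $d_M^{*}(\hat\theta_h,\theta_0)=\IE\,d_I^{*}$: Lemmas~\ref{da_approx_0}--\ref{da_cv_approx} show that $d_A$, $d_I$, the leave-one-out distance $\overline d_A$, and $CV(h)-\tfrac1n\sum_t\ell_{t,n}(\theta_0(t/n))w_{n,h}(t/n)$ are each within $o(d_M^{*})$ of $d_M^{*}$, and the final step compares $CV(h)-CV(h')$. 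This ``hub'' device keeps the denominators deterministic throughout.

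Two small corrections. First, the term $\tfrac1n\sum_t\ell_{t,n}(\theta_0(t/n))w_{n,h}(t/n)$ is \emph{not} independent of $h$ when $w_{n,h}$ varies with $h$ (as in the default choice $\Ii_{[h/2,1-h/2]}$); it only cancels after forming differences $CV(h)-CV(h')$, so do not claim it is irrelevant for the argmin outright. Second, Theorem~\ref{theorem1} uses only Assumptions~\ref{ass1}--\ref{ass4}, under which $\theta_0$ is merely H\"older-$\beta$ and no $n^{-4/5}$ rate is available; the lower bound actually used to absorb the $1/n$ slack is the variance part of Proposition~\ref{misedecomposition}(i), namely $d_M^{*}(\hat\theta_h,\theta_0)\ge V_h\sim c/(nh)\ge c\,n^{\delta-1}$ for $h\le\overline h\le c_1 n^{-\delta}$, together with Corollary~\ref{da_approx}.
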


Under stronger smoothness assumptions which allow a typical bias expansion up to the second derivative, we will show (in Theorem~\ref{theorem2} below) that $\hat h$ is asymptotically equivalent to the asymptotically optimal theoretical bandwidth $h_0$ (ao-bandwidth for short). The additional smoothness assumptions are natural specifications of Assumption \ref{ass1} and \ref{ass3}.

\begin{assumption}[Bias expansion conditions]\label{ass5} Suppose that
\begin{enumerate}[label=(\arabic*),ref=(\arabic*)]
	\item\label{ass5_b1} $K$ is symmetric and $\theta_0$ is twice continuously differentiable,
	\item\label{ass5_b2} for all $\theta \in \Theta$, $z\in\IR^{\infty}$, $z \mapsto \nabla \l(z,\theta)$ is twice partially differentiable and $\partial_{z_i} \partial_{z_j} \nabla \l(\cdot,\theta) \in \sL(\max\{M-2,1\},\tilde \chi,\tilde \psi_1(i)\tilde \psi_2(j))$ for all $i,j \ge 1$ with absolutely summable sequences $\tilde \psi_1, \tilde\psi_2$.
	\item\label{ass5_b3} $\theta \mapsto \tilde X_t(\theta)$ is twice continuously differentiable almost surely. For all $i,j=1,...,d$,  $\|\sup_{\theta \in \Theta}|\nabla_i \tilde X_0(\theta)| \|_M$ and $\|\sup_{\theta \in \Theta}|\nabla^2_{ij} \tilde X_0(\theta)| \|_M$ are finite.
\end{enumerate}
\end{assumption}
We know from standard asymptotics that
\begin{eqnarray*}
\hat \theta_h(u) - \theta_0 (u) &\approx& - \nabla ^{2} L_{n,h}(u,\bar \theta (u))^{-1} \nabla L_{n,h}(u,\theta_0(u))\\
&\approx& - V(\theta_0(u))^{-1} \nabla L_{n,h}(u,\theta_0(u)),
\end{eqnarray*}
which motivates the following approximations to $d_A(\hat \theta_h, \theta_0)$ and $d_I(\hat \theta_h, \theta_0)$:
\begin{eqnarray}
	d_A^{*}(\hat \theta_h, \theta_0) &:=& \frac{1}{n}\sum_{t=1}^{n}\Big| \nabla L_{n,h}\big(\frac{t}{n},\theta_0\big(\frac{t}{n}\big)\big)\Big|_{V(\theta_0(\frac{t}{n}))^{-1}}^2 w_{n,h}\big(\frac{t}{n}\big),\\
	d_I^{*}(\hat \theta_h, \theta_0) &:=& \int_{0}^{1}\big| \nabla L_{n,h}(u,\theta_0(u))\big|_{V(\theta_0(u))^{-1}}^2 w_{n,h}(u) \dif u.
\end{eqnarray}
We now set
\begin{equation*} \label{}
	d_{M}^{*}(\hat \theta_h,\theta_0) := \IE d_{I}^{*}(\hat \theta_h,\theta_0) .
\end{equation*}
If $\theta_0$ is twice continuously differentiable and some additional smoothness assumptions on the approximating stationary process (see Assumption \ref{ass5}), Proposition~\ref{misedecomposition} together with Assumption \ref{ass4}\ref{ass4_p2} implies the usual bias-variance decomposition for $d_{M}^{*}$:
\begin{equation}
	d_{M}^{*}(\hat \theta_h, \theta_0) = \frac{\mu_K V_0}{nh} + \frac{h^{4}}{4}d_K^2 B_0 + o( (nh)^{-1}) + o(h^4)\label{misedecompositionoben}
\end{equation}
uniformly in $h \in H_n$, where $\mu_K  := \int K(x)^2 \dif x$, $d_K := \int K(x) x^2 \dif x$ and
\begin{align}
		V_0 &:= \int_{0}^{1}\tr\{V(\theta_0(u))^{-1}I(\theta_0(u))\} w(u)\dif u > 0,\label{asv}\\
		B_0 &:= \int_{0}^{1}\big|\IE\big[\partial_u^2 \nabla \l(\tilde Y_t(\theta_0(u)),\theta)\big]\big|_{\theta = \theta_0(u)}\big|_{V(\theta_0(u))^{-1}}^2 w(u) \dif u \ge 0,\label{asb}
\end{align}
leading to the definition of the deterministic bias-variance decomposition $d_M^{**}(h)$ and the resulting asymptotically optimal bandwidth in the following two theorems.

\begin{theorem}[Approximation of distance measures]\label{theorem3} Let Assumptions \ref{ass1}, \ref{ass2}, \ref{ass3}, \ref{ass4} and \ref{ass5} hold. Define
	\begin{equation}
		d_M^{**}(h) := \frac{\mu_K V_0}{nh} + \frac{h^4}{4}d_K^2 B_0  \label{dmsternstern}
	\end{equation}
If the bias $B_0$ is not degenerated, i.e. $B_0 > 0$, then it holds that
	\[
		\sup_{h\in H_n}\left|\frac{d(\hat \theta_h, \theta_0) - d_{M}^{**}(h)}{d_{M}^{**}(h)}\right| \to 0 \qquad a.s.
	\]
	where $d$ is $d_A$ or $d_I$.
\end{theorem}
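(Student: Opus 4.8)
The plan is to establish the approximation in three stages: first reduce $d(\hat\theta_h,\theta_0)$ to its leading quadratic form $d^*(\hat\theta_h,\theta_0)$, then concentrate $d^*$ around its mean $d_M^*(\hat\theta_h,\theta_0)$ uniformly in $h$, and finally invoke the bias--variance expansion \reff{misedecompositionoben} to replace $d_M^*$ by the purely deterministic $d_M^{**}(h)$. Throughout, the denominator $d_M^{**}(h) \asymp (nh)^{-1} + h^4$ is bounded below by a multiple of $n^{-4/5}$ on $H_n$ (minimized at $h_0 \sim n^{-1/5}$), so it suffices to show each approximation error is $o(n^{-4/5})$ uniformly in $h \in H_n$, almost surely. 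Since $H_n$ is an interval of polynomial length and all the relevant quantities are, by Assumptions \ref{ass3}\ref{ass3_m5} and \ref{ass4}, Lipschitz in $h$ with at most polynomial constants (using \reff{ass4_eq1} for the weights and the Lipschitz continuity of $K$ for the kernel), the passage from pointwise to uniform control is a standard chaining/discretization argument: prove the bound on a fine grid of mesh $n^{-\gamma}$ with $\gamma$ large, then fill the gaps with the Lipschitz estimate.

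For the first stage, I would use the Taylor expansion $\hat\theta_h(u) - \theta_0(u) = -V(\theta_0(u))^{-1}\nabla L_{n,h}(u,\theta_0(u)) + R_{n,h}(u)$, where the remainder $R_{n,h}$ collects the second-order term $\nabla^2 L_{n,h}(u,\bar\theta(u))^{-1} - V(\theta_0(u))^{-1}$ acting on $\nabla L_{n,h}$ plus the contribution of $\nabla^3\ell$; Assumption \ref{ass3} guarantees all the ingredients exist and have the needed moments via the class $\sL(M,\chi,C)$, and the uniform-in-$u$ control of $\hat\theta_h(u)-\theta_0(u)$ follows from the consistency machinery underlying Theorem \ref{theorem1}. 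Expanding $|\hat\theta_h - \theta_0|^2_{V}$ then gives $d^* $ plus cross terms of the form $\langle \nabla L_{n,h}, R_{n,h}\rangle$ and $|R_{n,h}|^2$, which one bounds by Cauchy--Schwarz using $\sup_u |\nabla L_{n,h}(u,\theta_0(u))| = O_{a.s.}((nh)^{-1/2}\sqrt{\log n})$ (a maximal inequality for the martingale-difference-driven sum, Assumption \ref{ass3}\ref{ass3_m4}) and a comparable bound on $\sup_u|R_{n,h}(u)|$ of strictly smaller order; the product is $o(n^{-4/5})$ with room to spare. The $d_A$ versus $d_I$ distinction (sum versus integral) only costs a Riemann-sum error controlled by the bounded variation of the integrand and is absorbed likewise.

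The heart of the argument, and the step I expect to be the main obstacle, is the almost-sure uniform concentration $\sup_{h\in H_n}|d_I^*(\hat\theta_h,\theta_0) - d_M^*(\hat\theta_h,\theta_0)| = o(n^{-4/5})$. Writing $\nabla L_{n,h}(u,\theta_0(u)) = \frac1n\sum_t K_h(t/n - u)\,Z_{t,n}$ with $Z_{t,n} := \nabla\ell_{t,n}(\theta_0(t/n))$ (after the usual replacement of $X_{t,n}$ by $\tilde X_t(\theta_0(t/n))$ using \reff{ass1_s1_eq1}, which also needs the Lipschitz property to handle the $\theta_0(t/n)$ versus $\theta_0(u)$ mismatch inside the window), $d_I^*$ becomes a quadratic form $\frac{1}{n^2}\sum_{t,t'} c_{t,t'}(h)\, Z_{t,n}'\,A\,Z_{t',n}$. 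Its fluctuation around the mean splits into the diagonal part (a sum of centered $|Z_{t,n}|^2$-type terms, handled by a moment bound / Bernstein inequality for weakly dependent arrays) and the off-diagonal part (a genuine bilinear form in a locally stationary array). Controlling the latter is exactly where the moment bounds for quadratic and cubic forms advertised in the introduction are needed: via the functional dependence measure $\delta_q(k) = O(k^{-(3+\eta)})$ from Assumption \ref{ass2}, one obtains $\|d_I^* - \IE d_I^*\|_q \le C_q (nh)^{-1}\cdot (nh)^{-1/2}$ or so for large $q$, and then a union bound over the discretization grid plus Borel--Cantelli upgrades this to the almost-sure uniform statement. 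One must be careful that $Z_{t,n}$ is not stationary — its law drifts with $t/n$ through $\theta_0$ — but the Lipschitz bounds in $\sL(M,\chi,C)$ and the Hölder continuity of $\theta_0$ let one compare to the stationary proxy $\nabla\ell(\tilde X_t(\theta_0(u)),\tilde Y_{t-1}(\theta_0(u)),\theta_0(u))$ with an error summable against the kernel weights. Finally, once concentration is in hand, Proposition \ref{misedecomposition} and the decomposition \reff{misedecompositionoben} give $d_M^*(\hat\theta_h,\theta_0) = d_M^{**}(h) + o((nh)^{-1}) + o(h^4)$ uniformly, and since $B_0 > 0$ ensures $d_M^{**}(h) \gtrsim h^4 \vee (nh)^{-1}$, dividing through yields the claimed ratio convergence to $1$ for both $d_A$ and $d_I$.
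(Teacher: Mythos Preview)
Your three-stage plan (reduce $d$ to $d^*$, concentrate $d^*$ around $d_M^*$, then replace $d_M^*$ by $d_M^{**}$ via \reff{misedecompositionoben}) is exactly the architecture of the paper's proof: stages 1--2 are the content of Lemma~\ref{da_approx_0} and Corollary~\ref{da_approx}, and stage 3 is the one-line argument in the paper's proof of Theorem~\ref{theorem3}. The machinery you invoke (Taylor expansion of $\hat\theta_h-\theta_0$, moment bounds for quadratic forms via the functional dependence measure, discretization plus Borel--Cantelli for the uniform-in-$h$ upgrade) is also what the paper uses.

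There is, however, a genuine gap in how you set up the sufficiency criterion. You write that since $d_M^{**}(h)\ge c\,n^{-4/5}$ on $H_n$, ``it suffices to show each approximation error is $o(n^{-4/5})$ uniformly in $h\in H_n$.'' This would indeed be sufficient, but it is \emph{not achievable}: the errors you must control scale with $h$, and for $h$ near $\underline h\sim n^{\delta-1}$ they are far larger than $n^{-4/5}$. Concretely, the bias--variance decomposition \reff{misedecompositionoben} gives $d_M^*-d_M^{**}=o((nh)^{-1})+o(h^4)$, and $(nh)^{-1}$ can be as large as $n^{-\delta}$; likewise the concentration $\|d_I^*-d_M^*\|_q$ is of order $(n\cdot nh)^{-1/2}=n^{-1/2}(nh)^{-1/2}$ (not $(nh)^{-3/2}$ as you wrote), which for small $h$ is also much larger than $n^{-4/5}$. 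The same failure occurs in stage~1: the remainder $R_{n,h}(u)$ and the score $\nabla L_{n,h}$ both blow up in absolute size as $h\to\underline h$.

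The fix---and what the paper actually does---is to normalize by $d_M^*(\hat\theta_h,\theta_0)$ (equivalently $d_M^{**}(h)$) \emph{at each $h$}, not by the uniform lower bound $n^{-4/5}$. In the paper, Lemma~\ref{da_approx_0} and Corollary~\ref{da_approx} show $\sup_h|d-d_M^*|/d_M^*\to 0$ directly: the moment bounds produce $\|d_I^*-d_M^*\|_q/d_M^*\le Cn^{-\gamma}$ for some $\gamma>0$ (since $(n\cdot nh)^{-1/2}/(nh)^{-1}=(h/n)^{1/2}n^{1/2}\cdot h^{1/2}$\ldots more precisely one checks each piece against the variance floor $(nh)^{-1}$ or the bias floor $B_h$ separately), and the stage-1 reduction is handled by showing the multiplicative factor $R_h(u)=(\nabla^2 L_{n,h})^{-1}V-I\to 0$ uniformly, so that $|d-d^*|\le o(1)\cdot d^*$. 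Once you reframe everything as ratios, your argument goes through essentially as written.
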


\bigskip

\begin{theorem}[Consistency of the cross validation bandwidth]\label{theorem2}
	Let Assumptions \ref{ass1}, \ref{ass2}, \ref{ass3}, \ref{ass4} and \ref{ass5} hold. Then the bandwidth $\hat h$ chosen by cross validation fulfils
	\[
		\frac{\hat h}{h_0} \to 1 \qquad a.s.
	\]
	where
	\begin{equation}
		h_0 = \left(\frac{V_0 \mu_K}{B_0 d_K^2}\right)^{1/5} n^{-1/5}\label{h02optimal}.
	\end{equation}
	is the unique minimizer of $d_M^{**}(h)$.
\end{theorem}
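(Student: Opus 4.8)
The plan is to deduce Theorem~\ref{theorem2} from the asymptotic $d$-optimality of $\hat h$ (Theorem~\ref{theorem1}) together with the uniform deterministic approximation $d(\hat\theta_h,\theta_0)\approx d_M^{**}(h)$ (Theorem~\ref{theorem3}); after these two inputs the claim reduces to an elementary property of the explicit scalar function $d_M^{**}$. Throughout, $d$ is $d_A$ or $d_I$ (either works).

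First I would study $d_M^{**}$ in isolation. Since $B_0>0$, differentiating $d_M^{**}(h)=\mu_K V_0/(nh)+(h^4/4)d_K^2 B_0$ in $h$ yields a single stationary point on $(0,\infty)$, namely $h_0$ from \reff{h02optimal}, and it is a strict global minimum. Because $h_0\sim n^{-1/5}$ and $H_n=[\underline h,\overline h]$ with $\underline h\ge c_0 n^{\delta-1}$ and $\overline h\le c_1 n^{-\delta}$, Assumption~\ref{ass4} guarantees $h_0\in H_n$ for all $n$ large (this is where the restriction $\delta<1/5$ on the rates is used), so $\inf_{h\in H_n}d_M^{**}(h)=d_M^{**}(h_0)$ eventually. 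Using $nB_0 d_K^2 h_0^5=V_0\mu_K$ one then computes, for every $h>0$,
\[
 \frac{d_M^{**}(h)}{d_M^{**}(h_0)}=\phi\!\Big(\frac{h}{h_0}\Big),\qquad \phi(r):=\frac{1}{5}\Big(\frac{4}{r}+r^{4}\Big),
\]
where $\phi$ is continuous on $(0,\infty)$, strictly decreasing on $(0,1]$ and strictly increasing on $[1,\infty)$, with $\phi(1)=1$ and $\phi(r)\to\infty$ as $r\to0^+$ and as $r\to\infty$; hence $\inf\{\phi(r):r>0,\ |r-1|\ge\epsilon\}=\min\{\phi(1-\epsilon),\phi(1+\epsilon)\}>1$ for every $\epsilon>0$.

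Next I would bring in the probabilistic statements. Theorem~\ref{theorem3} (applicable since $B_0>0$) gives, almost surely, $\rho_n:=\sup_{h\in H_n}\big|d(\hat\theta_h,\theta_0)/d_M^{**}(h)-1\big|\to0$. Taking the infimum over $h\in H_n$ yields $\inf_{h\in H_n}d(\hat\theta_h,\theta_0)=(1+o(1))\,d_M^{**}(h_0)$ a.s., while evaluating the same relative bound at the ($H_n$-valued) cross-validation bandwidth $\hat h$ gives $d(\hat\theta_{\hat h},\theta_0)=(1+o(1))\,d_M^{**}(\hat h)$ a.s. Combining these with Theorem~\ref{theorem1}, i.e.\ $d(\hat\theta_{\hat h},\theta_0)=(1+o(1))\inf_{h\in H_n}d(\hat\theta_h,\theta_0)$, I obtain $d_M^{**}(\hat h)/d_M^{**}(h_0)\to1$ a.s., that is, $\phi(\hat h/h_0)\to1$ a.s. On the probability-one event where this holds, if $|\hat h/h_0-1|\ge\epsilon$ along a subsequence for some $\epsilon>0$, then $\phi(\hat h/h_0)\ge\min\{\phi(1-\epsilon),\phi(1+\epsilon)\}>1$ along it, contradicting $\phi(\hat h/h_0)\to1$; hence $\hat h/h_0\to1$ a.s., which is the assertion.

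I expect the only delicate point to be bookkeeping rather than a new estimate: one must make sure that the convergence supplied by Theorem~\ref{theorem1} is available in the almost-sure sense used above, which (if the proof of Theorem~\ref{theorem1} is only phrased in probability) requires extracting from it an a.s.\ uniform comparison over $H_n$ of $CV(h)$---up to an $h$-independent term---with $d(\hat\theta_h,\theta_0)$; otherwise the same argument only delivers $\hat h/h_0\pto1$. It is also worth emphasizing that the admissible ratios $\hat h/h_0$ range over a window that expands to all of $(0,\infty)$, so the concluding step genuinely uses the global coercivity of $\phi$ at $0$ and $\infty$ rather than mere strict convexity near $r=1$.
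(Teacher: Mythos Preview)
Your argument is correct and reaches the same endpoint as the paper, namely $d_M^{**}(\hat h)/d_M^{**}(h_0)\to 1$ a.s., after which the elementary analysis of $\phi(r)=\tfrac{1}{5}(4/r+r^4)$ (which you carry out explicitly; the paper just writes ``the structure of $d_M^{**}(h)$ implies $\hat h/h_0\to 1$'') finishes the proof. The route differs slightly, however. The paper does \emph{not} invoke Theorems~\ref{theorem1} and~\ref{theorem3} as black boxes; instead it re-enters the machinery one level down: it first establishes $\sup_{h\in H_n}|d_M^{*}(\hat\theta_h,\theta_0)/d_M^{**}(h)-1|\to 0$ (the same step as in Theorem~\ref{theorem3}), then combines Lemmas~\ref{da_strich_approx} and~\ref{da_cv_approx} to obtain a uniform a.s.\ comparison of $CV(h)$, up to an $h$-independent centering, with $d_M^{**}(h)$, and finally reruns the mechanics of the Theorem~\ref{theorem1} proof with $d_M^{**}$ in place of $d_A$.

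Your packaging is more modular: once Theorems~\ref{theorem1} and~\ref{theorem3} are available, your chain $d(\hat\theta_{\hat h},\theta_0)=(1+o(1))d_M^{**}(\hat h)$, $\inf_h d(\hat\theta_h,\theta_0)=(1+o(1))d_M^{**}(h_0)$, and $d(\hat\theta_{\hat h},\theta_0)/\inf_h d(\hat\theta_h,\theta_0)\to 1$ immediately yields the ratio result without reopening the lemmas. The paper's route avoids quoting Theorem~\ref{theorem1}, so it sidesteps your caveat about the mode of convergence there; but in fact the proof of Theorem~\ref{theorem1} in the paper is carried out almost surely throughout, so your concern does not materialize. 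Your remark that $h_0\in H_n$ eventually (hence $\inf_{h\in H_n}d_M^{**}(h)=d_M^{**}(h_0)$) requires $\delta<1/5$ is a point the paper uses tacitly without stating.
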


%\begin{remark}
%	In case of the indicator $w_{n,h}(\cdot) := \Ii_{[\frac{h}{2},1-\frac{h}{2}]}(\cdot)$, $w_{n,h}$ in the definition of $V_0$, $B_0$ in \reff{asv}, \reff{asb} can be omitted by absorbing the difference between $w_{n,h}(\cdot)$ and $\Ii_{[0,1]}(\cdot)$ into $o((nh)^{-1} + h^4)$.
%\end{remark}

\subsection{Proofs}
Here we present the structure of the proofs of Theorems \ref{theorem1}, \ref{theorem2} and \ref{theorem3}. The technical details including the proofs of the lemmata are postponed to the appendix. The main tool for the proofs is a general bound for moments on quadratic and cubic forms of functions of locally stationary processes (cf. Proposition \ref{hilfslemma1}). From now on, we assume that Assumptions \ref{ass1}, \ref{ass2}, \ref{ass3} and \ref{ass4} hold.
%All convergences stated here are with respect to $n\to\infty$.
The following Lemma shows that the approximated distances $d_I^{*}$, $d_{A}^{*}$ are close to  $d_M^{*}$.

\begin{lemma}\label{da_approx_0}
	We have almost surely
	\[
		\sup_{h \in H_n}\Big|\frac{d_{I}^{*}(\hat \theta_h, \theta_0) - d_{M}^{*}(\hat \theta_h,\theta_0)}{d_{M}^{*}(\hat \theta_h, \theta_0)} \Big| \to 0,\quad\quad
		\sup_{h \in H_n}\Big|\frac{d_{A}^{*}(\hat \theta_h, \theta_0) - d_{M}^{*}(\hat \theta_h,\theta_0)}{d_{M}^{*}(\hat \theta_h, \theta_0)} \Big| \to 0.
	\]
\end{lemma}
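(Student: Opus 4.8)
Write $N_{n,h}(u):=\nabla L_{n,h}(u,\theta_0(u))=\tfrac1n\sum_{t=1}^{n}K_h(\tfrac tn-u)\,\nabla\ell_{t,n}(\theta_0(u))$, so that $d_I^{*}(\hat\theta_h,\theta_0)=\int_0^1|N_{n,h}(u)|_{V(\theta_0(u))^{-1}}^{2}w_{n,h}(u)\dif u$ and $d_A^{*}(\hat\theta_h,\theta_0)$ is its Riemann sum on the grid $\{t/n\}$. By Assumption~\ref{ass3}\ref{ass3_m4} the scores $\nabla\ell_{t,n}(\theta_0(t/n))$ are, up to the approximation of $X_{t,n}$ by $\tilde X_t(\theta_0(t/n))$ which contributes a total $L^{q}$-error $\le C_B$ by \reff{ass1_s1_eq1}, martingale differences with respect to $\sF_t$; combined with the H\"older continuity of $\theta_0$ this yields, for all $h\in H_n$, $c\,(nh)^{-1}\le d_M^{*}(\hat\theta_h,\theta_0)=\IE\, d_I^{*}(\hat\theta_h,\theta_0)\le C\big((nh)^{-1}+h^{2\beta}\big)$, and in particular the fixed polynomial lower bound $\inf_{h\in H_n}d_M^{*}(\hat\theta_h,\theta_0)\ge c\,n^{\delta-1}$. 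The plan is: (a) establish, for every $q\ge 1$ and uniformly in $h\in H_n$, the $L^{2q}$-bounds $\IE|d_I^{*}(\hat\theta_h,\theta_0)-d_M^{*}(\hat\theta_h,\theta_0)|^{2q}\le C_q\rho_n^{2q}\,d_M^{*}(\hat\theta_h,\theta_0)^{2q}$ and $\IE|d_A^{*}(\hat\theta_h,\theta_0)-d_I^{*}(\hat\theta_h,\theta_0)|^{2q}\le C_q(nh)^{-2q}\,d_M^{*}(\hat\theta_h,\theta_0)^{2q}$ with a deterministic $\rho_n=n^{-\gamma}$, $\gamma>0$; (b) upgrade these to the uniform almost sure statements by discretising $H_n$.

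For (a), expanding the square shows that $d_I^{*}(\hat\theta_h,\theta_0)-d_M^{*}(\hat\theta_h,\theta_0)$ is a centred quadratic form in the scores $\nabla\ell_{t,n}$, with matrix weights of size $(n^{2}h)^{-1}$ supported on $|s-t|\lesssim nh$, plus cross terms between the stochastic part of $N_{n,h}$ and its bias, the latter being of order $h^{\beta}$. The martingale difference property annihilates the off-diagonal means, so the variance of this object is of order $n^{-2}h^{-1}$ up to lower order terms, and more generally Proposition~\ref{hilfslemma1} --- the moment bound for quadratic and cubic forms of functions of locally stationary processes --- together with Assumptions~\ref{ass1}, \ref{ass2} and \ref{ass3}\ref{ass3_m5} gives the first $L^{2q}$-bound; the rate $\rho_n$, which degrades as $\beta\downarrow 0$, is obtained by separating the ranges of $h$ in which $(nh)^{-1}$ respectively $h^{2\beta}$ dominates $d_M^{*}$. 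For $d_A^{*}-d_I^{*}$ I would start from the pathwise Riemann-sum estimate $|d_A^{*}-d_I^{*}|\le\tfrac1n\,\mathrm{TV}_{[0,1]}\big(u\mapsto|N_{n,h}(u)|_{V(\theta_0(u))^{-1}}^{2}w_{n,h}(u)\big)$, bound the total variation by $\int_0^1|\partial_u(\cdot)|\dif u$ plus the contribution of the bounded variation of $w_{n,h}$, and then use Cauchy--Schwarz to reduce to the quadratic forms $\int|N_{n,h}|^{2}$ and $\int|\partial_u N_{n,h}|^{2}$ (the second of expected order $(nh^{3})^{-1}$, with negligible off-diagonal part again by the martingale property); Proposition~\ref{hilfslemma1} applied to these gives the second $L^{2q}$-bound.

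For (b), discretise $H_n=[\underline h,\overline h]$ by a grid $\underline h=h_0<\dots<h_{m_n}=\overline h$ of mesh $\Delta_n=n^{-R}$, $m_n\le n^{R}$, with $R$ to be chosen large. By Markov's inequality and the $L^{2q}$-bounds from (a),
\[
\IP\Big(\max_{0\le j\le m_n}\frac{|d_I^{*}(\hat\theta_{h_j},\theta_0)-d_M^{*}(\hat\theta_{h_j},\theta_0)|}{d_M^{*}(\hat\theta_{h_j},\theta_0)}>\varepsilon\Big)\le m_n\,C_q\,\rho_n^{2q}\,\varepsilon^{-2q},
\]
which is summable in $n$ once $q$ is chosen so large that $2q\gamma>R+1$, so the maximum over the grid vanishes a.s.\ by Borel--Cantelli (and likewise for $d_A^{*}-d_I^{*}$). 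For the oscillation between grid points, $\partial_h K_h(x)=-h^{-2}K(x/h)-x\,h^{-3}K'(x/h)$ and Assumption~\ref{ass4}\ref{ass4_p1} give $\sup_x|\partial_h K_h(x)|\le C\underline h^{-2}$ on $H_n$, and together with \reff{ass4_eq1} one obtains $|d_I^{*}(\hat\theta_h,\theta_0)-d_I^{*}(\hat\theta_{h'},\theta_0)|\le Z_n|h-h'|$ for $h,h'\in H_n$, where $Z_n$ is a fixed power of $n$ times $\big(\max_{t\le n}\sup_{\theta\in\Theta}|\nabla\ell_{t,n}(\theta)|\big)^{2}$; by Assumptions~\ref{ass1} and \ref{ass3}\ref{ass3_m5} this maximum has moments of polynomial order, hence $Z_n\le n^{C'}$ eventually a.s.\ (Borel--Cantelli). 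Since $d_M^{*}(\hat\theta_{h_j},\theta_0)\gtrsim n^{\delta-1}$ and $d_M^{*}$ is itself Lipschitz in $h$ with a polynomial constant, choosing $R$ large enough makes both the random and the deterministic between-grid contributions $o(1)$ almost surely. Combining the grid estimate with the oscillation estimate gives $\sup_{h\in H_n}|d_I^{*}(\hat\theta_h,\theta_0)-d_M^{*}(\hat\theta_h,\theta_0)|/d_M^{*}(\hat\theta_h,\theta_0)\to0$ and $\sup_{h\in H_n}|d_A^{*}(\hat\theta_h,\theta_0)-d_I^{*}(\hat\theta_h,\theta_0)|/d_M^{*}(\hat\theta_h,\theta_0)\to0$ almost surely; since $d_M^{*}=\IE\, d_I^{*}$, the triangle inequality turns the second into the statement for $d_A^{*}$ against $d_M^{*}$, which is the second claim of the lemma.

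The main obstacle is part (a): obtaining the $L^{2q}$-bound with a rate $\rho_n$ that is a genuine power of $n$ uniformly over $H_n$. Here one must (i) isolate the diagonal part of the quadratic forms, since without the martingale difference structure of the score the off-diagonal terms would be of the same order as $d_M^{*}$ itself and the argument would collapse; (ii) Taylor-expand $\IE N_{n,h}(u)$ around $a=\theta_0(u)$ so that the bias enters only at order $h^{\beta}$ and its cross terms with the stochastic part are controlled through the polynomial Lipschitz class $\sL(M,\chi,C)$ of Assumption~\ref{ass3}\ref{ass3_m5}; and (iii) perform the reduction from $X_{t,n}$ to the stationary approximations $\tilde X_t(\theta)$ via \reff{ass1_s1_eq1}, so that Proposition~\ref{hilfslemma1} is applicable with constants independent of $h$ and $n$. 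The discretisation in (b), though lengthy, is routine once these moment bounds and the crude almost-sure polynomial bound on the score are available.
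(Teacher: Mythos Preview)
Your overall strategy --- $L^q$ moment bounds on the centred quantities, the polynomial lower bound $d_M^{*}\gtrsim(nh)^{-1}\ge c\,n^{\delta-1}$, discretisation of $H_n$, Markov plus Borel--Cantelli on the grid, and a random Lipschitz oscillation bound between grid points controlled by a polynomially growing $Z_n$ --- is exactly the route the paper takes. Your handling of $d_I^{*}-d_M^{*}$ (split into a centred quadratic form in the scores, for which Lemma~\ref{hilfslemma1} gives the $L^q$ rate $O((n\cdot nh)^{-1/2})$, plus a bias--stochastic cross term of order $O(n^{-1/2}\hat B_h^{1/2})$, plus the $O(n^{-1})$ cost of passing from $X_{t,n}$ to $\tilde X_t(\theta_0(t/n))$) also matches the paper's decomposition into $R_{n,h,1}+R_{n,h,2}+R_{n,h,3}$.

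Where you diverge is $d_A^{*}-d_I^{*}$. The paper does \emph{not} go through the total variation of $u\mapsto|N_{n,h}(u)|_{V^{-1}}^{2}w_{n,h}(u)$; instead it bounds $\|d_A^{*}-d_I^{*}\|_q$ directly by writing each summand as $|N_{n,h}(t/n)|^2-|N_{n,h}(u)|^2$ for $u\in[(t-1)/n,t/n]$, factoring this bilinearly, and applying H\"older together with the continuity estimate $\|N_{n,h}(t/n)-N_{n,h}(u)\|_{2q}\le C\{|t/n-u|/h+|\theta_0(t/n)-\theta_0(u)|_1\}$ from Lemma~\ref{lemma_stetigkeit} and the uniform bound $\|N_{n,h}(u)\|_{2q}=O((nh)^{-1/2}+h^{\beta\wedge1})$. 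Summing and using the bounded variation of $\theta_0$ and $w_{n,h}$ yields $\|d_A^{*}-d_I^{*}\|_q=O\big((nh)^{-1}\{(nh)^{-1/2}+h^{\beta\wedge1}\}+n^{-1}\big)$, hence ratio $O(n^{-\gamma})$ after division by $d_M^{*}$. Your TV route has a genuine wrinkle: under Assumption~\ref{ass3}\ref{ass3_m1} the curve $\theta_0$ is only H\"older continuous (with bounded variation), not differentiable, so $u\mapsto N_{n,h}(u)=\tfrac1n\sum_tK_h(t/n-u)\nabla\ell_{t,n}(\theta_0(u))$ need not be differentiable and the quantity $\int|\partial_uN_{n,h}|^{2}$ you invoke, with claimed order $(nh^{3})^{-1}$, may not exist. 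You could repair this by replacing the derivative with a pure variation argument (splitting the TV into the $O(h^{-1})$ contribution from $K_h(\cdot-u)$ and the $O(B_{\theta_0})$ contribution from $\theta_0$), but the paper's direct $L^q$ comparison is shorter and sidesteps the issue entirely.
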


As a consequence of Lemma \ref{da_approx_0} also the distances $d_{I}, d_A$ are close to  $d_M^{*}$:

\medskip

\begin{corollary}\label{da_approx}
	We have almost surely
	\[
		\sup_{h \in H_n}\Big|\frac{d_{I}(\hat \theta_h, \theta_0) - d_{M}^{*}(\hat \theta_h,\theta_0)}{d_{M}^{*}(\hat \theta_h, \theta_0)} \Big| \to 0, \quad\quad
		\sup_{h \in H_n}\Big|\frac{d_{A}(\hat \theta_h, \theta_0) - d_{M}^{*}(\hat \theta_h,\theta_0)}{d_{M}^{*}(\hat \theta_h, \theta_0)} \Big| \to 0.
	\]
\end{corollary}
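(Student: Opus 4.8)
The plan is to derive the Corollary from Lemma~\ref{da_approx_0} by showing that the genuine distances $d_I,d_A$ differ from their quadratic-form approximations $d_I^{*},d_A^{*}$ by an amount that is negligible relative to $d_M^{*}$. By the triangle inequality, $|d - d_M^{*}| \le |d - d^{*}| + |d^{*} - d_M^{*}|$ for $d \in \{d_I,d_A\}$, so in view of Lemma~\ref{da_approx_0} it suffices to prove
\begin{equation*}
	\sup_{h\in H_n}\frac{\big|d_I(\hat\theta_h,\theta_0) - d_I^{*}(\hat\theta_h,\theta_0)\big|}{d_M^{*}(\hat\theta_h,\theta_0)} \to 0 \quad\text{a.s.}
\end{equation*}
together with the analogous statement for $d_A,d_A^{*}$ in place of $d_I,d_I^{*}$.

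First I would establish, from a uniform law of large numbers for $L_{n,h}(u,\cdot)$ and its first three $\theta$-derivatives — obtained from the polynomial-Lipschitz structure of Assumption~\ref{ass3}\ref{ass3_m5}, the moment bound of Proposition~\ref{hilfslemma1}, a covering argument over the compact index set $[0,1]\times H_n\times\Theta$, and Borel--Cantelli — the uniform consistency $\sup_{u\in[0,1],\,h\in H_n}|\hat\theta_h(u) - \theta_0(u)| \to 0$ a.s. Taylor-expanding the first-order condition $\nabla L_{n,h}(u,\hat\theta_h(u)) = 0$ around $\theta_0(u)$ then gives, on the event of uniform consistency, the representation $\hat\theta_h(u) - \theta_0(u) = -V(\theta_0(u))^{-1}\nabla L_{n,h}(u,\theta_0(u)) + r_{n,h}(u)$, with
\begin{equation*}
	|r_{n,h}(u)| \le C\Big(\sup_{u',h}\big|\nabla^2 L_{n,h}(u',\bar\theta_{u',h}) - V(\theta_0(u'))\big| + \sup_{u',h}\|\nabla^3 L_{n,h}(u',\cdot)\|_{\infty}\cdot|\hat\theta_h(u) - \theta_0(u)|\Big)\,|\hat\theta_h(u) - \theta_0(u)|,
\end{equation*}
where $\bar\theta_{u',h}$ lies on the segment between $\theta_0(u')$ and $\hat\theta_h(u')$. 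A further application of Proposition~\ref{hilfslemma1} to the stochastic part of $\nabla^2 L_{n,h} - V$ and of the Hölder continuity of $\theta_0$ (Assumption~\ref{ass3}\ref{ass3_m1}) to its bias part, again upgraded to a uniform a.s. bound through covering and Borel--Cantelli, shows that the bracketed factor is $o(1)$ a.s., so that $|r_{n,h}(u)| = o(1)\cdot|\hat\theta_h(u) - \theta_0(u)|$ uniformly in $u$ and $h$.

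Next, writing $g_{t,h} := -V(\theta_0(t/n))^{-1}\nabla L_{n,h}(t/n,\theta_0(t/n))$, one has $|g_{t,h}|_{V(\theta_0(t/n))}^2 = |\nabla L_{n,h}(t/n,\theta_0(t/n))|_{V(\theta_0(t/n))^{-1}}^2$, hence $d_A^{*}(\hat\theta_h,\theta_0) = \frac1n\sum_t |g_{t,h}|_{V(\theta_0(t/n))}^2 w_{n,h}(t/n)$ and $d_A(\hat\theta_h,\theta_0) = \frac1n\sum_t |g_{t,h} + r_{n,h}(t/n)|_{V(\theta_0(t/n))}^2 w_{n,h}(t/n)$. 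Expanding the quadratic form and applying Cauchy--Schwarz with respect to the weighted inner product yields, with $\tilde\rho_h := \frac1n\sum_t |r_{n,h}(t/n)|_{V(\theta_0(t/n))}^2 w_{n,h}(t/n)$,
\begin{equation*}
	\big|d_A(\hat\theta_h,\theta_0) - d_A^{*}(\hat\theta_h,\theta_0)\big| \le 2\sqrt{d_A^{*}(\hat\theta_h,\theta_0)}\,\sqrt{\tilde\rho_h} + \tilde\rho_h,
\end{equation*}
and the analogue for $d_I$ with the corresponding integral $\rho_h$. From the previous step $\tilde\rho_h \le o(1)\cdot\frac1n\sum_t |\hat\theta_h(t/n) - \theta_0(t/n)|_{V(\theta_0(t/n))}^2 w_{n,h}(t/n) = o(1)\cdot d_A(\hat\theta_h,\theta_0)$ uniformly in $h$; inserting this into the displayed inequality, using $d_A^{*} \le (1+o(1))d_M^{*}$ from Lemma~\ref{da_approx_0}, and solving the resulting quadratic inequality for $\sqrt{d_A/d_A^{*}}$ gives $d_A(\hat\theta_h,\theta_0) \le (1+o(1))d_M^{*}(\hat\theta_h,\theta_0)$ uniformly, whence $\tilde\rho_h/d_M^{*} \to 0$ and therefore $|d_A - d_A^{*}|/d_M^{*} \le 2\sqrt{d_A^{*}/d_M^{*}}\,\sqrt{\tilde\rho_h/d_M^{*}} + \tilde\rho_h/d_M^{*} \to 0$ a.s. uniformly in $h$; the same computation applies to $d_I$.

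I expect the main obstacle to be the uniform-in-$(u,h)$ almost-sure control of the remainder $r_{n,h}$, specifically of $\sup_{u,h}|\nabla^2 L_{n,h}(u,\bar\theta_{u,h}) - V(\theta_0(u))|$ and of the third-derivative supremum. These are not pointwise statements: one must convert the functional-dependence hypothesis (Assumption~\ref{ass2}) and the polynomial-Lipschitz hypothesis (Assumption~\ref{ass3}\ref{ass3_m5}) into moment bounds for the relevant quadratic and cubic forms via Proposition~\ref{hilfslemma1}, and then pass to uniformity over $[0,1]\times H_n$ (and over $\Theta$) by a covering/chaining argument combined with Borel--Cantelli, checking that the number of finite moments guaranteed by Assumption~\ref{ass1} is large enough relative to the Hölder exponent $\beta$ of $\theta_0$. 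The delicate point is that $|\hat\theta_h(u) - \theta_0(u)|^2$ and $d_M^{*}(\hat\theta_h,\theta_0)$ are of the same order $(nh)^{-1}+h^4$, so the negligibility of $\tilde\rho_h/d_M^{*}$ rests entirely on the extra $o(1)$ factor gained from the consistency of $\nabla^2 L_{n,h}$, and this gain must be shown to hold uniformly over the whole scale of bandwidths $H_n$.
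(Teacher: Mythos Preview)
Your proposal is correct and follows essentially the same route as the paper: Taylor-expand the estimating equation to relate $\hat\theta_h(u)-\theta_0(u)$ to $-V(\theta_0(u))^{-1}\nabla L_{n,h}(u,\theta_0(u))$, use the uniform consistency results for $\nabla^k L_{n,h}$ (which the paper packages as Corollary~\ref{lemma_likelihood_uniform} and Theorem~\ref{mle_konsistenz}) to show the remainder is $o(1)\cdot|\hat\theta_h(u)-\theta_0(u)|$ uniformly, and then combine with Lemma~\ref{da_approx_0}.

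The one notable difference is algebraic rather than conceptual. The paper writes the estimation error \emph{multiplicatively} as $\hat\theta_h(u)-\theta_0(u)=(I_d+R_h(u))D_h(u)$ with $D_h(u):=-V(\theta_0(u))^{-1}\nabla L_{n,h}(u,\theta_0(u))$ and $R_h(u):=(\nabla^2 L_{n,h}(u,\bar\theta_h(u)))^{-1}V(\theta_0(u))-I_d$, and then bounds
\[
	\big||(I_d+R_h)D_h|_V^2-|D_h|_V^2\big|\le \big(2|VR_h|_2+|R_h'VR_h|_2\big)|D_h|^2,
\]
which, after integrating and using the eigenvalue bound $|D_h|^2\le\lambda_0^{-1}|D_h|_V^2$, gives $|d_I-d_I^{*}|/d_I^{*}\to 0$ \emph{directly}. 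This avoids your additive decomposition with the Cauchy--Schwarz step and the subsequent need to close the quadratic inequality for $\sqrt{d_A/d_A^{*}}$. Your route works, but the multiplicative factorization is cleaner and also makes the third-derivative bound on $\nabla^3 L_{n,h}$ unnecessary here: only a second-order expansion with the Hessian at an intermediate point is used.
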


To get a connection between the distance measure $d_M^{*}$ and the cross validation functional $CV(h)$, we
 define
\[
	\overline{d}_{A}(\hat \theta_h, \theta_0) := \frac{1}{n}\sum_{t=1}^{n}\Big| \hat \theta_{h,-t}\big(\frac{t}{n}\big) - \theta_0\big(\frac{t}{n}\big)\Big|^2_{V(\theta_0(t/n))}.
\]
The next two lemmata show that $\overline{d}_A$ is close both to $d_M^{*}$ and $CV(h)$. Lemma \ref{da_cv_approx} can be viewed as the core of the proof since there the main assumptions come into play, such as the martingale property of $\nabla \ell(\tilde Y_t(\theta'),\theta)\big|_{\theta' = \theta}$ which is used for normalization and the differentiability properties of $\ell$ which are used for Taylor expansions of third order.

\begin{lemma}\label{da_strich_approx}
	We have almost surely
\[
	\sup_{h\in H_n}\left| \frac{\overline{d}_{A}(\hat \theta_h, \theta_0) - d_{M}^{*}(\hat \theta_h, \theta_0)}{d_{M}^{*}(\hat \theta_h, \theta_0)}\right| \to 0.
\]
\end{lemma}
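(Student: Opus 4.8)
The plan is to show that the quasi-leave-one-out averaged error $\overline d_A(\hat\theta_h,\theta_0)$ differs from the non-leave-one-out approximation $d_A^{*}(\hat\theta_h,\theta_0)$ only by terms which are negligible relative to $d_M^{*}(\hat\theta_h,\theta_0)$, uniformly in $h\in H_n$; combined with Lemma~\ref{da_approx_0} (which relates $d_A^{*}$ to $d_M^{*}$) this gives the claim. The starting point is the standard Taylor/Bahadur expansion of both $\hat\theta_{h,-t}(t/n)$ and $\hat\theta_h(t/n)$ around $\theta_0(t/n)$: by Assumption~\ref{ass3} (three times differentiability, invertibility of $V$, martingale property of $\nabla\ell$) one has, pointwise in $u=t/n$,
\[
	\hat\theta_h(u) - \theta_0(u) = - V(\theta_0(u))^{-1}\nabla L_{n,h}(u,\theta_0(u)) + R_h(u),
\]
and the analogous expansion for the leave-one-out estimator with $L_{n,h}$ replaced by $L_{n,h,-t}$ and a remainder $R_{h,-t}(u)$. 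The key algebraic observation is that
\[
	\nabla L_{n,h,-t}(u,\theta_0(u)) = \nabla L_{n,h}(u,\theta_0(u)) - \tfrac{1}{n}K_h\big(\tfrac{t}{n}-u\big)\nabla\ell_{t,n}(\theta_0(u)),
\]
so at the diagonal point $u=t/n$ the difference between the two leading terms is exactly $\tfrac{1}{n}K_h(0)V(\theta_0(t/n))^{-1}\nabla\ell_{t,n}(\theta_0(t/n))$, a single-term correction of order $(nh)^{-1}$ times a bounded-moment quantity.

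Carrying this through, I would expand $|\hat\theta_{h,-t}(t/n)-\theta_0(t/n)|^2_{V}$ minus $|\nabla L_{n,h}(t/n,\theta_0(t/n))|^2_{V^{-1}}$ into: (a) the pure leave-one-out correction term $\propto (nh)^{-2}\tfrac1n\sum_t |\nabla\ell_{t,n}(\theta_0(t/n))|^2$ and its cross term with $\nabla L_{n,h}$; (b) remainder-type terms involving $R_{h}$, $R_{h,-t}$ and their products with the linear part. For (a), the pure correction is $O((nh)^{-2})$ deterministically once one controls $\tfrac1n\sum_t\|\nabla\ell_{t,n}\|^2$ via the moment bounds of Proposition~\ref{hilfslemma1}, hence negligible compared to $d_M^{*}\asymp (nh)^{-1}+h^4$; the cross term is handled by Cauchy--Schwarz against the already-controlled $\tfrac1n\sum_t|\nabla L_{n,h}|^2_{V^{-1}}=d_A^{*}$ and the smallness of the correction factor. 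For (b), the remainders $R_h$, $R_{h,-t}$ must be shown to be uniformly (in $h$ and $t$, a.s.) of smaller order than $|\nabla L_{n,h}|$; this is where one invokes, as in the proof of Lemma~\ref{da_approx_0} (and Lemma~\ref{da_cv_approx}), the Lipschitz/polynomial-growth structure of Assumption~\ref{ass3}\ref{ass3_m5}, Taylor expansion to third order, and moment bounds for quadratic and cubic forms of locally stationary processes from Proposition~\ref{hilfslemma1}, together with a chaining/maximal-inequality argument over $h\in H_n$ to upgrade pointwise bounds to uniform-in-$h$ almost sure bounds (the polynomial sandwich $c_0 n^{\delta-1}\le\underline h$, $\overline h\le c_1 n^{-\delta}$ and the Lipschitz-in-$h$ continuity of $w_{n,h}$ and $K_h$ make the discretization over $h$ cheap).

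The main obstacle is the uniform-in-$h$ control of the leave-one-out remainders $R_{h,-t}(u)$ at the diagonal $u=t/n$: one needs that the Bahadur remainder is small \emph{simultaneously} for all $t=1,\dots,n$ and all $h\in H_n$, not just in an averaged sense, since each enters $\overline d_A$ through a squared term. I would handle this by first establishing a uniform bound $\sup_{h\in H_n}\sup_t|\hat\theta_{h,-t}(t/n)-\theta_0(t/n)|=o(1)$ a.s.\ (using consistency of the localized M-estimator plus the fact that deleting one summand changes $L_{n,h,-t}$ by $O((nh)^{-1})$ uniformly), then Taylor-expanding $\nabla L_{n,h,-t}$ to second order around $\theta_0(t/n)$, bounding the quadratic term by $\sup_u\|\nabla^2 L_{n,h}(u,\cdot)-V\|$ (itself controlled uniformly via Proposition~\ref{hilfslemma1} and the $\sL(M,\chi,C)$-continuity) times the estimation error, and dividing by $\lambda_0$ from Assumption~\ref{ass3}\ref{ass3_m3}. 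Once this uniform remainder bound is in place, everything else is routine bookkeeping: collect the error terms, divide by $d_M^{*}(\hat\theta_h,\theta_0)\gtrsim (nh)^{-1}$, and conclude that the ratio tends to $0$ almost surely uniformly in $h$.
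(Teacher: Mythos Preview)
Your approach is correct and essentially the same as the paper's. The only organizational difference is that the paper introduces an intermediate quantity $\overline{d}_A^{*}(\hat\theta_h,\theta_0):=\frac{1}{n}\sum_{t=1}^{n}|\nabla L_{n,h,-t}(t/n,\theta_0(t/n))|^2_{V(\theta_0(t/n))^{-1}}$ to cleanly separate the two error sources you identify: (a) the leave-one-out correction (your cross term and pure correction) is exactly the comparison $\overline{d}_A^{*}$ versus $d_A^{*}$, handled by the same Cauchy--Schwarz argument with $W_n=\frac{1}{n}\sum_t|\nabla\ell_{t,n}|^2_{V^{-1}}$ bounded a.s.; (b) the Bahadur remainder is the comparison $\overline{d}_A$ versus $\overline{d}_A^{*}$, for which the paper simply invokes the same machinery as in Corollary~\ref{da_approx} (uniform consistency of $\hat\theta_{h,-t}$ via Theorem~\ref{mle_konsistenz} and uniform convergence of $\nabla^2 L_{n,h,-t}$ via Corollary~\ref{lemma_likelihood_uniform}, both already stated uniformly in $s=1,\dots,n$). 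Your identification of the ``main obstacle'' is accurate but perhaps overstated: the uniform-in-$t$ control of the remainder is already packaged in those earlier results, so no new maximal inequality is needed at this stage.
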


\begin{lemma}\label{da_cv_approx}
	We have almost surely
	\begin{equation}
		\sup_{h \in H_n}\Big|\frac{CV(h) - \frac{1}{n}\sum_{t=1}^{n}\l_{t,n}(\theta_0(\frac{t}{n}))w_{n,h}(t/n) -\overline{d}_{A}(\hat \theta_h,\theta_0)}{d_{M}^{*}(\hat \theta_h,\theta_0)}\Big| \to 0.\label{hauptterm}
	\end{equation}
\end{lemma}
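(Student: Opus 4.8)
The plan is to expand $CV(h)$ around the curve $\theta_0$ by a third-order Taylor expansion in $\theta$, exploiting that $\hat\theta_{h,-s}(s/n)$ is the minimizer of the leave-one-out objective. Fix $s$ and abbreviate $\hat\theta = \hat\theta_{h,-s}(s/n)$, $\theta_0^s = \theta_0(s/n)$. Writing
\[
  \ell_{s,n}(\hat\theta) = \ell_{s,n}(\theta_0^s) + \nabla\ell_{s,n}(\theta_0^s)'(\hat\theta - \theta_0^s) + \tfrac12 (\hat\theta-\theta_0^s)'\nabla^2\ell_{s,n}(\theta_0^s)(\hat\theta - \theta_0^s) + R_{s,n},
\]
with $R_{s,n}$ a cubic remainder controlled by $\nabla^3\ell_{s,n}$ on a segment, multiplying by $w_{n,h}(s/n)$ and averaging over $s$ gives, after subtracting $\frac1n\sum_s \ell_{s,n}(\theta_0(s/n))w_{n,h}(s/n)$, three terms: a linear term $\frac1n\sum_s \nabla\ell_{s,n}(\theta_0^s)'(\hat\theta_{h,-s}(s/n)-\theta_0^s)w_{n,h}(s/n)$, a quadratic term $\frac1{2n}\sum_s (\hat\theta_{h,-s}(s/n)-\theta_0^s)'\nabla^2\ell_{s,n}(\theta_0^s)(\hat\theta_{h,-s}(s/n)-\theta_0^s)w_{n,h}(s/n)$, and the remainder. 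I would first argue that in the quadratic term $\nabla^2\ell_{s,n}(\theta_0^s)$ may be replaced by its expectation $V(\theta_0^s)$: this turns the quadratic term into $\tfrac12\overline d_A(\hat\theta_h,\theta_0)$, which is exactly the term we want to isolate in \reff{hauptterm}. The replacement error is a sum over $s$ of $(\hat\theta_{h,-s}(s/n)-\theta_0^s)'\big(\nabla^2\ell_{s,n}(\theta_0^s)-V(\theta_0^s)\big)(\hat\theta_{h,-s}(s/n)-\theta_0^s)w_{n,h}(s/n)$, which I would bound using the linearization $\hat\theta_{h,-s}(s/n)-\theta_0^s \approx -V(\theta_0^s)^{-1}\nabla L_{n,h,-s}(s/n,\theta_0^s)$ (from standard M-estimation asymptotics, uniformly in $s$ and $h$) together with Proposition \ref{hilfslemma1} applied to the resulting cubic form in the locally stationary process; the key point is that $\nabla L_{n,h,-s}$ only averages $\nabla\ell_{t,n}$ over $t\ne s$, so $\nabla^2\ell_{s,n}(\theta_0^s)-V(\theta_0^s)$ is, by the martingale property \ref{ass3_m4} and the fact that $\nabla^2\ell_{s,n}$ depends on $\sF_s$ while the leave-one-out terms with $t<s$ are $\sF_{s-1}$-measurable, largely uncorrelated with one of the two factors, yielding a rate $o(d_M^*(\hat\theta_h,\theta_0))$.

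The crux of the proof is the linear term. Here I would use that $\hat\theta_{h,-s}(s/n)$ minimizes $L_{n,h,-s}(s/n,\cdot)$, so $\nabla L_{n,h,-s}(s/n,\hat\theta_{h,-s}(s/n)) = 0$; Taylor-expanding this first-order condition gives $\hat\theta_{h,-s}(s/n)-\theta_0^s = -\big(\nabla^2 L_{n,h,-s}(s/n,\bar\theta)\big)^{-1}\nabla L_{n,h,-s}(s/n,\theta_0^s)$, and after replacing $\nabla^2 L_{n,h,-s}(s/n,\bar\theta)$ by $V(\theta_0^s)$ (a negligible correction by the same kind of argument plus Hölder continuity of $\theta_0$), the linear term becomes
\[
  -\frac1n\sum_{s=1}^n \nabla\ell_{s,n}(\theta_0^s)'\,V(\theta_0^s)^{-1}\,\nabla L_{n,h,-s}\big(\tfrac sn,\theta_0^s\big)\,w_{n,h}\big(\tfrac sn\big) + (\text{negligible}).
\]
Writing $\nabla L_{n,h,-s}(s/n,\theta_0^s) = \frac1n\sum_{t\ne s}K_h(\frac tn-\frac sn)\nabla\ell_{t,n}(\theta_0^s)$, this is a double sum over $(s,t)$ with $t\ne s$ of products $\nabla\ell_{s,n}(\theta_0^s)'V(\theta_0^s)^{-1}\nabla\ell_{t,n}(\theta_0^s)$ — crucially with the diagonal $t=s$ excluded, which is the whole point of the quasi-leave-one-out construction. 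Because $\nabla\ell(\tilde Y_t(\theta'),\theta)\big|_{\theta'=\theta}$ is a martingale difference with respect to $\sF_t$ (Assumption \ref{ass3_m4}), the off-diagonal terms form (after symmetrization in $s\leftrightarrow t$) a martingale-type sum with mean zero, so I would establish $\sup_{h\in H_n}|\text{linear term}|/d_M^*(\hat\theta_h,\theta_0)\to 0$ a.s. via a maximal/Doob-type inequality combined with a chaining argument over $h\in H_n$ (using the Lipschitz bound \reff{ass4_eq1} on $w_{n,h}$ and Lipschitz continuity of $K$), controlling the higher moments of the martingale increments by the functional-dependence-measure bounds of Assumption \ref{ass2} and Proposition \ref{hilfslemma1}. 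Passing from the stationary approximations $\tilde X_t(\theta)$ to the observed $X_{t,n}$ costs only $O(C_B)$ by \reff{ass1_s1_eq1} and the Lipschitz property \reff{DP_hoelder_prop_polynom}.

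Finally, for the cubic remainder $\frac1n\sum_s R_{s,n}w_{n,h}(s/n)$ I would bound $|R_{s,n}|\le \tfrac16|\hat\theta_{h,-s}(s/n)-\theta_0^s|_1^3 \sup_{\theta}|\nabla^3\ell_{s,n}(\theta)|$; using $\nabla^3\ell \in \sL(M,\chi,C)$ to get polynomial-in-$|Y_{s-1,n}|_{\chi,1}$ moment bounds and the uniform rate $\sup_{h,s}|\hat\theta_{h,-s}(s/n)-\theta_0^s| = O_{a.s.}((nh)^{-1/2}\log n + h^\beta)$ (from the $d_M^*$-comparison already available, together with Corollary \ref{da_approx} and Lemma \ref{da_strich_approx}), this term is $o(d_M^*(\hat\theta_h,\theta_0))$ uniformly in $h$. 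The main obstacle is clearly the uniform-in-$h$ a.s. control of the off-diagonal martingale double sum in the linear term: one must simultaneously handle the martingale structure, the dependence (via $\delta_q(k)=O(k^{-(3+\eta)})$), the polynomial growth of the score through the class $\sL(M,\chi,C)$, and the supremum over the continuum $H_n$, which forces a careful discretization argument with enough moments — this is exactly where the "all moments exist" assumption and Proposition \ref{hilfslemma1} are indispensable.
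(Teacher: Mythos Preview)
Your strategy is the paper's: Taylor-expand $\ell_{s,n}$, linearize $\hat\theta_{h,-s}-\theta_0$ via the first-order condition, and show the linear term is negligible while the quadratic term produces $\overline{d}_{A}$. The paper packages the remainders more explicitly --- a third-order Taylor expansion of the first-order condition (yielding a structured cubic term $R_{n,h,2}$) and a two-step Neumann expansion \reff{proof_da_cv_approx_eq25}--\reff{proof_da_cv_approx_eq3} for the random inverse Hessian --- so that every leftover becomes one of the five concrete convergence statements in Lemma~\ref{lemma_approxmart}, each dispatched via the moment inequalities of Lemma~\ref{hilfslemma1}. Your direct replacement $\nabla^2 L_{n,h,-s}(\bar\theta)\to V(\theta_0^s)$ would produce the same terms, just less explicitly separated.

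The one real oversight is in the linear term. You assert that the off-diagonal double sum $\sum_{s\neq t}K_h(\cdot)\,\nabla\ell_{s,n}(\theta_0^s)'V(\theta_0^s)^{-1}\nabla\ell_{t,n}(\theta_0^s)$ has mean zero by the martingale property and symmetrization. But Assumption~\ref{ass3}\ref{ass3_m4} makes $\nabla\ell(\tilde Y_t(\theta),\theta)$ a martingale difference only when \emph{both} arguments coincide; your $t$-factor is $\nabla\ell(Y_{t,n}^c,\theta_0(s/n))$, and your stated approximation $X_{t,n}\to\tilde X_t(\theta_0(t/n))$ (the ``$O(C_B)$'' remark) still leaves $\nabla\ell(\tilde Y_t(\theta_0(t/n)),\theta_0(s/n))$, which is \emph{not} conditionally centred at time $t$. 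The missing step is the further shift $\tilde Y_t(\theta_0(t/n))\to\tilde Y_t(\theta_0(s/n))$, of size $O(h^{\beta\wedge 1})$, which is exactly the bias $\IE\nabla\hat L_{n,h,-s}(s/n,\theta_0^s)\approx b_h(s/n)$. In the paper's proof of \reff{lemma_approxmart_eq1} this forces a four-way split $R_{0,n}+R_{1,n}+R_{2,n}+R_{3,n}$: only $R_{3,n}$ (the pure martingale double sum built from $\nabla\tilde L_{n,h,-t}$, which uses $\tilde Y_s(t/n)$ rather than $\tilde Y_s(s/n)$) has mean zero; $R_{2,n}=\tfrac1n\sum_t\nabla\ell(\tilde Y_t(t/n),\theta_0(t/n))(\IE V_{t,n})^{-1}\IE\nabla\hat L_{n,h,-t}\,w_{n,h}(t/n)$ is a martingale-times-deterministic-bias term of size $O\big(n^{-1/2}(B_h^{dis})^{1/2}\big)$, and $R_{1,n}$ (the expectation of the remaining cross fluctuation) needs its own dependence-measure bound. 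Without these pieces your martingale inequality controls only the centred part of the linear term, not its non-vanishing mean. (A minor aside: your $\tfrac12$ on the quadratic term is the correct Taylor coefficient and yields $\tfrac12\overline{d}_{A}$; the paper's expansion \reff{proof_da_cv_approx_eq1} drops this $\tfrac12$, apparently a typo with no downstream effect.)
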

With the help of these results, we can now prove Theorems \ref{theorem1}, \ref{theorem3}, \ref{theorem2}:
\begin{proof}[Proof of Theorem \ref{theorem1}]
An immediate consequence of Lemma \ref{da_cv_approx} is (use $\frac{x_1+x_2}{y_1+y_2} \le \frac{x_1}{y_1} + \frac{x_2}{y_2}$ for positive numbers $x_1,x_2,y_1,y_2 > 0$)
	\[
		\sup_{h,h' \in H_n}\left|\frac{\overline{d}_A(\hat \theta_h, \theta_0) - \overline{d}_A(\hat \theta_{h'}, \theta_0) - (CV(h) - CV(h'))}{d_{M}^{*}(\hat \theta_h, \theta_0) + d_{M}^{*}(\hat \theta_{h'},\theta_0)}\right| \to 0 \quad a.s.
	\]
almost surely. Now, using Corollary \ref{da_approx} and Lemma \ref{da_strich_approx} it is easy to see that
\[
	\sup_{h,h' \in H_n}\left|\frac{d_A(\hat \theta_h, \theta_0) - d_{A}(\hat \theta_{h'},\theta_0) - (CV(h) - CV(h'))}{d_{A}(\hat \theta_h, \theta_0) + d_{A}(\hat \theta_h, \theta_0)}\right| \to 0\quad a.s.
\]
Choosing $h = \hat h$ and $h'$ such that
\[
	d_{A}(\hat \theta_{h'}, \theta_0) - \inf_{h \in H_n} d_{A}(\hat \theta_h, \theta_0) \le n^{-1}
\]
yields
\begin{eqnarray*}
	0 &\leftarrow& \frac{d_{A}(\hat \theta_{\hat h},\theta_0) - d_{A}(\hat \theta_{h'}, \theta_0) - (CV(\hat h) - CV(h'))}{d_{A}(\hat \theta_{\hat h}, \theta_0) + d_{A}(\hat \theta_{h'}, \theta_0)}\\
	&\ge& \frac{d_{A}(\hat \theta_{\hat h},\theta_0) - \inf_{h\in H_n} d_{A}(\hat \theta_{h}, \theta_0) - ( \inf_{h \in H_n} CV(h) - CV(h'))}{d_{A}(\hat \theta_{\hat h}, \theta_0) + \inf_{h \in H_n} d_{A}(\hat \theta_{h}, \theta_0) + n^{-1}}\\
	&&\quad\quad\quad+ \frac{2n^{-1}}{d_{A}(\hat \theta_{\hat h}, \theta_0) + d_{A}(\hat \theta_{h'}, \theta_0)}
\end{eqnarray*}
almost surely. Because of Corollary \ref{da_approx} and \reff{misedecompositionoben} we have $\sup_{h \in H_n}\frac{n^{-1}}{d_{A}(\theta_h, \theta_0)} \to 0$ a.s. Thus,
\[
	\frac{d_{A}(\hat \theta_{\hat h}, \theta_0) - \inf_{h \in H_n}d_{A}(\hat \theta_{h}, \theta_0)}{d_{A}(\hat \theta_{\hat h}, \theta_0) + \inf_{h \in H_n} d_{A}(\hat \theta_{h}, \theta_0)} \to 0\quad a.s.,
\]
from which
\[
	\frac{d_{A}(\hat \theta_{\hat h}, \theta_0)}{\inf_{h \in H_n}d_{A}(\hat \theta_h, \theta_0)} \to 1 \quad a.s.
\]
follows. The same can be done for $d_{I}$.
\end{proof}

\begin{proof}[Proof of Theorem \ref{theorem3}]
	Because of $B_0 > 0$ and \reff{misedecompositionoben}, we have
	\begin{equation}
		\sup_{h \in H_n}\left| \frac{d_{M}^{*}(\hat \theta_h, \theta_0) - d_{M}^{**}(h)}{d_{M}^{**}(h)}\right| \to 0\quad a.s.\label{theorem3beweis}
	\end{equation}
	Application of Corollary \ref{da_approx} finishes the proof.
\end{proof}

\begin{proof}[Proof of Theorem \ref{theorem2}]
	As in the proof of Theorem \ref{theorem3}, we show \reff{theorem3beweis}. This result in combination with  Lemma \ref{da_strich_approx} and Lemma \ref{da_cv_approx} gives almost surely
	\[
		\sup_{h\in H_n}\left| \frac{CV(h) - \frac{1}{n}\sum_{t=1}^{n}l_{t,n}(\theta_0(t/n))w(t/n) - d_M^{**}(h)}{d_{M}^{**}(h)}\right| \to 0.
	\]
	Using the same methods as in the proof of Theorem \ref{theorem1}, we have almost surely
	\[
		\frac{d_M^{**}(\hat h)}{d_M^{**}(h_0)} = \frac{d_M^{**}(\hat h)}{\inf_{h \in H_n}d_{M}^{**}(h)} \to 1
	\]
	The structure of $d_M^{**}(h)$ implies $\hat h / h_0 \to 1$ a.s.
\end{proof}

\section{Examples and Simulations}
\label{sec4}

\subsection{Examples} Assumptions \ref{ass1}, \ref{ass2}, \ref{ass3} and \ref{ass5} are fulfilled for a large class of locally stationary time series models. Here, we discuss how the conditions transform in the case of some special linear and recursively defined time series. More general statements can be found in the technical supplement, see Proposition \ref{example_linearmodel} and \ref{example_recursion} therein.

Recall that $\varepsilon_t$, $t\in\IZ$ is a sequence of i.i.d. real random variables. We will use a Gaussian likelihood for $\ell$ defined in \reff{infinitepastlikelihood}, but allow for a non Gaussian distribution of $\varepsilon_t$.
%\[
%	\l(x,y,\theta) := f(\tilde X_t(\theta) = x|\tilde Y_t(\theta) = y)
%\]

An important special case of locally stationary linear processes is given by tvARMA processes, see also Proposition 2.4. in \cite{dahlhauspolonik2009}. Since in this case, the linear filter $A_{\theta}(\lambda) = \sigma\cdot \frac{\beta(e^{i\lambda})}{\alpha(e^{i\lambda})}$ and the spectral density $f_{\theta}(\lambda) = \frac{\sigma^2}{2\pi}\cdot \big|\frac{\beta(e^{i\lambda})}{\alpha(e^{i\lambda})}\big|^2$ have a simple form, the conditions in Proposition \ref{example_linearmodel} are obviously fulfilled. The likelihood \reff{infinitepastlikelihood} takes the form
\begin{equation}
	\l(z,\theta) = \frac{1}{2}\log\big(\frac{2\pi}{\gamma_{\theta}(0)^2}\big) + \frac{1}{2}\Big(\sum_{k=0}^{\infty}\gamma_{\theta}(k) z_{k+1}\Big)^2\label{example_linearmodel_eq2}.
\end{equation}

\begin{example}[tvARMA($r,s$) process]\label{example_tvarma}
	Assume that $\varepsilon_t, t \in\IZ$ are i.i.d. with existing moments of all order. Suppose that $\IE \varepsilon_0 = 0$ and $\IE \varepsilon_0^2 = 1$. Let Assumption \ref{ass3}\ref{ass3_m1} hold. Assume that $X_{t,n}$ obeys
	\[
		X_{t,n} + \sum_{j=1}^{r}\alpha_j\big(\frac{t}{n}\big) X_{t-j,n} = \sigma\big(\frac{t}{n}\big)\varepsilon_t + \sum_{k=1}^{s}\beta_k\big(\frac{t}{n}\big)\sigma\big(\frac{t-k}{n}\big)\varepsilon_{t-k}, \quad t = 1,...,n,
	\]
	where $\theta_0 = (\alpha_1,...,\alpha_r,\beta_1,...,\beta_s,\sigma)'$. Define $\beta(z) := 1 + \sum_{k=0}^{s}\beta_k z^k$, $\alpha(z) := 1 + \sum_{k=0}^{r}\alpha_k z^k$, and let $\Theta$ be an arbitrary compact subset of
	\begin{eqnarray*}
		&& \{\theta = (\alpha_1,...,\alpha_r,\beta_1,...,\beta_s,\sigma)' \in \IR^{r+s+1}: \sigma > 0,\\
		&&\quad\quad \alpha(z), \beta(z) \text{ have no zeros in common and}\\
		&&\quad\quad\quad\quad\quad\quad\quad\quad \text{only zeros outside the unit circle}\}.
	\end{eqnarray*}
	Then Assumptions \ref{ass1}, \ref{ass2}, \ref{ass3} are fulfilled for $\ell$ chosen as in \reff{example_linearmodel_eq2}. If additionally Assumption \ref{ass5}\ref{ass5_b1} is fulfilled, then Assumption \ref{ass5} is fulfilled. It holds that $V(\theta) = \frac{1}{4\pi}\int \nabla \log f_{\theta}(\lambda) \cdot \nabla \log f_{\theta}(\lambda)' \dif \lambda$.
\end{example}

\begin{remark}[tvAR($r$) processes]
	In the special case of $tvAR(r)$ processes, closed forms for the estimators based on $\l(z,\theta) = \frac{1}{2}\log(2\pi \sigma^2) + \frac{1}{2\sigma^2}\big(z_1 + \sum_{j=1}^{r}\alpha_j z_{j+1}\big)^2$ are available: $\hat \alpha_h(u) = -\hat \Gamma_h(u)^{-1}\hat \gamma_h(u)$ and $\hat \sigma_h(u)^2 = \frac{1}{n}\sum_{t=r+1}^{n}\big(X_{t,n} + \sum_{j=1}^{r}\hat \alpha_j(u) X_{t-j,n}\big)^2$, where $Z_{t-1,n} = (X_{t-1,n},...,X_{t-r,n})'$ and
	\begin{eqnarray*}
		\hat \Gamma_h(u) &:=& \frac{1}{n}\sum_{t=r+1}^{n}K_h\big(\frac{t}{n}-u\big)Z_{t-1,n}Z_{t-1,n}',\\
		\hat \gamma_h(u) &:=& \frac{1}{n}\sum_{t=r+1}^{n}K_h\big(\frac{t}{n}-u\big)X_{t,n}Z_{t-1,n}.
	\end{eqnarray*}
\end{remark}

We now discuss recursively defined nonlinear time series models with additive innovations $\varepsilon_t$. Let us fix some $r > 0$ and define the vectors of the last $r$ lags $Z_{t-1,n} = (X_{t-1,n},...,X_{t-r,n})'$, $\tilde Z_{t-1}(\theta) = (\tilde X_{t-1}(\theta),...,\tilde X_{t-r}(\theta))'$ as the vector of the $r$ past values of the locally stationary and the stationary time series, respectively. Many popular locally stationary models assume that the conditional mean and / or variance is a linear combination of unknown parameter curves and functions of $Z_{t-1,n}$, i.e.
\[
	X_{t,n} = \tilde\mu(Z_{t-1,n},\theta_0(t/n)) + \tilde\sigma(Z_{t-1,n},\theta_0(t/n)) \varepsilon_t, \quad t = 1,...,n,
\]
with some measurable $\tilde \mu$, $\tilde \sigma$. In this case, the likelihood \reff{infinitepastlikelihood} takes the form
\begin{equation}
	\l(x,y,\theta) := \frac{1}{2}\log\big(2\pi \tilde \sigma(y,\theta)^2\big)+ \frac{1}{2}\Big(\frac{x-\tilde \mu(y,\theta)}{\tilde \sigma(y,\theta)}\Big)^2.\label{example_recursion_eq2}
\end{equation}
In the first example we discuss conditional mean processes. This class covers the tvAR- as well as the tvTAR case.

\begin{example}[Conditional mean processes]\label{example_ar_mean}  Assume that $\varepsilon_t$, $t\in\IZ$ are i.i.d. and have all moments with $\IE \varepsilon_0 = 0$ and $\IE \varepsilon_0^2 = 1$. Suppose that Assumption \ref{ass3}\ref{ass3_m1} is fulfilled. Assume that $X_{t,n}$ obeys
	\[
		X_{t,n} = \alpha_1\big(\frac{t}{n})\mu_1(Z_{t-1,n}) + ... + \alpha_{p-1}\big(\frac{t}{n})\mu_{p-1}(Z_{t-1,n})+ \sigma\big(\frac{t}{n}\big)\varepsilon_t, \quad t = 1,...,n.
	\]
	Here, $\theta_0 = (\alpha_1,...,\alpha_{p-1},\sigma)':[0,1] \to \Theta$ and $\mu = (\mu_1,...,\mu_{p-1}):\IR^r\to \IR^{p-1}$ is a function which fulfills
	\begin{enumerate}
		\item[(a)] $\sup_{y\not=y'}\frac{|\mu_i(y) - \mu_i(y')|}{|y-y'|_{\chi_i,1}} \le 1$ with some $\chi_i \in \IR_{\ge 0}^r$ ($i = 1,...,p-1$),
		\item[(b)] $\mu_1(\tilde Y_0(\theta)),...,\mu_{p-1}(\tilde Y_0(\theta))$ are linearly independent in $L^2$ for all $\theta \in \Theta$.
	\end{enumerate}
	Define $\Theta := \{\theta = (\alpha_1,...,\alpha_d,\sigma)\in\IR^{p}:\sum_{i=1}^{p-1}\sum_{j=1}^{r}|\alpha_i|\chi_{i,j} \le \rho, \sigma_{min} \le \sigma \le \sigma_{max}\}$ with some $0 < \rho < 1$ and $0 < \sigma_{min} < \sigma_{max}$.
		
	Then Assumptions \ref{ass1}, \ref{ass2}, \ref{ass3} are fulfilled for $\ell$ chosen as in \reff{example_recursion_eq2}. Furthermore, with $W(\theta) := \IE[\mu(\tilde Z_0(\theta))\mu(\tilde Z_0(\theta))']$ it holds that
	\[
		V(\theta) = \frac{1}{\sigma^2}\begin{pmatrix}W(\theta) & 0\\
		0 & 2 \end{pmatrix}, \quad\quad I(\theta) = \frac{1}{\sigma^2}\begin{pmatrix}W(\theta) & \IE \varepsilon_0^3 \IE \mu(\tilde Z_0(\theta))\\
		\IE \varepsilon_0^3 \IE \mu(\tilde Z_0(\theta))' & \IE \varepsilon_0^4-1 \end{pmatrix}.
	\]
	%and $P(u) := \partial_u W(\theta_0(u))$,
	%\begin{eqnarray*}
	%	\IE[\partial_u^2 \nabla \l(\tilde Y_t(u),\theta)]\big|_{\theta = \theta_0(u)} &=& \frac{1}{\sigma(u)^2}\big\{W(\theta_0(u))\alpha''(u) + 2P(u)\alpha'(u)\big\}.
	%\end{eqnarray*}
\end{example}

The next example discusses conditional variance processes, who cover, for instance, the tvARCH process.

\begin{example}[Conditional variance processes]\label{example_ar_var}  Assume that $\varepsilon_t$, $t\in\IZ$ are i.i.d. with $\IE \varepsilon_0 = 0$ and $\IE \varepsilon_0^2 = 1$ and are almost surely bounded by some $C_{\varepsilon} > 0$. Suppose that Assumption \ref{ass3}\ref{ass3_m1} is fulfilled. Assume that $X_{t,n}$ obeys
	\[
		X_{t,n} = \Big(\alpha_1\big(\frac{t}{n})\mu_1(Z_{t-1,n}) + ... + \alpha_{p}\big(\frac{t}{n})\mu_{p}(Z_{t-1,n})\Big)^{1/2}\varepsilon_t, \quad t = 1,...,n.
	\]
	Here, $\theta_0 = (\alpha_1,...,\alpha_{p})':[0,1] \to \Theta$ and $\mu = (\mu_1,...,\mu_{p-1}):\IR^r\to \IR^{p}_{\ge 0}$ is a function which fulfills
	\begin{enumerate}
		\item[(a)] $\sup_{y\not=y'}\frac{|\sqrt{\mu_i(y)} - \sqrt{\mu_i(y')}|}{|y-y'|_{\chi_i,1}} \le 1$ with some $\chi_i \in \IR_{\ge 0}^r$ ($i = 1,...,p$). There exists $\mu_0 > 0$ such that $\mu_1(y) \ge \mu_0$ for all $y\in\IR^{r}$.
		\item[(b)] $\mu_1(\tilde Y_0(\theta)),...,\mu_{p}(\tilde Y_0(\theta))$ are linearly independent in $L^2$ for all $\theta \in \Theta$.
	\end{enumerate}
	Define $\Theta := \{\theta = (\alpha_1,...,\alpha_p)\in\IR^{p}:\sum_{i=1}^{p}\sum_{j=1}^{r}\sqrt{\alpha_i}\chi_{i,j} \le \rho_{max}C_{\varepsilon}^{-1}, \theta_i \ge \rho_{min}\}$ with some $0 < \rho_{max} < 1$ and $\rho_{min} > 0$.
	
	Then Assumptions \ref{ass1}, \ref{ass2}, \ref{ass3} are fulfilled for $\ell$ chosen as in \reff{example_recursion_eq2}. It holds that
	\[
		I(\theta) = \frac{\IE \varepsilon_0^4 - 1}{2}V(\theta) = \frac{\IE \varepsilon_0^4 - 1}{4}\cdot \IE\Big[\frac{\mu(\tilde Z_0(\theta)) \mu(\tilde Z_0(\theta))'}{\langle \theta, \mu(\tilde Z_0(\theta))\rangle^2}\Big],
	\]
	%and thus $V_0 = \frac{\IE \varepsilon_0^4-1}{2}$ and
	%\[
	%	\IE[\partial_u^2 \nabla \l(\tilde Y_t(u),\theta)] = \IE\Big[\partial_u^2 \Big(\frac{\mu(\tilde Z_{t-1}(u))}{2\langle \theta,\mu(\tilde Z_{t-1}(u))}\cdot \big(1 - \frac{\tilde X_t(u)^2}{\langle \theta, \mu(\tilde Z_{t-1}(u))\rangle}\big)\Big)\Big]
	%\]
\end{example}

\textbf{A simulation study. } Here, we study the behavior of the presented cross validation algorithm for different time series models. We assume that $\varepsilon_t$ is standard Gaussian distributed, and consider
\begin{enumerate}
	\item[(a)] tvAR(1) processes $X_{t,n} = \alpha(\frac{t}{n}) X_{t-1,n} + \sigma(\frac{t}{n}) \varepsilon_t$, with $\alpha(u) = 0.9 \sin(2\pi u)$ and $\sigma(u) = 0.3 \sin(2\pi u) + 0.5$.%here it holds that $V(\theta) = \mbox{diag}(\frac{1}{1-\alpha^2},\frac{2}{\sigma^2})$. %V = (1/(1-a^2),0\\0,2/sigma^2).
	\item[(b)] tvMA(1) processes $X_{t,n} = \sigma(\frac{t}{n}) \varepsilon_t + \alpha(\frac{t}{n}) \sigma(\frac{t-1}{n})\varepsilon_{t-1}$, with $\alpha(u) = 0.9 \sin(2\pi u)$ and $\sigma(u) = 0.3 \sin(2\pi u) + 0.5$.%here it holds that $V(\theta) = \mbox{diag}(\frac{1}{1-\alpha^2},\frac{2}{\sigma^2})$.
	\item[(c)] tvARCH(1) processes $X_{t,n} = \sqrt{\alpha_1(\frac{t}{n}) + \alpha_2(\frac{t}{n})X_{t-1,n}^2}\cdot\varepsilon_{t-1}$, with $\alpha_1(u) = 0.2 \sin (2\pi u) + 0.4$ and $\alpha_2(u) = 0.1 \sin(2\pi u) + 0.2$.%here it holds that $V(\theta) = \IE\big[\frac{1}{2(\alpha_1 + \alpha_2 \tilde X_0(\theta)^2)^2}\big(\begin{smallmatrix}1 & \tilde X_0(\theta)^2\\ \tilde X_0(\theta)^2 & \tilde X_0(\theta)^4\end{smallmatrix}\big)\big]$, which has to be simulated.
	\item[(d)] tvTAR(1) processes  $X_{t,n} = \alpha_1(\frac{t}{n}) X_{t-1,n}^{+} + \alpha_2(\frac{t}{n}) X_{t-1,n}^{-} + \varepsilon_t$, with $\alpha_1(u) = 0.4 \sin(2\pi u)$ and $\alpha_2(u) = 0.5 \cos(2\pi u)$ and $y^{+} := \max\{y,0\}$, $y^{-} := \max\{-y,0\}$ for real numbers $y$.%Here it holds that $V(\theta) = \mbox{diag}(\IE[(\tilde X_0(\theta)^{+})^2], \IE[(\tilde X_0(\theta)^{-})^2])$ which has to be simulated.
\end{enumerate}

We performed a Monte Carlo study by generating in each case $N = 2000$ realizations of time series with length $n \in \{200,500\}$. For estimation, we used the weight function $w_{n,h}(\cdot) = \Ii_{[0.05,0.95]}(\cdot)$ which excludes most of the boundary effects and the Epanechnikov kernel $K(x) = \frac{3}{2}(1-(2x)^2)\Ii_{[-\frac{1}{2},\frac{1}{2}]}(x)$. We do not use $w_{n,h}(\cdot) = \Ii_{[\frac{h}{2},1-\frac{h}{2}]}(\cdot)$ since this weight function has poor finite sample properties for large $h$.

We chose $H_n = [0.01,1.0]$ and calculated the cross-validation bandwidth $\hat h$, the ao-bandwidth $h_0$  from Theorem \ref{theorem2} (for models (a)-(c), model (d) does not satisfy the smoothness conditions) and the optimal theoretical bandwidth
\[
	h^{*} = \argmin_{h\in H_n}d_{A}(\hat \theta_h,\theta_0),
\]
Note that $\hat  h, h^{*}$ depend on the current realization while $h_0$ is deterministic and fixed. $h^{*}$ and $h_0$ depend on the unknown true curve $\theta_0(\cdot)$ and are unavailable in practice.

Figure \ref{simulation_plot} shows the results $\hat h, h^{*}$ for the four models respectively. The histograms show the chosen cross validation bandwidths $\hat h$, the bandwidth $h_0$ is marked via a black vertical line. The boxplots show the achieved values of $d_{A}(\hat \theta_h,\theta_0)$ for the different selectors $h \in \{\hat h, h_0, h^{*}\}$ (labeled as 'CV', 'Plugin' and 'Optimal'). Each box contains $50\%$ while the whiskers contain $90\%$ of the values of $d_{A}(\hat \theta_h,\theta_0)$. It can be seen that the cross validation procedure works well even for the case of a time series length of only $n = 200$ if the model is recursively defined (i.e., (a), (b), (d)) while the method needs a larger sample size such that the bandwidths accumulate around $h_0$ in the tvMA case (c). For the models (a),(d) we observe that the distances $d_A$ attained by the cross validation approach are nearly as good as the distances obtained by the optimal selector $h^{*}$ which is remarkable. For the models (b) and (c) the values of $d_A$ associated to $\hat h$ have a higher variance. This can be explained by the higher variance of the maximum likelihood estimators $\hat \theta_h$ in these models. In all cases, the distances produced by the estimator based on the cross validation procedure are of course greater in average, but they still look quite satisfying in our opinion.\\

\begin{figure}[h!]
	\centering
	\begin{tabular}{cc}
		(a), $n = 200$ & (a), $n = 500$\\
		\includegraphics[width=5.5cm]{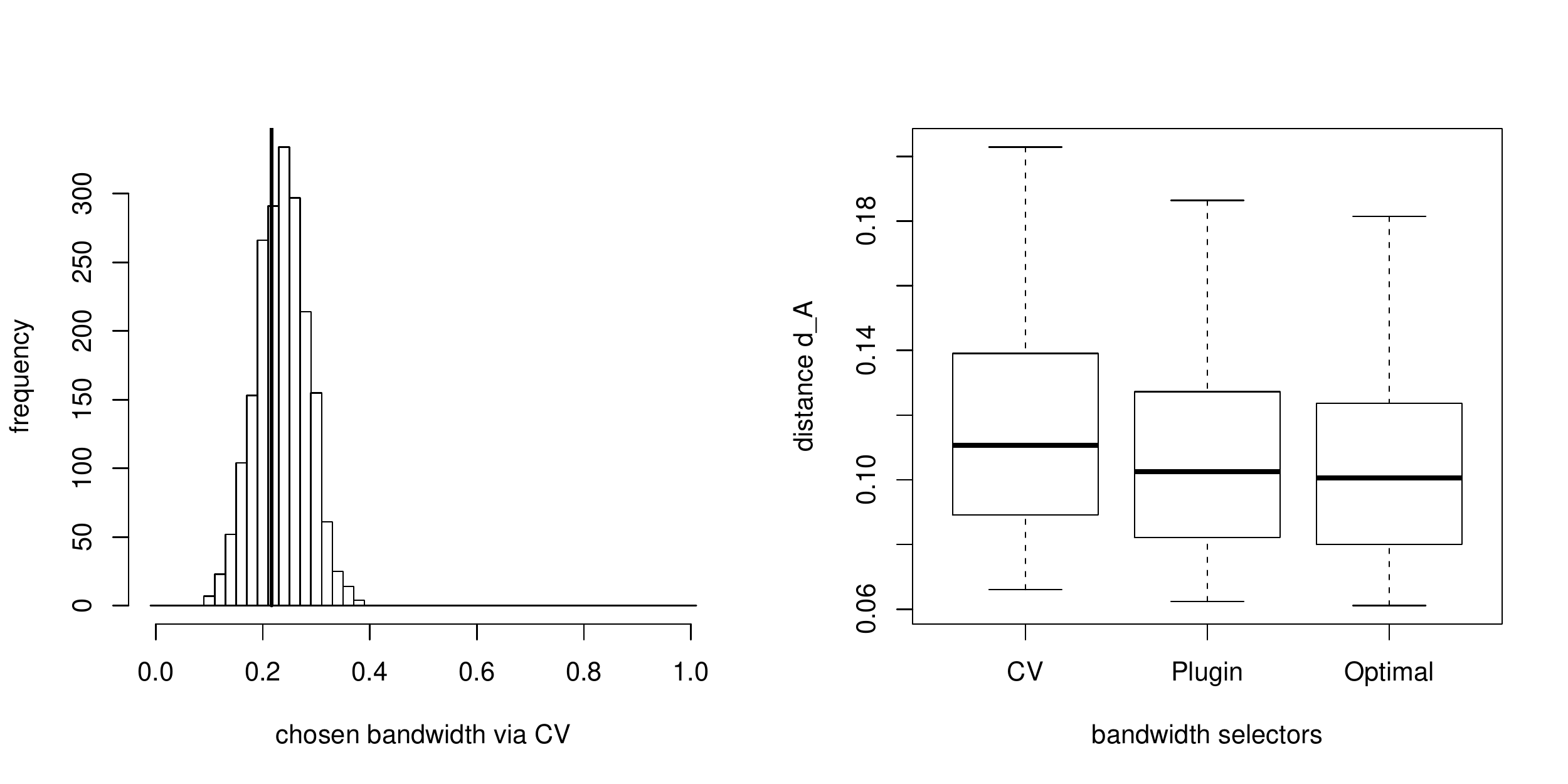} & \includegraphics[width=5.5cm]{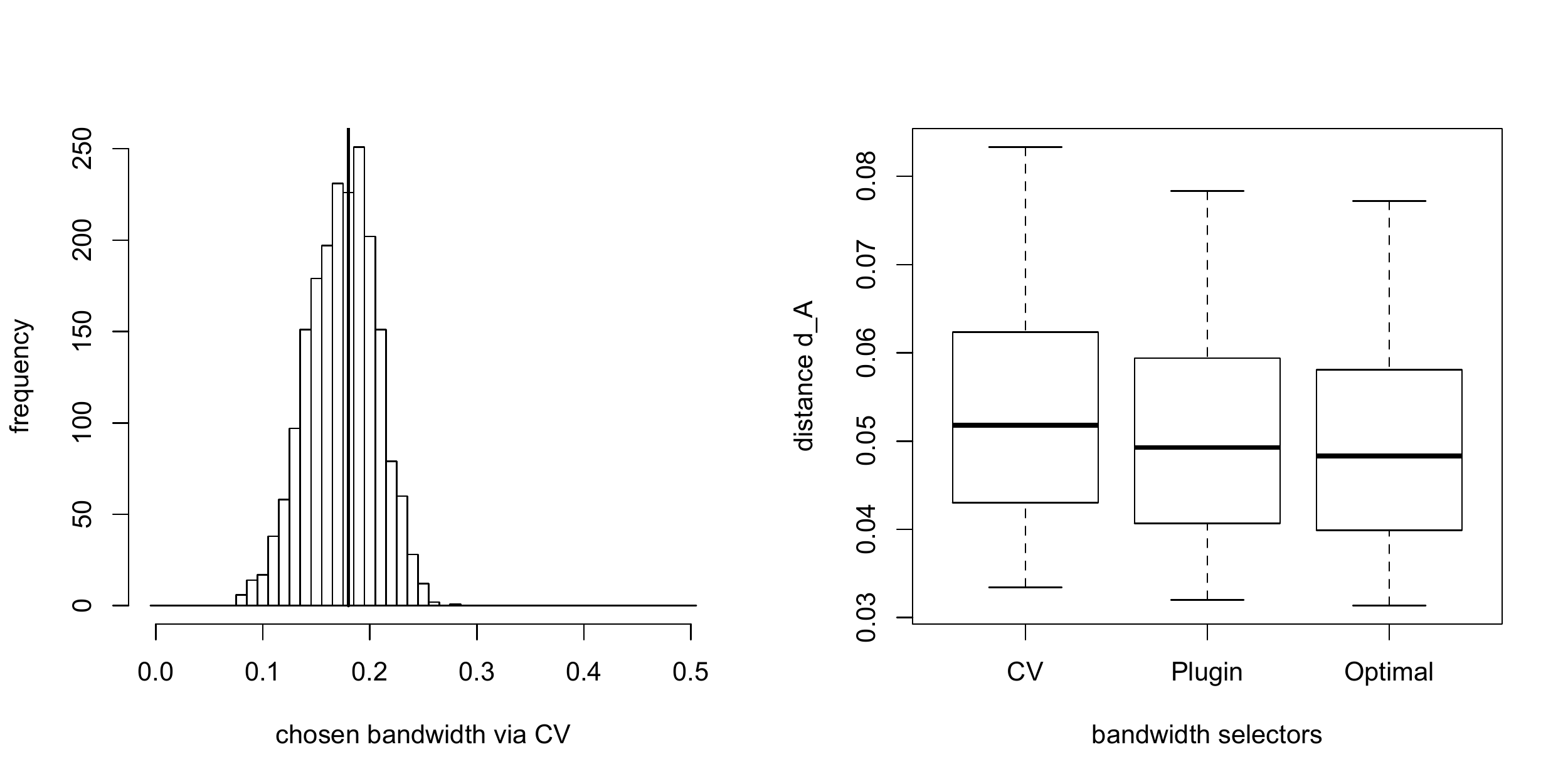}\\
		(b), $n = 200$ & (b), $n = 500$\\
		\includegraphics[width=5.5cm]{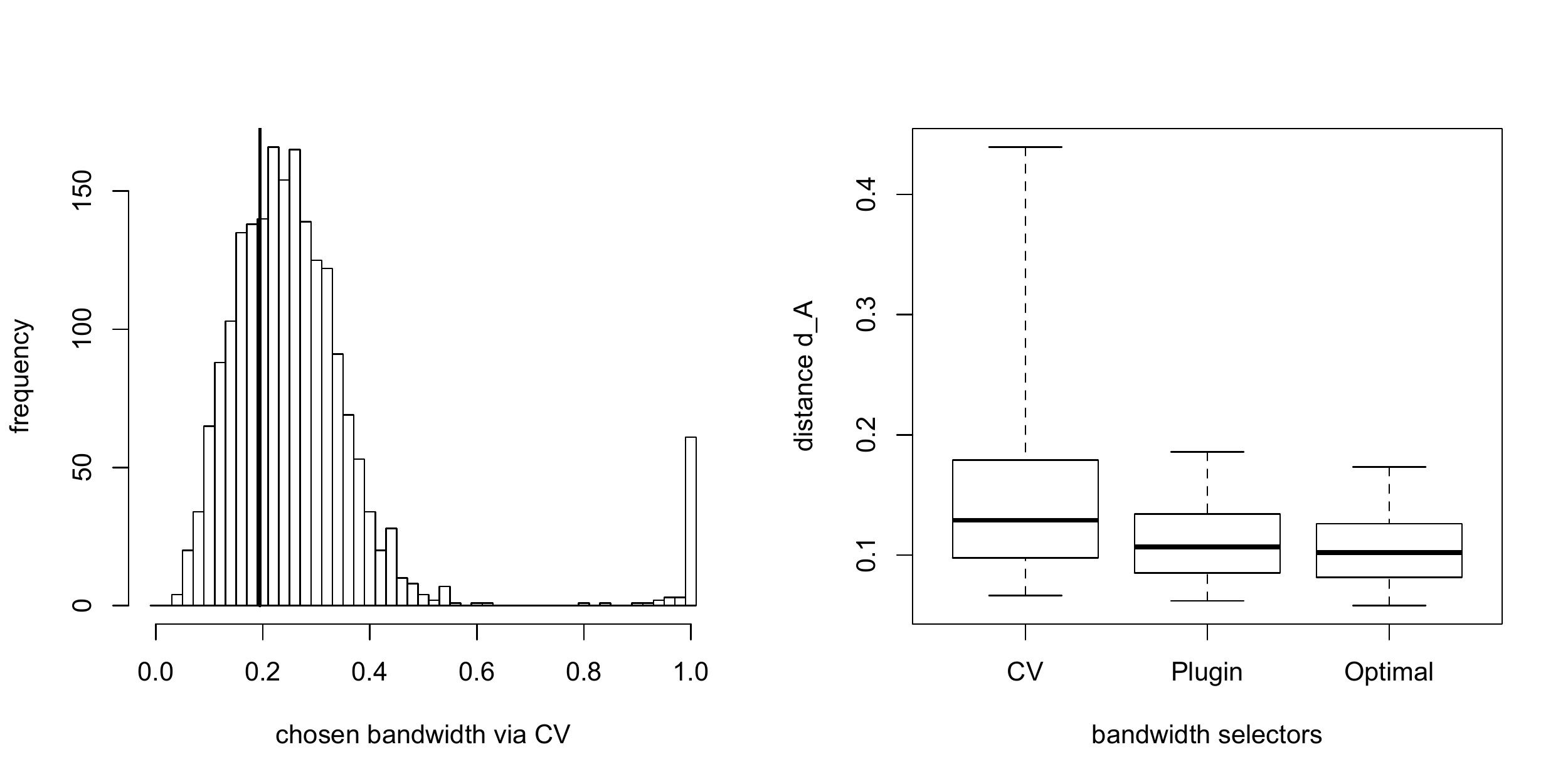} & \includegraphics[width=5.5cm]{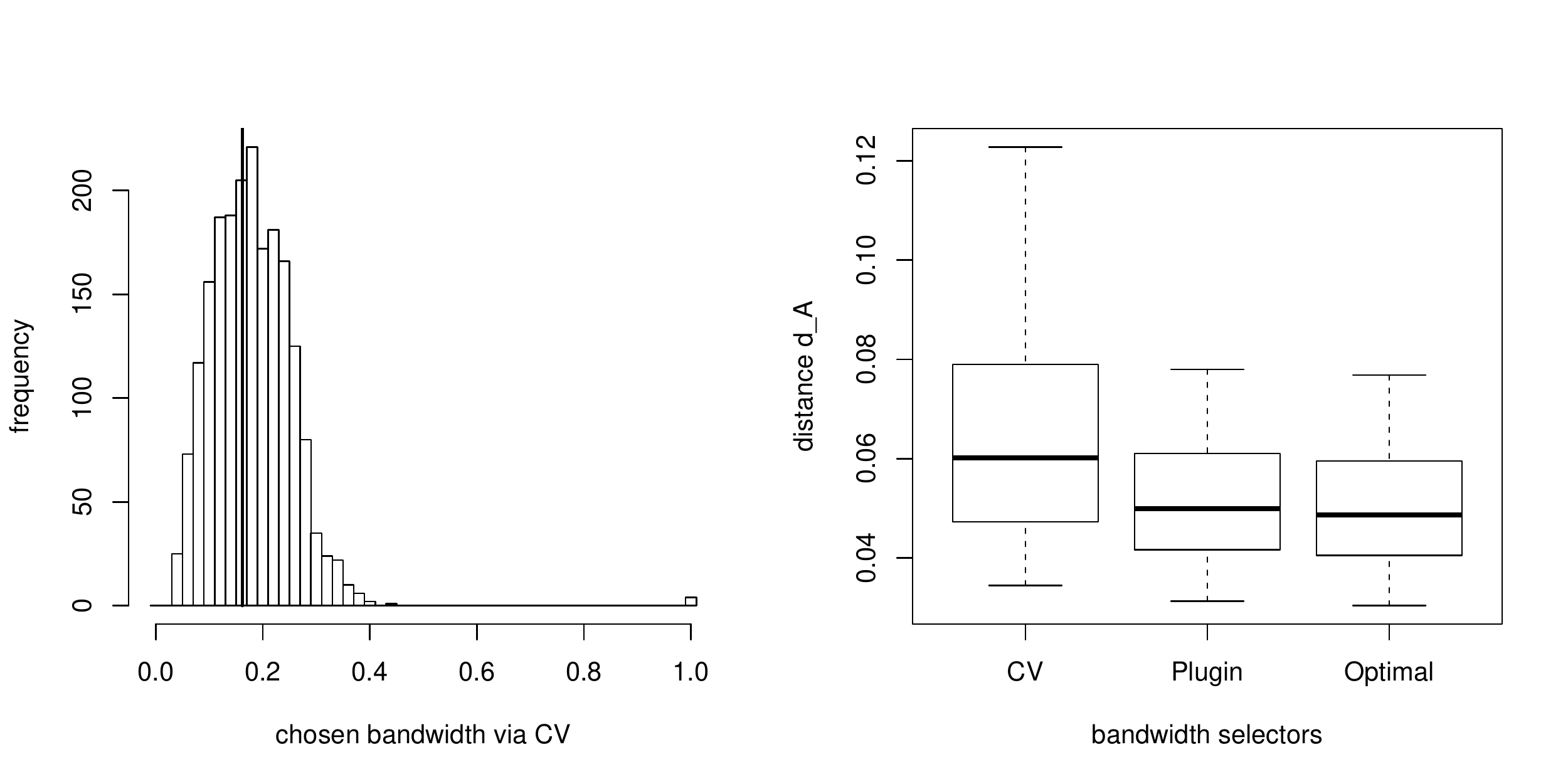}\\
		(c), $n = 200$ & (c), $n = 500$\\
		\includegraphics[width=5.5cm]{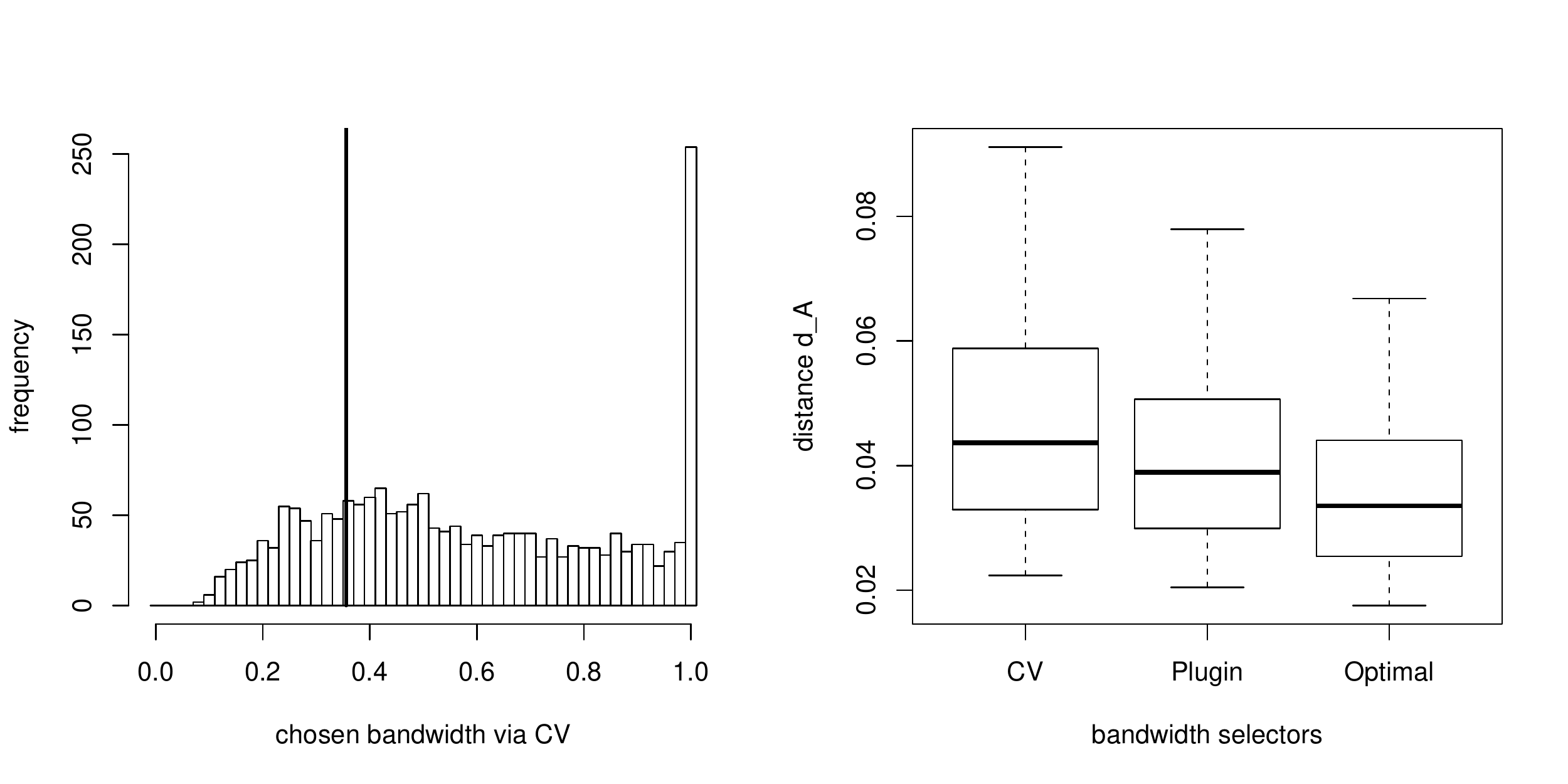} & \includegraphics[width=5.5cm]{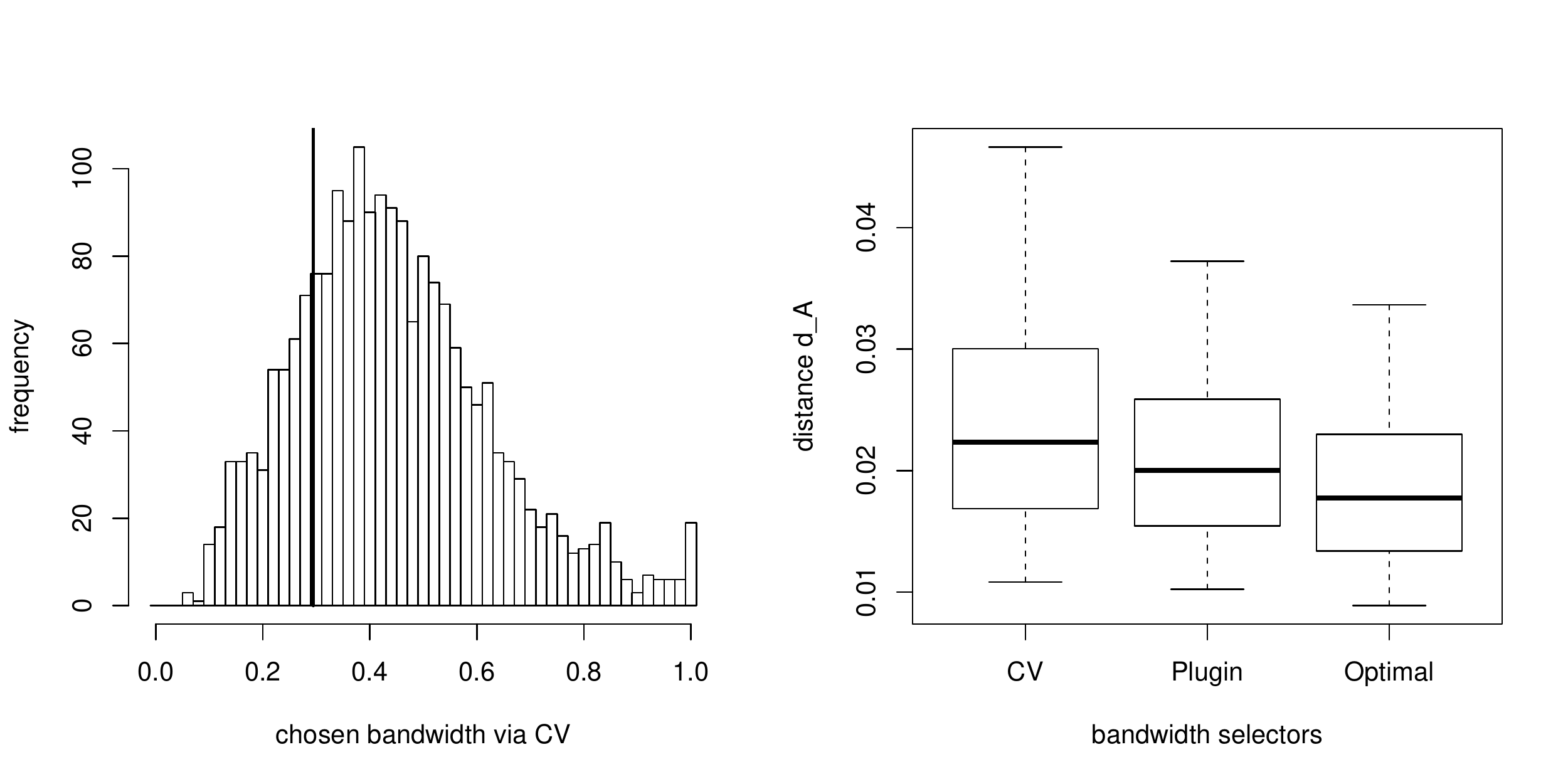}\\
		(d), $n = 200$ & (d), $n = 500$\\
		\includegraphics[width=5.5cm]{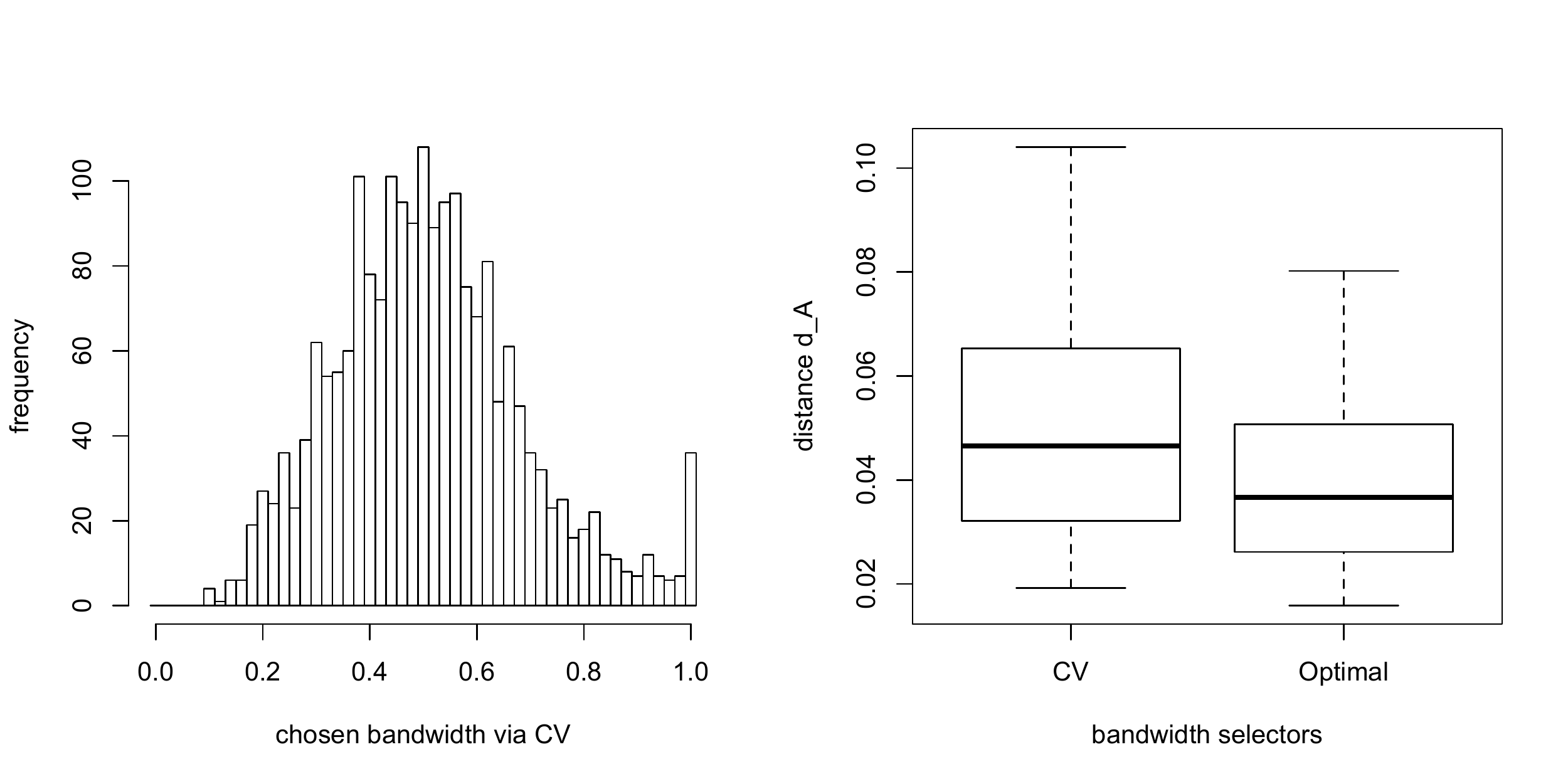} & \includegraphics[width=5.5cm]{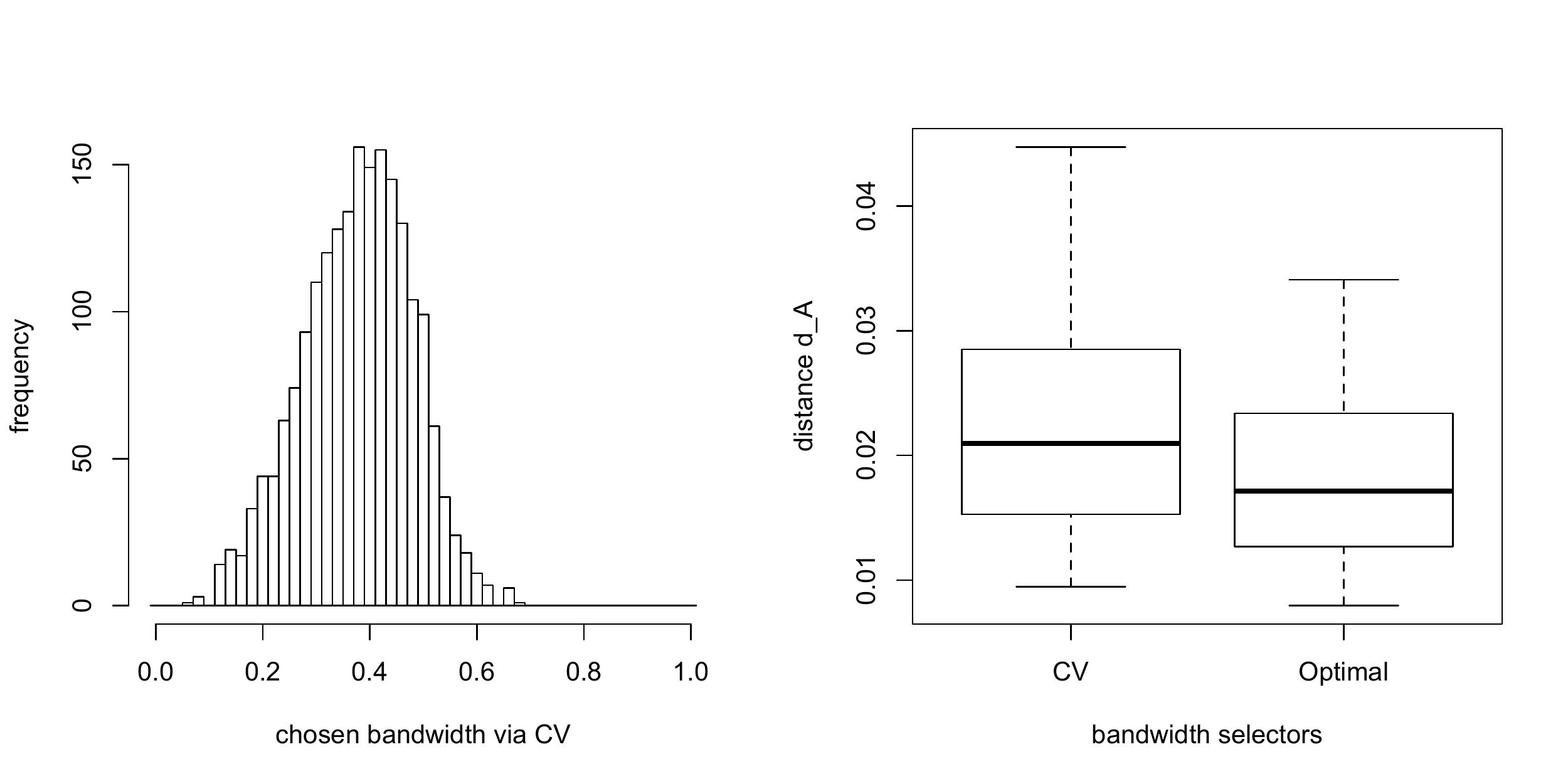}\\
	\end{tabular}
	\caption{Simulation results for the models (a),(b),(c),(d) for time series lengths $n = 200$ (left) and $n = 500$ (right) and $N = 2000$ replications. The left plot shows a histogram of the chosen cross validation bandwidths $\hat h$, the vertical line therein represents the asymptotically optimal bandwidth $h_0$. The right box plots show the values of $d_A(\hat \theta_h,\theta_0)$ achieved for $h \in \{\hat h, h_0, h^{*}\}$.}
	\label{simulation_plot}
\end{figure}

\textbf{Model misspecifications: } We observed in simulations that the performance of the cross validation procedure is robust against the distribution of $\varepsilon_t$, leading to similar results even if $\varepsilon_t$ is uniformly, exponentially or Pareto distributed (meaning that the moment conditions from Assumption \ref{ass1} are violated).

Due to the fact that our cross validation method is a natural generalization of the version for iid regression it works even well if the underlying model itself is misspecified. In the following we estimate parameters with a Gaussian likelihood which assumes that the time series model follows a tvAR(1) model $X_{t,n} = \alpha^{ms}(t/n) X_{t-1,n} + \sigma^{ms}(t/n) \varepsilon_t$, but in fact the underlying model is either tvMA (b) or tvARCH (c). The cross validation method then tries to estimate the minimizer $\theta_0^{ms}(u) = (\alpha^{ms}(u), \sigma^{ms}(u))'$ of $\theta \mapsto L(u,\theta)$, i.e. $\alpha^{ms}(u) = \frac{c(1,u)}{c(0,u)}$ and $\sigma^{ms}(u) = \big(\frac{c(0)^2 - c(1)^2}{c(0)}\big)^{1/2}$ with the covariances $c(k,u) := \IE[\tilde X_0(\theta_0(u)) \tilde X_k(\theta_0(u))]$:
\begin{center}
	\begin{tabular}{|c|c|c|}
\hline
	model & $\alpha^{ms}(u)$ & $\sigma^{ms}(u)$\\
	\hline
	tvMA & $\frac{\alpha(u)}{1+\alpha(u)^2}$ & $\big(\frac{1+\alpha(u)^2 + \alpha(u)^4}{1+\alpha(u)^2}\big)^{1/2}\cdot \sigma(u)$ \\
	\hline
	tvARCH & 0 & $\big(\frac{\alpha_1(u)}{1-\alpha_2(u)}\big)^{1/2}$\\
	\hline
\end{tabular}
\end{center}
To compare the distances, we use $d_A(\hat \theta_h(u),\theta_0^{ms}(u))$ with $V$ from the tvAR(1) model. The simulations are performed in the same way as for the correctly specified case above. In Figure \ref{simulation_plot_misspec} it is seen that even in the misspecified case the bandwidth selector $\hat h$ produces reasonable estimators which are comparable with the optimal bandwidth choice $h^{*}$ in the case of tvMA estimators and still satisfying in the tvARCH case (note that a lot of information is lost due to the fact that $\alpha^{ms}(u) \equiv 0$ in this case).
\begin{figure}[h!]
	\centering
	\begin{tabular}{cc}
		(b), $n = 200$ & (b), $n = 500$\\
		\includegraphics[width=5.5cm]{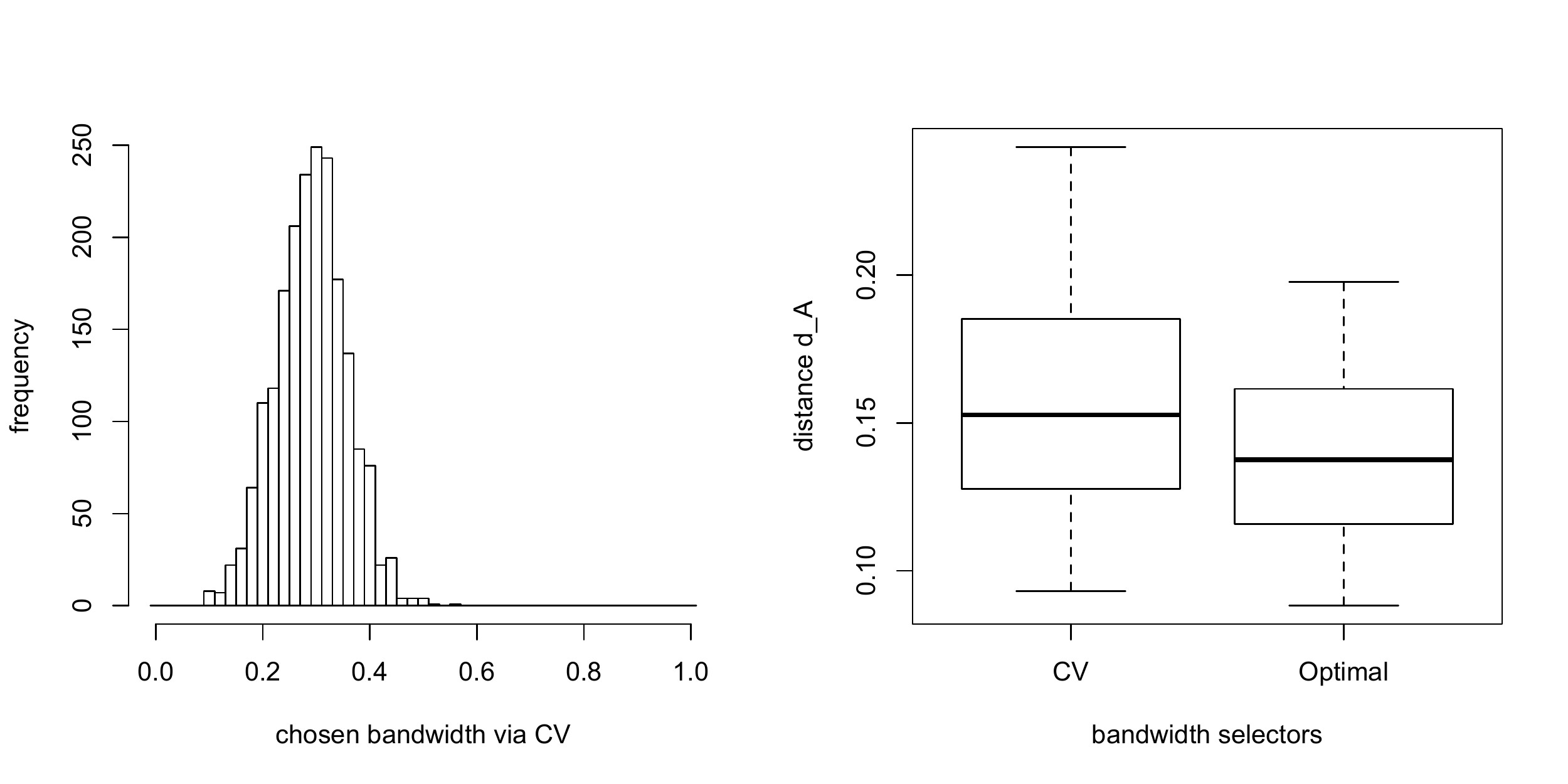} & \includegraphics[width=5.5cm]{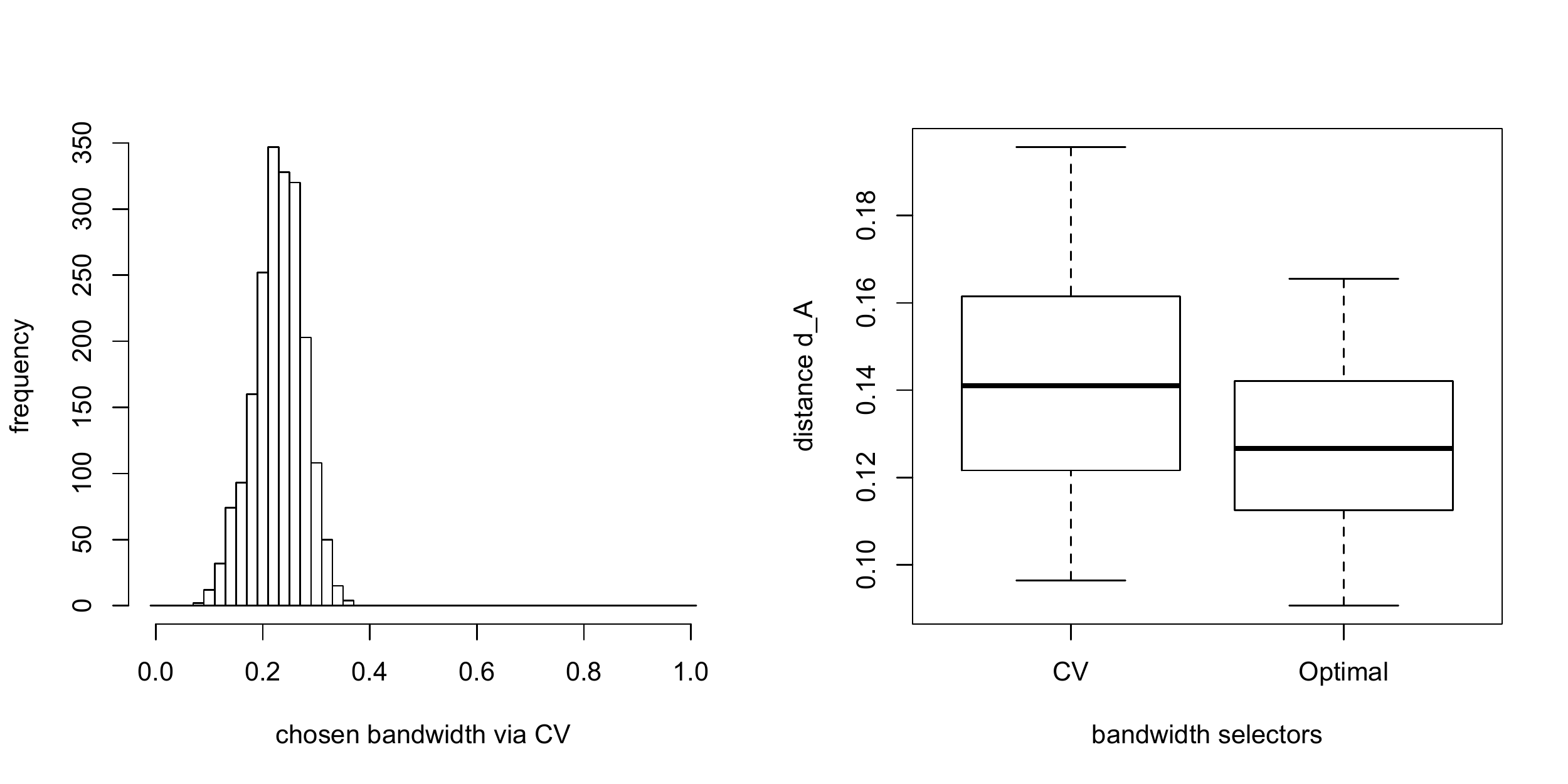}\\
		(c), $n = 200$ & (c), $n = 500$\\
		\includegraphics[width=5.5cm]{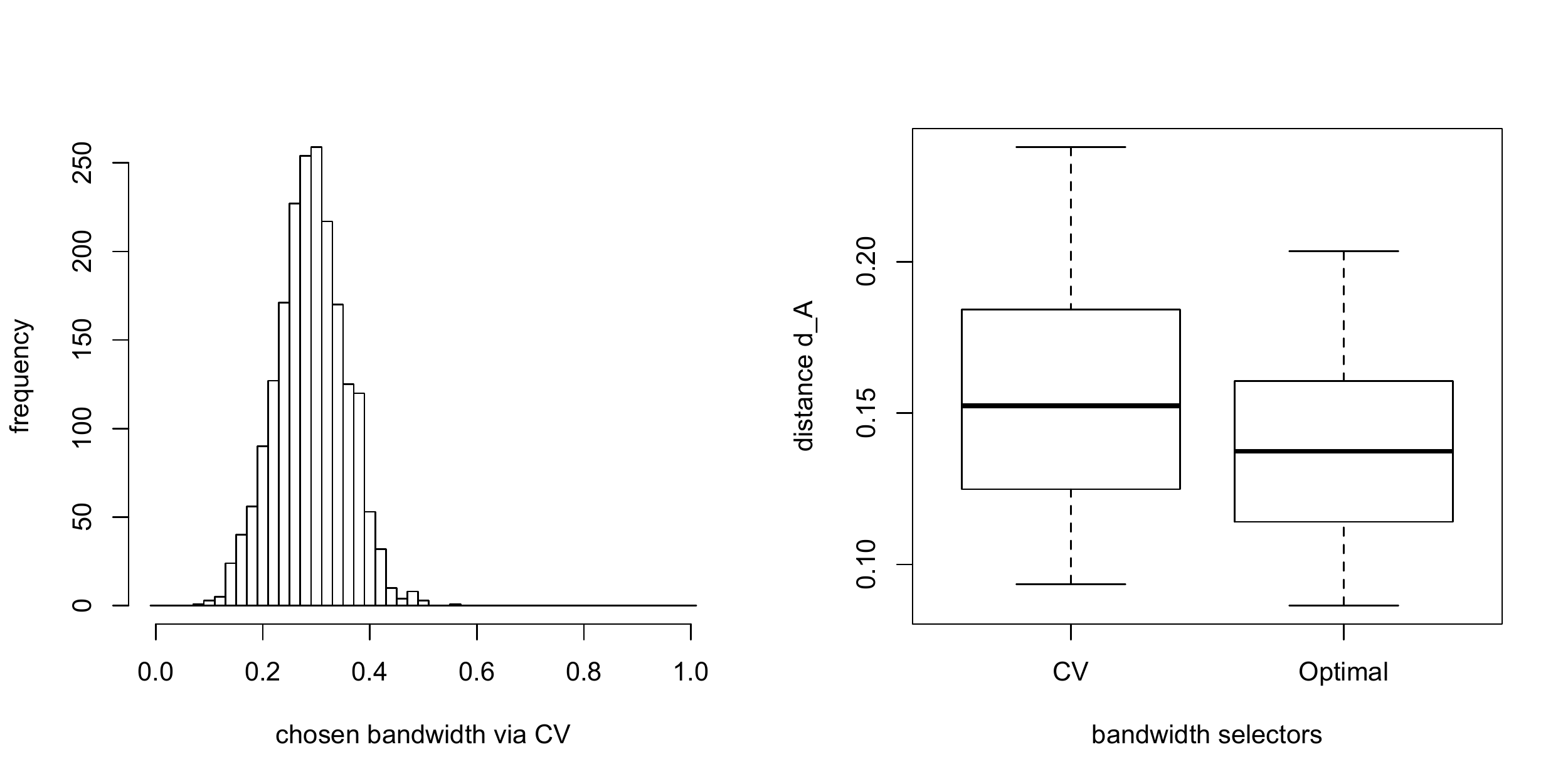} & \includegraphics[width=5.5cm]{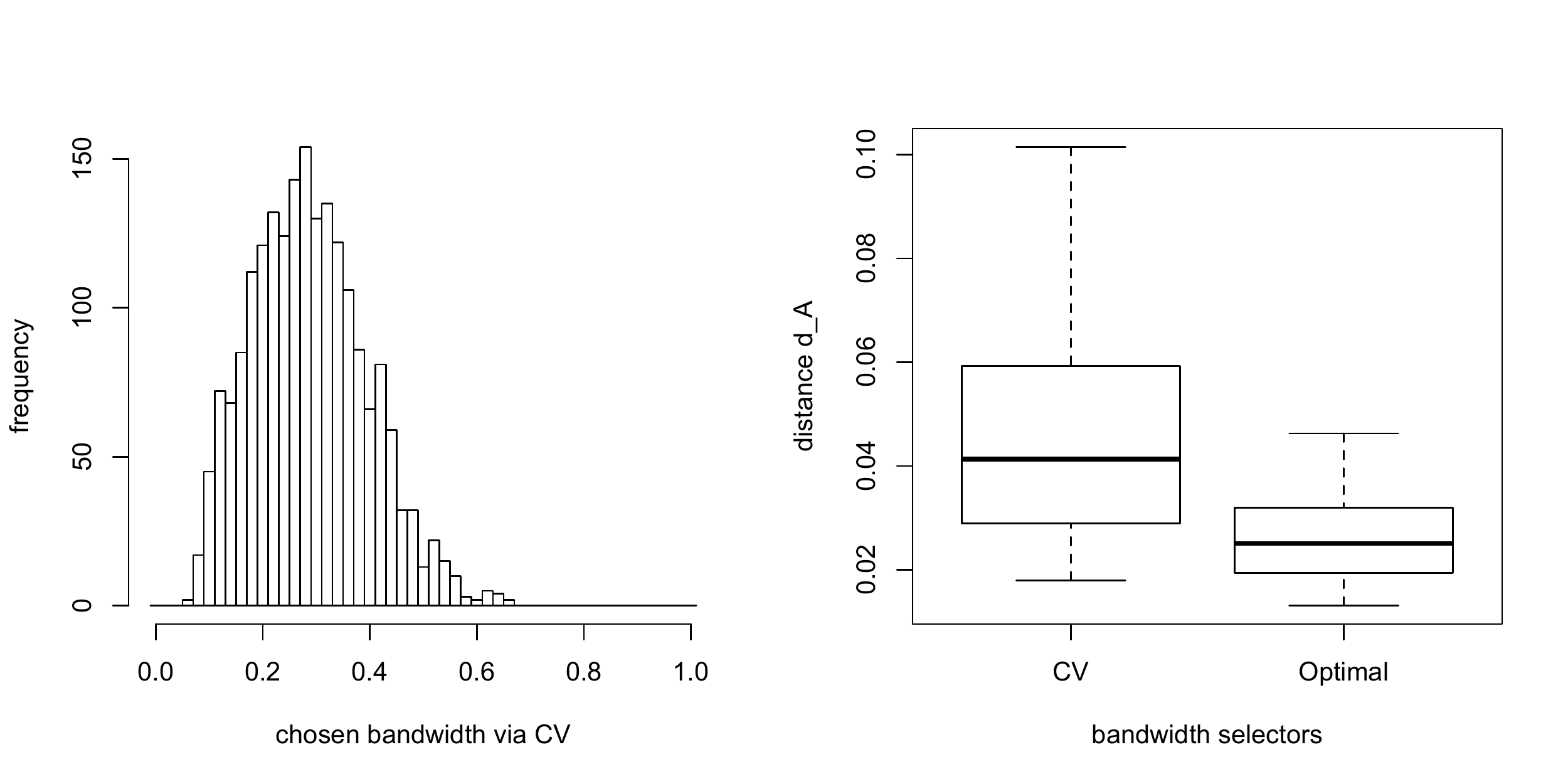}\\
	\end{tabular}
	\caption{Simulation results in the misspecified case: The underlying processes are either (b),(c) but for the estimation it is assumed that a tvAR(1) process is present.}
	\label{simulation_plot_misspec}
\end{figure}

\section{Concluding remarks}
\label{sec5}

In this paper we have introduced a data adaptive bandwidth selector via cross validation which is applicable for a large class of locally stationary processes. An important property of the method is the fact that it does not involve any tuning parameters.
% and thus can serve as an initial guess of the optimal bandwidth in practice, which may be refined by applying further procedures such as a plugin estimator or local bandwidth selectors. Note that for complicated models, a plugin estimator (based on estimates of $B_0$ and $V_0$ in  \reff{asb} and  \reff{asv} respectively) may be difficult to obtain due to the fact that $B_0$ often is only available by simulation and thus depends on the correct choice for the density of $\varepsilon_t$.

In simulations we have seen that the proposed cross validation method yields nearly optimal bandwidth choices with respect to an Kullback-Leibler type distance measure in the case of correctly specified models and still leads to satisfying results in the case of model misspecification. It remains an open question if a similar cross validation procedure can be defined which is asymptotically optimal with respect to a simple quadratic distance measure (i.e. without a weighting matrix) which would then lead to estimates of $\theta_0$ which do not optimize the prediction properties of the associated model but the estimation quality of the parameter curve $\theta_0$ itself.

We mention that it is not hard to generalize the proposed method and the proofs to multidimensional time series which may be of interest in many practical applications.

An interesting open problem is the adaptive estimation in time series models with several parameter curves coming from different smoothness class, in particular since these curves are not observed separately but via a single time series.

Let us point out the fact that cross validation procedures in general are not stable if applied locally. Thus it remains an open question to find an local adaptive bandwidth selector.

In nonparametric regression there exist also several results on the rate of convergence of $\hat h - h_0$.  Based on the simulations, we conjecture that $n^{1/10}\cdot(\hat h-h_0)$ (with $h_0$ from \reff{h02optimal}) is asymptotically normal if $\theta_0$ is twice continuously differentiable, like \cite{haerdle1988} showed in the iid regression case. This raises the question if there are improved crossvalidation methods like \cite{chiu1991} (via Fourier transform) or \cite{hall1992} (via presmoothing) proved in the iid kernel density estimation case that attain the optimal rate of $n^{1/2}$ if further smoothness assumptions on $\theta_0$ are supposed.

%However, the methods seem not to be directly applicable in our situation because the unknown parameter curve $\theta_0$ in our model is strongly connected with the stochastic part of the observations $X_{t,n}$, which is not the case in iid density estimation.

\section{Proofs}
\label{sec6}

In the following, we will use the abbreviation $\IE_0 Z := Z - \IE Z$ for real-valued random variables (or random vectors) $Z$. Note that by Assumption \ref{ass3}, we have that the minimal eigenvalue of $V(\theta)$ is bounded from below by some $\lambda_0$, leading to invertibility of $V(\theta)$ and by equivalence of the norms on $\IR^{p \times p}$ to a uniform upper bound
\[
	V_{-1,max} := \sup_{\theta\in \Theta}\sup_{i,j=1,...,p}|(V(\theta)^{-1})_{ij}|.
\]
Additionally define $V_{1,max} := \sup_{\theta \in \Theta}\sup_{i,j=1,...,p}|V(\theta)_{ij}|$.
%In the following we will frequently use these bounds.
%Recall the definition $|x|_{A}^2 = x'Ax$ for  $A\in \IR^{p \times p}$, $x \in \IR^p$.
In the following, we will use the abbreviation $\tilde X_t(u) := \tilde X_t(\theta_0(u))$ and $\tilde Y_t(u) = (\tilde X_s(u): s \le t)$. Recall that $Y_{t,n}^c = (X_{t,n},...,X_{1,n},0,0,...)$.  Define
\[
	\textstyle\hat L_{n,h}(u,\theta) := \frac{1}{nh}\sum_{s=1}^{n}K\big(\frac{s/n-u}{h}\big)\cdot \l(\tilde Y_s(s/n),\theta)
\]
and by omitting the $t$-th summand, $\hat L_{n,h,-t}(u,\theta)$.

\subsection{Standard approximations}

%To shorten some expressions, let us introduce the following notations. We do not include the dependence on $M,\chi$ in the notations since they don't change in the whole paper. Let $C_X := \sum_{k=0}^{\infty}\delta(k) + \sup_{u\in[0,1]}|\IE \tilde X_0(u)| < \infty$. We then have $\|\tilde X_t(u)\|_q = \|\tilde X_t(u) - \IE \tilde X_t(u)\|_q + |\IE \tilde X_t(u)| \le C_X N_{\alpha}(q)$. Furthermore, set $E_{X,1} := 1+2|\chi|_1^{M-1}C_X^{M-1}$ and $E_{X,2} := C_X |\chi|_1 (1 + |\chi|_1^{M-1}C_X^{M-1})$.

\begin{lemma}[The stationary crop approximation]\label{lemma_stat}
 Let $g \in \sL(M,\chi,C)$. Put
	\[
		\textstyle S_{n,h}(g(\cdot,\theta),u) := \frac{1}{n}\sum_{t=1}^{n}K_h\Big(\frac{t}{n}-u\Big)\cdot \big\{g(Y_{t,n}^c, \theta) - g(\tilde Y_{t}(t/n),\theta)\big\}.
	\]
Suppose that Assumption \ref{ass1} holds. Assume that $\chi_j = O(j^{-(2+\delta)})$ for some $\delta > 0$.
%\sum_{j=1}^{\infty}j\chi_j < \infty$ and $\sum_{t=1}^{\infty}\sum_{j=t+1}^{\infty}\chi_j < \infty$.
Then for all $q \ge 1$ there exists a constant $C_{S,q} > 0$ not depending on $n,h$ such that 
\[
	\textstyle\big\|\sup_{\theta \in \Theta}|S_{n,h}(g(\cdot,\theta),u)|\big\|_q \le C_{S,q}\cdot (nh)^{-1}.
\]
\end{lemma}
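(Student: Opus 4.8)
The plan is to reduce the claim to a deterministic estimate on the individual summands: use the first Lipschitz inequality in \reff{DP_hoelder_prop_polynom} to split the difference $g(Y_{t,n}^c,\theta)-g(\tilde Y_t(t/n),\theta)$ into an ``error factor'' that is summable over $t$ and an ``amplitude factor'' with uniformly bounded moments, and then combine this with the crude kernel bound $K_h(t/n-u)\le|K|_\infty/h$ and the summability $\sum_{t=1}^n\|X_{t,n}-\tilde X_t(\theta_0(t/n))\|_q\le C_B$ from \reff{ass1_s1_eq1}.

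More precisely, writing $\Delta_t(\theta):=g(Y_{t,n}^c,\theta)-g(\tilde Y_t(t/n),\theta)$ and noting that neither $Y_{t,n}^c$ nor $\tilde Y_t(t/n)=\tilde Y_t(\theta_0(t/n))$ depends on the dummy $\theta$, the first inequality in \reff{DP_hoelder_prop_polynom} gives, uniformly in $\theta\in\Theta$,
\[
	|\Delta_t(\theta)|\le C_1\,R_t\,P_t,\qquad R_t:=|Y_{t,n}^c-\tilde Y_t(t/n)|_{\chi,1},\quad P_t:=1+|Y_{t,n}^c|_{\chi,1}^{M-1}+|\tilde Y_t(t/n)|_{\chi,1}^{M-1}.
\]
Hence $\sup_{\theta\in\Theta}|S_{n,h}(g(\cdot,\theta),u)|\le\frac{C_1|K|_\infty}{nh}\sum_{t=1}^n R_tP_t$, and Minkowski's inequality followed by Cauchy--Schwarz give $\big\|\sup_{\theta\in\Theta}|S_{n,h}(g(\cdot,\theta),u)|\big\|_q\le\frac{C_1|K|_\infty}{nh}\sum_{t=1}^n\|R_t\|_{2q}\|P_t\|_{2q}$. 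The amplitude factors are harmless: from $|Y_{t,n}^c|_{\chi,1}\le\sum_{i\ge1}\chi_i|X_{t-i+1,n}|$ (with $X_{s,n}:=0$ for $s\le0$), $\sum_i\chi_i<\infty$, and the finiteness of $D_r$ for every $r$ (Assumption \ref{ass1}), one obtains $\sup_{n,t}\|P_t\|_{2q}\le C_P<\infty$. Thus everything reduces to showing $\sum_{t=1}^n\|R_t\|_{2q}\le C_R$ with $C_R$ independent of $n$ (and then automatically of $h$ and $u$).

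This summability estimate is the main obstacle, and I would establish it by splitting $R_t$ into the coordinates $i\le t$ (for which $Y_{t,n}^c$ contains an observation) and $i>t$ (padded zeros). The ``$i>t$'' part equals $\sum_{i>t}\chi_i|\tilde X_{t-i+1}(\theta_0(t/n))|$, with $L^{2q}$-norm at most $D_{2q}\sum_{i>t}\chi_i$ by stationarity, and $\sum_{t=1}^n\sum_{i>t}\chi_i\le\sum_{i\ge1}(i-1)\chi_i<\infty$ exactly because $\chi_i=O(i^{-(2+\delta)})$. For the ``$i\le t$'' part, reindexing by the lag $s=t-i+1$ and the gap $m=t-s$ rewrites the double sum over $t$ as $\sum_{m\ge0}\chi_{m+1}\sum_{s=1}^{n-m}\|X_{s,n}-\tilde X_s(\theta_0((s+m)/n))\|_{2q}$. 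I would then bound the inner norm by $\|X_{s,n}-\tilde X_s(\theta_0(s/n))\|_{2q}+C_A|\theta_0(s/n)-\theta_0((s+m)/n)|_1$ using \reff{ass1_s1_eq1}. The first term sums over $s$ to at most $C_B$, contributing $\le C_B\sum_i\chi_i$. For the second term, the decisive step is a telescoping bound along the grid, $\sum_{s=1}^{n-m}|\theta_0(s/n)-\theta_0((s+m)/n)|_1\le m\,B_{\theta_0}$, which uses only the componentwise bounded variation $B_{\theta_0}$ of $\theta_0$ (Assumption \ref{ass3}\ref{ass3_m1}) and crucially avoids the divergent factor $n^{1-\beta}$ that a naive Hölder bound would produce; its contribution is then $\le C_AB_{\theta_0}\sum_{i\ge1}i\chi_i<\infty$.

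Adding the three pieces gives $\sum_{t=1}^n\|R_t\|_{2q}\le C_R<\infty$ uniformly in $n$, and hence $\big\|\sup_{\theta\in\Theta}|S_{n,h}(g(\cdot,\theta),u)|\big\|_q\le C_{S,q}(nh)^{-1}$ with $C_{S,q}:=C_1|K|_\infty C_P C_R$, which depends neither on $n$ nor on $h$ (nor on $u$). Beyond the reindexing bookkeeping and this telescoping argument, the remaining ingredients --- the Lipschitz property, Cauchy--Schwarz, and the moment bound $D_r<\infty$ --- are routine.
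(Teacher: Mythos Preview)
Your proposal is correct and follows essentially the same route as the paper's proof: apply the Lipschitz bound from $\sL(M,\chi,C)$, separate the error from the amplitude factor via H\"older, and split the coordinates of $Y_{t,n}^c-\tilde Y_t(t/n)$ into the padded zeros $i>t$ (controlled by $\sum_j j\chi_j<\infty$) and the observed part $i\le t$ (handled via $\sum_t\|X_{t,n}-\tilde X_t(t/n)\|\le C_B$ plus a parameter-shift term that both you and the paper bound using the bounded variation of $\theta_0$ --- your explicit telescope $\sum_s|\theta_0(s/n)-\theta_0((s+m)/n)|_1\le mB_{\theta_0}$ is precisely the argument behind the paper's constant $\sum_j j\chi_j$). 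The only cosmetic differences are that the paper uses H\"older with exponent $qM$ rather than your Cauchy--Schwarz $2q$, and inserts the intermediate point $\tilde Y_t(t/n)^c$ to perform the split at the level of $g$ rather than of $R_t$.
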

The proof follows from Hoelder's inequality. Details can be found in the appendix.

\begin{lemma}[Weak bias approximation]\label{lemma_bias_weak} Let $g \in \sL(M,\chi,C)$. Define
	\[
		\textstyle B_{n,h}(g(\cdot,\theta),u) := \frac{1}{n}\sum_{t=1}^{n}K_h\Big(\frac{t}{n}-u\Big)\cdot \big\{g(\tilde Y_{t}(t/n),\theta) - g(\tilde Y_{t}(u),\theta)\big\}.
	\]
Suppose that Assumption \ref{ass1} holds. Assume that $\theta_0$ is Hoelder continuous with exponent $\beta$ in each component. Then there exist constants $C_{bias} > 0$ such that for all $u \in  [0,1]$:
\[
	\textstyle\big\|\sup_{\theta \in \Theta}|B_{n,h}(g(\cdot,\theta),u)| \big\|_2 \le C_{bias} h^{\beta \wedge 1}.
\]
\end{lemma}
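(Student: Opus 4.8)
The plan is to bound the supremum over $\theta \in \Theta$ of $|B_{n,h}(g(\cdot,\theta),u)|$ by exploiting the two facts at our disposal: that $g \in \sL(M,\chi,C)$ gives a Lipschitz estimate in the first argument with a polynomially-growing constant, and that the kernel $K_h$ is supported on a window of width $h$ around $u$ so that every summand with nonzero weight has $|t/n - u| \le h/2$. The first step is therefore to move the supremum inside: for each fixed $t$,
\[
	\sup_{\theta \in \Theta}\big|g(\tilde Y_t(t/n),\theta) - g(\tilde Y_t(u),\theta)\big| \le C_1 \cdot \big|\tilde Y_t(t/n) - \tilde Y_t(u)\big|_{\chi,1}\cdot\Big(1 + \big|\tilde Y_t(t/n)\big|_{\chi,1}^{M-1} + \big|\tilde Y_t(u)\big|_{\chi,1}^{M-1}\Big),
\]
using the first inequality in \reff{DP_hoelder_prop_polynom}. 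Crucially the bound on the right no longer depends on $\theta$, so we may take $L^2$-norms of the whole kernel-weighted average and then apply the triangle inequality over $t$.

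Next I would estimate the two factors separately. For the increment factor, $\big|\tilde Y_t(t/n) - \tilde Y_t(u)\big|_{\chi,1} = \sum_{i \ge 1}\chi_i |\tilde X_{t-i+1}(t/n) - \tilde X_{t-i+1}(u)|$ (with the appropriate indexing of $\tilde Y$), and by the first part of \reff{ass1_s1_eq1} each term has $L^{2M}$-norm (say) bounded by $C_A |\theta_0(t/n) - \theta_0(u)|_1 \le C_A \cdot p \cdot B_{\theta_0}^{(Hoelder)} |t/n - u|^\beta$ by Hoelder continuity of $\theta_0$; since $\sum_i \chi_i < \infty$ this gives $\|\,|\tilde Y_t(t/n) - \tilde Y_t(u)|_{\chi,1}\,\|_{2M} \le C \cdot |t/n-u|^{\beta \wedge 1}$, uniformly in $t$. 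For the moment factor $1 + |\tilde Y_t(t/n)|_{\chi,1}^{M-1} + |\tilde Y_t(u)|_{\chi,1}^{M-1}$, I would use the bound $D_q < \infty$ from Assumption \ref{ass1}: by Minkowski, $\|\,|\tilde Y_t(v)|_{\chi,1}\,\|_{2M} \le \sum_i \chi_i \|\tilde X_{\cdot}(v)\|_{2M} \le (\sum_i \chi_i) D_{2M} < \infty$, so the $(M-1)$-th power has finite $L^{2M/(M-1)}$-norm, again uniformly in $t$ and in the time index $v \in \{t/n, u\}$. A single application of the Cauchy–Schwarz (or Hoelder) inequality with exponents $2M$ and $2M/(M-1)$ then yields, for each $t$,
\[
	\big\|\sup_{\theta \in \Theta}\big|g(\tilde Y_t(t/n),\theta) - g(\tilde Y_t(u),\theta)\big|\big\|_2 \le C \cdot |t/n - u|^{\beta \wedge 1}.
\]

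Finally I would assemble: by the triangle inequality in $L^2$,
\[
	\big\|\sup_{\theta \in \Theta}|B_{n,h}(g(\cdot,\theta),u)|\big\|_2 \le \frac{1}{n}\sum_{t=1}^{n} K_h\Big(\frac{t}{n}-u\Big) \cdot C \cdot \Big|\frac{t}{n} - u\Big|^{\beta \wedge 1}.
\]
On the support of $K_h(\cdot - u)$ we have $|t/n - u| \le h/2$, so $|t/n-u|^{\beta\wedge 1} \le (h/2)^{\beta \wedge 1}$, and the remaining sum $\frac{1}{n}\sum_t K_h(t/n - u)$ is a Riemann sum for $\int K = 1$, hence bounded by a universal constant (using that $K$ is Lipschitz with compact support; the discretization error is $O((nh)^{-1}\cdot\ldots)$ and in any case the sum is $\le \|K\|_\infty$ times the number of nonzero terms over $n$, which is $O(1)$). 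This produces the claimed bound with $C_{bias}$ absorbing all constants. The only mildly delicate points are bookkeeping ones: making sure the chosen moment exponents $2M$ and $2M/(M-1)$ are all $\ge 1$ (true since $M \ge 1$; if $M = 1$ the moment factor is just the constant $3$ and Cauchy–Schwarz is not even needed), and handling the reindexing between $\tilde Y_t$ and its coordinates so that the weights $\chi_i$ line up — neither of which presents a real obstacle, and the argument is uniform in $u \in [0,1]$ and in $n, h$ as required.
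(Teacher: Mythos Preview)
Your proof is correct and follows essentially the same route as the paper: bound $\sup_\theta|g(\tilde Y_t(t/n),\theta)-g(\tilde Y_t(u),\theta)|$ via the $\sL(M,\chi,C)$ Lipschitz property, apply H\"older's inequality with exponents $2M$ and $2M/(M-1)$ to separate the increment factor from the moment factor, use Assumption~\ref{ass1} together with the H\"older continuity of $\theta_0$ to get $\|\tilde X_0(t/n)-\tilde X_0(u)\|_{2M}\le C|t/n-u|^{\beta\wedge 1}$, and finish by bounding the kernel-weighted sum using the compact support of $K$. The paper's version is slightly more terse (it records the bound directly in terms of $\|\tilde X_0(t/n)-\tilde X_0(u)\|_{2M}$ and uses $|K|_\infty$ for the final sum), but the argument is the same.
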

\begin{proof}[Proof of Lemma \ref{lemma_bias_weak}:]
	Since $\theta_0$ is Hoelder continuous in each component, there exists $\tilde L > 0$ such that $|\theta_{0,i}(u) - \theta_{0,i}(v)| \le \tilde L|u-v|^{\beta}$ for all $i = 1,...,p$ and $u,v\in [0,1]$. Thus $\|\tilde X_t(t/n) - \tilde X_t(u)\|_{2M} \le C_A |\theta_0(t/n) - \theta_0(u)|_1 \le C_A d\cdot \tilde L |t/n - u|^{\beta \wedge 1}$. By Hoelder's inequality,
	\begin{align}
		& \big\|\sup_{\theta \in \Theta}|g(\tilde Y_t(t/n),\theta) - g(\tilde Y_t(u),\theta)|\big\|_2\nonumber\\
		&\le C_1 \big(1 + 2(D_{2M} |\chi|_1)^{M-1}\big) \cdot \| \tilde X_0(t/n) - \tilde X_0(u)\|_{2M}.\label{lemma_bias_weak_eq1}
\end{align}
	Since $\frac{1}{n}\sum_{t=1}^{n}\big|K_h\big(\frac{t}{n}-u\big)\big|\cdot \| \tilde X_0(t/n) - \tilde X_0(u)\|_{2M} \le |K|_{\infty}C_A p L \cdot h^{\beta \wedge 1}$, we obtain the result.
\end{proof}

\subsection{The bias-variance decomposition of $\nabla L_{n,h}(u,\theta_0(u))$}

The proof of the next Lemma \ref{lemma_expectation_expansion} is purely analytical and is deferred to the appendix.

\begin{lemma}[Expansion of expectations]\label{lemma_expectation_expansion} Let Assumption \ref{ass1}, \ref{ass3} and \ref{ass5} hold. Assume that $g: \IR^{\infty} \to \IR$ is twice continuously partially differentiable and $\partial_{i}\partial_{j} g \in \sL_{\infty}(M-2,\tilde \chi,\tilde\psi_1(i)\tilde\psi_2(j))$ for each component of $\partial^{2}g$, where $\tilde \psi_1, \tilde \psi_2$ are absolutely summable sequences.\\
Furthermore assume that $|\partial_{i} g(0)| \le \tilde \psi_1(i)$ and $|\partial_i \partial_j g(0)| \le \tilde \psi_1(i) \tilde \psi_2(j)$ for all $i,j \ge 1$. Then it holds for $u\in [0,1]$ and $\xi \in \IR$ that
\begin{equation}
	\IE g(\tilde Y_0(u+\xi)) = \IE g(\tilde Y_0(u)) + \xi \cdot \IE \partial_u g(\tilde Y_0(u)) + \frac{\xi^2}{2}\cdot \IE \partial_u^2 g(\tilde Y_0(u)) + \frac{\xi^2}{2}\cdot R(u,\xi),\label{expectation_expansion_eq1}
\end{equation}
where $R(u,\xi) := \int_{0}^{1}\big\{\IE\partial_u^2 g(\tilde Y_0(u+\xi s))  - \IE \partial_u^2 g(\tilde Y_0(u))\big\} \dif s = o(1)$ ($\xi \to 0$) uniformly in $u \in [0,1]$, and all expressions exist.
\end{lemma}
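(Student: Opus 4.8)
\emph{Proof proposal.} The plan is to reduce the claim to a classical second-order Taylor expansion of the \emph{deterministic} function $\phi(u) := \IE\, g(\tilde Y_0(u))$, where throughout $\tilde Y_0(u) := \tilde Y_0(\theta_0(u))$ (and $\theta_0$, hence $\phi$, is extended $C^2$ to a neighbourhood of $[0,1]$ for the arguments $u+\xi s$ that occur). Concretely I would show that $\phi$ is twice continuously differentiable with $\phi'(u) = \IE\,\partial_u g(\tilde Y_0(u))$ and $\phi''(u) = \IE\,\partial_u^2 g(\tilde Y_0(u))$, that all three expectations are finite, and then read off \reff{expectation_expansion_eq1} from Taylor's theorem with integral remainder. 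The first substep is pathwise: by Assumption \ref{ass5}\ref{ass5_b1} and \ref{ass5_b3}, for every $s$ the map $u\mapsto\tilde X_s(\theta_0(u))$ is twice continuously differentiable a.s.\ with $\partial_u\tilde X_s(u) = \sum_{i=1}^d\nabla_i\tilde X_s(\theta_0(u))\,\theta_{0,i}'(u)$ and the analogous second-order formula built from $\nabla^2_{ij}\tilde X_s$, $\theta_0'$, $\theta_0''$. To differentiate $g$ along this curve I would work with the truncations $\tilde Y_0^{(m)}(u) := (\tilde X_0(u),\dots,\tilde X_{1-m}(u),0,0,\dots)$, where the finite-dimensional chain rule applies directly since $g$ is twice continuously partially differentiable, and then let $m\to\infty$: the increments of $g(\tilde Y_0^{(m)}(u))$ and of its first two $u$-derivatives are controlled by the $\sL_\infty$ Lipschitz-with-polynomial-growth bounds on $\partial^2 g$ (and the bounds on $\partial_k g$ and on $g$ derived from them below) together with the convergent tails $\sum_{k>m}\tilde\chi_k,\ \sum_{k>m}\tilde\psi_1(k),\ \sum_{k>m}\tilde\psi_2(k)\to0$, so these truncations converge uniformly in $u$, a.s. Hence $u\mapsto g(\tilde Y_0(u))$ is $C^2$ a.s.\ with $\partial_u g(\tilde Y_0(u)) = \sum_{k\ge1}\partial_k g(\tilde Y_0(u))\,\partial_u\tilde X_{1-k}(u)$ and a corresponding absolutely convergent expression for $\partial_u^2 g(\tilde Y_0(u))$ (a double sum in $\partial_j\partial_k g\cdot\partial_u\tilde X_{1-j}\cdot\partial_u\tilde X_{1-k}$ plus a single sum in $\partial_k g\cdot\partial_u^2\tilde X_{1-k}$).

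The second substep is domination. From $\partial^2 g\in\sL_\infty(M-2,\tilde\chi,\cdot)$ and the hypotheses $|\partial_k g(0)|\le\tilde\psi_1(k)$, $|\partial_j\partial_k g(0)|\le\tilde\psi_1(j)\tilde\psi_2(k)$ one gets $|\partial_j\partial_k g(z)|\le\tilde\psi_1(j)\tilde\psi_2(k)\,P(|z|_{\tilde\chi,1})$ for a fixed polynomial $P$ of degree $\le M-2$, and (integrating $\partial_j\partial_k g$ along segments in the truncated space) $|\partial_k g(z)|\le\tilde\psi_1(k)+\tilde\psi_2(k)\,\tilde P(|z|_{\tilde\chi,1})$ and a similar bound on $g(z)-g(0)$; in particular $g$ and $\partial_k g$ are continuous along the truncations, so the limits in the first substep are the correct objects. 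Since $\big\||\tilde Y_0(u)|_{\tilde\chi,1}\big\|_q \le \big(\sum_k\tilde\chi_k\big)D_q<\infty$ for all $q$ by Assumption \ref{ass1}, these polynomials of $|\tilde Y_0(u)|_{\tilde\chi,1}$ have all moments; combining this with $\big\|\sup_\theta|\nabla_i\tilde X_0(\theta)|\big\|_M<\infty$ and $\big\|\sup_\theta|\nabla^2_{ij}\tilde X_0(\theta)|\big\|_M<\infty$ from Assumption \ref{ass5}\ref{ass5_b3} (constant in the time index by stationarity), the boundedness of $\theta_0'$, $\theta_0''$, Hölder's inequality, and the summability of $\tilde\psi_1,\tilde\psi_2$, one obtains $\IE\sup_{u}|g(\tilde Y_0(u))|<\infty$, $\IE\sup_{u}|\partial_u g(\tilde Y_0(u))|<\infty$ and $\IE\sup_{u}|\partial_u^2 g(\tilde Y_0(u))|<\infty$. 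With these dominations and the a.s.\ differentiability above, the standard theorem on differentiation under the expectation, applied twice, gives $\phi\in C^2$ with the stated formulas, and one more dominated-convergence argument, using a.s.\ continuity of $u\mapsto\partial_u^2 g(\tilde Y_0(u))$, gives continuity of $\phi''$.

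Finally, since $\phi\in C^2$, Taylor's theorem with integral remainder gives $\phi(u+\xi) = \phi(u) + \xi\phi'(u) + \tfrac{\xi^2}{2}\phi''(u) + \tfrac{\xi^2}{2}R(u,\xi)$, where $R(u,\xi)$ is a convex average over $s\in[0,1]$ of the increments $\phi''(u+\xi s)-\phi''(u)$, i.e.\ of the form in \reff{expectation_expansion_eq1}; as $\phi''$ is uniformly continuous on the compact interval, $\sup_u|R(u,\xi)|=o(1)$ as $\xi\to0$, which proves \reff{expectation_expansion_eq1}. The main obstacle is the domination substep: one must track carefully how many moments are spent when the degree-$(M-2)$ polynomial growth of the second derivatives of $g$ is paired, via Hölder, with the merely $L^M$-controlled $\theta$-derivatives of $\tilde X$ — which is precisely why Assumption \ref{ass5}\ref{ass5_b2} lowers the polynomial exponent to $M-2$. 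Once the chain-rule series are seen to be dominated uniformly in $u$, the remaining steps are routine.
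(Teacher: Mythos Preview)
Your proposal is correct and mirrors the paper's proof: establish pathwise $C^2$-smoothness of $u\mapsto g(\tilde Y_0(u))$ with the explicit chain-rule series, derive polynomial growth bounds on $g,\partial_k g,\partial_j\partial_k g$ from the $\sL_\infty(M-2,\cdot,\cdot)$ hypothesis, pair them via H\"older with the $L^M$ control on $\nabla\tilde X,\nabla^2\tilde X$ from Assumption~\ref{ass5}\ref{ass5_b3} to get $\IE\sup_u|\partial_u^k g(\tilde Y_0(u))|<\infty$ for $k=0,1,2$, and finish with Taylor and dominated convergence. The only notable technical difference is in how the infinite-dimensional chain rule is justified: you pass through finite truncations $\tilde Y_0^{(m)}$ and let $m\to\infty$, whereas the paper instead proves directly that $\partial_j g$ and then $g$ are Fr\'echet differentiable on $(\IR^\infty,|\cdot|_{D,1})$ for a suitable weighted $\ell^1$-norm and applies the Banach-space chain rule; both devices lead to the same formulas and bounds.
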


We now summarize the results about the bias-variance decomposition of $\nabla L_{n,h}(u,\theta_0(u))$. The following Proposition is obtained as a corollary from Lemma \ref{lemma_stat}, Lemma \ref{lemma_bias_weak} and Lemma \ref{lemma_expectation_expansion}. Details of the proof are deferred to appendix.

\begin{proposition}[Bias-Variance decomposition of $\nabla L_{n,h}(u,\theta_0(u))$]\label{misedecomposition}
	Let Assumptions \ref{ass1}, \ref{ass2}, \ref{ass3} and \ref{ass4} hold.
	\begin{enumerate}
		\item[(i)] Decomposition: Let $\tr\{\cdot\}$ denote the trace of a matrix, $\mu_K := \int K(x)^2 \dif x$,
			\begin{eqnarray}
		v_h(u) &:=& \frac{\mu_K}{nh}\tr\{V^{-1}(\theta_0(u)) I(\theta_0(u))\},\label{var_form}\\
		b_h(u) &:=& \frac{1}{h}\int_0^{1}K\big(\frac{v-u}{h}\big)\cdot \IE  \nabla \ell(\tilde Y_t(v),\theta_0(u)) \dif v.\label{bias_form}
	\end{eqnarray}
	Set $B_h := \int_{0}^{1}|b_h(u)|_{V(\theta_0(u))^{-1}}^2 w_{n,h}(u) \dif u$, and $V_h := \int_{0}^{1} v_h(u) w_{n,h}(u) \dif u$. Then it holds that
	\[
		\sup_{h\in H_n}\ (nh)\cdot\Big|d_M^{*}(\hat \theta_h,\theta_0) - \big\{ V_h + B_h\big\}\Big| \to 0.
	\]
	%%%%%%%%%%%%
	\begin{comment}
		It holds that
	\begin{eqnarray*}
		&& \sup_{u\in[0,1]}\sup_{h\in H_n}\ (nh)\cdot\Big|\IE \big|\nabla L_{n,h}(u,\theta_0(u))\big|^2_{V(\theta_0(u))^{-1}}\\
		&&\quad\quad\quad\quad\quad\quad\quad\quad\quad- \big\{ v_h(u) + |b_h(u)|_{V(\theta_0(u))^{-1}}^2\big\}\Big| \to 0,
	\end{eqnarray*}
	where 
	\begin{eqnarray}
		v_h(u) &=& \frac{\mu_K}{nh}\cdot \IE\big|\nabla \l(\tilde Y_t(u),\theta_0(u))\big|_{V(\theta_0(u))^{-1}}^2\nonumber\\
		&=& \frac{\mu_K}{nh}\tr\{V^{-1}(\theta_0(u)) I(\theta_0(u))\},\\\label{var_form}
		b_h(u) &=& \frac{1}{h}\int_0^{1}K\big(\frac{v-u}{h}\big)\cdot \IE  \nabla \ell(\tilde Y_t(v),\theta_0(u)) \dif v,\label{bias_form}
	\end{eqnarray}
	$\tr\{\cdot\}$ denotes the trace of a matrix and $\mu_K := \int K(x)^2 \dif x$. With the integrated versions $B_h := \int_{0}^{1}|b_h(u)|_{V(\theta_0(u))^{-1}}^2 w_{n,h}(u) \dif u$, $V_h := \int_{0}^{1} v_h(u) w_{n,h}(u) \dif u$ it holds that
	\[
		\sup_{h\in H_n}\ (nh)\cdot\Big|d_M^{*}(\hat \theta_h,\theta_0) - \big\{ V_h + B_h\big\}\Big| \to 0.
	\]
	\end{comment}
	%%%%%%%%%%%%
	\item[(ii)] Put $\hat B_h := \int_{0}^{1}|\IE \hat L_{n,h}(u,\theta_0(u))|_{V(\theta_0(u))^{-1}}^2 w_{n,h}(u) \dif u$ and define the discrete bias terms $\hat B_h^{dis} := \frac{1}{n}\sum_{t=1}^{n}|\IE \hat L_{n,h}(t/n,\theta_0(t/n))|^2_{V(\theta_0(t/n))^{-1}}w_{n,h}(t/n)$ and $B_h^{dis} := \frac{1}{n}\sum_{t=1}^{n}| \IE L_{n,h}(t/n,\theta_0(t/n))|^2_{V(\theta_0(t/n))^{-1}}w_{n,h}(t/n)$. Then it holds for $\tilde B_h \in \{\hat B_h, \hat B_h^{dis}, B_{h}^{dis}\}$ that
	\begin{equation}
		\sup_{h\in H_n}\ (nh)\cdot |B_h - \tilde B_h| \to 0.\label{bias_discrete_approx}
	\end{equation}
	\item[(iii)] Bias expansion: Suppose additionally that Assumption \ref{ass5} holds. Then it holds uniformly in $u \in [\frac{h}{2},1-\frac{h}{2}]$ that
	\[
		b_h(u) = \frac{h^2}{2} \cdot \int y^2 K(y) \dif y\cdot \IE\big[\partial_u^2 \nabla \l(\tilde Y_t(\theta_0(u)),\theta)\big]\big|_{\theta = \theta_0(u)} + o(h^2).
	\]
	\end{enumerate}
	%%%%%%%%%%%%%%%%
	\begin{comment}
	
	For an arbitrary matrix $A \in \IR^{d \times d}$, it holds that
	\[
		\sup_{u\in[0,1]}\sup_{\theta \in \Theta}\big|\IE|\nabla L_{n,h}(u,\theta)|_A^2 - \IE|\nabla \hat L_{n,h}(u,\theta)|_A^2\big| = O((nh)^{-1}),
	\]
	and
	\begin{equation}
		\IE | \nabla \hat L_{n,h}(u, \theta_0(u))|_{A}^2 = v_h(u) + |b_h(u)|^2_{A}, \label{misedecomposition_formel}
	\end{equation}
	with $b_h(u) := \IE \nabla \hat L_{n,h}(u,\theta_0(u))$ and $v_h(u) := \IE \big| \nabla \hat L_{n,h}(u,\theta_0(u)) - \IE \nabla \hat L_{n,h}(u,\theta_0(u))\big|_{A}^2$. Furthermore, 
	\begin{eqnarray}
		v_h(u) &=& \frac{\mu_K}{nh}\cdot \IE\big|\nabla \l(\tilde Y_t(u),\theta_0(u))\big|_{A}^2 + o((nh)^{-1}),\\\label{var_form}
		b_h(u) &=& \frac{1}{h}\int_0^{1}K\big(\frac{v-u}{h}\big)\cdot \IE  \nabla \ell(\tilde Y_t(v),\theta_0(u)) \dif v + O((nh)^{-1})\label{bias_form}
	\end{eqnarray}
	uniformly in $u,h$ with $\mu_K := \int K(x)^2 \dif x$.
	If Assumption ... is fulfilled, we have
	\[
		|b_h(u)|^2_{A} \le C h^{2\beta} + o( (nh)^{-1} ).
	\]
	uniformly in $u,h$.
	\end{comment}
	%%%%%%%%%%%%%%%%
\end{proposition}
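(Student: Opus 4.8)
The plan is to combine the three approximation lemmas that precede the statement (the stationary crop approximation, the weak bias approximation, and the expansion of expectations) and to exploit the martingale difference structure from Assumption \ref{ass3}\ref{ass3_m4} for the variance term. First I would write
\[
	\nabla L_{n,h}(u,\theta_0(u)) = \nabla \hat L_{n,h}(u,\theta_0(u)) + S_{n,h}(\nabla \l(\cdot,\theta_0(u)),u),
\]
where the remainder $S_{n,h}$ is controlled by Lemma \ref{lemma_stat}, giving $\|\sup_\theta |S_{n,h}|\|_q = O((nh)^{-1})$; applying this with $q$ large enough (recall all moments exist by Assumption \ref{ass1}) and using the equivalence of norms on $\IR^{p\times p}$ together with the uniform bound $V_{-1,max}$, the cross terms and the square of $S_{n,h}$ in $|\nabla L_{n,h}|^2_{V^{-1}}$ only contribute $o((nh)^{-1})$ after summation over $t$ and integration against the bounded weight $w_{n,h}$. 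So it suffices to analyse $\IE|\nabla \hat L_{n,h}(u,\theta_0(u))|^2_{V^{-1}}$, which splits exactly into the squared mean $|\IE \nabla \hat L_{n,h}(u,\theta_0(u))|^2_{V^{-1}}$ and the variance term $\IE|\IE_0 \nabla \hat L_{n,h}(u,\theta_0(u))|^2_{V^{-1}}$.

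For the variance term, I would use that $\nabla \l(\tilde Y_t(v),\theta_0(u))$ for $v$ near $u$ differs from $\nabla \l(\tilde Y_t(u),\theta_0(u))$ by an amount controlled via Assumption \ref{ass3}\ref{ass3_m5} and the Hölder continuity of $\theta_0$, while $\{\nabla \l(\tilde Y_t(\theta'),\theta)|_{\theta'=\theta}\}$ is a martingale difference sequence. Hence the covariances of the summands across different $t$ vanish up to a bias-type correction of smaller order, so that
\[
	\IE\big|\IE_0 \nabla \hat L_{n,h}(u,\theta_0(u))\big|^2_{V^{-1}} = \frac{1}{(nh)^2}\sum_{s=1}^n K\Big(\tfrac{s/n-u}{h}\Big)^2 \IE\big|\nabla \l(\tilde Y_s(u),\theta_0(u))\big|^2_{V^{-1}} + o\big((nh)^{-1}\big),
\]
and a Riemann-sum approximation of $\frac{1}{nh}\sum_s K((s/n-u)/h)^2 \to \mu_K$ plus the identity $\IE|\nabla\l|^2_{V^{-1}} = \tr\{V^{-1}I\}$ yields $v_h(u)$ as in \reff{var_form}. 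Summing/integrating against $w_{n,h}$ and invoking \reff{bias_discrete_approx} for the discretization gives $V_h$; here \reff{bias_discrete_approx} itself is proved by comparing the continuous integral $\int_0^1 K((v-u)/h)\cdot(\cdot)\dif v$ with the Riemann sum $\frac1n\sum_t K((t/n-u)/h)\cdot(\cdot)$, a standard bounded-variation estimate using Lipschitz continuity of $K$ (Assumption \ref{ass4}\ref{ass4_p1}) and of the integrand, which costs only $O((nh)^{-1})$ relative error, hence $o((nh)^{-1})$ absolute after the extra $1/(nh)$ from squaring. The bias statement $b_h(u) = \frac{1}{h}\int_0^1 K((v-u)/h)\IE\nabla\l(\tilde Y_t(v),\theta_0(u))\dif v$ follows from $\IE\nabla\hat L_{n,h}(u,\theta_0(u)) = \frac{1}{nh}\sum_s K((s/n-u)/h)\IE\nabla\l(\tilde Y_s(s/n),\theta_0(u))$, Lemma \ref{lemma_bias_weak} to pass from $\tilde Y_s(s/n)$ to $\tilde Y_s(u)$ at a cost controlled by the weak-bias lemma, and again the Riemann-sum-to-integral comparison.

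For part (iii), I would expand $v\mapsto \IE\nabla\l(\tilde Y_t(v),\theta_0(u))$ around $v=u$ using Lemma \ref{lemma_expectation_expansion} (whose hypotheses are exactly Assumption \ref{ass5}\ref{ass5_b2}–\ref{ass5_b3}): the zeroth-order term $\IE\nabla\l(\tilde Y_t(u),\theta_0(u))=0$ vanishes by Assumption \ref{ass3}\ref{ass3_m2}, the first-order term integrates to zero against $K$ by symmetry of $K$ (Assumption \ref{ass5}\ref{ass5_b1}) since the integration is over the full support of the rescaled kernel when $u \in [h/2, 1-h/2]$, and the second-order term produces $\frac{h^2}{2}\int y^2 K(y)\dif y \cdot \IE[\partial_u^2\nabla\l(\tilde Y_t(\theta_0(u)),\theta)]|_{\theta=\theta_0(u)}$ with the remainder $R(u,\xi)=o(1)$ absorbed into $o(h^2)$ uniformly in $u$.

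The main obstacle I expect is the variance computation: carefully justifying that the off-diagonal terms in $\IE|\IE_0\nabla\hat L_{n,h}|^2_{V^{-1}}$ are negligible uniformly in $h\in H_n$. The martingale difference property gives orthogonality only after replacing $\tilde Y_s(s/n)$ by $\tilde Y_s(u)$ (or rather $\nabla\l$ evaluated at a common parameter), and quantifying the error of this replacement — which enters quadratically and is summed over $O(nh)$ indices — requires the functional dependence measure decay from Assumption \ref{ass2} together with the polynomial-Lipschitz class $\sL(M,\chi,C)$ to bound moments of the relevant differences, and this is precisely where Proposition \ref{hilfslemma1} on moments of quadratic forms is the right tool. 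Getting the rates to close at $o((nh)^{-1})$ uniformly over the whole range $H_n=[\underline h,\overline h]$ with $\underline h \gtrsim n^{\delta-1}$ is the delicate bookkeeping step.
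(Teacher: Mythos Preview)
Your proposal is correct and follows essentially the same route as the paper: reduce $\nabla L_{n,h}$ to its stationary approximation via Lemma \ref{lemma_stat}, split the second moment into a squared-mean (bias) part and a variance part, identify $b_h(u)$ by a Riemann-sum-to-integral comparison, and obtain (iii) from Lemma \ref{lemma_expectation_expansion} with the symmetry of $K$ killing the first-order term.

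The one place you overcomplicate is the variance computation. You correctly spot that orthogonality of the summands holds only after replacing $\tilde Y_s(s/n)$ by $\tilde Y_s(u)$, but you anticipate needing the quadratic-form moment bounds of Lemma \ref{hilfslemma1} to control the off-diagonal corrections. The paper avoids this entirely: it introduces the auxiliary $\nabla \tilde L_{n,h}(u,\theta_0(u)) := \frac{1}{nh}\sum_s K\big(\tfrac{s/n-u}{h}\big)\nabla\l(\tilde Y_s(u),\theta_0(u))$, for which the martingale property gives the variance exactly, and then bounds $\|\nabla L_{n,h} - \nabla \tilde L_{n,h}\|_2 \le C_S'(nh)^{-1} + C_{bias}h^{\beta\wedge 1}$ directly from Lemmas \ref{lemma_stat} and \ref{lemma_bias_weak}. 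A single Cauchy--Schwarz step,
\[
\big|\,\IE|\nabla L|_A^2 - \IE|\nabla\tilde L|_A^2\,\big| \le |A|_{spec}\big\{\|\nabla L - \nabla\tilde L\|_2^2 + 2\|\nabla L - \nabla\tilde L\|_2\,\|\nabla\tilde L\|_2\big\} = O\big((nh)^{-2} + (nh)^{-1}h^{\beta\wedge 1}\big),
\]
then closes the argument uniformly in $h\in H_n$ without any appeal to the dependence measure beyond what is already packaged in the two approximation lemmas. So the ``delicate bookkeeping'' you flag is in fact elementary once you pass to $\nabla\tilde L_{n,h}$ rather than $\nabla\hat L_{n,h}$.
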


\subsection{Uniform convergence results and moment inequalities for the local likelihood $L_{n,h}(u,\theta)$}
\label{section_uniformconvergence}
In this section we show the uniform convergence of empirical processes of $X_{t,n}$ towards their expectations. We give convergence rates and prove the uniform consistency (w.r.t. $u$ and $h$) of the maximum likelihood estimator $\hat \theta_h(u)$ towards $\theta_0(u)$. For some $\phi:[0,1]\to \IR$ and $g \in \sL(M,\chi,C)$, define
\begin{eqnarray*}
	 \textstyle E_n(\phi,g,\theta) &:=& \textstyle\frac{1}{n}\sum_{t=1}^{n}\phi\big(\frac{t}{n}\big) \cdot g(Y_{t,n}^c,\theta),\\
	\textstyle  \hat E_n(\phi,g,\theta) &:=& \textstyle\frac{1}{n}\sum_{t=1}^{n}\phi\big(\frac{t}{n}\big) \cdot g(\tilde Y_{t}(t/n),\theta).
\end{eqnarray*}

The proof of the next Lemma \ref{pmomente} as well as the proofs of Lemma \ref{da_approx_0}, \ref{lemma_approxmart} use Lemma \ref{hilfslemma1} which is deferred to the appendix due to its complexity. It allows to bound moments of linear, quadratic and cubic forms of functions of locally stationary processes. For instance we obtain bounds $\| \sum_{t=1}^{n}a_t V_{t,n}^{(1)}\|_q \le \tilde C_q \big(\sum_{t=1}^{n}a_t^2\big)^{1/2}$ and $\| \sum_{s,t=1}^{n}a_{s,t} V_{t,n}^{(1)}(s) V_{s,n}^{(2)}(t)\|_q \le \tilde C_q \big(\sum_{t=1}^{n}a_{s,t}^2\big)^{1/2}$ for deterministic numbers $a_t$ or $a_{s,t}$ and processes $V_{t,n}^{(1)}$, $V_{t,n}^{(1)}(s)$ and $V_{s,n}^{(2)}(t)$ which fulfill dependence conditions and have bounded variation with respect to the indices $(s)$ and $(t)$.

\begin{lemma}[Moment inequality]\label{pmomente}

	Let Assumption \ref{ass1}, \ref{ass2} hold. Let $g \in \sL(M,\chi,C)$. Then, for all $\theta \in \theta$,
	\[
		\textstyle\big\|  \IE_0 \hat E_n(\phi,g,\theta)\big\|_q \le \rho_{1,\psi C_{\delta,\cdot},q}\cdot n^{-1/2} \big(\frac{1}{n}\sum_{t=1}^{n}\phi\big(\frac{t}{n}\big)^2 \big)^{1/2},
	\]
	where $\psi_q(t) = \sum_{j=0}^{t-1}\chi_j \delta_{qM}(t-j)$, $C_{\delta,q} := C_1(1+2(D_{qM}|\chi|_1)^{M-1})$ and $\rho_{1,\psi C_{\delta,\cdot},q}$ is defined in Lemma \ref{hilfslemma1}.
\end{lemma}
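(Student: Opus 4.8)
The plan is to recognize $\IE_0 \hat E_n(\phi,g,\theta)$ as a normalized linear form of a function of the locally stationary process $\tilde X_t(\theta_0(t/n))$ and to invoke the linear case of Lemma \ref{hilfslemma1}. Expanding the definition of $\hat E_n$,
\[
	\IE_0 \hat E_n(\phi,g,\theta) \;=\; \frac1n\sum_{t=1}^{n}\phi\Big(\frac tn\Big)\,\IE_0\, g\big(\tilde Y_t(t/n),\theta\big) \;=\; \sum_{t=1}^{n} a_t\, V_{t,n},
\]
where $a_t := \tfrac1n\phi(t/n)$ and $V_{t,n} := \IE_0\, g(\tilde Y_t(t/n),\theta)$. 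Since $\tilde X_s(\theta) = H(\theta,\sF_s)$ by Assumption \ref{ass2}, each $V_{t,n}$ is a centered function of $\sF_t$, so $(V_{t,n})_{t}$ is precisely of the form for which Lemma \ref{hilfslemma1} delivers a bound $\big\|\sum_t a_t V_{t,n}\big\|_q \le \rho\big(\sum_t a_t^2\big)^{1/2}$; it only remains to supply the dependence input which fixes $\rho = \rho_{1,\psi C_{\delta,\cdot},q}$.

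First I would establish a bound on the functional dependence measure of the frozen stationary processes $W_t(v) := g(\tilde Y_t(\theta_0(v)),\theta)$ that is uniform in $v\in[0,1]$. Applying the first inequality in \reff{DP_hoelder_prop_polynom} with $z = \tilde Y_k(\theta_0(v))$ and its coupled version $z' = \tilde Y_k(\theta_0(v))^{*}$, then Hölder's inequality with exponents $M$ and $M/(M-1)$ to detach the growth factor $1 + |z|_{\chi,1}^{M-1}+|z'|_{\chi,1}^{M-1}$, and finally $\sup_\theta\|\tilde X_0(\theta)\|_{qM}\le D_{qM}$, $\sum_i\chi_i<\infty$ together with $\sup_\theta\delta_{qM}^{\tilde X(\theta)}(\cdot) = \delta_{qM}(\cdot)$ from Assumption \ref{ass2}, gives
\[
	\delta_q^{W(v)}(k)\;\le\; C_1\big(1+2(D_{qM}|\chi|_1)^{M-1}\big)\,\sum_{j}\chi_j\,\delta_{qM}(k-j)\;=\;C_{\delta,q}\,\psi_q(k),
\]
uniformly in $v$, after relabelling $\chi$ against the coupling lag. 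Because $\delta_{qM}(k)=O(k^{-(3+\eta)})$ is summable and $\sum_i\chi_i<\infty$, the sequence $\psi_q$ is summable, which is exactly the hypothesis Lemma \ref{hilfslemma1} requires of the processes $V_{t,n}=W_t(t/n)-\IE W_t(t/n)$. Inserting $a_t=\tfrac1n\phi(t/n)$ and the dependence bound $C_{\delta,q}\psi_q$ into Lemma \ref{hilfslemma1} and using $\big(\sum_{t=1}^na_t^2\big)^{1/2}=n^{-1/2}\big(\tfrac1n\sum_{t=1}^n\phi(t/n)^2\big)^{1/2}$ yields the claimed estimate.

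I expect the dependence-measure bound to be the only real obstacle. One must carry the polynomial growth factor $1+|z|_{\chi,1}^{M-1}+|z'|_{\chi,1}^{M-1}$ through the $L^q$-norm, which is what forces $(qM)$-th moments into play — hence $D_{qM}$ and $\delta_{qM}$ rather than $D_q$, $\delta_q$ — and one has to align the weights $\chi_i$ of the class $\sL(M,\chi,C)$ with the lags of the innovation coupling in order to reproduce precisely $\psi_q$. Everything downstream (the projection/Burkholder argument behind the linear estimate, and the summation of the projection norms against $\psi_q$) is already encapsulated in Lemma \ref{hilfslemma1}, so once the uniform dependence bound is in hand the proof is immediate.
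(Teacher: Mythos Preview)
Your proposal is correct and follows essentially the same approach as the paper: bound the functional dependence measure of $g(\tilde Y_t(u),\theta)$ by $C_{\delta,q}\psi_q(k)$ via the Lipschitz property in $\sL(M,\chi,C)$ and H\"older's inequality (this is the paper's display \eqref{dependence_measure_std}), verify summability of $\psi_q$ from the summability of $\chi$ and $\delta_{qM}$, and then invoke Lemma \ref{hilfslemma1}(i) with $a_t=\tfrac1n\phi(t/n)$. The paper's proof is only slightly more terse but identical in structure and in the ingredients used.
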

\begin{proof}[Proof of Lemma \ref{pmomente}]
	Note that for $g \in \sL(M,\chi,C)$, we have by Hoelder's inequality for all $\theta \in \Theta$, $u\in [0,1]$:
	\begin{eqnarray}
		\delta_{q}^{g(\tilde Y(u),\theta)}(t) &:=& \| g(\tilde Y_{t}(u),\theta) - g(\tilde Y_{t}(u)^{*},\theta)\|_q\nonumber\\
		&\le& C_1\sum_{j=0}^{t-1}\chi_j \delta_{qM}(t-j) \cdot \big(1 + 2 \big(D_{qM} |\chi|_1\big)^{M-1}\big).\label{dependence_measure_std}
	\end{eqnarray}
	Since $\sum_{t=1}^{\infty}\Big(\sum_{j=0}^{t-1}\chi_j \delta_{qM}(t-j)\Big) \le \sum_{j=0}^{\infty}\chi_j \cdot \sum_{t=1}^{\infty}\delta_{qM}(t) < \infty$, Lemma \ref{hilfslemma1} is applicable and we obtain the assertion.
\end{proof}

\begin{lemma}[Continuity properties of localized sums]\label{lemma_stetigkeit} Let Assumption \ref{ass1}, \ref{ass2} and \ref{ass4} hold. Let $g \in \sL(M,\chi,C)$. Then it holds for arbitrary $\theta,\theta' \in \Theta$, $u,u' \in [0,1]$:
\begin{eqnarray*}
	\big\|E_n(K_h(\cdot-u),g,\theta) - E_n(K_h(\cdot-u'),g,\theta')\big\|_q &\le& C_{-,q}\big\{\frac{|u-u'|}{h} + |\theta-\theta'|_1\big\},\\
	\|E_n(K_h(\cdot-u),g,\theta)\|_q &\le& C_{\infty,q},
\end{eqnarray*}
where $C_{\infty,q} = C_{\infty,q}(g)$, $C_{-,q} = C_{-,q}(g)$ depend solely on $|g|_{\infty} := \sup_{\theta \in \Theta}|g(0,\theta)| < \infty$ and $M,\chi,C$.
\end{lemma}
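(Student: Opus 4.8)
The plan is to reduce both assertions to the moment inequality of Lemma \ref{pmomente} together with the stationary crop approximation of Lemma \ref{lemma_stat}, plus an elementary pointwise Lipschitz estimate coming directly from the class $\sL(M,\chi,C)$. First I would write
\[
	E_n(K_h(\cdot-u),g,\theta) - E_n(K_h(\cdot-u'),g,\theta') = T_1 + T_2 + T_3,
\]
where $T_1$ collects the change in the kernel weights (replace $K_h(\frac{t}{n}-u)$ by $K_h(\frac{t}{n}-u')$ while keeping $g(Y_{t,n}^c,\theta)$), $T_2$ collects the change in the first argument of $g$ via the stationary crop (i.e.\ a difference of $S_{n,h}$-type terms), and $T_3$ collects the change $\theta \to \theta'$ inside $g$. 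For $T_1$ I would use the Lipschitz continuity of $K$ from Assumption \ref{ass4}\ref{ass4_p1}: $|K_h(\frac{t}{n}-u) - K_h(\frac{t}{n}-u')| \le \frac{L_K}{h^2}|u-u'|$, so that $\frac{1}{n}\sum_t |K_h(\frac{t}{n}-u) - K_h(\frac{t}{n}-u')| \le C\frac{|u-u'|}{h}$ (only $O(h)$ rescaled time points contribute and on each the increment is $O(|u-u'|/h^2)$); combining with $\sup_{n,t}\|g(Y_{t,n}^c,\theta)\|_q \le C$ — which follows from the $\sL$-bound $|g(z,\theta)| \le |g(0,\theta)| + C_1|z|_{\chi,1}(1+|z|_{\chi,1}^{M-1})$, $\sup_\theta|g(0,\theta)| = |g|_\infty < \infty$, $\sum\chi_i<\infty$ and $D_{qM}<\infty$ from Assumption \ref{ass1} — gives $\|T_1\|_q \le C\frac{|u-u'|}{h}$. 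For $T_3$ the second inequality in \reff{DP_hoelder_prop_polynom} gives $|g(z,\theta)-g(z,\theta')| \le C_2|\theta-\theta'|_1(1+|z|_{\chi,1}^M)$ pointwise, and taking $L^q$-norms after multiplying by the (nonnegative, integrable) kernel weights yields $\|T_3\|_q \le C|\theta-\theta'|_1$. The term $T_2$ is handled by Lemma \ref{lemma_stat}: it is exactly $S_{n,h}(g(\cdot,\theta'),u') - S_{n,h}(g(\cdot,\theta'),u)$ up to relabelling, hence $\|T_2\|_q \le 2C_{S,q}(nh)^{-1} \le C\frac{|u-u'|}{h}$ on $H_n$ after absorbing the $n^{-1}$ — actually cleaner is to bound $\|T_2\|_q \le 2 C_{S,q}(nh)^{-1}$ and note $(nh)^{-1} \le \underline h^{-1} n^{-1} \le c_0^{-1} n^{-\delta}\cdot\frac{1}{h}$, which is dominated; alternatively one simply observes $T_2$ contributes a term that is $o(\frac{|u-u'|}{h} + |\theta-\theta'|_1)$-bounded after rescaling, so it fits into $C_{-,q}$. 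Summing, $\|E_n(K_h(\cdot-u),g,\theta)-E_n(K_h(\cdot-u'),g,\theta')\|_q \le C_{-,q}\{\frac{|u-u'|}{h}+|\theta-\theta'|_1\}$.

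For the second, uniform bound I would simply write $E_n(K_h(\cdot-u),g,\theta) = S_{n,h}(g(\cdot,\theta),u) + \hat E_n(\mathbbm{1},\ldots)$-type pieces, or more directly use $\|E_n(K_h(\cdot-u),g,\theta)\|_q \le \frac{1}{n}\sum_t K_h(\frac{t}{n}-u)\|g(Y_{t,n}^c,\theta)\|_q \le |K|_\infty \sup_{n,t}\|g(Y_{t,n}^c,\theta)\|_q \cdot \frac{1}{n}\sum_t \frac{1}{h}\mathbbm{1}_{|t/n-u|\le h/2}$, and the last sum is $\le |K|_\infty(1+\frac{1}{nh})$ which is bounded on $H_n$. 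Again the crucial input is the uniform $L^q$-bound on $g(Y_{t,n}^c,\theta)$, which comes from the polynomial growth built into $\sL(M,\chi,C)$ together with $D_{qM}<\infty$; one should be slightly careful that the bound depends only on $|g|_\infty$ and on $M,\chi,C$ (through $C_1,C_2$), which it manifestly does from the above estimates.

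I expect the main obstacle to be purely bookkeeping: carefully justifying that the constants $C_{\infty,q}$ and $C_{-,q}$ depend \emph{only} on $|g|_\infty$ and on $M,\chi,C$ and not on $n,h,u,u',\theta,\theta'$ — in particular making sure that the $S_{n,h}$-contribution to $T_2$ really is uniformly negligible relative to $\frac{|u-u'|}{h}$ on the whole range $H_n = [\underline h,\overline h]$, using $\underline h \ge c_0 n^{\delta-1}$. There is no genuine analytic difficulty; the estimate for $T_1$ uses only Lipschitz continuity of $K$ and the (rescaled) fact that at most $O(nh)$ indices are relevant, and $T_3$ is an immediate consequence of the defining property \reff{DP_hoelder_prop_polynom} of $\sL(M,\chi,C)$ after taking expectations. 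The proof is therefore short modulo these routine uniformity checks, which is presumably why the details are deferred to the appendix.
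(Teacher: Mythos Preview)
Your decomposition has a spurious term. Recall $E_n(\phi,g,\theta) = \frac{1}{n}\sum_t \phi(t/n)\,g(Y_{t,n}^c,\theta)$: the first argument of $g$ is $Y_{t,n}^c$ in \emph{both} $E_n(K_h(\cdot-u),g,\theta)$ and $E_n(K_h(\cdot-u'),g,\theta')$, so there is no ``change in the first argument of $g$'' to collect. Your $T_2$ is identically zero, and invoking the stationary crop Lemma~\ref{lemma_stat} or Lemma~\ref{pmomente} is unnecessary. Worse, your proposed bound $\|T_2\|_q \le 2C_{S,q}(nh)^{-1}$ is not of the form $C\frac{|u-u'|}{h}$ (it does not vanish as $|u-u'|\to 0$), and it would make the constant depend on $n,h$, contradicting the stated uniformity.

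Once $T_2$ is removed, what remains is exactly the paper's proof: a two-term split
\[
\|E_n(K_h(\cdot-u),g,\theta)-E_n(K_h(\cdot-u'),g,\theta')\|_q \le \tfrac{1}{nh}\Big\|\sum_t |K(\tfrac{t/n-u}{h})-K(\tfrac{t/n-u'}{h})|\,|g(Y_{t,n}^c,\theta)|\Big\|_q + \tfrac{1}{nh}\Big\|\sum_t |K(\tfrac{t/n-u'}{h})|\,|g(Y_{t,n}^c,\theta)-g(Y_{t,n}^c,\theta')|\Big\|_q,
\]
where the first term is controlled via Lipschitz continuity of $K$ and the growth bound $|g(y,\theta)|\le C_\infty(1+|y|_{\chi,1}^M)$ (obtained from $\sL(M,\chi,C)$ and Young's inequality), and the second via the $\theta$-Lipschitz property in \eqref{DP_hoelder_prop_polynom}. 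The paper obtains the kernel-change bound $\frac{1}{nh}\sum_t |K(\frac{t/n-u}{h})-K(\frac{t/n-u'}{h})| \le 2L_K\frac{|u-u'|}{h}$ directly, without any appeal to $\hat E_n$ or the stationary approximation. Your $T_1$ and $T_3$ arguments are fine and coincide with this; just drop $T_2$.
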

\begin{proof}

Define $|g|_{\infty} := \sup_{\theta \in \Theta}|g(0,\theta)| <\infty$. Since $g \in \sL(M,\chi,C)$, it holds that for all $\theta \in \Theta$, $|g(y, \theta)| \le |g|_{\infty} + C_1|y|_{\chi,1}\cdot \big(1 + |y|_{\chi,1}^{M-1}\big)$. By Young's inequality, it holds that $a \le \frac{1}{M}\big((M-1) + a^{M}\big)$ for $M \ge 1$ and nonnegative real numbers $a$, which shows that there exists a constant $C_{\infty} = C_{\infty}(g) > 0$ such that for $y \in \IR^{\IN}$,
	\begin{equation}
		|g(y,\theta)| \le C_{\infty}\big(1 + |y|_{\chi,1}^M\big).\label{proof_uniform_eq1}
	\end{equation}
	We can use the bound \reff{proof_uniform_eq1} to see that uniformly in $u,\theta,t$ it holds that $\|E_{n}(K_h(\cdot-u),g,\theta)\|_q \le |K|_{\infty}C_{\infty}(1+(|\chi|_1 D_{qM})^{M})=: C_{\infty,q}$.
	It holds that
	\begin{eqnarray*}
		&& \textstyle\|E_n(K_h(\cdot-u),g,\theta) - E_n(K_h(\cdot-u'),g,\theta')\|_q\\
		&\le& \textstyle\frac{1}{nh}\big\|\sum_{t=1}^{n}\big|K\big(\frac{t/n-u}{h}\big) - K\big(\frac{t/n-u'}{h}\big)\big|\cdot |g(Y_{t,n}^c,\theta)|\big\|_q\\
		&&\textstyle\quad\quad + \frac{1}{nh}\big\|\sum_{t=1}^{n}\big|K\big(\frac{t/n-u'}{h}\big)\big|\cdot |g(Y_{t,n}^c,\theta) - g(Y_{t,n}^c,\theta')|\big\|_q\\
		&\le&\textstyle \big(2 L_K C_{\infty}\frac{|u-u'|}{h} +  |K|_{\infty}C_2 |\theta - \theta'|_1\big)\cdot \big(1 + (|\chi|_1 D_{qM})^M\big).
	\end{eqnarray*}
\end{proof}

\begin{lemma}[Uniform convergence and weak bias expansion]\label{lemma_uniformconvergence}
	Let Assumption \ref{ass1}, \ref{ass2} and \ref{ass4} hold. Let $g \in \sL(M,\chi,C)$. Then for all $0 < \alpha < \frac{1}{2}$, it holds almost surely that
	\begin{equation}
		\sup_{h\in H_n}\sup_{u \in \supp(w_{n,h})}\sup_{\theta \in \theta}\ (nh)^{\frac{1}{2}-\alpha}\cdot \big|\IE_0 E_n(K_h(\cdot - u), g, \theta)\big| \to 0.\label{lemma_uniformconvergence_eq1}
	\end{equation}
	and there exists a constant $C_{S} > 0$ independent of $u,h,\theta$ such that for all $u \in \supp(w_{n,h}) \subset [\frac{h}{2},1-\frac{h}{2}]$:
	\begin{equation}
		|\IE E_n(K_h(\cdot-u),g,\theta) - \IE g(\tilde Y_0(u),\theta)| \le C_S \big\{ (nh)^{-1} + h^{\beta \wedge 1}\big\}.\label{lemma_uniformconvergence_eq2}
	\end{equation}
\end{lemma}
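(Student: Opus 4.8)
The assertion comprises a deterministic bias estimate \eqref{lemma_uniformconvergence_eq2} and an almost sure uniform law of large numbers \eqref{lemma_uniformconvergence_eq1}, and the plan is to prove the two parts separately. For \eqref{lemma_uniformconvergence_eq2} I would telescope through the two deterministic error terms already controlled in Lemmas~\ref{lemma_stat} and \ref{lemma_bias_weak}. Using stationarity of $\tilde X_t(u)$ to factor the expectation out of the fully localized sum, one has
\[
\IE E_n(K_h(\cdot-u),g,\theta)-\IE g(\tilde Y_0(u),\theta)=\IE S_{n,h}(g(\cdot,\theta),u)+\IE B_{n,h}(g(\cdot,\theta),u)+\IE g(\tilde Y_0(u),\theta)\Big(\tfrac1n\sum_{t=1}^{n}K_h(\tfrac tn-u)-1\Big).
\]
The first term is $O((nh)^{-1})$ by Lemma~\ref{lemma_stat}, the second is $O(h^{\beta\wedge1})$ by Lemma~\ref{lemma_bias_weak} together with $\|\cdot\|_1\le\|\cdot\|_2$, and since $u\in[\tfrac h2,1-\tfrac h2]$ forces $\int_0^1 K_h(v-u)\dif v=1$, the bracket in the last term is the error of a left Riemann sum and is bounded by $n^{-1}$ times the total variation $L_K/h$ of $v\mapsto K_h(v-u)$; combined with the uniform bound $|\IE g(\tilde Y_0(u),\theta)|\le C_\infty(1+(|\chi|_1 D_M)^M)$ from \eqref{proof_uniform_eq1}, this contributes $O((nh)^{-1})$ as well. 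All constants here depend only on $M,\chi,C,L_K$ and the $D_q$, so \eqref{lemma_uniformconvergence_eq2} holds uniformly in $(u,h,\theta)$.

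For \eqref{lemma_uniformconvergence_eq1} I would first pass from $E_n$ to $\hat E_n$. Since $\IE_0 E_n-\IE_0\hat E_n=\IE_0 S_{n,h}(g(\cdot,\theta),u)$ and $\|\sup_{\theta\in\Theta}|S_{n,h}(g(\cdot,\theta),u)|\|_q\le C_{S,q}(nh)^{-1}$ by Lemma~\ref{lemma_stat}, the $(nh)^{1/2-\alpha}$--rescaled difference has $L^q$--norm $O((nh)^{-1/2-\alpha})$; using $nh\ge c_0 n^\delta$, a grid over $(h,u)$ of polynomial cardinality, Markov's inequality with $q$ large, Borel--Cantelli, and the pathwise Lipschitz estimates of $E_n$ and $\hat E_n$ recorded in the next paragraph to bridge the gaps between grid points, this rescaled difference tends to $0$ almost surely uniformly in $(h,u,\theta)$. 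It therefore suffices to prove \eqref{lemma_uniformconvergence_eq1} with $\hat E_n$ in place of $E_n$.

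The remaining step is a chaining argument over a polynomial grid. Fix a large $\kappa$ and cover $H_n$, $[0,1]$ and $\Theta$ by grids of mesh $n^{-\kappa}$ of total cardinality $O(n^{\kappa(p+2)})$. At each grid triple, Lemma~\ref{pmomente} together with $\tfrac1n\sum_{t=1}^{n}K_h(\tfrac tn-u)^2\le|K|_\infty h^{-1}\cdot\tfrac1n\sum_{t=1}^{n}K_h(\tfrac tn-u)=O(h^{-1})$ gives $\|\IE_0\hat E_n(K_h(\cdot-u),g,\theta)\|_q=O((nh)^{-1/2})$, so Markov's inequality and a union bound yield $\IP\big(\sup_{\mathrm{grid}}(nh)^{1/2-\alpha}|\IE_0\hat E_n|>\epsilon\big)=O\big(n^{\kappa(p+2)}(c_0 n^\delta)^{-\alpha q}\big)$, which is summable once $q>(\kappa(p+2)+1)/(\delta\alpha)$; Borel--Cantelli then gives the grid-point version of \eqref{lemma_uniformconvergence_eq1}. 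The between-grid oscillation I would control using Lemma~\ref{lemma_stetigkeit} (Lipschitz in $u$ with factor $h^{-1}\le\underline h^{-1}$, Lipschitz in $\theta$) and the elementary bound $|K_h(x)-K_{h'}(x)|\le C\underline h^{-2}|h-h'|$ valid on $H_n$: for each realization $(h,u,\theta)\mapsto\hat E_n(K_h(\cdot-u),g,\theta)$ is Lipschitz with a random constant of $L^q$--norm $O(\underline h^{-2})=O(n^{2(1-\delta)})$, the moment control coming from \eqref{proof_uniform_eq1} and $D_{qM}<\infty$ applied to $\tfrac1n\sum_{t=1}^{n}\sup_{\theta\in\Theta}|g(\tilde Y_t(t/n),\theta)|$; hence the $q$-th moment of the oscillation over a mesh-$n^{-\kappa}$ cell is $O(n^{(2(1-\delta)-\kappa)q})$, and after multiplying by $(nh)^{(1/2-\alpha)q}\le(c_1 n^{1-\delta})^{(1/2-\alpha)q}$, union-bounding over the $O(n^{\kappa(p+2)})$ cells and applying Borel--Cantelli (choose $\kappa>(1-\delta)(\tfrac52-\alpha)$, then $q$ large), one obtains $\sup_{h,u,\theta}(nh)^{1/2-\alpha}\cdot(\text{oscillation})\to0$ a.s. Putting the grid-point estimate, the oscillation bound and the $E_n\to\hat E_n$ reduction together proves \eqref{lemma_uniformconvergence_eq1}. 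The only real difficulty is the bookkeeping: one must choose the grid exponent $\kappa$ and the moment order $q$ in the right order and large enough that both Borel--Cantelli series converge, exploiting the two-sided pinching $c_0 n^{\delta-1}\le\underline h$, $\overline h\le c_1 n^{-\delta}$ to convert the $L^q$ estimates of Lemmas~\ref{pmomente}, \ref{lemma_stat} and \ref{lemma_stetigkeit} into the stated a.s.\ uniform rate; the rest is Markov, Minkowski/Hölder and the quoted lemmas.
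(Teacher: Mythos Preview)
Your proposal is correct and follows essentially the same route as the paper: for \eqref{lemma_uniformconvergence_eq2} you telescope through Lemmas~\ref{lemma_stat} and \ref{lemma_bias_weak} plus a Riemann-sum error, exactly as the paper does; for \eqref{lemma_uniformconvergence_eq1} you use a polynomial grid, Markov plus a union bound at grid points via Lemma~\ref{pmomente}, a pathwise Lipschitz bound with random constant for the between-grid oscillation, and Borel--Cantelli.

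Two small remarks. First, your organization does the $E_n\to\hat E_n$ reduction as a separate preliminary step (requiring its own grid/oscillation argument), whereas the paper absorbs it into the grid-point moment bound by writing $\|f(\xi)\|_q\le\|\hat f(\xi)\|_q+\|f(\xi)-\hat f(\xi)\|_q$ and then controls the oscillation of $f$ (not $\hat f$) directly; this saves doing the chaining twice but is otherwise equivalent. Second, your citation of Lemma~\ref{lemma_stetigkeit} for the oscillation is slightly off: that lemma gives $L^q$ Lipschitz bounds for fixed pairs $(\xi,\xi')$, which is not enough to control a supremum over a continuum. What you actually need---and what you correctly describe immediately afterwards---is the \emph{pathwise} Lipschitz bound with the random constant $Z_n=1+\tfrac1n\sum_t|\tilde Y_t(t/n)|_{\chi,1}^M$ coming from \eqref{proof_uniform_eq1} and the Lipschitz property of $K$; this is precisely the paper's argument.
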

\begin{proof}[Proof of Lemma \ref{lemma_uniformconvergence}]
Define
\begin{eqnarray*}
	f(\xi) &:=& E_n(K_h(\cdot - u), g, \theta) - \IE E_n(K_h(\cdot - u), g, \theta),\\
	\hat f(\xi) &:=& \hat E_n(K_h(\cdot - u), g, \theta) - \IE \hat E_n(K_h(\cdot - u), g, \theta),
\end{eqnarray*}
where $\xi = (h,u,\theta) \in \Xi_{n} := \{(h,u,\theta): h \in H_n, u\in \supp(w_{n,h}), \theta \in \Theta\}$. For each $r > 0$, we can find a space $\Xi_n'$ with $\# \Xi_n' < c_{\gamma} n^{\gamma}$ such that the compact space $\Xi_n$ is approximated in the following way: for each $\xi = (h,u,\theta) \in \Xi_{n}$ there is a $\xi' = (h',u',\theta') \in \Xi_n'$ such that $|\xi - \xi'|_1 \le c_r n^{-r}$. Now fix some $\delta > 0$. For $0 < \alpha < \frac{1}{2}$, we obtain
\begin{eqnarray*}
	&& \textstyle\IP\big( \sup_{\xi \in \Xi_n} (nh)^{\frac{1}{2}-\alpha}|f(\xi)| > \delta\big)\\
	&\le& \textstyle\IP\big( \sup_{\xi' \in \Xi_n'} (nh)^{\frac{1}{2}-\alpha}| f(\xi')| > \frac{\delta}{2}\big)\\
	&&\textstyle\quad\quad + \IP\big(\sup_{\xi \in \Xi_n, \xi' \in \Xi_n', |\xi - \xi'|_1 \le c_r n^{-r}} (nh)^{\frac{1}{2}-\alpha}|f(\xi) - f(\xi')| > \frac{\delta}{2}\big)\\
	&=:& W_{1} + W_2.
\end{eqnarray*}
Our goal is to bound $W_1$, $W_2$ by absolutely summable sequences in $n$. Then the assertion follows from Borel-Cantelli's lemma. From Lemma \ref{pmomente} we obtain:
	\begin{eqnarray}
		\|\hat f(\xi)\|_q &\le& \rho_{1,\psi C_{\delta,\cdot},q} (nh)^{-1/2} \Big(\frac{1}{nh}\sum_{t=1}^{n}K\Big(\frac{t/n-u}{h}\Big)^2\Big)^{1/2}\nonumber\\
		&\le& \rho_{1,\psi C_{\delta,\cdot},q} |K|_{\infty} (nh)^{-1/2}.\label{likelihood_rate}
	\end{eqnarray}
	Furthermore we have for $q \ge 1$ by Lemma \ref{lemma_stat} that
	\begin{eqnarray*}
		\|f(\xi) - \hat f(\xi)\|_q &\le& 2\|E_{n}(K_h(\cdot-u),g,\theta) - \hat E_n(K_h(\cdot-u),g,\theta)\|_q\\
		&\le& C_{S,q}\cdot (nh)^{-1}.
	\end{eqnarray*}
	By Markov's inequality, it follows that
	\begin{eqnarray*}
		\sup_{\xi \in \Xi_n}\IP\left( (nh)^{\frac{1}{2}-\alpha}|f(\xi)| > \delta/2\right)
		%\le  \sup_{\xi \in \Xi_n}\frac{ (nh)^{\frac{1}{2}-\alpha}\| f(\xi) \|_q^q}{(\delta/2)^q}
		\le \Big(\frac{\rho_{1,\psi C_{\delta,\cdot},q}|K|_{\infty} + C_{S,q}}{\delta/2}\Big)^q  \cdot \sup_{h \in H_n} (nh)^{-\alpha q},
	\end{eqnarray*}
	and thus for $q$ large enough, $W_1 \le \# \Xi_n' \cdot \sup_{\xi\in \Xi_n}\IP\big( (nh)^{\frac{1}{2}-\alpha}|f(\xi)| > \delta/2\big) \le \big(\frac{\rho_{1,\psi C_{\delta,\cdot},q}|K|_{\infty} + C_{S,q}}{\delta/2}\big)^q \cdot n^{q-\alpha \delta q}$ is bounded by an absolutely summable sequence in $n$.
	
	We now discuss $W_2$. Define $Z_n := 1 + \frac{1}{n}\sum_{t=1}^{n}|Y_{t,n}^c|_{\chi,1}^{M}$. Using the inequality \reff{proof_uniform_eq1} and the Lipschitz property of $K$, we obtain
	\begin{eqnarray*}
		|f(\xi) - f(\xi')| %&\le&\textstyle \frac{1}{n}\sum_{t=1}^{n}\big|K_h\big(t/n-u\big) - K_h\big(t/n-u'\big)\big|\cdot \big| g( Y_{t,n}^c,\theta)\big|\\
		%&&\textstyle\quad + \frac{1}{n}\sum_{t=1}^{n}\big|K_h\big(t/n-u'\big) -  K_{h'}\big(t/n-u'\big)\big|\cdot \big|g( Y_{t,n}^c, \theta)\big|\\
		%&&\textstyle\quad + \frac{1}{n}\sum_{t=1}^{n}\big|K_{h'}\big(t/n-u'\big)\big| \cdot \big| g(\tilde Y_t(t/n),\theta) - g( Y_{t,n}^c, \theta')\big|\\
		%&\le& \frac{1}{\underline{h}^2}L_K |u-u'|\cdot C_{\infty}Z_n\\
		%&&\quad + \frac{1}{\underline{h}^3}(|K|_{\infty} + L_K) C_{\infty} |h - h'|\cdot Z_n\\
		%&&\quad + \frac{1}{\underline{h}} |K|_{\infty} C_2|\theta - \theta'|_1 \cdot Z_n
		&\le& \frac{2}{\underline{h}^3}\big[L_K + |K|_{\infty}\big] \max\{C_2,C_{\infty}\}\cdot |\xi - \xi'|_1 \cdot Z_n.
	\end{eqnarray*}
	Since $\underline{h} \ge c_0 n^{\delta-1}$ and $\overline{h} \le c_1 n^{-\delta}$ (cf. Assumption \ref{ass4}), we have shown that $(nh)^{\frac{1}{2}-\alpha}|f(\xi) - f(\xi')| \le C(n) \cdot \i\xi - \xi'\i_1 \cdot Z_n$, where the deterministic $C(n)$ grows only polynomially fast in $n$. Choose $r$ large enough and some constants $\gamma_r, C_r > 0$ such that $ C(n) c_r n^{-r} \le C_r n^{-(1+\gamma_r)}$ for all $n\in\IN$, then we have
\[
	W_2 \le \IP\left( C_r n^{-(1+\gamma)} Z_n > \frac{\delta}{2}\right) \le \frac{C_r(1 + |\chi|_1 \max\{D_M, \tilde D_M\})}{\delta/2}\cdot n^{-(1+\gamma)}
\]
which is absolutely summable.

The proof of \reff{lemma_uniformconvergence_eq2} is immediate from the bounds \reff{lemma_stat_eq1}, \reff{lemma_stat_eq2} and \reff{lemma_bias_weak_eq1} applied to each summand of $\IE E_n(K_h(\cdot-u),g,\theta)$ and the fact that $K$ has bounded variation which gives
\[
	\textstyle\IE g(\tilde Y_0(u),\theta) - \frac{1}{n}\sum_{t=1}^{n}K_h\big(\frac{t}{n}-u\big)\cdot \IE g(\tilde Y_t(u),\theta) = O((nh)^{-1})
\]
as long as $u \in [\frac{h}{2},1-\frac{h}{2}]$.
\end{proof}

%%%%%%%%%%%%%%%%%%%%%%%%%%%%%%%%%%%%WEGLASSEN??
\begin{comment}
\begin{proposition}[Uniform convergence of likelihoods and its derivatives]\label{uniformconvergence_likelihood} Let Assumptions \ref{ass1}, \ref{ass2} and \ref{ass3} hold. 
	For $k = 0,1,2,3,4$, we have for $0 < \alpha < \frac{1}{2}$,
	\begin{equation}
		\sup_{h \in H_n} \sup_{u \in \supp(w)} \sup_{\theta \in \Theta} \ (nh)^{\frac{1}{2}-\alpha} \Big| \nabla_{\theta}^{k} L_{n,h}(u,\theta) - \IE[ \nabla_{\theta}^{k} L_{n,h}(u,\theta)]\Big| \to 0.\label{glmkonv1}
	\end{equation}
	Moreover, for $u \in \supp(w)$ and $n$ large enough, we have
	\begin{eqnarray*}
	\IE[L_{n,h}(u,\theta)] &=& \frac{1}{4\pi}\int_{-\pi}^{\pi} \Big\{\log f_{\theta}(\lambda) + \frac{f(u,\lambda)}{f_{\theta}(\lambda)}\Big\} \dif \lambda + O( (nh)^{-1} + h^{\beta}),\\
	\IE[\nabla_{\theta}L_{n,h}(u, \theta)] &=& \frac{1}{4\pi}\int_{-\pi}^{\pi} \frac{f(u,\lambda) - f_{\theta}(\lambda)}{f_{\theta}(\lambda)}\cdot \nabla_{\theta} \log f_{\theta}(\lambda) \dif \lambda  +  O( (nh)^{-1} + h^{\beta}),\\
	\IE[\nabla_{\theta}^2 L_{n,h}(u,\theta)] &=& \frac{1}{4\pi}\int_{-\pi}^{\pi} \frac{f(u,\lambda) - f_{\theta}(\lambda)}{f_{\theta}(\lambda)}\cdot \nabla_{\theta}^2 \log f_{\theta}(\lambda) \dif \lambda  + I(\theta) + O( (nh)^{-1} + h^{\beta}).
\end{eqnarray*}
\end{proposition}
\end{comment}

\begin{lemma}\label{lemma_minust}
	Let Assumption \ref{ass1}, \ref{ass2} and \ref{ass4} hold. Let $g\in \sL(M,\chi,C)$. Define $E_{n,-s}(\phi,g,\theta) := \frac{1}{n}\sum_{t=1, t \not= s}^{n}\phi\big(\frac{t}{n}\big)\cdot g(Y_{t,n}^c,\theta)$. Then for all $0 < \alpha < 1$, we have
	\begin{eqnarray}
		F_n &:=& \sup_{s = 1,...,n}\sup_{h \in H_n}\sup_{u \in \supp(w_{n,h})} \sup_{\theta \in \Theta}\nonumber\\
		&&\quad (nh)^{1- \alpha}\big| E_{n,-s}(K_h(\cdot -u),g,\theta) - E_{n}(K_h(\cdot-u),g,\theta) \big| \to 0 \quad \text{a.s.}\label{lemma_minust_eq1}
	\end{eqnarray}
\end{lemma}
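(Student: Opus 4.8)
The plan rests on the elementary observation that $E_{n,-s}-E_n$ collapses to a single summand: directly from the definitions,
\[
	E_{n,-s}(K_h(\cdot-u),g,\theta) - E_n(K_h(\cdot-u),g,\theta) = -\frac{1}{n}K_h\Big(\frac{s}{n}-u\Big)\, g(Y_{s,n}^c,\theta),
\]
so that $F_n = \sup_{s,h,u,\theta}\,(nh)^{1-\alpha}\cdot\frac{1}{n}\,\big|K_h\big(\frac{s}{n}-u\big)\big|\cdot|g(Y_{s,n}^c,\theta)|$. First I would bound the kernel crudely by Assumption \ref{ass4}\ref{ass4_p1}: since $|K_h(\cdot)|\le|K|_\infty/h$, we get $(nh)^{1-\alpha}\cdot\frac{1}{n}\big|K_h\big(\frac{s}{n}-u\big)\big|\le|K|_\infty(nh)^{-\alpha}$, and I would control $g$ via the polynomial growth bound \reff{proof_uniform_eq1} valid for $g\in\sL(M,\chi,C)$, namely $|g(y,\theta)|\le C_\infty(1+|y|_{\chi,1}^M)$ uniformly in $\theta\in\Theta$. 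Since $\underline h\ge c_0 n^{\delta-1}$ forces $nh\ge c_0 n^{\delta}$, this reduces the statement to showing
\[
	n^{-\delta\alpha}\max_{s=1,\dots,n}|Y_{s,n}^c|_{\chi,1}^{M}\to 0\qquad\text{a.s.}
\]

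For this I would exploit that Assumption \ref{ass1} supplies moments of all orders: by Minkowski's inequality, $\big\||Y_{s,n}^c|_{\chi,1}\big\|_q\le\sum_{i\ge1}\chi_i\|X_{s-i+1,n}\|_q\le|\chi|_1 D_q<\infty$ for every $q\ge1$, uniformly in $s$ and $n$. Then a union bound over the $n$ centres $s$ combined with Markov's inequality yields, for fixed $\varepsilon>0$ and any $q$,
\[
	\IP\Big(n^{-\delta\alpha}\max_{s=1,\dots,n}|Y_{s,n}^c|_{\chi,1}^{M}>\varepsilon\Big)\le n\cdot\frac{(|\chi|_1 D_{Mq})^{Mq}}{\varepsilon^{q}\, n^{\delta\alpha q}},
\]
which, upon choosing $q$ large enough that $\delta\alpha q>2$, is summable in $n$; Borel--Cantelli then gives the displayed almost sure convergence, hence $F_n\to0$ a.s.

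The main obstacle here is comparatively mild: the only genuine issue is handling the supremum over the $n$ centres $s$ simultaneously, which the union bound dispatches at the cost of one power of $n$ --- a cost absorbed precisely because Assumption \ref{ass1} allows the moment order $q$ to be pushed arbitrarily high. In contrast to Lemma~\ref{lemma_uniformconvergence}, no chaining argument over $(h,u,\theta)$ is required, since the bound obtained after the first step is already uniform in these variables: the kernel estimate $|K|_\infty/h$ and the growth bound \reff{proof_uniform_eq1} are free of $\theta$ and $u$, and the $h$-dependence collapses to the monotone factor $(nh)^{-\alpha}$, maximised at $h=\underline h$.
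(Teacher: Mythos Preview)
Your proof is correct and follows essentially the same route as the paper: both observe that the difference collapses to the single term $-\frac{1}{n}K_h(\tfrac{s}{n}-u)\,g(Y_{s,n}^c,\theta)$, bound the kernel by $|K|_\infty/h$ and $g$ via the polynomial growth estimate \reff{proof_uniform_eq1}, and then finish with a union bound over $s$, Markov's inequality with arbitrarily high moment order (available by Assumption~\ref{ass1}), and Borel--Cantelli. Your write-up is slightly more explicit than the paper's in spelling out the union bound and the moment computation, but the argument is the same.
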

\begin{proof}[Proof of Lemma \ref{lemma_minust}:]
	Fix $\delta > 0$. Since $F_n \le n^{-\delta \alpha} \cdot |K|_{\infty}C_{\infty}\cdot \sup_{s=1,...,n}\big(1 + |Y_{s,n}|_{\chi,1}^M\big)$, we obtain by Markov's inequality
	\begin{eqnarray*}
		&& \IP(F_n > \delta) \le \Big(\frac{|K|_{\infty}C_{\infty}(1+|\chi|_1^M D_{qM}^M)}{\delta}\Big)^q\cdot n^{1-\delta \alpha q}.
	\end{eqnarray*}
	If $q$ is chosen large enough, we obtain the assertion by Borel-Cantelli's lemma.
\end{proof}

The following corollary is immediate from Lemma \ref{lemma_uniformconvergence} and \ref{lemma_minust}.
\begin{corollary}[Uniform convergence of likelihoods]\label{lemma_likelihood_uniform} Let Assumption \ref{ass1}, \ref{ass2}, \ref{ass3} and \ref{ass4} hold. Then for all $k = 0,1,2,3$ and all $0 < \alpha \le \frac{1}{2}$ it holds component-wise that
\[
	\sup_{h\in H_n} \sup_{u \in \supp(w_{n,h})} \sup_{\theta \in \Theta}\ (nh)^{\frac{1}{2}-\alpha}\cdot \big| \IE_0 \nabla^k L_{n,h}(u,\theta)\big| \to 0 \quad \text{a.s.}
\]
and there exists a constant $C_S > 0$ independent of $u,h,\theta$ such that (component-wise):
\[
	|\IE \nabla^k L_{n,h}(u,\theta) - \IE \nabla^k \l(\tilde Y_t(u),\theta)| \le C_S \big\{ (nh)^{-1} + h^{\beta \wedge 1}\big\}.
\]
and for all $0 < \alpha' \le 1$, it holds that
\[
	\sup_{s = 1,...,n}\sup_{h \in H_n}\sup_{u \in \supp(w)} \sup_{\theta \in \Theta}\  (nh)^{1- \alpha'}\Big| \nabla_{\theta}^{k} L_{n,h,-s}(u,\theta) - \nabla_{\theta}^{k} L_{n,h}(u,\theta) \Big| \to 0.\quad \text{a.s.}
\]
\end{corollary}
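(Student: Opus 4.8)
The plan is to read the statement off the two preceding lemmas by specializing them to $g$ equal to a component of $\nabla^k\ell$. First I would note that, since $\ell$ is three times differentiable in $\theta$ by Assumption \ref{ass3} and the sum in \reff{h4} is finite, $\nabla^k$ commutes with the averaging sum, so componentwise
\[
\nabla^k L_{n,h}(u,\theta) = \frac{1}{n}\sum_{t=1}^{n} K_h\big(\tfrac{t}{n}-u\big)\,(\nabla^k\ell)_i\big(Y_{t,n}^c,\theta\big) = E_n\big(K_h(\cdot-u),\,(\nabla^k\ell)_i,\,\theta\big),
\]
and likewise $\nabla^k L_{n,h,-s}(u,\theta) = E_{n,-s}\big(K_h(\cdot-u),(\nabla^k\ell)_i,\theta\big)$. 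By Assumption \ref{ass3}\ref{ass3_m5} every component $(\nabla^k\ell)_i$ for $k=0,1,2,3$ lies in $\sL(M,\chi,C)$ with $\chi_j=O(j^{-(3+\eta)})$; in particular $\chi_j=O(j^{-(2+\delta)})$, so the hypotheses of Lemma \ref{lemma_uniformconvergence} and Lemma \ref{lemma_minust} are satisfied.

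Next I would match the three displays one by one. The first is exactly \reff{lemma_uniformconvergence_eq1} of Lemma \ref{lemma_uniformconvergence} with $g=(\nabla^k\ell)_i$, noting that $\supp(w_{n,h})\subset[\tfrac{h}{2},1-\tfrac{h}{2}]$ by Assumption \ref{ass4}\ref{ass4_p2}, which is the domain occurring there. The second is \reff{lemma_uniformconvergence_eq2}, where stationarity of $\tilde X_t(\theta)$ lets me rewrite $\IE(\nabla^k\ell)_i(\tilde Y_0(u),\theta)$ as $\IE(\nabla^k\ell)_i(\tilde Y_t(u),\theta)$ to obtain the stated form. The third is \reff{lemma_minust_eq1} of Lemma \ref{lemma_minust}. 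Since $\Theta\subset\IR^d$, there are only finitely many (at most $O(d^3)$) components involved, and the constant $C_S$ as well as the a.s.\ null sequences produced by the two lemmas can be taken uniform over those components; hence taking the maximum over components preserves all bounds and upgrades the componentwise conclusions to the ones stated in the corollary.

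There is no substantial obstacle: the analytic work is already contained in Lemma \ref{lemma_uniformconvergence} and Lemma \ref{lemma_minust}. The only points needing a line of care are (i) the interchange of $\nabla^k$ with the finite average, which is immediate from the differentiability hypothesis in Assumption \ref{ass3}; (ii) the inheritance of the polynomial-growth Lipschitz class membership by the derivative components, which is precisely what Assumption \ref{ass3}\ref{ass3_m5} guarantees; and (iii) the passage from componentwise to joint bounds via the finite maximum over coordinates.
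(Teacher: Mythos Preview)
Your proposal is correct and takes essentially the same approach as the paper, which simply states that the corollary is immediate from Lemma~\ref{lemma_uniformconvergence} and Lemma~\ref{lemma_minust}. You have merely spelled out what ``immediate'' means: identify $\nabla^k L_{n,h}$ componentwise with $E_n(K_h(\cdot-u),(\nabla^k\ell)_i,\theta)$, invoke Assumption~\ref{ass3}\ref{ass3_m5} to place each component in $\sL(M,\chi,C)$, and then read off the three displays from \reff{lemma_uniformconvergence_eq1}, \reff{lemma_uniformconvergence_eq2}, and \reff{lemma_minust_eq1} respectively.
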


The following Theorem is a consequence of Corollary \ref{lemma_likelihood_uniform}. The proof uses standard arguments from maximum likelihood theory and is postponed to the appendix.

\begin{theorem}[Uniform strong consistency of the maximum likelihood estimator]\label{mle_konsistenz} Let Assumptions \ref{ass1}, \ref{ass2}, \ref{ass3} and \ref{ass4} hold. Then for each $0 < \alpha \le \frac{1}{2}$, it holds almost surely in each component that
	 \begin{equation}
	 	\sup_{h \in H_n}\sup_{u \in \supp(w_{n,h})} \big| \hat \theta_h(u) - \theta_0(u)\big| \to 0 \quad \text{a.s.}\label{mle_konsistenz_gleichung}
	 \end{equation}
Furthermore for $n$ large enough, we have uniformly in $h\in H_n$, $u \in \supp(w_{n,h})$ for each component that
\begin{eqnarray}
	&& \big|\hat \theta_h(u) - \theta_0(u)\big|\nonumber\\
	&\le& \frac{V_{-1,max}}{2}\sup_{i = 1,...,p}\big\{|\IE_0 \nabla_i L_{n,h}(u,\theta_0(u))| + |\IE \nabla_i L_{n,h}(u,\theta_0(u))|\big\}\label{mle_konsistenz_gleichung2}
\end{eqnarray}
The results still hold if $\hat \theta_h$, $L_{n,h}$ are replaced by $\hat \theta_{h,-t}$, $L_{n,h,-t}$ accordingly.
\end{theorem}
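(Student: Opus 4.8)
The plan is to run the classical M-estimation consistency argument while tracking uniformity in both $u$ and $h$, and then to upgrade qualitative consistency to the quantitative bound \reff{mle_konsistenz_gleichung2} via a first-order Taylor expansion at $\theta_0(u)$.

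\emph{Uniform convergence of the objective.} Set $L(u,\theta) := \IE\l(\tilde Y_0(\theta_0(u)),\theta)$. The case $k=0$, $\alpha=\tfrac12$ of Corollary \ref{lemma_likelihood_uniform} gives $\sup_{h\in H_n}\sup_{u\in\supp(w_{n,h})}\sup_{\theta\in\Theta}|\IE_0 L_{n,h}(u,\theta)| \to 0$ a.s., and the accompanying bias bound reads $|\IE L_{n,h}(u,\theta) - L(u,\theta)| \le C_S\{(nh)^{-1}+h^{\beta\wedge1}\}$; since $\underline h \ge c_0 n^{\delta-1}$ and $\overline h \le c_1 n^{-\delta}$ (Assumption \ref{ass4}), both error terms vanish uniformly in $h\in H_n$, so $L_{n,h}(u,\theta)\to L(u,\theta)$ uniformly in $(h,u,\theta)$ a.s. Using Assumption \ref{ass1} together with the class $\sL(M,\chi,C)$ condition in Assumption \ref{ass3}\ref{ass3_m5} and finiteness of all moments, one checks that $(u,\theta)\mapsto L(u,\theta)$ is jointly continuous on $[0,1]\times\Theta$.

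\emph{Consistency.} Restrict to the almost-sure event on which the convergence above holds, and suppose \reff{mle_konsistenz_gleichung} fails. Then there are $\varepsilon>0$ and $h_n\in H_n$, $u_n\in\supp(w_{n,h_n})$ with $|\hat\theta_{h_n}(u_n)-\theta_0(u_n)|\ge\varepsilon$ along a subsequence; by compactness of $[0,1]\times\Theta$ we pass to a further subsequence with $u_n\to u^*$ and $\hat\theta_{h_n}(u_n)\to\theta^*\in\Theta$, and continuity of $\theta_0$ forces $|\theta^*-\theta_0(u^*)|\ge\varepsilon$. Passing to the limit in $L_{n,h_n}(u_n,\hat\theta_{h_n}(u_n))\le L_{n,h_n}(u_n,\theta_0(u_n))$ using the uniform convergence and joint continuity of $L$ yields $L(u^*,\theta^*)\le L(u^*,\theta_0(u^*))$, contradicting uniqueness of $\theta_0(u^*)$ as the minimizer of $L(u^*,\cdot)$ (Assumption \ref{ass3}\ref{ass3_m2}). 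This proves \reff{mle_konsistenz_gleichung}, and the same argument applies with $L_{n,h,-t}$, $\hat\theta_{h,-t}$ since Corollary \ref{lemma_likelihood_uniform} also makes $L_{n,h,-s}-L_{n,h}$ uniformly negligible in $s$.

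\emph{The quantitative bound.} Because $\theta_0$ is continuous and each $\theta_0(u)$ lies in the interior of $\Theta$ (Assumption \ref{ass3}\ref{ass3_m1}), $\theta_0([0,1])$ has positive distance to $\partial\Theta$, so by the previous step, for $n$ large $\hat\theta_h(u)$ lies in the interior uniformly in $h,u$ and hence $\nabla L_{n,h}(u,\hat\theta_h(u))=0$. A componentwise mean-value expansion gives $\nabla L_{n,h}(u,\theta_0(u)) = -\nabla^2 L_{n,h}(u,\bar\theta_h(u))\,(\hat\theta_h(u)-\theta_0(u))$ with $\bar\theta_h(u)$ on the segment, hence $\bar\theta_h(u)\to\theta_0(u)$ uniformly. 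The case $k=2$ of Corollary \ref{lemma_likelihood_uniform} together with continuity of $\theta\mapsto\IE\nabla^2\l(\tilde Y_t(u),\theta)$ shows $\nabla^2 L_{n,h}(u,\bar\theta_h(u))\to V(\theta_0(u))$ uniformly; since the minimal eigenvalue of $V(\theta_0(u))$ is $\ge\lambda_0$ and the entries of $V(\theta)^{-1}$ are bounded by $V_{-1,max}$, continuity of matrix inversion gives, for $n$ large and uniformly in $h,u$, invertibility of $\nabla^2 L_{n,h}(u,\bar\theta_h(u))$ with inverse whose entries are controlled by $V_{-1,max}$. Solving for $\hat\theta_h(u)-\theta_0(u)$ and splitting $\nabla_i L_{n,h}(u,\theta_0(u)) = \IE_0\nabla_i L_{n,h}(u,\theta_0(u)) + \IE\nabla_i L_{n,h}(u,\theta_0(u))$ yields \reff{mle_konsistenz_gleichung2}; the leave-one-out version follows identically from the leave-one-out part of Corollary \ref{lemma_likelihood_uniform}. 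The only genuine difficulty is the bookkeeping of uniformity in $h$ and $u$: one must make sure the compactness/subsequence argument is legitimate and that the ``for $n$ large'' statements (interiority of $\hat\theta_h(u)$, the lower eigenvalue bound on the Hessian, and closeness of the inverse Hessian to $V^{-1}$) hold simultaneously for all $h\in H_n$ and $u\in\supp(w_{n,h})$ rather than only pointwise --- which is exactly what the uniform convergence in Corollary \ref{lemma_likelihood_uniform}, combined with compactness of $[0,1]\times\Theta$ and continuity of $\theta_0$, is designed to deliver.
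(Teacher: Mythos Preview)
Your proposal is correct and follows essentially the same route as the paper: uniform convergence of $L_{n,h}$ to $L(u,\theta)$ via Corollary~\ref{lemma_likelihood_uniform} with $k=0$, the identifiability condition from Assumption~\ref{ass3}\ref{ass3_m2} for consistency, and then the first-order Taylor expansion combined with the uniform Hessian control from Corollary~\ref{lemma_likelihood_uniform} with $k=2$ and the eigenvalue bound on $V(\theta)$. The only difference is cosmetic: where the paper writes ``standard arguments provide the uniform convergence \reff{mle_konsistenz_gleichung}'' and cites \cite{dahlhaus2017}, you explicitly run the compactness/subsequence argument, which is precisely one of those standard arguments.
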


\subsection{Proofs of the results of Chapter \ref{sec3}}

%\subsection{Proof of Lemma \ref{da_approx_0}}

\begin{proof}[Proof of Lemma \ref{da_approx_0}]
We have for arbitrary $q > 2$:
\begin{eqnarray}
	&& \|d_{A}^{*}(\hat \theta_h,\theta_0) - d_{I}^{*}(\hat \theta_h,\theta_0)\|_q\nonumber\\
	&\le& 2p^2V_{-1,max} |w|_{\infty}\sup_{i=1,...,d}\sup_{u\in[h/2,1-h/2]}\|\nabla_i L_{n,h}(u,\theta_0(u))\|_{2q}\nonumber\\
	&&\quad \times \sup_{j=1,...,d}\sum_{t=1}^{n}\int_{(t-1)/n}^{t/n}\|\nabla_j L_{n,h}(t/n,\theta_0(t/n)) - \nabla_j L_{n,h}(u,\theta_0(u)\|_{2q} \dif u\nonumber\\
	&& + p^2 \sup_{i=1,...,d}\sup_{u\in[h/2,1-h/2]}\|\nabla_i L_{n,h}(u,\theta_0(u))\|_{2q}^2\nonumber\\
	&&\quad\times p^4 V_{-1,max}^2 \sum_{t=1}^{n}\int_{(t-1)/n}^{t/n}\Big\{|w|_{\infty} \sup_{i,j=1,...,p}|V(\theta_0(t/n))_{ij} - V(\theta_0(u))_{ij}|\nonumber\\
	&&\quad\quad\quad\quad\quad\quad\quad\quad\quad + V_{-1,max} |w_{n,h}(t/n) - w_{n,h}(u)|\Big\} \dif u.\label{proof_da_approx_0_eq1}
\end{eqnarray}
Since $\theta_0$ has bounded variation $B_K$, $w_{n,h}$ has bounded variation $B_w$ and $\|\nabla_i L_{n,h}(u,\theta)\|_{2q} \le C_{\infty,2q}$, the second summand of \reff{proof_da_approx_0_eq1} is of order $O(n^{-1})$. Furthermore, by Lemma \ref{lemma_stat},  Lemma \ref{pmomente} and Lemma \ref{lemma_uniformconvergence} we obtain for each component
\[
	\|\nabla_i L_{n,h}(u,\theta_0(u))\|_{2q} \le \rho_{1,\psi C_{\delta,\cdot},2q}(nh)^{-1/2} + C_S\{ (nh)^{-1} + h^{\beta \wedge 1}\}.
\]
By Lemma \ref{lemma_stetigkeit}, we have
\begin{eqnarray*}
	&&\|\nabla_j L_{n,h}(t/n,\theta_0(t/n)) - \nabla_j L_{n,h}(u,\theta_0(u))\|_{2q}\\
	&\le& C_{-,q}\big\{\frac{|t/n-u|}{h} + |\theta_0(t/n) - \theta_0(u)|_1\big\}.
\end{eqnarray*}
Since $\theta_0$ has bounded variation $B_{\theta_0}$, we obtain that the first summand in \reff{proof_da_approx_0_eq1} is $O((nh)^{-1}\big\{h^{\beta\wedge 1} + (nh)^{-1/2}\big\})$. So in view of Proposition \ref{misedecomposition}, we have shown that there exists $C, \gamma > 0$ such that 
\begin{equation}
	\frac{\|d_A^{*}(\hat \theta_h,\theta_0) - d_I^{*}(\hat \theta_h,\theta_0)\|_q}{d_{M}^{*}(\hat \theta_h,\theta_0)} \le C n^{-\gamma}.\label{proof_da_approx_0_eq2}
\end{equation}
Define $Z_n := 1 + \frac{1}{n}\sum_{t=1}^{n}|Y_{t,n}^c|_{\chi,1}^{M}$. Using the inequality \reff{proof_uniform_eq1} and the notation therein, we obtain for each component that $|\nabla L_{n,h}(u,\theta)| \le \frac{1}{\underline{h}}|K|_{\infty}C_{\infty}Z_n$ and $|\nabla L_{n,h}(u,\theta) - \nabla L_{n,h'}(u,\theta)| \le \frac{1}{\underline{h}^3}L_K C_{\infty}Z_n |h-h'|$. These results together with \reff{ass4_eq1} imply
\begin{eqnarray*}
	&&\big|d_{A}^{*}(\hat \theta_h,\theta_0) - d_A^{*}(\hat \theta_{h'},\theta_0)\big|\\
	&\le& p^2 V_{-1,max}C_{\infty}^2 \cdot \Big[\frac{2}{\underline{h}^4}L_K |K|_{\infty}|w|_{\infty} + \frac{C_w}{\underline{h}^2}|K|_{\infty^2}\Big]\cdot Z_n^2 \cdot |h-h'|.
\end{eqnarray*}
A similar argumentation is valid for $|d_{I}^{*}(\hat \theta_h,\theta_0) - d_I^{*}(\hat \theta_{h'},\theta_0)|$. Since $\underline{h} \ge c_0 n^{\delta-1}$ by assumption, we have shown that there exists $C(n)$ which grows at most polynomially in $n$ such that
\begin{eqnarray}
	&& \big|\big(d_{A}^{*}(\hat \theta_h,\theta_0) - d_I^{*}(\hat \theta_h,\theta_0)\big) - \big(d_A^{*}(\hat \theta_{h'},\theta_0) - d_{I}^{*}(\hat \theta_{h'},\theta_0)\big)\big|\nonumber\\
	&\le& C(n)\cdot Z_n^2 \cdot |h-h'|.\label{proof_da_approx_0_eq3}
\end{eqnarray}
As in the proof of Lemma \ref{da_approx_0} it can be shown that \reff{proof_da_approx_0_eq2} and \reff{proof_da_approx_0_eq3} together imply that $\sup_{h\in H_n}\frac{|d_A^{*}(\hat \theta_h,\theta_0) - d_{I}^{*}(\hat \theta_h,\theta_0)|}{d_M^{*}(\hat \theta_h,\theta_0)} \to 0$ a.s. 
%%%%%COMMENT%%%%%%

With the definitions $A_1(u) := |\nabla \hat L_{n,h}(u,\theta_0(u)) - \IE \nabla \hat L_{n,h}(u,\theta_0(u))|_{V(\theta_0(u))}^2$ and $A_2(u) := \langle \nabla \hat L_{n,h}(u,\theta_0(u)) -   \IE\nabla \hat L_{n,h}(u,\theta_0(u)), V(\theta_0(u)) \IE \nabla \hat L_{n,h}(u,\theta_0(u))\rangle$, we decompose
\begin{eqnarray*}
	&& d_{I}^{*}(\hat \theta_h,\theta_0) - \IE d_{I}^{*}(\hat \theta_h,\theta_0)\\
	&=& \big[d_{I}^{*}(\hat \theta_h,\theta_0) - \int_{0}^{1}|\hat \nabla L_{n,h}(u,\theta_0(u))|_{V(\theta_0(u))}^2 w_{n,h}(u) \dif u\big]\\
	&& \quad\quad + \big[\int_{0}^{1}\big\{A_1(u) - \IE A_1(u)\big\} w_{n,h}(u)\dif u\big] + \big[\int_{0}^{1}A_2(u) w_{n,h}(u) \dif u\big]\\
	&=:& R_{n,h,1} + R_{n,h,2} + R_{n,h,3}.
\end{eqnarray*}
With Lemma \ref{hilfslemma1} we obtain $\|R_{n,h,1}\|_q = O(n^{-1})$, $\|R_{n,h,2}\|_q = O(n^{-1/2}(nh)^{-1/2})$ and $\|R_{n,h,3}\|_q = O(n^{-1/2}\hat B_h + n^{-1/2}(nh)^{-1/2})$ (see Proposition \ref{misedecomposition}(ii) for $\hat B_h$), i.e. $\frac{\|R_{n,h,i}\|_q}{d_{M}^{*}(\hat \theta_h,\theta_0)} = O(n^{-\tilde\gamma})$ with some $\tilde \gamma > 0$. Details are deferred to the appendix. It is straightforward to see that $|R_{n,h,i} - R_{n,h',i}|$ fulfills a similar condition as in \reff{proof_da_approx_0_eq2} for $i = 1,2,3$. The technique from the proof of Lemma \ref{da_approx_0} implies $\sup_{h\in H_n}\big|\frac{d_{I}^{*}(\hat \theta_h,\theta_0) - d_M^{*}(\hat \theta_h,\theta_0)}{d_M^{*}(\hat \theta_h,\theta_0)}\big| \to 0$ a.s.
\end{proof}

\begin{proof}[Proof of Corollary \ref{da_approx}] The convergence $\sup_{h\in H_n}\big|\frac{d_I(\hat \theta_h,\theta_0) - d_I^{*}(\hat \theta_h,\theta_0)}{d_I^{*}(\hat \theta_h,\theta_0)}\big|\to 0$ a.s. follows from the decomposition
\[
	\hat \theta_{h}(u) - \theta_0(u) = -(\nabla^2 L_{n,h}(u, \bar \theta_h(u)))^{-1} \cdot \nabla L_{n,h}(u, \theta_0(u))
\]
and the results from Corollary \ref{lemma_likelihood_uniform} and Theorem \ref{mle_konsistenz}. Together with Lemma \ref{da_approx_0}, the assertion of the Corollary follows. Details can be found in the appendix.
\end{proof}

%\subsection{Proof of Lemma \ref{da_strich_approx}}
\begin{proof}[Proof of Lemma \ref{da_strich_approx}]
Put
\[
	\overline{d}_{A}^{*}(\hat \theta_h,\theta_0) := \frac{1}{n}\sum_{t=1}^{n}\big| \nabla L_{n,h,-t}(t/n, \theta_0(t/n))\big|_{V(\theta_0(t/n))^{-1}}^2.
\]
We have to show that
\begin{equation}
	\sup_{h \in H_n}\Big| \frac{\overline{d}_{A}^{*}(\hat \theta_h, \theta_0) - d_{A}^{*}(\hat \theta_h, \theta_0)}{d_M^{*}(\hat \theta_h, \theta_0)}\Big| \to 0\quad\text{a.s.},\label{dazuzeigen}
\end{equation}
then it follows immediately from Lemma \ref{da_approx_0}:
\begin{equation}
	\sup_{h\in H_n}\Big|\frac{\overline{d}_{A}^{*}(\hat \theta_h, \theta_0) - d_{M}^{*}(\hat \theta_h, \theta_0)}{d_M^{*}(\hat \theta_h,\theta_0)}\Big| \to 0\quad \text{a.s.}\label{da_abstand_1}
\end{equation}
Using the same techniques as in the proof of Corollary \ref{da_approx} (and additionally the uniform convergence results of Corollary \ref{lemma_likelihood_uniform}), it can be shown that
\begin{equation}
	\sup_{h\in H_n}\Big|\frac{\overline{d}_{A}(\hat \theta_h, \theta_0) - \overline{d}_{A}^{*}(\hat \theta_h, \theta_0)}{\overline{d}_{A}^{*}(\hat \theta_h, \theta_0)}\Big| \to 0\quad \text{a.s.},\label{da_abstand_2}
\end{equation}
and we can conclude from \reff{da_abstand_1}, \reff{da_abstand_2} (by a similar expansion as in \reff{proof_cor_da_cv_approx_eq1}) that $\sup_{h\in H_n}\big|\frac{\overline{d}_{A}(\hat \theta_h, \theta_0) - d_{M}^{*}(\hat \theta_h, \theta_0)}{d_M^{*}(\hat \theta_h,\theta_0)}\big| \to 0$ a.s.
It remains to prove \reff{dazuzeigen}.
%It holds that
%\[
%	\nabla L_{n,h}(t/n, \theta_0(t/n)) - \nabla L_{n,h,-t}(t/n, \theta_0(t/n))=\frac{K(0)}{nh} \nabla \l(Y_{t,n}^c,\theta_0(t/n)).
%\]
By applying the Cauchy Schwarz inequality, we obtain
\begin{eqnarray}
	&& |d_{A}^{*}(\hat \theta_h,\theta_0) - \overline{d}_{A}^{*}(\hat \theta_h,\theta_0)|\nonumber\\
	&\le& \frac{2 K(0)}{nh} \Big|\frac{1}{n}\sum_{t=1}^{n}\langle \nabla \l(Y_{t,n}^c,\theta_0(t/n)), V(\theta_0(t/n))^{-1} \nabla L_{n,h}(t/n, \theta_0(t/n))\rangle\Big|\nonumber\\
	&& \quad + \frac{K(0)^2}{(nh)^2} \cdot \frac{1}{n}\sum_{t=1}^{n}\big|\nabla \l(Y_{t,n}^c,\theta_0(t/n))\big|_{V(\theta_0(t/n))^{-1}}^2\nonumber\\
	&\le& \frac{2K(0)}{nh}W_{n}^{1/2}\cdot d_{A}^{*}(\hat \theta_h, \theta_0)^{1/2} +  \frac{K(0)^2}{(nh)^2}\cdot W_{n}\label{daapprox10},
\end{eqnarray}
where $W_{n} := \frac{1}{n}\sum_{t=1}^{n}\big|\nabla \l(Y_{t,n}^c,\theta_0(t/n))\big|^2_{V(\theta_0(t/n))^{-1}}$ is bounded a.s. (details are deferred to the appendix). The assertion now follows from \reff{daapprox10}, Lemma \ref{da_approx_0} and the expansion $d_{A}^{*} = d_M^{*}\cdot\big(1 + \frac{d_{A}^{*}-d_{M}^{*}}{d_M^{*}}\big)$.
\end{proof}

%The proof of Lemma \ref{da_cv_approx} seems to need at least 3 times differentiability of $\l$ in order to obtain the right convergence rates.
For some $A \in \IR^{d\times d \times d}$ and vectors $y,z\in\IR^d$, define $x := A[y,z] \in \IR^d$ as $x_i := \sum_{j,k=1}^{d}A_{ijk}y_j z_k$.
\begin{proof}[Proof of Lemma \ref{da_cv_approx}] Define $\l_{t,n}(\theta) := \l(Y_{t,n}^c,\theta)$. By Taylor's expansion, it holds that
\begin{align}
	& \l_{t,n}(\hat \theta_{h,-t}(u)) - \l_{t,n}(\theta_0(u))\nonumber\\
	&= \nabla \l_{t,n}(\theta_0(u))' \big\{\hat \theta_{h,-t}(u) - \theta_0(u)\big\} + |\hat \theta_{h,-t}(u) - \theta_0(u)|^2_{\nabla^2 \l_{t,n}(\bar \theta_{h,-t}(u))}\label{proof_da_cv_approx_eq1}
\end{align}
with some intermediate value $\bar \theta_{h,-t}(u))$ which satisfies $|\bar \theta_{h,-t}(u)) - \theta_0(u)|_1 \le |\hat \theta_{h,-t}(u) - \theta_0(u)|_1$. By Theorem \ref{mle_konsistenz}, $\hat \theta_{h,-t}(u)$ converges to $\theta_0(u)$ uniformly in $u,h,t$ and thus lies in the interior of $\Theta$ for $n$ large enough. Using a third-order Taylor expansion, we obtain 
\begin{eqnarray}
	&& \hat \theta_{h,-t}(u) - \theta_0(u)\nonumber\\
	&=& -\big[\nabla^2 L_{n,h,-t}(u,\theta_0(u))\big]^{-1}\big\{ \nabla L_{n,h,-t}(u,\theta_0(u))\nonumber\\
	&&\quad\quad\quad\quad+ \nabla^3 L_{n,h,-t}(\tilde \theta_{h,-t}(u))[\hat \theta_{h,-t}(u) - \theta_0(u),\hat \theta_{h,-t}(u)-\theta_0(u)]\big\} \label{proof_da_cv_approx_eq2}
\end{eqnarray}
with some intermediate value $\tilde \theta_{h,-t}(u))$ which satisfies $|\tilde \theta_{h,-t}(u)) - \theta_0(u)|_1 \le |\hat \theta_{h,-t}(u) - \theta_0(u)|_1$. Put $V_{t,n} := \nabla^2 L_{n,h,-t}(t/n,\theta_0(t/n))$. For $n$ large enough and $h \in H_n$, we have by Corollary \ref{lemma_likelihood_uniform} that $|V_{t,n} - V(\theta_0(t/n))|_2, |\IE V_{t,n} - V(\theta_0(t/n))|_2 \le \frac{\lambda_0}{4}$. Then it follows that the minimal eigenvalue of $V_{t,n}$ and $\IE V_{t,n}$ are bounded from below by $\frac{\lambda_0}{2}$. So for $n$ large enough, $h\in H_n$, we have
%\begin{eqnarray*}
%	&& \inf_{x\in\IR^d, |x|_2 = 1}x' V_{t,n}x\\
%	&\ge& \inf_{x\in\IR^d, |x|_2 = 1}x' V(\theta_0(t/n)) x - \sup_{x\in\IR^d, |x|_2 = 1}|x'(V_{t,n} - V(\theta_0(t/n)))x| \ge \frac{\lambda_0}{2},
%\end{eqnarray*}
%and the minimal eigenvalue of $\IE V_{t,n}$ is bounded from below by $\frac{\lambda_0}{2}$, too. By equivalence of norms in the finite-dimensional space $\IR^{p \times p}$, there exists a constant $F(p)$ only dependent on $p$ such that for every $A \in \IR^{p\times p}$, $|A|_{\infty} \le F(p)|A|_{spec}$. This shows that for $n$ large enough, we have
\begin{equation}
	\sup_{i,j=1,...,p}|(V_{t,n}^{-1})_{ij}|,  \sup_{i,j=1,...,d}|((\IE V_{t,n})^{-1})_{ij}| \le \frac{V_{-1,max}}{2}.%2F(p)\lambda_0^{-1}.\label{norm_bound}
\end{equation}
With \reff{proof_da_cv_approx_eq1} and \reff{proof_da_cv_approx_eq2}, we obtain the decomposition
\begin{align*}
	& \textstyle CV(h) - \frac{1}{n}\sum_{t=1}^{n}\l(\theta_0(t/n))w_{n,h}(t/n) - \overline{d_A}(\hat \theta_h,\theta_0)\nonumber\\
	&\textstyle= -\frac{1}{n}\sum_{t=1}^{n}\nabla \l_{t,n}(\theta_0(t/n))'\cdot V_{t,n}^{-1}\cdot \nabla L_{n,h,-t}(t/n,\theta_0(t/n)) w_{n,h}(t/n)\\
	&\textstyle\quad - \frac{1}{n}\sum_{t=1}^{n}\nabla \l_{t,n}(\theta_0(t/n))'\cdot V_{t,n}^{-1}\cdot \nabla^3 L_{n,h,-t}(t/n,\tilde \theta_{h,-t}(t/n))\\
	&\quad\quad\quad\quad\quad\quad \big[\hat \theta_{h,-t}(t/n) - \theta_0(t/n)),\hat \theta_{h,-t}(t/n) - \theta_0(t/n))\big]w_{n,h}(t/n)\\
	&\textstyle\quad + \frac{1}{n}\sum_{t=1}^{n}\big|\hat \theta_{h,-t}(t/n) - \theta_0(t/n)\big|^2_{\nabla^2 \l_{t,n}(\bar \theta_{h,-t}(t/n)) - V(\theta_0(t/n))} w_{n,h}(t/n)\\
	&\textstyle=: R_{n,h,1} + R_{n,h,2} + R_{n,h,3}. 
\end{align*}
The remainders $R_{n,h,i}$ ($i = 1,2,3$) now have to be discussed separately. To get rid of $V_{t,n}^{-1}$ and the intermediate values $\tilde \theta_{h,-t}$ and $\bar \theta_{h,-t}$ we replace them by $(\IE V_{t,n})^{-1}$ and $\theta_0$, respectively. To replace $V_{t,n}^{-1}$, we use the decompositions
\begin{eqnarray}
	V_{t,n}^{-1} &=& (\IE V_{t,n})^{-1} + (V_{t,n})^{-1}\big\{ \IE V_{t,n} - V_{t,n}\big\} (\IE V_{t,n})^{-1}\label{proof_da_cv_approx_eq25}\\
	&=& (\IE V_{t,n})^{-1} + (\IE V_{t,n})^{-1}\big\{ \IE V_{t,n} - V_{t,n}\big\} (\IE V_{t,n})^{-1}\nonumber\\
	&&\quad+ V_{t,n}^{-1}\big\{ \IE V_{t,n} - V_{t,n}\big\} (\IE V_{t,n})^{-1} \big\{ \IE V_{t,n} - V_{t,n}\big\} (\IE V_{t,n})^{-1}. \label{proof_da_cv_approx_eq3}
\end{eqnarray}
It can be shown these replacements are of order $(nh)^{-\frac{3}{2}(1-\alpha)} + (B_h^{dis})^{3/2}$ uniformly in $h\in H_n$ with arbitrary small $\alpha > 0$ by using the uniform results of Corollary \ref{lemma_likelihood_uniform} and Theorem \ref{mle_konsistenz}, \reff{mle_konsistenz_gleichung2} (see Proposition \ref{misedecomposition} for $B_h^{dis}$). By the decomposition of $d_M^{*}(\hat \theta_h,\theta_0)$ in Proposition \ref{misedecomposition}(i),(ii)  the replacements are of smaller order than $d_M^{*}(\hat \theta_h,\theta_0)$. The remaining terms are listed in Lemma \ref{lemma_approxmart}, where also their convergence is proven. Details are in the appendix.
\end{proof}

%The next Lemma states the convergences that are used in the proof of Lemma \ref{da_cv_approx}.
In the proof of the following Lemma \ref{lemma_approxmart} we use a similar technique as in Lemma \ref{lemma_uniformconvergence} (or Lemma \ref{da_approx_0}). The main part therefore is to calculate the norm $\|\cdot \|_q$ of the quantities which is done via the results of Lemma \ref{hilfslemma1}. Details can be found in the appendix. 

\begin{lemma}\label{lemma_approxmart}
	Let Assumption \ref{ass1}, \ref{ass2}, \ref{ass3} and \ref{ass4} hold. Put $V_{t,n} := \nabla^2 L_{n,h,-t}(t/n,\theta_0(t/n))$. Then it holds almost surely that
	\begin{align}
		\sup_{h\in H_n}\frac{1}{d_{M}^{*}(\hat \theta_h,\theta_0)}\Big|\frac{1}{n}\sum_{t=1}^{n}\nabla \l(Y_{t,n}^c,\theta_0(t/n))' (\IE V_{t,n})^{-1}\quad\quad\quad\quad & \nonumber\\
		\times\nabla L_{n,h,-t}(t/n, \theta_0(t/n))w_{n,h}(t/n)\Big| &\to 0,\label{lemma_approxmart_eq1}\\
		\sup_{h\in H_n}\frac{1}{d_{M}^{*}(\hat \theta_h,\theta_0)}\Big|\frac{1}{n}\sum_{t=1}^{n}w_{n,h}(t/n)\quad\quad\quad\quad\quad\quad\quad\quad\quad\quad\quad\quad\quad\quad\quad\quad &\nonumber\\
		\times |(\IE V_{t,n})^{-1}\nabla L_{n,h,-t}(t/n,\theta_0(t/n))|^2_{\nabla^2 \l(Y_{t,n}^c,\theta_0(t/n)) - V(\theta_0(t/n))}\Big| &\to 0,\label{lemma_approxmart_eq2}\\
		%\intertext{}
		\sup_{h\in H_n}\frac{1}{d_M^{*}(\hat \theta_h,\theta_0)}\Big|\frac{1}{n}\sum_{t=1}^{n}\nabla \l(Y_{t,n}^c,\theta_0(t/n))' (\IE V_{t,n})^{-1}\quad\quad\quad\quad\quad\quad\quad\quad&\nonumber \\
		\times (V_{t,n} - \IE V_{t,n}) (\IE V_{t,n})^{-1}\nabla L_{n,h,-t}(t/n, \theta_0(t/n))w_{n,h}(t/n)\Big| &\to 0,\label{lemma_approxmart_eq3}\\
		\sup_{h\in H_n}\frac{1}{d_M^{*}(\hat \theta_h,\theta_0)}\big|\frac{1}{n}\sum_{t=1}^{n}w_{n,h}(t/n)\cdot \nabla \l_{t,n}(\theta_0(t/n))'(\IE V_{t,n})^{-1}\quad\quad\nonumber\\
		\times  \big(\IE \nabla^3 L_{n,h,-t}(\theta_0(t/n))\big)\big[(\IE V_{t,n})^{-1} \nabla L_{n,h,-t}(t/n,\theta_0(t/n))\}^2,\quad\quad\quad\quad&\nonumber\\
	\quad\quad\quad\quad  (\IE V_{t,n})^{-1} \nabla L_{n,h,-t}(t/n,\theta_0(t/n))\}^2\big]\big| &\to 0,\label{lemma_approxmart_eq4}\\
	\intertext{}
		\sup_{h\in H_n}\frac{\frac{1}{n}\sum_{t=1}^{n}\IE_0[ |Y_{t,n}^c|_{\chi,1}^{M}] w_{n,h}(t/n) \sup_{i=1,...,p}|\IE \nabla_i \hat L_{n,h}(t/n, \theta_0(t/n))|}{d_M^{*}(\hat \theta_h,\theta_0)} &\to 0.\label{lemma_approxmart_eq5}
	\end{align}
\end{lemma}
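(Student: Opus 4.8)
The plan is to treat all five displayed convergences by the two-step scheme already used for Lemma~\ref{lemma_uniformconvergence} and Lemma~\ref{da_approx_0}. In the first step one bounds, for each fixed $h\in H_n$ and every large $q$, the $L^q$-norm of the numerator $N_h$ of the corresponding quotient by $\|N_h\|_q\le C\,n^{-\gamma}\,d_M^{*}(\hat\theta_h,\theta_0)$ with some $\gamma>0$ not depending on $h$ (recall that $d_M^{*}(\hat\theta_h,\theta_0)=\IE d_I^{*}(\hat\theta_h,\theta_0)$ is deterministic). In the second step this pointwise bound is lifted to the asserted a.s.\ uniform statement over $h\in H_n$: one covers $H_n$ by $O(n^{\gamma'})$ points, applies Markov's inequality to $N_h/d_M^{*}$ there, and bridges the gaps by an a.s.\ Lipschitz estimate $|N_h-N_{h'}|\le C(n)\,Z_n\,|h-h'|$ with $C(n)$ of polynomial growth (here $\underline h\ge c_0 n^{\delta-1}$ enters) and $Z_n$ satisfying $\sup_n\|Z_n\|_q<\infty$ for all $q$; such an estimate follows from the Lipschitz continuity of $K$, the growth bound \reff{proof_uniform_eq1} and \reff{ass4_eq1}, exactly as in \reff{proof_da_approx_0_eq3}. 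Borel--Cantelli then closes the argument. The denominators against which the $L^q$-bounds are measured are read off from Proposition~\ref{misedecomposition}(i),(ii): $d_M^{*}(\hat\theta_h,\theta_0)=V_h+B_h$ with $V_h\asymp(nh)^{-1}$ and $B_h\asymp B_h^{dis}$; in particular $d_M^{*}(\hat\theta_h,\theta_0)\gtrsim(nh)^{-1}$, which is what turns the crude moment bounds below into vanishing rates.

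I would then carry out the first step term by term. Writing $\nabla L_{n,h,-t}(t/n,\theta_0(t/n))=\frac{1}{n}\sum_{s\ne t}K_h(\frac{s}{n}-\frac{t}{n})\nabla\ell_{s,n}(\theta_0(t/n))$ and, in \reff{lemma_approxmart_eq3}, also $V_{t,n}-\IE V_{t,n}=\frac{1}{n}\sum_{s'\ne t}K_h(\frac{s'}{n}-\frac{t}{n})\,\IE_0\nabla^2\ell_{s',n}(\theta_0(t/n))$, the expressions \reff{lemma_approxmart_eq1} and \reff{lemma_approxmart_eq3} become a bilinear, respectively trilinear, form in the scores $\nabla\ell_{\cdot,n}$ and in $\IE_0\nabla^2\ell_{\cdot,n}$, whose deterministic coefficients carry one or two kernel factors, the weights $w_{n,h}$, and the matrices $(\IE V_{t,n})^{-1}$ (uniformly bounded by $V_{-1,max}/2$ for $n$ large, by Corollary~\ref{lemma_likelihood_uniform}). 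After replacing $\theta_0(t/n)$ by $\theta_0(s/n)$ inside $\nabla\ell_{s,n}$ at a cost governed by the bounded variation of $\theta_0$ and the Lipschitz property \reff{DP_hoelder_prop_polynom}, and replacing $Y_{t,n}^c$ by the stationary crop $\tilde Y_t(t/n)$ (Lemma~\ref{lemma_stat}), the leading form is built from the martingale differences $\nabla\ell(\tilde Y_s(\theta_0(s/n)),\theta_0(s/n))$ (Assumption~\ref{ass3}\ref{ass3_m4}), and the quasi-leave-one-out construction removes the diagonal $s=t$. Lemma~\ref{hilfslemma1} then applies and gives $\|N_h\|_q\lesssim(n\sqrt h)^{-1}$ for \reff{lemma_approxmart_eq1} and $\|N_h\|_q\lesssim(n^{3/2}h)^{-1}$ for \reff{lemma_approxmart_eq3}, hence $O(\sqrt h)$ and $O(n^{-1/2})$ after dividing by $d_M^{*}$. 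The statements \reff{lemma_approxmart_eq2} and \reff{lemma_approxmart_eq4} are of the same nature: \reff{lemma_approxmart_eq2} is a trilinear form in $\nabla^2\ell(Y_{t,n}^c,\theta_0(t/n))-V(\theta_0(t/n))$ (which, modulo the stationary-crop corrections of Lemma~\ref{lemma_stat}, reduces to the mean-zero $\IE_0\nabla^2\ell(\tilde Y_t(\theta_0(t/n)),\theta_0(t/n))$) and two scores, while \reff{lemma_approxmart_eq4} is a trilinear form in three scores multiplied by the deterministic, uniformly bounded factor $\IE\nabla^3 L_{n,h,-t}(\theta_0(t/n))$; in both cases Lemma~\ref{hilfslemma1} yields $\|N_h\|_q\lesssim(n^{3/2}h)^{-1}$, hence $O(n^{-1/2})$ after division by $d_M^{*}$.

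The term \reff{lemma_approxmart_eq5} needs a slightly different bookkeeping. Here $b_t:=\sup_{i}|\IE\nabla_i\hat L_{n,h}(t/n,\theta_0(t/n))|$ is deterministic, so $N_h=\frac{1}{n}\sum_t\IE_0\big[|Y_{t,n}^c|_{\chi,1}^{M}\big]\,w_{n,h}(t/n)\,b_t$ is a single mean-zero linear form in $g(Y_{t,n}^c):=|Y_{t,n}^c|_{\chi,1}^{M}$, and $g\in\sL(M,\chi,C)$. Lemma~\ref{hilfslemma1} (equivalently the argument of Lemma~\ref{pmomente}) gives $\|N_h\|_q\lesssim n^{-1/2}\big(\frac{1}{n}\sum_t b_t^2\,w_{n,h}(t/n)\big)^{1/2}$; by Corollary~\ref{lemma_likelihood_uniform} and Proposition~\ref{misedecomposition}(ii) one has $\frac{1}{n}\sum_t b_t^2\,w_{n,h}(t/n)\lesssim B_h^{dis}\lesssim d_M^{*}(\hat\theta_h,\theta_0)$, so $\|N_h\|_q\lesssim n^{-1/2}\,d_M^{*}(\hat\theta_h,\theta_0)^{1/2}$, whence $\|N_h\|_q/d_M^{*}(\hat\theta_h,\theta_0)\lesssim n^{-1/2}(nh)^{1/2}=O(\sqrt h)$.

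I expect the hard part to be the first step for the trilinear forms \reff{lemma_approxmart_eq2}--\reff{lemma_approxmart_eq4}: one has to strip off the intermediate points, the matrices $(\IE V_{t,n})^{-1}$ and the factor $\IE\nabla^3 L_{n,h,-t}$ using the uniform controls of Corollary~\ref{lemma_likelihood_uniform} and Theorem~\ref{mle_konsistenz}, keep precise track of which index the dependence and bounded-variation hypotheses of Lemma~\ref{hilfslemma1} refer to (the outer parameter index $t$ entering through $\theta_0(t/n)$ versus the time index $s$ in $\nabla\ell_{s,n}$), and verify that every correction produced by the stationary-crop / weak-bias approximations (Lemma~\ref{lemma_stat}, Lemma~\ref{lemma_bias_weak}, Corollary~\ref{lemma_likelihood_uniform}) and by the martingale substitution $\theta_0(t/n)\mapsto\theta_0(s/n)$ is of strictly smaller order than the variance part $(nh)^{-1}$ of $d_M^{*}$. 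Once the forms are cast in the shape required by Lemma~\ref{hilfslemma1}, the remaining calculation is routine, and the details are deferred to the appendix.
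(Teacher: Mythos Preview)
Your two-step scheme (pointwise $L^q$-bound on a grid, Lipschitz bridging in $h$, Borel--Cantelli) and your appeal to Lemma~\ref{hilfslemma1} for the leading bilinear/trilinear forms are exactly the paper's. Your substitution $\theta_0(t/n)\mapsto\theta_0(s/n)$ in the \emph{second} argument of $\nabla\ell_{s,n}$ is a legitimate dual of the paper's substitution in the \emph{first} argument ($\tilde Y_s(s/n)\mapsto\tilde Y_s(t/n)$, producing the martingale sum $\nabla\tilde L_{n,h,-t}$ and the term $R_{3,n}$). Both routes turn the inner summand into a genuine martingale difference; yours even has the advantage that the resulting leading bilinear form in \reff{lemma_approxmart_eq1} has exactly zero mean, so you avoid the paper's separate projection bound for $R_{1,n}$ (the $\sum_k\min\{\psi_2(k),(k/n)^{\beta}\}$ argument).

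There is, however, a real gap in your handling of the substitution cost for \reff{lemma_approxmart_eq1}--\reff{lemma_approxmart_eq4}. It is \emph{not} ``governed by the bounded variation of $\theta_0$'' in the sense you need: the difference $\nabla\ell(\tilde Y_s(s/n),\theta_0(t/n))-\nabla\ell(\tilde Y_s(s/n),\theta_0(s/n))$ has nonzero mean $\IE\nabla\ell(\tilde Y_s(s/n),\theta_0(t/n))$, and after the kernel sum over $s$ this mean is precisely $\IE\nabla\hat L_{n,h,-t}(t/n,\theta_0(t/n))$, i.e.\ the bias. So your substitution error in \reff{lemma_approxmart_eq1} contains the linear martingale sum $\tfrac{1}{n}\sum_t\nabla\ell_t'\,(\IE V_{t,n})^{-1}\,\IE\nabla\hat L_{n,h,-t}\cdot w_{n,h}(t/n)$ (this is the paper's $R_{2,n}$), whose $L^q$-norm is $\asymp n^{-1/2}(\hat B_h^{dis})^{1/2}$ by Lemma~\ref{hilfslemma1}(i); this is in general \emph{not} $\lesssim (n\sqrt h)^{-1}$, so your stated rate for $\|N_h\|_q$ is wrong. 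The correct comparison is against the \emph{full} $d_M^*\approx V_h+B_h$: using $\sqrt{x}/(a+x)\le 1/(2\sqrt a)$ with $a\asymp(nh)^{-1}$ gives a ratio $\lesssim h^{1/2}$. You carry out exactly this bookkeeping for \reff{lemma_approxmart_eq5}; the same explicit bias extraction is needed in \reff{lemma_approxmart_eq1}--\reff{lemma_approxmart_eq4}, and in the quadratic displays \reff{lemma_approxmart_eq2}--\reff{lemma_approxmart_eq4} the bias of $\nabla L_{n,h,-t}$ enters twice, producing cross and pure-bias pieces (the paper's $S_{1,n},S_{2,n},T_{2,n}$) that must be measured against $B_h$ as well as $(nh)^{-1}$. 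Once this separation is written out, your sketch goes through.
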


\textbf{Acknowledgements.}

We gratefully acknowledge support by Deutsche Forschungsgemeinschaft
through the Research Training Group RTG 1653.

\newpage

\appendix

%\begin{supplement}[id=suppA]
 % \sname{Supplement A}
 % \stitle{Technical proofs}
 % \slink[doi]{COMPLETED BY THE TYPESETTER}
 % \sdatatype{.pdf}
 % \sdescription{This material contains some details of the proofs in the paper as well as the proofs of the examples.}\end{supplement}  

\section{More detailed proofs of section 3}

\begin{proof}[Proof of Corollary \ref{da_approx}, more detailed] 
	We have
	\[
		\hat \theta_{h}(u) - \theta_0(u) = -(\nabla^2 L_{n,h}(u, \bar \theta_h(u)))^{-1} \cdot \nabla L_{n,h}(u, \theta_0(u)) = \big\{ I_d + R_h(u)\big\} \cdot D_h(u),
	\]
	where $I_d \in \IR^{d\times d}$ is the identity matrix and
	\begin{eqnarray*}
		D_h(u) &:=& -V(\theta_0(u))^{-1}\cdot \nabla L_{n,h}(u, \theta_0(u)),\\
		R_h(u) &:=& (\nabla^2 L_{n,h}(u, \bar \theta_h(u)))^{-1}V(\theta_0(u)) - I_{d}
	\end{eqnarray*}
	and $\bar \theta_h(u)$ is some intermediate value with $|\bar \theta_h(u) - \theta_0(u)|_2 \le |\hat \theta_h(u) - \theta_0(u)|_2$. Using the bound $|\langle x,Ax \rangle| \le |x|_2^2 |A|_{spec}$, we have
	\begin{eqnarray}
		&& \big| | (I_d + R_h(u))D_h(u)|^2_{I(\theta_0(u))}  - | D_h(u)|^2_{V(\theta_0(u))}\big|\nonumber\\
		&\le& 2 \cdot \big|\langle D_h(u), I(\theta_0(u)) R_h(u) D_h(u)\rangle\big| + |R_h(u) D_h(u)|^2_{I(\theta_0(u))}\nonumber\\
		&\le& \big\{ 2 | I(\theta_0(u)) R_h(u)|_2 + | R_h(u)' V(\theta_0(u)) R_h(u)|_2\big\} \cdot | D_h(u)|^2. \label{abstand2}
	\end{eqnarray}
	By Corollary \ref{lemma_likelihood_uniform} and Theorem \ref{mle_konsistenz}, we have
	\[
		\sup_{h \in H_n}\sup_{u \in \supp(w_{n,h})}|\nabla^2 L_{n,h}(u, \bar \theta(u)) - V(\theta_0(u))| \to 0 \quad \text{a.s.},
	\]
	thus
	\begin{equation}
		\sup_{h \in H_n} \sup_{u \in \supp(w_{n,h})} |R_h(u)| \to 0.\label{abstand3}
	\end{equation}
	According to Assumption \ref{ass3}, let $\lambda_0 > 0$ be the value which bounds all eigenvalues from $V(\theta)$ from below. Using the representations $d_{I}(\hat \theta_h, \theta_0) = \int_{0}^{1}| (I_d + R_h(u)) \cdot D_h(u)|^2_{V(\theta_0(u))} w_{n,h}(u) \dif u$, $d_{I}^{*}(\hat \theta_h, \theta_0) = \int_{0}^{1}| D_h(u)|^2_{V(\theta_0(u))} w_{n,h}(u) \dif u$, we conclude with \reff{abstand2}, \reff{abstand3}:
	\begin{eqnarray*}
		&& \sup_{h \in H_n}\Big| \frac{d_{I}(\hat \theta_h, \theta_0) - d_{I}^{*}(\hat \theta_h, \theta_0)}{d_{I}^{*}(\hat \theta_h, \theta_0)}\Big|\\
		&\le& \frac{1}{\lambda_0}\sup_{h\in H_n}\Big\{ \big\{\int_{0}^{1}\big| |(I_d + R_h(u))D_h(u)|^2_{V(\theta_0(u))}\\
		&&\quad\quad\quad\quad\quad\quad\quad\quad\quad - |D_h(u)|^2_{V(\theta_0(u))}\big| \cdot w_{n,h}(u) \dif u\big\}\\
		&&\quad\quad\quad\quad\quad \times \big\{\int_{0}^{1} | D_h(u)|^2 \cdot w_{n,h}(u) \dif u\big\}^{-1}\Big\} \to 0 \quad (n\to\infty).
	\end{eqnarray*}
	Using the shortcuts $d_I = d_I(\hat \theta_h, \theta_0)$ and so on, we have
	\begin{equation}
		 \frac{d_{I} - d_M^{*}}{d_{M}^{*}}= \frac{d_{I} - d_{I}^{*}}{d_{I}^{*}}\cdot \left(\frac{d_{I}^{*} - d_{M}^{*}}{d_{M}^{*}} +1\right)+ \frac{d_I^{*}- d_{M}^{*}}{d_{M}^{*}},\label{proof_cor_da_cv_approx_eq1}
	\end{equation}
	hence, the assertion follows from Lemma \ref{da_approx_0}. The proof for $d_A$ is the same by using sums instead of integrals.
\end{proof}

\section{Additional proofs of section \ref{sec6}}

\begin{proof}[Proof of Lemma \ref{lemma_stat}] Since $g \in \sL(M,\chi,C)$, we have by Hoelder's inequality:
\begin{align}
	& \big\|\sup_{\theta \in \Theta}|g(Y_{t,n}^c,\theta) - g(\tilde Y_t(t/n)^c,\theta)|\big\|_q\nonumber\\
	&\le C_1\textstyle\sum_{j=1}^{t}\chi_j \big\|X_{t-j+1,n} - \tilde X_{t-j+1}\big(\frac{t}{n}\big)\big\|_{qM}\nonumber\\
	&\quad\quad\quad \times \big(1 + \big(\textstyle\sum_{j=1}^{t}\chi_j\|X_{t-j+1,n}\|_{qM}\big)^{M-1}\nonumber\\
	&\quad\quad\quad\quad\quad+ \big(\textstyle\sum_{j=1}^{t}\chi_j\big\|\tilde X_{t-j+1}\big(\frac{t}{n}\big)\big\|_{qM}\big)^{M-1}\big)\nonumber\\
	&\le C_1\big(1 + 2(D_{qM} |\chi|_1)^{M-1}\big)\cdot \textstyle\sum_{j=1}^{t}\chi_j \big\|X_{t-j+1,n} - \tilde X_{t-j+1}\big(\frac{t}{n}\big)\big\|_{qM}.\label{lemma_stat_eq1}
\end{align}
Furthermore, we have
\begin{eqnarray*}
	&& \textstyle\sum_{t=1}^{n}\sum_{j=1}^{t}\chi_j \|X_{t-j+1,n} - \tilde X_{t-j+1}(\frac{t}{n})\|_{qM}\\
	&\le& \textstyle\sum_{j=1}^{\infty}\chi_j\sum_{t=1}^{n}\|X_{t,n} - \tilde X_{t}(\frac{t}{n})\|_{qM}\\
	&&\textstyle\quad\quad\quad\quad + \sum_{j=1}^{\infty}\chi_j \sum_{t=1}^{n}\|X_{t-j+1}\big(\frac{t-j+1}{n}\big) - \tilde X_{t-j+1}(\frac{t}{n})\|_{qM}\\
	&\le& \textstyle C_B |\chi|_1 + p L_K C_A \sum_{j=1}^{\infty}j\chi_j.
\end{eqnarray*}
Define $ \tilde C_{S,q,1} := C_1(1 + 2(D_{qM} |\chi|_1)^{M-1})\cdot (C_B|\chi|_1 + pL_KC_A \sum_{j=1}^{\infty}j\chi_j)$, then
\begin{equation}
	\textstyle\big\|\sup_{\theta\in \Theta}\big|\frac{1}{n}\sum_{t=1}^{n}K_h\big(\frac{t}{n}-u\big)\cdot \big\{g(Y_{t,n}^c,\theta) - g(\tilde Y_t(t/n)^c,\theta)\big\}\big\|_2 \le \frac{|K|_{\infty} \tilde C_{S,q,1}}{nh}.\label{proof_lemma_stat_eq3}
\end{equation}
By Hoelder's inequality, we have for arbitrary $u \in [0,1]$:
\begin{equation}
	\big\|\sup_{\theta \in \Theta}|g(\tilde Y_t(u)^c,\theta) - g(\tilde Y_t(u),\theta)|\big\|_q \le C_1 \big(1 + 2(D_{qM} |\chi|_1)^{M-1}\big) D_{qM} \cdot \textstyle\sum_{j=t+1}^{\infty}\chi_j .\label{lemma_stat_eq2}
\end{equation}
Defining $\tilde C_{S,q,2} := C_1 (1 + 2( D_{qM} |\chi|_1)^{M-1}) D_{qM} \cdot \sum_{t=1}^{\infty}\sum_{j=t+1}^{\infty}\chi_j$, we obtain
\begin{equation}
	\textstyle\big\|\sup_{\theta\in \Theta}\big|\frac{1}{n}\sum_{t=1}^{n}K_h\big(\frac{t}{n}-u\big)\cdot \big\{g(\tilde Y_{t}(t/n)^c,\theta) - g(\tilde Y_t(t/n),\theta)\big\}\big\|_q \le \frac{|K|_{\infty} \tilde C_{S,q,2}}{nh}.\label{proof_lemma_stat_eq4}
\end{equation}
\reff{proof_lemma_stat_eq3} and \reff{proof_lemma_stat_eq4} give the result.
\end{proof}

\begin{proof}[Proof of Lemma \ref{lemma_expectation_expansion}:]
	Put $\tilde M := M-2$. Define $f_j := \partial_j g$ and $D_i := \max\{\tilde\chi_i, \psi_1(i)\psi_2(j)\}$. We show that $f_j:(\IR^{\infty}, |\cdot|_{D,1}) \to (\IR,|\cdot|)$ is Frechet differentiable with derivative $f_j'(y)h := \sum_{i=1}^{\infty}\partial_i \partial_j g(y)\cdot h_i$. Choose $h \in \IR^{\infty}$ with $|h|_{D,1} < \varepsilon$. Let $e_j = (e_{j,i})_{i=1,2,...} \in \IR^{\infty}$ be a sequence of zeros where only at the $j$-th position is a 1. By the mean value theorem in $\IR$, there exists $s_i \in [0,1]$ such that
	\begin{eqnarray*}
		&& \textstyle |f_j(y+h) - f_j(y) - f_j'(y)h|\\
		&\le& \textstyle\sum_{i=1}^{\infty}\big|f_j\big(y + \sum_{k=1}^{i}h_k e_k\big) - f_j\big(y + \sum_{k=1}^{i-1}h_k e_k\big) - \partial_i f_j(y) h_i\big|\\
		&=& \textstyle\sum_{i=1}^{\infty}\big|\partial_i f_j\big(y + \sum_{k=1}^{i-1}h_k e_k + s_i h_i\big) - \partial_i f_j(y)\big| \cdot |h_i|\\
		&\le& \textstyle\sum_{i=1}^{\infty}\psi_1(i)\psi_2(j) |h|_{\tilde\chi,1}\cdot \big(1 + 2^{\tilde M}|y|_{\tilde\chi,1}^{\tilde M-1} + |h|_{\tilde\chi,1}^{\tilde M-1}\big) \cdot |h_i|\\
		&\le& \textstyle\varepsilon\cdot \big(1 + 2^{\tilde M}|y|_{\tilde\chi,1}^{\tilde M-1} + |h|_{\tilde\chi,1}^{\tilde M-1}\big).
	\end{eqnarray*}
	This shows Frechet differentiability of $f_j$. By applying the chain rule, $s \mapsto f_j(y + s\cdot(y'-y))$ is differentiable with derivative $\sum_{i=1}^{\infty}\partial_i f_j(y + s\cdot(y'-y))\cdot (y_i'-y_i)$. By the fundamental theorem of analysis, 
	\begin{eqnarray*}
		&& |f_j(y') - f_j(y)|\\
		&\le& \int_0^{1} \sum_{i=1}^{\infty}|\partial_i f_j(y + s\cdot(y'-y)) - \partial_i f_j(0)|\cdot |y_i'-y_i| \dif s + |y'-y|_{\psi_1(\cdot) \psi_2(j),1}\\
		&\le& \Big\{ \big(|y|_{\tilde\chi,1}+|y'|_{\tilde\chi,1}\big)\cdot\big(1 + 2^{\tilde M}|y|_{\tilde\chi,1}^{\tilde M-1} + 2^{\tilde M}|y'|_{\tilde\chi,1}^{\tilde M-1}\big) + 1\Big\} |y'-y|_{\psi_1(\cdot) \psi_2(j),1}\\
		&\le& \tilde \psi_1 \psi_2(j) |y'-y|_{\psi_1,1}\big(1 + |y|_{\tilde\chi,1}^{\tilde M} + |y'|_{\tilde\chi,1}^{\tilde M}\big)
	\end{eqnarray*}
	with some constant $\tilde \psi_1$ dependent on $\tilde M$. This shows that $f_j = \partial_j g \in \sL(\tilde M+1, \tilde\chi+\psi_1, \tilde \psi_1 \psi_2(j))$. In the same manner, we obtain Frechet differentiability of $g$ itself, and $g \in \sL(M,\tilde\chi + \psi_1 + \tilde \psi_1 \psi_2, \tilde \psi_1 \tilde \psi_2)$ with some constant $\tilde \psi_2$ depending on $\tilde M$. Since $u\mapsto \theta_0(u)$ and $\theta \mapsto \tilde Y_t(\theta)$ are twice continuously differentiable, we conclude that the composition $u \mapsto \tilde Y_t(u) = \tilde Y_t(\theta_0(u))$ is twice continuously differentiable and thus
	\begin{eqnarray*}
		\textstyle\partial_u g(\tilde Y_t(u)) &=& \textstyle\sum_{j=1}^{\infty}\partial_j g(\tilde Y_t(u))\cdot \partial_u \tilde X_{t-j+1}(u),\\
		\textstyle\partial_u^2 g(\tilde Y_t(u)) &=& \textstyle\sum_{i,j=1}^{\infty} \partial_i \partial_j g(\tilde Y_t(u)) \cdot  \partial_u \tilde X_{t-i+1}(u) \partial_u \tilde X_{t-j+1}(u)\\
		&&\quad\quad\quad\textstyle+ \sum_{j=1}^{\infty}\partial_j g(\tilde Y_t(u))\cdot \partial_u^2 \tilde X_{t-j+1}(u),
	\end{eqnarray*}
	by the chain rule. We obtain with Hoelder's inequality
	\begin{eqnarray}
		&& \|\partial_i \partial_j g(\tilde Y_0(u))\|_{M/\tilde M}\nonumber\\
		&\le& \|\partial_i \partial_j g(\tilde Y_0(u)) - \partial_i \partial_j g(0)\|_{M/\tilde M} + |\partial_i \partial_j g(0)|\nonumber\\
		&\le& \psi_1(i)\psi_2(j) \big[\big\| |\tilde Y_0(u)|_{\tilde\chi,1}\cdot\big(1 + |\tilde Y_0(u)|_{\tilde\chi,1}^{\tilde M-1}\big)\big\|_{M/\tilde M} + 1\big]\nonumber\\
		&\le& \psi_1(i)\psi_2(j) \big[ \| |\tilde Y_0(u)|_{\tilde\chi,1}\|_M \cdot \big(1 + \| |\tilde Y_0(u)|_{\tilde\chi,1}\|_M^{\tilde M-1}\big) + 1\big]\nonumber\\
		&\le& \tilde D_2 \psi_1(i)\psi_2(j),\label{expectation_expansion_eq2}
	\end{eqnarray}
	where $\tilde D_2 := |\tilde\chi|_1 D_M\cdot (1 + (|\tilde\chi|_1 D_M)^{\tilde M-1}) + 1$. With similar arguments, we can show that $\|\partial_j g(\tilde Y_0(u))\|_{M/(\tilde M+1)} \le \tilde D_1 \psi_2(j)$ and $\|g(\tilde Y_0(u))\|_1 \le \tilde D_0$ with some constants $\tilde D_0, \tilde D_1 > 0$. Hoelder's inequality yields 
	\begin{align}
		\textstyle\|\partial_u g(\tilde Y_0(u))\|_1 &\le \textstyle\tilde D_1\sum_{j=1}^{\infty}\psi_2(j) \|\partial_u \tilde X_0(u)\|_M < \infty,\nonumber\\
		\textstyle\|\partial_u^2 g(\tilde Y_0(u))\|_1 &\le \textstyle\tilde D_2 \sum_{i,j=1}^{\infty}\psi_1(i) \psi_2(j) \|\partial_u \tilde X_0(u)\|_M^2\nonumber\\
		&\quad\quad\quad\textstyle+ \tilde D_1\sum_{j=1}^{\infty}\psi_2(j) \|\partial_u^2 \tilde X_0(u)\|_M < \infty,\label{expectation_expansion_eq3}
	\end{align}
	so we have proven the existence of all terms in the expansion \reff{expectation_expansion_eq1}. It remains to analyze the residual term $R(u,\xi)$. Since $u \mapsto \partial_u^2 g(\tilde Y_0(u))$ is (uniformly) continuous a.s., we have
	\begin{eqnarray}
		&& \lim_{\xi \to 0}\sup_{u\in[0,1]} \sup_{s \in [0,1]}|\partial_u^2 g(\tilde Y_0(u+\xi s)) - \partial_u^2 g(\tilde Y_0(u))|\nonumber\\
		&\le& \lim_{\xi \to 0}\sup_{|u-v| \le \xi}|\partial_u^2 g(\tilde Y_0(v)) - \partial_u^2 g(\tilde Y_0(u))|= 0.\label{expectation_expansion_eq4}
	\end{eqnarray}
	Furthermore, it holds that $\sup_{u\in[0,1]}|\tilde Y_0(u) - \tilde Y_0(0)| \le \sup_{u\in[0,1]}|\partial_u \tilde Y_0(u)| \le \sup_{u\in[0,1]}|\theta_0'(u)|_2 \cdot \sup_{\theta \in \Theta}|\nabla \tilde Y_t(\theta)|_2$ and thus $\|\sup_{u\in[0,1]}|\tilde Y_0(u)|\|_M$ is finite by Assumption \ref{ass5}. Assumption \ref{ass5} also directly implies that
	\[
		\|\sup_{u\in[0,1]}|\partial_u \tilde Y_0(u)|\|_M, \quad\|\sup_{u\in[0,1]}|\partial_u^2 \tilde Y_0(u)|\|_M < \infty.
	\]
	Using similar techniques as in \reff{expectation_expansion_eq2} and \reff{expectation_expansion_eq3}, $\|\sup_{u\in[0,1]}|\partial_u^2 g(\tilde Y_0(u))| \|_1 < \infty$ follows. The dominated convergence theorem and \reff{expectation_expansion_eq4} yield
	\[
		\lim_{\xi \to 0}\sup_{u\in[0,1]}R(u,\xi) = 0.
	\]
\end{proof}

\begin{proof}[Proof of Proposition \ref{misedecomposition}]
	(i) Let $j \in \{1,...,d\}$. By Lemma \ref{lemma_stat}, there exists some constant $C_S > 0$ such that
	\begin{eqnarray}
		&& \big|\IE \nabla L_{n,h}(u,\theta) - \IE \nabla \hat L_{n,h}(u,\theta)\big|\nonumber\\
		&\le& \Big\|\frac{1}{n}\sum_{t=1}^{n}K_h\Big(\frac{t}{n}-u\Big)\big\{\nabla \l(Y_{t,n}^c,\theta) - \nabla \l(\tilde Y_t(t/n), \theta)\big\}\Big\|_1\nonumber\\
		&\le& C_S(nh)^{-1}.\label{proof_misedecomposition_eq2}
	\end{eqnarray}
	By $\nabla \l \in \sL_{\infty}(M,\chi,C)$ (component-wise) and \reff{proof_uniform_eq1}, it holds that
	\[
		\sup_{\theta,\theta'\in\Theta}\|\nabla \l(\tilde Y_0(\theta'),\theta)\|_1 \le D := C_{\infty}(1 + (|\chi|_1 D_{qM})^M).
	\]
	Furthermore, $v \mapsto \IE \nabla \l(\tilde Y_0(v),\theta)$ has bounded variation $B_{\nabla \l}$ (uniformly in $\theta$) since $\theta_0$ has bounded variation. The same holds for the kernel $K$. We conclude that
	\begin{eqnarray}
		&& \Big|\frac{1}{n}\sum_{t=1}^{n}K_h\Big(\frac{t}{n}-u\Big)\IE \nabla \l(\tilde Y_t(t/n), \theta_0(u))\nonumber\\
		&&\quad\quad\quad\quad - \int_{0}^{1}K_h(v-u) \IE \nabla \l(\tilde Y_t(v),\theta_0(u)) \dif v\Big| \le \frac{D L_K + |K|_{\infty} B_{\nabla \l}}{nh}.\label{proof_misedecomposition_eq11}
	\end{eqnarray}
	This implies, uniformly in $u,h$, 
	\begin{equation}
		|\IE \nabla L_{n,h}(u,\theta_0(u)) - b_h(u)| = O((nh)^{-1}).\label{proof_misedecomposition_eq14}
	\end{equation}
	By Lemma \ref{lemma_stat} and Lemma \ref{lemma_bias_weak}, there exist constants $C_S', C_{bias} > 0$ such that
	\begin{equation}
		 \|\nabla L_{n,h}(u,\theta_0(u)) - \nabla \tilde L_{n,h}(u,\theta_0(u))\|_2 \le C_S' (nh)^{-1} + C_{bias}h^{\beta \wedge 1}.\label{proof_misedecomposition_eq3}
	\end{equation}
	By the (component-wise) martingale difference property of $\nabla \l(\tilde Y_t(u),\theta_0(u))$ and since $K$ has bounded variation, we have
	\begin{eqnarray}
		&&\IE\big| \nabla \tilde L_{n,h}(u,\theta_0(u))\big|_{V(\theta_0(u))^{-1}}^2\nonumber\\
		&=& \Big(\frac{1}{(nh)^2}\sum_{t=1}^{n}K\Big(\frac{t/n-u}{h}\Big)^2\Big)\cdot \IE \big| \nabla \l(\tilde Y_0(u),\theta_0(u))\big|_{V(\theta_0(u))^{-1}}^2\nonumber\\
		&=& \frac{\mu_K}{nh}\cdot \IE \big| \nabla \l(\tilde Y_0(u),\theta_0(u))\big|_{V(\theta_0(u))^{-1}}^2 + O((nh)^{-2}).\label{proof_misedecomposition_eq12}
	\end{eqnarray}
	Let us use the abbreviations $\nabla L := \nabla L_{n,h}(u,\theta_0(u)) - \IE \nabla L_{n,h}(u,\theta_0(u))$ and $\nabla \tilde L :=  \nabla \tilde L_{n,h}(u,\theta_0(u)) := \frac{1}{n}\sum_{t=1}^{n}K_h(t/n-u)\ell(\tilde Y_t(u),\theta_0(u))$. With \reff{proof_misedecomposition_eq3} we conclude
	\begin{eqnarray}
		&& \big| \IE \big| \nabla L\big|_A^2 - \IE\big| \nabla \tilde L\big|_A^2 \big|\nonumber\\
		&\le&  \big| \IE \langle \nabla L - \nabla \tilde L, A (\nabla L - \nabla \tilde L)\rangle\big| + \big|\IE \langle \nabla L - \nabla \tilde L, A \nabla \tilde L\rangle\big|\nonumber\\
		&&\quad\quad\quad + \big|\IE \langle \nabla \tilde L, A(\nabla L - \nabla \tilde L)\rangle\big|\nonumber\\
		&\le& |A|_{spec}\big\{\IE |\nabla L - \nabla \tilde L|_2^2 + \big(\IE |\nabla L - \nabla \tilde L|_2^2 \cdot \IE |\nabla \tilde L|_2^2\big)^{1/2}\big\}\nonumber\\
		&=& O\big( (nh)^{-2} + (nh)^{-1}h^{\beta \wedge 1}\big).\label{proof_misedecomposition_eq13}
	\end{eqnarray}
	Note that $\IE\big|\nabla \l(\tilde Y_t(u),\theta_0(u))\big|_{V(\theta_0(u))^{-1}}^2 = \tr\{V^{-1}(\theta_0(u)) I(\theta_0(u))\}$. By combining \reff{proof_misedecomposition_eq14}, \reff{proof_misedecomposition_eq12} and \reff{proof_misedecomposition_eq13}, we obtain
	\begin{eqnarray*}
		&& \sup_{u\in[0,1]}\sup_{h\in H_n}\ (nh)\cdot\Big|\IE \big|\nabla L_{n,h}(u,\theta_0(u))\big|^2_{V(\theta_0(u))^{-1}}\\
		&&\quad\quad\quad\quad\quad\quad\quad\quad\quad- \big\{ v_h(u) + |b_h(u)|_{V(\theta_0(u))^{-1}}^2\big\}\Big| \to 0,
	\end{eqnarray*}
	from which the stated convergence follows by integration.\\
	(ii) By Lipschitz continuity of $K$, we have for each component $i = 1,...,p$ that $|b_{h,i}(u) - b_{h,i}(u')| \le \frac{2}{h} L_K \sup_{v\in[0,1]}|\nabla_i \ell(\tilde Y_t(v),\theta_0(u))|$. Furthermore, by using the inequality $|V(\theta)^{-1} - V(\theta')^{-1}|_{spec} \le \lambda_0^{-2}|V(\theta) - V(\theta')|_{2}$ (where $|A|_{spec}$ denotes the spectral norm of a matrix $A$), the bounded variation of $u \mapsto V(\theta_0(u))$ and $w_{n,h}$ and Lemma \ref{lemma_bias_weak}, we have 
	\[
		\textstyle\Big|B_h - \frac{1}{n}\sum_{t=1}^{n}|b_h(t/n)|_{V(\theta_0(t/n))^{-1}}^2 w_{n,h}(t/n)\Big| = O( (nh)^{-1}\cdot h^{\beta \wedge 1} + n^{-1}).
	\]
	The uniform results \reff{proof_misedecomposition_eq2} and \reff{proof_misedecomposition_eq11} together with Lemma \ref{lemma_bias_weak} provide \reff{bias_discrete_approx}.
	
	%The inequality \reff{proof_misedecomposition_eq3} and $\IE \nabla \l(\tilde Y_t(u),\theta_0(u)) = 0$ imply that $|b_h(u)| \le $The first inequality in \reff{bias_discrete_approx} follows directly from \reff{proof_misedecomposition_eq2}.\\
	(iii) If $u \in [\frac{h}{2},1-\frac{h}{2}]$, it holds that $b_h(u) = \int_{-1/2}^{1/2}K(y) \cdot \IE \nabla \l(\tilde Y_t(\theta_0(u+yh)),\theta_0(u)) \dif y$. According to Lemma \ref{lemma_expectation_expansion}, we have uniformly in $u\in[0,1]$:
	\begin{align}
		b_h(u) &= \textstyle\int K(y) \dif y\cdot \IE \nabla \l(\tilde Y_t(\theta_0(u)),\theta_0(u))\nonumber\\
		& \textstyle\quad\quad + h \cdot \int K(y) y \dif y\cdot \IE\big[\partial_u \nabla \l(\tilde Y_t(\theta_0(u)),\theta)\big]\big|_{\theta = \theta_0(u)}\nonumber\\
		&\textstyle\quad\quad + \frac{h^2}{2} \cdot \int K(y) y^2 \dif y\cdot \IE\big[\partial_u \nabla \l(\tilde Y_t(\theta_0(u)),\theta)\big]\big|_{\theta = \theta_0(u)}\nonumber\\
		&\textstyle\quad\quad + \frac{h^2}{2}\int K(y)y^2 R(u,yh)\dif y.\label{misedecomposition_eq20}
	\end{align}
	Since $\nabla \l(\tilde Y_t(\theta_0(u)),\theta_0(u))$ is a martingale difference sequence and $K$ is symmetric, the first two terms in \reff{misedecomposition_eq20} vanish. The last term in \reff{misedecomposition_eq20} is $o(h^2)$ uniformly in $u\in[0,1]$ since $K(y) = 0\gdw |y| \le \frac{1}{2}$ and $\lim_{\xi \to 0}\sup_{u\in[0,1]}|R(u,\xi)| = 0$ from Lemma \ref{lemma_expectation_expansion}.
\end{proof}

\begin{proof}[Proof of Theorem \ref{mle_konsistenz}]
	By application of Corollary \ref{lemma_likelihood_uniform} with $k = 0$, we obtain the uniform convergence
	\[
		\sup_{h \in H_n} \sup_{u \in \supp(w_{n,h})} \sup_{\theta \in \Theta} |L_{n,h}(u,\theta) - L(u,\theta)| \to 0 \quad \text{a.s.}
	\]
	The identifiability condition in Assumption \ref{ass3} implies that $L(u,\theta)$ attains its unique minimum at $\theta = \theta_0(u)$. Standard arguments provide the uniform convergence \reff{mle_konsistenz_gleichung} (see also \cite{dahlhaus2017}). Since $\theta_0(u)$ lies in the interior of $\Theta$, we have that $\hat \theta_h(u)$ lies in the interior of $\Theta$ almost surely for $n$ large enough. With some intermediate value $\bar \theta_h(u)$ satisfying $|\bar \theta_h(u) - \theta_0(u)|_1 \le |\hat \theta_h(u) - \theta_0(u)|_1$, it holds that
	 \begin{eqnarray*}
	 	&& \hat \theta_h(u) - \theta_0(u)\\
		&=& -\big[\nabla^2 L_{n,h}(u,\bar \theta_h(u))\big]^{-1}\cdot \big\{\IE_0 \nabla L_{n,h}(u,\theta_0(u)) + \IE \nabla L_{n,h}(u,\theta_0(u))\big\}
	 \end{eqnarray*}
	 By using the continuity of $\theta \mapsto V(\theta)$ and the uniform convergences of $\nabla^2 L_{n,h}$ provided by Corollary \ref{lemma_likelihood_uniform}, we have that
	 \begin{eqnarray*}
	 	&&|\nabla^2 L_{n,h}(u,\bar \theta_h(u)) - V(\theta_0(u))|_2\\
		&\le& |\nabla^2 L_{n,h}(u,\bar \theta_h(u)) - [\IE \nabla^2 L_{n,h}(u,\theta)]\big|_{\theta = \bar \theta_h(u)}|_2\\
		&&\quad + | [\IE \nabla^2 L_{n,h}(u,\theta)]\big|_{\theta = \bar \theta_h(u)} - V(\bar \theta_h(u))|_2  + |V(\bar \theta_h(u)) - V(\theta_0(u))|_2
	 \end{eqnarray*}
	 converges to $0$ uniformly in $h\in H_n$, $u \in \supp(w_{n,h})$. Since the smallest eigenvalue of $V(\theta)$ is bounded from below by $\lambda_0 > 0$ uniformly in $\theta \in \Theta$, we have that for $n$ large enough, the smallest eigenvalue of $\nabla^2 L_{n,h}(u,\bar \theta_h(u))$ is bounded from below by $\frac{\lambda_0}{2}$ uniformly in $h \in H_n$, $u \in \supp(w_{n,h})$ giving the result \reff{mle_konsistenz_gleichung2}. The arguments for $\hat \theta_{h,-t}(u)$ are similar.
\end{proof}

\begin{proof}[Proof of Lemma \ref{da_approx_0} (additional material)]
We now show that \reff{proof_da_approx_0_eq2} and \reff{proof_da_approx_0_eq3} together imply that $\sup_{h\in H_n}\frac{Q_{n,h}}{d_M^{*}(\hat \theta_h,\theta_0)} \to 0$ a.s., where $Q_{n,h} := |d_A^{*}(\hat \theta_h,\theta_0) - d_{I}^{*}(\hat \theta_h,\theta_0)|$. For each $r > 0$, we can find a space $H_n' := \{\frac{k}{n^r}:k = 1,...,n^r\}$ such that for each $h\in H_n$ there exists some $h' \in H_n'$ such that $|h-h'| \le n^{-r}$. Choose $r$ large enough such that $C(n)\cdot n^{-r} \le C_{\xi} n^{-(1+\xi)}$ with some constants $c_{\xi},\xi > 0$. Let $\delta > 0$ be arbitrarily chosen. Then by Markov's inequality, \reff{proof_da_approx_0_eq2} and \reff{proof_da_approx_0_eq3},
\begin{eqnarray}
	&& \textstyle\IP(\sup_{h\in H_n}\frac{Q_{n,h}}{d_M^{*}(\hat \theta_h,\theta_0)} > \delta)\nonumber\\
	&\le& \textstyle\IP\Big(\sup_{h'\in H_n'}\frac{Q_{n,h'}}{d_M^{*}(\hat \theta_{h'},\theta_0)} > \frac{\delta}{2}\Big)\nonumber\\
	&&\textstyle\quad\quad\quad + \IP\Big(\sup_{h\in H_n,h'\in H_n', |h-h'| \le n^{-r}}\frac{|Q_{n,h} - Q_{n,h'}|}{d_M^{*}(\hat \theta_h,\theta_0)} > \frac{\delta}{2}\Big)\nonumber\\
	&\le& \textstyle\# H_n'\cdot \Big(\frac{\|Q_{n,h'}\|_q}{d_M^{*}(\hat \theta_{h'},\theta_0)\cdot (\delta/2)}\Big)^q + C_{\xi} n^{-(1+\xi)}\frac{\|Z_n\|_2^2}{\delta/2}\nonumber\\
	&\le& \textstyle\frac{C}{(\delta/2)^q} n^{r-\gamma q} + C_{\xi}\frac{C_{\infty}^2\big(1 + (|\chi|_1 D_{2M})^M\big)^2}{(\delta/2)}\cdot n^{-(1+\xi)},\label{proof_da_approx_0_eq11}
\end{eqnarray}
which is absolutely summable for $q$ large enough, giving the result by applying Borel-Cantelli's lemma. In the following we will use this technique for similar expressions without explicitly showing results of the type \reff{proof_da_approx_0_eq3}. The proofs are similar to the proof of \reff{proof_da_approx_0_eq3}. Following the argumentation in \reff{proof_lemma_approxmart_eq1}, we obtain
\begin{eqnarray}
	%&&\Big\| d_A^{*}(\hat \theta_h,\theta_0) - \frac{1}{n}\sum_{t=1}^{n}|\nabla \hat L_{n,h}(t/n,\theta_0(t/n))|^2_{V(\theta_0(t/n))} w_{n,h}(t/n)\Big\|_q\\
	%&\le& \frac{2C_{\infty,2q}d^2|V|_{\infty}|w|_{\infty}C_{\delta,2q}}{n(nh)^2}\cdot \sum_{s=1}^{n}\|X_{s,n} - \tilde X_s(s/n)\|_{2qM}\sum_{t=1}^{n}\Big|K\Big(\frac{s-t}{nh}\Big)\Big|\sum_{r=1}^{n}\Big|K\Big(\frac{r-t}{nh}\Big)\Big|,\\
	&&\textstyle\Big\| d_I^{*}(\hat \theta_h,\theta_0) - \int_{0}^{1}|\nabla \hat L_{n,h}(u,\theta_0(u))|^2_{V(\theta_0(u))} w_{n,h}(u)\dif u\Big\|_q\nonumber\\
	&\le& \textstyle\frac{2C_{\infty,2q}p^2|V|_{\infty}|w|_{\infty}C_{\delta,2q}}{n(nh)}\cdot \int_{-1/2}^{1/2}\Big\{|K(v)| \nonumber\\
	&&\textstyle\quad \times \sum_{s=1}^{n} \|X_{s,n} - \tilde X_s(s/n)\|_{2qM}\sum_{r=1}^{n}\Big|K\Big(\frac{s-r}{nh}+vh\Big)\Big|\Big\}\dif v = O(n^{-1}). \label{proof_da_approx_0_eq4}
\end{eqnarray}
For $u\in [0,1]$, it holds that
\begin{eqnarray}
	&& |\nabla \hat L_{n,h}(u,\theta_0(u))|_{V(\theta_0(u))}^2 - \IE |\nabla \hat L_{n,h}(u,\theta_0(u))|_{V(\theta_0(u))}^2\nonumber\\
	&=& \big\{A_1(u) - \IE A_1(u)\big\} + 2A_2(u),\label{keine_ew_noetig}
\end{eqnarray}
where $A_1(u) := |\nabla \hat L_{n,h}(u,\theta_0(u)) - \IE \nabla \hat L_{n,h}(u,\theta_0(u))|_{V(\theta_0(u))}^2$ and $A_2(u) := \langle \nabla \hat L_{n,h}(u,\theta_0(u)) -   \IE\nabla \hat L_{n,h}(u,\theta_0(u)), V(\theta_0(u)) \IE \nabla \hat L_{n,h}(u,\theta_0(u))\rangle$.
We now derive upper bounds for
\begin{eqnarray*}
	\textstyle\|d_I^{*}(\hat \theta_h,\theta_0) - \IE d_{I}^{*}(\hat \theta_h,\theta_0)\|_q &\le&\textstyle \big\|\int_{0}^{1}\big\{A_1(u) - \IE A_1(u)\big\}w_{n,h}(u) \dif u\big\|_q\\
	&&\textstyle\quad\quad + 2\big\|\int_{0}^{1}A_2(u) w_{n,h}(u) \dif u\big\|_q.
\end{eqnarray*}
Lemma \ref{hilfslemma1}, \reff{hilfslemma1_eq2} gives
\begin{eqnarray}
	 &&\textstyle\Big\|\int_{0}^{1}\big\{A_1(u) - \IE A_1(u)\big\} w_{n,h}(u) \dif u\Big\|_q\nonumber\\
	 &\le&\textstyle \frac{V_{1,max}|w|_{\infty}}{n(nh)}\Big\|\sup_{i,j=1,...,d}\int_{-1/2}^{1/2}\Big\{|K(v)|\cdot \Big|\sum_{r,s=1}^{n}K\Big(\frac{r-s}{nh}+v\Big)\nonumber\\
	 &&\textstyle\quad\quad\quad\quad\quad\quad\quad\quad\quad\times \nabla_i \l(\tilde Y_r(r/n),\theta_0(r/n-vh))\nonumber\\
	 &&\textstyle\quad\quad\quad\quad\quad\quad\quad\quad\quad\times \nabla_j \l(\tilde Y_s(s/n),\theta_0(r/n-vh))\Big|\Big\} \dif v\Big\|_q\nonumber\\
	 &\le&\textstyle \rho_{2,\psi C_{\delta,\cdot},q}\cdot \frac{p^2 V_{1,max} |w|_{\infty}}{n(nh)} \int_{-1/2}^{1/2}\Big\{|K(v)|\nonumber\\
	 &&\textstyle\quad\quad\quad\quad\quad\times \Big(\sum_{r,s=1}^{n}K\Big(\frac{r-s}{nh} + v\Big)^2 \Big)^{1/2} \Big\}\dif v,\nonumber\\
	 &\le&\textstyle \rho_{2,\psi C_{\delta,\cdot},q}\cdot p^2 V_{1,max} |w|_{\infty} |K|_{\infty}^2 (n(nh))^{-1/2},\label{proof_da_approx_0_eq5}
\end{eqnarray}
where $\psi_q(t)$ and $C_{\delta,q}$ are defined in Lemma \ref{pmomente}. Furthermore, we have
\begin{eqnarray}
	 &&\textstyle\Big\|\int_{0}^{1}A_2(u) w_{n,h}(u)\dif u\Big\|_q\nonumber\\
	 &\le&\textstyle \frac{1}{n}\Big\|\sup_{i,j=1,...,d}\int_{-1/2}^{1/2}\Big\{|K(v)|\cdot \Big| \sum_{s=1}^{n} V(\theta_0(t/n))_{ij}w_{n,h}(s/n-vh)\nonumber\\
	 &&\textstyle\quad\quad \times \IE \nabla_i \hat L_{n,h}(s/n-vh,\theta_0(s/n-vh))\nonumber\\
	 &&\textstyle\quad\quad \times \big\{\nabla \l(\tilde Y_{s}(s/n),\theta_0(s/n-vh))\nonumber\\
	 &&\textstyle\quad\quad\quad\quad -\IE \nabla_j \l(\tilde Y_{s}(s/n),\theta_0(s/n-vh))\big\} \Big| \Big\}\dif v\Big\|_q\nonumber\\
	 &\le&\textstyle \rho_{1,\psi C_{\delta,\cdot},q}\cdot \frac{p^2 V_{1,max}}{n^{1/2}}\sup_{i=1,...,d}\int_{-1/2}^{1/2}|K(v)|\nonumber\\
	 &&\textstyle\quad\quad \times \Big(\frac{1}{n}\sum_{s=1}^{n}|\IE \nabla_i \hat L_{n,h}(s/n-vh,\theta_0(s/n-vh))|^2\nonumber\\
	 &&\textstyle\quad\quad\quad\quad\quad\quad\quad\quad\times w_{n,h}(s/n-vh)^2\Big)^{1/2}\dif v.
\end{eqnarray}
Put $\hat b_{h,i}(u) = \IE \nabla_{i}\hat L_{n,h}(u,\theta_0(u))$ and $\hat B_h$ from Proposition \ref{misedecomposition}. By Lemma \ref{lemma_stetigkeit}, we have $|\hat b_{h,i}(u) - \hat b_{h,i}(v)| \le C_{-,1}\big\{\frac{|u-v|}{h} + |\theta_0(u) - \theta_0(v)|_1\big\}$ and $|\hat b_{h,i}(u)| \le C_{\infty,1}$. This implies
\begin{eqnarray*}
	&& \textstyle\Big|\frac{1}{n}\sum_{t=1}^n |\hat b_{h,i}(s/n-vh)|^2 w_{n,h}(s/n-vh)\\
	&&\textstyle\quad\quad\quad\quad - \int_{0}^{1}|\hat b_{h,i}(u-vh)|^2 w_{n,h}(u-vh) \dif u\Big|\\
	&\le& \textstyle\frac{B_w}{n} + \frac{2C_{\infty,1}C_{-,1}}{nh}\big\{1 + B_{\theta_0}\big\}.
\end{eqnarray*}
Since $\int_{0}^{1}|\hat b_{h,i}(u-vh)|^2 w_{n,h}(u-vh) \dif u \le \frac{p^2}{\lambda_0}\hat B_h$, we obtain
\begin{equation}
	\Big\|\int_{0}^{1}A_2(u) w_{n,h}(u)\dif u\Big\|_q = O\big(n^{-1/2}(\hat B_h + (nh)^{-1/2})\big)\label{proof_da_approx_0_eq6}
\end{equation}
The bounds \reff{proof_da_approx_0_eq4}, \reff{proof_da_approx_0_eq5} and \reff{proof_da_approx_0_eq6} imply $\sup_{h\in H_n}\big|\frac{d_I^{*}(\hat \theta_h,\theta_0) - d_M^{*}(\hat \theta_h,\theta_0)}{d_M^{*}(\hat \theta_h, \theta_0)}\big| \to 0$ a.s.
\end{proof}

\begin{proof}[Proof of Lemma \ref{da_strich_approx} (additional material)]
Using similar techniques as in \reff{proof_lemma_approxmart_eq1}, we obtain $\|W_{n} - \hat W_{n}\|_q \le 2C_{\infty,2q}p^2 V_{-1,max}C_{\delta,2q} n^{-1}$, which shows $W_{n}-\hat W_{n}\to 0$ a.s. Furthermore, by \reff{hilfslemma1_eq2}, we have for arbitrary $q >0$ that $\|\hat W_{n} - \IE \hat W_{n}\|_q \le \rho_{2,\psi C_{\delta,\cdot},q}p^2 V_{-1,max}n^{-1/2}$, showing that $\hat W_{n} - \IE \hat W_{n} \to 0$ a.s. By the bound \reff{proof_uniform_eq1},
\[
	\|\hat W_{n}\|_1 \le C_{\infty}^2\|1 + |Y_{t,n}^c|_{\chi,1}^M\|_{2}^2 \le C_{\infty}^2\big(1+ |\chi|_1 D_{2M}\big)^2,
\]
which finally shows that $|\IE \hat W_{n}|$ is bounded and thus $W_{n}$ is bounded a.s.
\end{proof}

\begin{proof}[Proof of Lemma \ref{da_cv_approx} (additional material)]
For shortness, let us define $\SUP := \sup_{i}\sup_{t=1,...,n}\sup_{u\in \supp(w_{n,h})}\sup_{\theta \in \Theta}$, where the supremum over $i$ should be understood as the maximum over all components of the argument (if it is a vector or matrix). It is well-known from Corollary \reff{lemma_likelihood_uniform} that for $k = 0,1,2,3$ and all $0 < \alpha \le \frac{1}{2}$,
\begin{equation}
	\sup_{h\in H_n}\SUP (nh)^{\frac{1}{2}-\alpha}|\IE_0 \nabla^k_i L_{n,h,-t}(u,\theta)| \to 0 \quad \text{a.s.}\label{proof_da_cv_approx_eq17}
\end{equation}
Furthermore, let $C^{\#}$ denote a generic constant depending only on $p, V_{-1,max}$ and $V_{1,max}$ which may change its value from line to line.

We first discuss $R_{n,h,1}$. By using \reff{proof_da_cv_approx_eq3}, $R_{n,h,1}$ can be expanded into three terms: 
\begin{eqnarray}
	&& R_{n,h,1}\nonumber\\
	&=& \textstyle -\frac{1}{n}\sum_{t=1}^{n}\nabla \l_{t,n}(\theta_0(t/n))\cdot (\IE V_{t,n})^{-1}\cdot \nabla L_{n,h,-t}(t/n,\theta_0(t/n)) w_{n,h}(t/n)\nonumber\\
	&&\textstyle\quad\quad -\frac{1}{n}\sum_{t=1}^{n}\nabla \l_{t,n}(\theta_0(t/n))\cdot (\IE V_{t,n})^{-1}\big(\IE V_{t,n} - V_{t,n}) (\IE V_{t,n})^{-1}\nonumber\\
	&&\textstyle\quad\quad\quad\quad \times \nabla L_{n,h,-t}(t/n,\theta_0(t/n)) w_{n,h}(t/n)\nonumber\\
	&&\textstyle\quad\quad -\frac{1}{n}\sum_{t=1}^{n}\nabla \l_{t,n}(\theta_0(t/n))\cdot V_{t,n}^{-1}\big(\IE V_{t,n} - V_{t,n})(\IE V_{t,n})^{-1}\nonumber\\
	&&\textstyle\quad\quad\quad\quad\times \big(\IE V_{t,n} - V_{t,n}) (\IE V_{t,n})^{-1}\cdot \nabla L_{n,h,-t}(t/n,\theta_0(t/n)) w_{n,h}(t/n)\label{decomp_eq1}
\end{eqnarray}
The first summand and the second summand in \reff{decomp_eq1} are discussed in Lemma \ref{lemma_approxmart}, \reff{lemma_approxmart_eq1} and \reff{lemma_approxmart_eq3}. Put $Z_{n,k} := 1 + \frac{1}{n}\sum_{t=1}^{n}|Y_{t,n}^c|_{\chi,1}^{kM}$. Due to the Cauchy Schwarz inequality, the third summand in \reff{decomp_eq1} is bounded by
\begin{eqnarray*}
	 R_{n,h,1,3} &:=& C^{\#}\cdot \SUP\big| \IE_0 \nabla_{i}^2 L_{n,h,-t}(u,\theta)\big|^2\\
	 &&\quad\quad \times \Big\{|w|_{\infty}\SUP \big| \IE_0\nabla_i L_{n,h,-t}(u,\theta)\big|\cdot C_{\infty} Z_{n,1}\\
	 	 &&\quad\quad\quad\quad\quad\quad\quad\quad + (2|w|_{\infty})^{1/2}C_{\infty}Z_{n,2}^{1/2}\cdot\lambda_0^{-1/2}(B_h^{dis})^{1/2}\Big\}
	 %&&\quad\quad\quad\quad + |w|_{\infty}^{1/2}\Big(\frac{1}{n}\sum_{t=1}^{n}|\nabla_i \ell_{t,n}(\theta_0(t/n))|^2\Big)^{1/2}\Big(\frac{1}{n}\sum_{t=1}^{n}|\IE \nabla_i L_{n,h,-t}(t/n,\theta_0(t/n))|^2 w_{n,h}(t/n)\Big)^{1/2}\Big\}
\end{eqnarray*}
Here we used the fact that for $v \in \IR^d$ and $j = 1,...,p$, we have $|v_j|^2 \le |v|_{\text{Id}}^2 \le \frac{1}{\lambda_0}|v|_{V(\theta)}^2$ for all $\theta \in \Theta$ by the assumption on $V(\theta)$. $B_h^{dis}$ is defined in Proposition \ref{misedecomposition}(ii). By \reff{proof_da_cv_approx_eq17}, \reff{bias_discrete_approx} and Proposition \ref{misedecomposition}(i),(ii) we conclude that $\sup_{h\in H_n}\frac{R_{n,h,1,3}}{d_M^{*}(\hat \theta_h,\theta_0)} \to 0$ a.s. and $\sup_{h\in H_n}\frac{R_{n,h,1}}{d_M^{*}(\hat \theta_h,\theta_0)} \to 0$ a.s.

%The rest of the proof uses similar techniques and is postponed to \ref{suppA}. 

The next part of the proof addresses $R_{n,h,2}$. Our goal is to eliminate $V_{t,n}^{-1}$ and the intermediate values. Put
\begin{eqnarray*}
	&& R_{n,h,2,1}\\
	&:=& \frac{1}{n}\sum_{t=1}^{n}w_{n,h}(t/n)\cdot \nabla \l_{t,n}(\theta_0(t/n))' (\IE V_{t,n})^{-1}\big(\IE \nabla^3 L_{n,h,-t}(t/n,\theta_0(t/n))\big)\\
	&&\quad \times \big[(\IE V_{t,n})^{-1} \nabla L_{n,h,-t}(t/n,\theta_0(t/n)),(\IE V_{t,n})^{-1} \nabla L_{n,h,-t}(t/n,\theta_0(t/n))\big].
\end{eqnarray*}
We now argue how $R_{n,h,2,1}$ can be obtained by successively replacing terms in $R_{n,h,2}$ with a rate smaller than $d_{M}^{*}(\hat \theta_h,\theta_0)$. The remainder by replacing $V_{t,n}^{-1}$ by $(\IE V_{t,n})^{-1}$ in $R_{n,h,2}$ is bounded by (see \reff{proof_da_cv_approx_eq25} and \reff{mle_konsistenz_gleichung2}):
\begin{eqnarray}
	&& C^{\#} \SUP |\IE_0 \nabla_{i}^2 L_{n,h,-t}(u,\theta)|\cdot \Big\{|w|_{\infty}C_{\infty} Z_n \cdot \SUP |\IE_0 \nabla_{i} L_{n,h,-t}(u,\theta)|^2\nonumber\\
	&& + \frac{1}{n}\sum_{t=1}^{n}\big(|Y_{t,n}^c|_{\chi,1}^M - \IE |Y_{t,n}^c|_{\chi,1}^{M}\big) w_{n,h}(t/n) \sup_{i=1,...,p}|\IE \nabla_i L_{n,h,-t}(t/n,\theta_0(t/n))|^2\nonumber\\
	&&\quad\quad\quad + C_{\infty}\IE Z_n\cdot |w|_{\infty}^{1/2}\lambda_0^{-1}B_h^{dis}\Big\}\label{proof_da_cv_approx_eq31}
\end{eqnarray}
which is of order $(nh)^{\frac{1}{2}-\alpha}( (nh)^{1-2\alpha} + n^{\frac{1}{2}-\alpha} + B_h^{dis})$ by \reff{proof_da_cv_approx_eq17}.\\
The same arguments hold for the remainder which is obtained by replacing $\nabla^3 L_{n,h,-t}(t/n,\tilde \theta_{h,-t}(t/n))$ with $\IE[\nabla^3 L_{n,h,-t}(t/n,\theta)]\big|_{\theta = \tilde \theta_{h,-t}(t/n)}$. For the next replacement note that for arbitrary $g \in \sL(M,C,\chi)$, we have
\begin{equation}
	\big|\frac{1}{n}\sum_{t=1}^{n}K_h\big(\frac{t}{n}-u\big)\big\{g(Y_{t,n}^c,\theta_1) - g(Y_{t,n}^c,\theta_2)\big\}\big| \le C_2 E_n(K_h(\cdot-u),\tilde g,\theta)\cdot |\theta_1 - \theta_2|_1,\label{proof_da_cv_approx_eq18}
\end{equation}
where $\tilde g(y,\theta) = 1+|y|_{\chi,1}^{M}$. By Lemma \ref{lemma_uniformconvergence}, $E_n(K_h(\cdot-u),\tilde g,\theta)$ is bounded almost surely. The remainder by replacing $\IE[\nabla^3 L_{n,h,-t}(t/n,\theta)]\big|_{\theta = \tilde \theta_{h,-t}(t/n)}$ with $\IE \nabla^3 L_{n,h,-t}(t/n,\theta_0(t/n))$ is bounded by (see \reff{mle_konsistenz_gleichung2} and \reff{proof_da_cv_approx_eq18})
\begin{eqnarray}
	&& C^{\#} \Big\{ |w|_{\infty} C_{\infty} Z_n \cdot \SUP |\IE_0 \nabla_i L_{n,h,-t}(u,\theta)|^3\nonumber\\
	&& + \frac{C_{\infty}}{n}\sum_{t=1}^{n}w_{n,h}(t/n)\big(|Y_{t,n}^c|_{\chi,1}^{M} - \IE |Y_{t,n}^c|_{\chi,1}^{M}\big) \sup_{i=1,...,p}|\IE \nabla_i L_{n,h,-t}(t/n,\theta_0(t/n))|^3 \nonumber\\
	&&\quad\quad + C_{\infty}|w|_{\infty} \cdot \sup_{t=1,...,n}\big(1 + \IE |Y_{t,n}^c|_{\chi,1}^M\big)\cdot \lambda_0^{-1}B_h^{dis} \cdot\SUP|\IE \nabla_i L_{n,h,-t}(u,\theta)|\Big\}\label{proof_da_cv_approx_eq32}.
\end{eqnarray}
While the second summand is discussed in Lemma \ref{lemma_approxmart}, \reff{lemma_approxmart_eq5}, the first and the third summand are of order $O((nh)^{\frac{3}{2}-3\alpha}) + O(h^{(\beta\wedge 1)}B_h^{dis})$ by Lemma \ref{lemma_bias_weak} and \reff{proof_da_cv_approx_eq17}.

Lastly we replace $\hat \theta_{h,-t}(t/n)-\theta_0(t/n)$ twice by $(\IE V_{t,n})^{-1}\nabla L_{n,h,-t}(t/n,\theta_0(t/n))$ by using the expansion
\begin{eqnarray*}
	&& \hat \theta_{h,-t}(u) - \theta_0(u)\\
	&=& -[\IE \nabla^2 L_{n,h,-t}(u,\theta_0(u))]^{-1} \big\{ \nabla L_{n,h,-t}(u,\theta_0(u))\\
	&&\quad\quad + \big(\IE \nabla^2 L_{n,h,-t}(u,\theta_0(u)) - \nabla^2 L_{n,h,-t}(u,\breve \theta_{h,-t}(u))\big)\\
	&&\quad\quad\quad\quad\times [\nabla^2 L_{n,h,-t}(u,\breve \theta_{h,-t}(u))]^{-1} \nabla L_{n,h,-t}(u,\theta_0(u))\big\},
\end{eqnarray*}
where $|\breve \theta_{h,-t}(u) - \theta_0(u)|_2 \le |\hat \theta_{h,-t}(u) - \theta_0(u)|_2$. The remainder is bounded by terms similar to \reff{proof_da_cv_approx_eq31} and \reff{proof_da_cv_approx_eq32}.

With Proposition \ref{misedecomposition}(i),(ii) we conclude that $\sup_{h\in H_n}\big|\frac{R_{n,h,2} - R_{n,h,2,1}}{d_M^{*}(\hat \theta_h,\theta_0)}\big| \to 0$ a.s. In Lemma \ref{lemma_approxmart}, \reff{lemma_approxmart_eq4} it is shown that $\sup_{h\in H_n}\big|\frac{R_{n,h,2,1}}{d_M^{*}(\hat \theta_h,\theta_0)}\big| \to 0$ a.s.

Finally, put
\begin{eqnarray*}
	R_{n,h,3,1} &:=& \frac{1}{n}\sum_{t=1}^{n}w_{n,h}(t/n)\\
	&&\quad\quad \times \big| (\IE V_{t,n})^{-1}\nabla L_{n,h,-t}(t/n,\theta_0(t/n))\big|^2_{\nabla \l_{t,n}(\theta_0(t/n)) - V(\theta_0(t/n))}.
\end{eqnarray*}
We shortly explain how $R_{n,h,3,1}$ can be obtained from $R_{n,h,3}$. The intermediate value $\tilde \theta_{h,-t}(t/n)$ is replaced by  $\theta_0(t/n)$ with remainder given by
\begin{eqnarray*}
	&& \big|\frac{1}{n}\sum_{t=1}^{n}|\hat \theta_{h,-t}(t/n) - \theta_0(t/n)|^2_{\nabla^2 \l_{t,n}(\tilde \theta_{h,-t}(t/n)) - \nabla^2 \l_{t,n}(\theta_0(t/n))} w_{n,h}(t/n)\big|\\
	&\le& \frac{C^{\#}}{n}\sum_{t=1}^{n} C_2 (1 + |Y_{t,n}^c|_{\chi,1}^M) w_{n,h}(t/n)\\
	&&\quad \times \sup_{i=1,...,p}\big\{|\IE_0 \nabla_i L_{n,h,-t}(t/n,\theta_0(t/n))|^3 + |\IE \nabla_i L_{n,h,-t}(t/n,\theta_0(t/n))|^3\big\}
\end{eqnarray*}
which is similar handled as in \reff{proof_da_cv_approx_eq32}. The replacement of $\hat \theta_{h,-t}(t/n) - \theta_0(t/n)$ by $(\IE V_{t,n})^{-1}\nabla L_{n,h,-t}(t/n,\theta_0(t/n))$ is done as for $R_{n,h,2}$. We conclude with Proposition \ref{misedecomposition}(i),(ii) that $\sup_{h \in H_n}\big|\frac{R_{n,h,3} - R_{n,h,3,1}}{d_M^{*}(\hat \theta_h,\theta_0)}\big| \to 0$ a.s. The convergence $\sup_{h\in H_n}\big|\frac{R_{n,h,3,1}}{d_M^{*}(\hat \theta_h,\theta_0)}\big| \to 0$ a.s. is proved in Lemma \ref{lemma_approxmart}, \reff{lemma_approxmart_eq2}.

\end{proof}

\begin{proof}[Proof of Lemma \ref{lemma_approxmart}:]
	We have uniformly in $u,\theta$ that $\delta_{q}^{\nabla \l(\tilde Y(u),\theta)}(k) \le C_{\delta,q} \psi_q(k)$ (see the bound in \reff{dependence_measure_std}), where $\psi_q$ and $C_{\delta,q}$ are defined in Lemma \ref{pmomente}. Furthermore, we can use the same argumentation as in \reff{lemma_bias_weak_eq1} to see that uniformly in $u,u',\theta$ it holds that $\|\nabla \l(\tilde Y_0(u),\theta) - \nabla \l(\tilde Y_0(u'),\theta)\|_2 \le C_{\delta,2}\|\tilde X_0(u) - \tilde X_0(u')\|_{2M} \le C_{\delta,2}C_A |\theta_0(u) - \theta_0(u')|_1$. Since $\theta_0$ is Hoelder continuous, there exists $\tilde L > 0$ such that $|\theta_0(u) - \theta_0(u')|_1 \le d \tilde L |u-u'|^{\beta \wedge 1}$. As in the proof of Lemma \ref{da_cv_approx}, let $C^{\#}$ denote a generic constant depending only on $p, V_{-1,max}$ and $V_{1,max}$ which may change its value from line to line. We use the same technique as in the proof of Lemma \ref{da_approx_0}, \reff{proof_da_approx_0_eq11}, but omit the proofs on Lipschitz continuity in $h$ (cf. \reff{proof_da_approx_0_eq3}) since they do not pose any extra difficulty.
	
	We start by proving \reff{lemma_approxmart_eq1}. Put
	\begin{eqnarray*}
		R_{0,n} &:=& \frac{1}{n}\sum_{t=1}^{n} w_{n,h}(t/n)\big\{\nabla \l(Y_{t,n}^c,\theta_0(t/n)) (\IE V_{t,n})^{-1} \nabla L_{n,h,-t}(t/n, \theta_0(t/n))\\
		&&\quad\quad - \nabla \l(\tilde Y_{t}(t/n),\theta_0(t/n)) (\IE V_{t,n})^{-1}  \nabla \hat L_{n,h,-t}(t/n, \theta_0(t/n))\big\}.
	\end{eqnarray*}
	By Hoelder's inequality (cp. also Lemma \ref{lemma_stat}) it follows that
	\begin{eqnarray}
		&& \| R_{0,n}\|_q\label{proof_lemma_approxmart_eq1}\\
		&\le& \frac{C^{\#}C_{\infty,2q}|w|_{\infty}|K|_{\infty}}{n}\cdot \sup_{j=1,...,d}\sum_{t=1}^{n}\sup_{\theta \in \Theta}\big\|\nabla_j \l(Y_{t,n}^c,\theta) - \nabla_j\l(\tilde Y_t(t/n),\theta)\big\|_{2q}\nonumber\\
		&\le& \frac{C^{\#} C_{\infty,2q}|w|_{\infty}|K|_{\infty} C_{\delta,2q}}{n}\cdot  \sum_{t=1}^{n}\|X_{t,n} - \tilde X_t(t/n)\|_{2qM} = O(n^{-1}).\nonumber
	\end{eqnarray}	 
	Since $\nabla \l(\tilde Y_t(u),\theta_0(u))$ are martingale differences, $\nabla \hat L_{n,h,-t}(t/n,\theta_0(t/n)) - \IE \nabla \hat L_{n,h,-t}(t/n,\theta_0(t/n)) - \nabla \tilde L_{n,h,-t}(t/n,\theta_0(t/n))$ has expectation 0 and we obtain
	\begin{eqnarray*}
		&&R_{1,n}\\
		&:=& \frac{1}{n}\Big|\IE\Big[\sum_{t=1}^{n}w_{n,h}(t/n)\nabla \l(\tilde Y_t(t/n),\theta_0(t/n))\cdot (\IE V_{t,n})^{-1}\\
		&&\quad\quad\quad\times  \big\{\nabla \hat L_{n,h,-t}(t/n,\theta_0(t/n)) - \IE \nabla \hat L_{n,h,-t}(t/n,\theta_0(t/n))\\
		&&\quad\quad\quad\quad\quad - \nabla \tilde L_{n,h,-t}(t/n,\theta_0(t/n))\big\}\Big]\Big|\\
		&\le& \frac{|w|_{\infty}}{n(nh)}\sum_{k=0}^{\infty}\Big| \sum_{s\not= t}K\Big(\frac{s-t}{nh}\Big)\IE\big[ \nabla \l(\tilde Y_t(t/n),\theta_0(t/n))(\IE V_{t,n})^{-1}\\
		&&\quad\quad\quad\times P_{s-k}\big\{\nabla \l(\tilde Y_s(s/n), \theta_0(t/n)) - \nabla \l(\tilde Y_s(t/n),\theta_0(t/n))\big\}\big]\Big|\\
		&\le& \frac{|w|_{\infty}|K|_{\infty}}{n(nh)}\sum_{k=0}^{\infty} \sum_{s=1}^{n}\big\|\nabla \l(\tilde Y_{s-k}((s-k)/n),\theta_0((s-k)/n))\big\|_2 (\IE V_{t,n})^{-1}\\
		&&\times \min\big\{2C_{\delta,2} \psi_2(k), \big\|\nabla \l(\tilde Y_s(\frac{s}{n}),\theta_0(\frac{t}{n})) - \nabla \l(\tilde Y_s(\frac{s-k}{n}),\theta_0(\frac{s-k}{n}))\big\|_2\big\}\\
		&\le& \frac{C^{\#} |w|_{\infty}|K|_{\infty}C_{\infty,2}}{nh}\sum_{k=0}^{\infty} \min\big\{2C_{\delta,2}\psi_2(k), C_{\delta,2}C_A p \tilde L \big(\frac{k}{n}\big)^{\beta \wedge 1}\big\},
	\end{eqnarray*}
	which shows that $\sup_{h\in H_n}\frac{R_{1,n}}{d_{M}^{*}(\hat \theta_h,\theta_0)} \to 0$ since $\psi_2$ is absolutely summable. Define
	\[
		R_{2,n} := \frac{1}{n}\sum_{t=1}^{n}\nabla \l(\tilde Y_{t}(t/n),\theta_0(t/n))(\IE V_{t,n})^{-1}\IE \nabla \hat L_{n,h,-t}(t/n,\theta_0(t/n))w_{n,h}(t/n).
	\]
	Recall that $\psi_{q}(k)$ is absolutely summable. Furthermore, for some $v \in \IR^d$, note that for arbitrary $j = 1,...,p$, we have $|v_j|^2 \le |v|_{\text{Id}}^2 \le \frac{1}{\lambda_0}|v|_{V(\theta)}^2$ for all $\theta \in \Theta$ by the assumption on $V(\theta)$. By Lemma \ref{hilfslemma1}, we have
	\begin{eqnarray}
		\|R_{2,n}\|_q &\le& C^{\#}C_{\delta,q}\rho_{1,\psi,q}|_{\infty}\cdot n^{-1/2}\nonumber\\
		&&\quad\quad\times \sup_{j=1,...,d}\Big(\frac{1}{n}\sum_{t=1}^{n}\big|\IE \nabla_j \hat L_{n,h,-t}(t/n,\theta_0(t/n))\big|^2 w_{n,h}(t/n)^2\Big)^{1/2}\nonumber\\
		&\le& C^{\#}C_{\delta,q}\rho_{1,\psi,q}\frac{|w|_{\infty}^{1/2}}{\lambda_0^{1/2}}\cdot n^{-1/2} B_h^{dis},\label{proof_lemma_approxmart_eq2}
	\end{eqnarray}
	which shows that  $\sup_{h\in H_n}\frac{|R_{2,n}|}{d_{M}^{*}(\hat \theta_h,\theta_0)} \to 0$ a.s.  Finally, put
	\[
		R_{3,n} := \frac{1}{n}\sum_{t=1}^{n}\nabla \l(\tilde Y_t(t/n),\theta_0(t/n))(\IE V_{t,n})^{-1}\nabla \tilde L_{n,h,-t}(t/n,\theta_0(t/n)).
	\]
	Since $\nabla \l(\tilde Y_0(u),\theta_0(u))$ are martingale differences, it holds that $\IE R_{3,n} = 0$.  Since $\chi_i = O(i^{-(3+\varepsilon)})$ and $\delta_q(k) = O(k^{-(3+\varepsilon)})$, we conclude that $\sum_{i \ge 0}i \chi_i < \infty$ and $\sum_{k \ge 0}k \delta_q(k) < \infty$, which also shows $\sum_{k\ge 0} k\psi_q(k) < \infty$. Thus Lemma \ref{hilfslemma1} is applicable and we obtain
	\begin{eqnarray}
		\|R_{3,n}\|_q &\le& C^{\#}\rho_{2,\psi C_{\delta,\cdot},q}|w|_{\infty}\cdot (n(nh))^{-1} \Big(\sum_{t=1}^{n}\sum_{s\not= t}K\Big(\frac{t-s}{nh}\Big)^2\Big)^{1/2}\nonumber\\
		&=& O\big((n(nh))^{-1/2}\big),\label{proof_lemma_approxmart_eq3}
	\end{eqnarray}
	which shows that  $\sup_{h\in H_n}\frac{|R_{3,n}|}{d_{M}^{*}(\hat \theta_h,\theta_0)} \to 0$ a.s.
	
	We now prove \reff{lemma_approxmart_eq2}. Similar arguments as in \reff{proof_lemma_approxmart_eq1} can be used to show that 
	\begin{eqnarray*}
		S_{0,n} &:=& \frac{1}{n}\sum_{t=1}^{n}w_{n,h}(t/n)\\
		&& \times \big\{|(\IE V_{t,n})^{-1}\nabla  L_{n,h,-t}(t/n,\theta_0(t/n))|_{\nabla^2 \l(Y_{t,n}^c,\theta_0(t/n)) - V(\theta_0(t/n))}^2\\
		&&\quad - |(\IE V_{t,n})^{-1}\nabla \hat L_{n,h,-t}(t/n,\theta_0(t/n))|_{\nabla^2 \l(\tilde Y_{t}(t/n),\theta_0(t/n)) - V(\theta_0(t/n))}^2\big\}
	\end{eqnarray*}
	fulfills $\sup_{h\in H_n}\frac{|S_{0,n}|}{d_M^{*}(\hat \theta_h,\theta_0)} \to 0$ a.s. Put
	\begin{eqnarray*}
		S_{3,n} &:=& \frac{1}{n}\sum_{t=1}^{n}  w_{n,h}(t/n) |(\IE V_{t,n})^{-1}\big\{\nabla \hat L_{n,h,-t}(t/n,\theta_0(t/n))\\
		&&\quad\quad\quad - \IE \nabla \hat L_{n,h,-t}(t/n,\theta_0(t/n))\big\}|_{\nabla^2 \l(\tilde Y_t(t/n), \theta_0(t/n)) - V(\theta_0(t/n))}^2.
	\end{eqnarray*}
	Decompose
	\begin{eqnarray*}
		&& \frac{1}{n}\sum_{t=1}^{n}\big\{|V^{-1}\nabla \hat L_{n,h,-t}(t/n,\theta_0(t/n))|_{\nabla^2 \l(\tilde Y_t(t/n), \theta_0(t/n)) - V(\theta_0(t/n))}w_{n,h}(t/n)\\
		&&- S_{3,n} = 2S_{1,n} + S_{2,n},
	\end{eqnarray*}
	where
	\begin{eqnarray*}
		&& S_{1,n}\\
		&:=& \frac{1}{n}\sum_{t=1}^{n}\big\langle (\IE V_{t,n})^{-1}\big\{\nabla \hat L_{n,h,-t}(t/n,\theta_0(t/n)) - \IE \nabla \hat L_{n,h,-t}(t/n,\theta_0(t/n))\big\},\\
		&&\quad\quad\quad\quad \big\{\nabla^2 \l(\tilde Y_t(t/n),\theta_0(t/n)) - V(\theta_0(t/n))\big\}\\
		&&\quad\quad\quad\quad\quad\quad\times (\IE V_{t,n})^{-1}\IE \nabla \hat L_{n,h,-t}(t/n,\theta_0(t/n))\big\rangle w_{n,h}(t/n),\\
		&& S_{2,n}\\
		&:=& \frac{1}{n}\sum_{t=1}^{n}w_{n,h}(t/n)\\
		&& \quad\quad\times |(\IE V_{t,n})^{-1}\IE \nabla \hat L_{n,h,-t}(t/n,\theta_0(t/n))|_{\nabla^2 \l(\tilde Y_t(t/n), \theta_0(t/n)) - V(\theta_0(t/n))}^2.
	\end{eqnarray*}
	Since $\nabla^2 \l \in g(M,\chi,C)$, we obtain by Lemma \ref{hilfslemma1}, similarly to \reff{proof_lemma_approxmart_eq2},
	\[
		\|S_{2,n}\|_q \le C^{\#}\rho_{1,\psi C_{\delta,\cdot},q}\cdot C_{\infty,1}\frac{|w|_{\infty}^{1/2}}{\lambda_0^{1/2}}\cdot n^{-1/2}B_{h}^{dis}.
	\]
	By Lemma \ref{hilfslemma1}, \reff{hilfslemma1_eq4},
	\begin{eqnarray*}
		&& |\IE S_{1,n}|\\
		&\le& \frac{C^{\#} |K|_{\infty} \tilde \rho_{2,\psi C_{\delta,\cdot},q}}{n(nh)}\sup_{j=1,...,d}\sum_{t=1}^{n}\IE|\nabla_j \hat L_{n,h,-t}(t/n,\theta_0(t/n))|\cdot w_{n,h}(t/n).
	\end{eqnarray*}
	By the Cauchy-Schwarz inequality, we have
	\begin{eqnarray*}
		&&\sup_{j=1,...,d}\frac{1}{n}\sum_{t=1}^{n}|\IE\nabla_j \hat L_{n,h,-t}(t/n,\theta_0(t/n))|\cdot w_{n,h}(t/n)\\
		&\le& \sup_{j=1,...,d}\big(\frac{1}{n}\sum_{t=1}^{n}|\IE\nabla_j \hat L_{n,h,-t}(t/n,\theta_0(t/n))|^2\cdot w_{n,h}(t/n)^2\big)^{1/2}\\
		&\le& |w|_{\infty}^{1/2}\lambda_0^{-1/2}(\hat B_h^{dis})^{1/2},
	\end{eqnarray*}
	which shows that $\sup_{h\in H_n}\frac{|\IE S_{1,n}|}{d^{*}(\hat \theta_h,\theta_0)}\to 0$.
	
	By Lemma \ref{hilfslemma1}, we obtain similarly to \reff{proof_lemma_approxmart_eq3},
	\begin{eqnarray*}
		&& \|S_{1,n} - \IE S_{1,n}\|_q\\
		&\le& C^{\#}\rho_{2,\psi C_{\delta,\cdot},q} (n(nh))^{-1}\\
		&&\quad\quad \times \Big(\sum_{s,t=1}^{n}K\Big(\frac{t-s}{nh}\Big)^2|\IE \nabla_j L_{n,h,-t}(t/n,\theta_0(t/n))|^2 w_{n,h}(t/n)^2\Big)^{1/2}\\
		&\le& C^{\#}\rho_{2,\psi C_{\delta,\cdot},q} |w|_{\infty}C_{\infty,1}(n(nh))^{-1}\Big(\sum_{s,t=1}^{n}K\Big(\frac{t-s}{nh}\Big)^2\Big)^{1/2}\\
		&=& O\big((n(nh))^{-1/2}\big),
	\end{eqnarray*}
	which shows that $\sup_{h\in H_n}\frac{|S_{1,n} - \IE S_{1,n}|}{d^{*}(\hat \theta_h,\theta_0)}\to 0$ a.s. By Lemma \ref{hilfslemma1}, \reff{hilfslemma1_eq5},
	\begin{eqnarray*}
		|\IE S_{3,n}| &\le& \frac{C^{\#}}{n(nh)^2}\sup_{k,l\in\IZ}\sum_{1 \le t \le n}\big|K\big(\frac{k}{nh}\big)\big|\cdot \big|K\big(\frac{l}{nh}\big)\big| \cdot |w_{n,h}(t/n)|\\
		&\le& \frac{C^{\#} |K|_{\infty}^2 |w|_{\infty}}{(nh)^2},
	\end{eqnarray*}
	thus $\sup_{h\in H_n}\frac{| \IE S_{3,n}|}{d^{*}(\hat \theta_h,\theta_0)}\to 0$. Define
	\begin{eqnarray*}
		S_{4,n} &:=& \frac{1}{n}\sum_{t=1}^{n} \IE|(\IE V_{t,n})^{-1}\big\{\nabla \hat L_{n,h,-t}(t/n,\theta_0(t/n))\\
		&&\quad\quad\quad\quad\quad\quad\quad\quad\quad - \IE \nabla \hat L_{n,h,-t}(t/n,\theta_0(t/n))\big\}|_{A}^2\\
		&&\quad\quad\quad\times w_{n,h}(t/n)\big|_{A = \nabla^2 \l(\tilde Y_t(t/n), \theta_0(t/n)) - V(\theta_0(t/n))},
	\end{eqnarray*}
	then by Lemma \ref{hilfslemma1} and inequality \reff{likelihood_rate} it holds that
	\begin{eqnarray*}
		&& \|S_{4,n}\|_q\\
		&\le& C^{\#}\rho_{1,\psi C_{\delta,\cdot},q}|w|_{\infty} n^{-1/2} \sup_{i,j = 1,...,d}\Big(\frac{1}{n}\sum_{t=1}^{n}|\Cov(\nabla_i \hat L_{n,h,-t}(t/n,\theta_0(t/n)),\\
		&&\quad\quad\quad\quad\quad\quad\quad\quad\quad\quad\quad\quad\quad\quad\quad\quad\quad\nabla_j \hat L_{n,h,-t}(t/n,\theta_0(t/n)))|^2\Big)^{1/2}\\
		&\le& \rho_{1,\psi C_{\delta,\cdot},q}^3 C^{\#} |w|_{\infty}|K|_{\infty}^2 n^{-1/2}(nh)^{-1}.
	\end{eqnarray*}
	Define
	\begin{eqnarray*}
		S_{5,n} &:=& \frac{1}{n}\sum_{t=1}^{n} \IE \big\langle v, \big(\nabla^2 \l(\tilde Y_t(t/n), \theta_0(t/n)) - V(\theta_0(t/n))\big)\\
		&& \times (\IE V_{t,n})^{-1}\big\{\nabla \hat L_{n,h,-t}(t/n,\theta_0(t/n)) - \IE \nabla \hat L_{n,h,-t}(t/n,\theta_0(t/n))\big\}\big\rangle\\
		&& \times w_{n,h}(t/n)\big|_{v = (\IE V_{t,n})^{-1}\{\nabla \hat L_{n,h,-t}(t/n,\theta_0(t/n)) - \IE \nabla \hat L_{n,h,-t}(t/n,\theta_0(t/n))\}},
	\end{eqnarray*}
	then by Lemma \ref{hilfslemma1} and inequality \reff{likelihood_rate} it holds that
	\begin{eqnarray*}
		&& \|S_{5,n}\|_q\\
		&\le& \rho_{1,\psi C_{\delta,\cdot},q} C^{\#} |w|_{\infty} n^{-1/2} \sup_{i,j = 1,...,d}\Big(\frac{1}{n}\sum_{t=1}^{n}|\Cov(\nabla_i \hat L_{n,h,-t}(t/n,\theta_0(t/n)),\\
		&&\quad\quad\quad\quad\quad\quad\quad\quad\quad\quad\quad\quad\quad\quad\quad\quad\quad\nabla_j \hat L_{n,h,-t}(t/n,\theta_0(t/n)))|^2\Big)^{1/2}\\
		&\le& \rho_{1,\psi C_{\delta,\cdot},q}^3 C^{\#} |w|_{\infty}|K|_{\infty}^2 n^{-1/2}(nh)^{-1}.
	\end{eqnarray*}
	Finally, by Lemma \ref{hilfslemma1}, \reff{hilfslemma1_eq3}, we have
	\begin{eqnarray*}
		&&\|S_{3,n} - S_{4,n} - 2S_{5,n} - \IE S_{3,n}\|_q\\
		&\le& \rho_{3,\psi C_{\delta,\cdot},q}C^{\#} |w|_{\infty}\frac{1}{n(nh)^2}\Big(\sum_{r,s,t=1}^{n}K\Big(\frac{s-t}{nh}\Big)^2 K\Big(\frac{r-t}{nh}\Big)^2\Big)^{1/2}\\
		&=& O( n^{-1/2}(nh)^{-1}).
	\end{eqnarray*}
	We now discuss \reff{lemma_approxmart_eq3}. Put $\hat V_{t,n} := \nabla^2 \hat L_{n,h,-t}(t/n,\theta_0(t/n))$. Similar arguments as in \reff{proof_lemma_approxmart_eq1} can be used to show that
	\begin{eqnarray*}
		T_{0,n}&:=&\textstyle \frac{1}{n}\sum_{t=1}^{n}w_{n,h}(t/n)\big\{\nabla \l(Y_{t,n}^c,\theta_0(t/n))(\IE V_{t,n})^{-1}(V_{t,n} - \IE V_{t,n})\\
		&&\quad\quad\quad\quad\quad\quad\quad\quad\quad\quad \times(\IE V_{t,n})^{-1}\nabla L_{n,h,-t}(t/n,\theta_0(t/n))\\
		&&\quad\quad\quad\quad\quad\quad\quad\quad - \nabla \l(\tilde Y_{t}(t/n),\theta_0(t/n))(\IE V_{t,n})^{-1}(\hat V_{t,n} - \IE \hat V_{t,n})\\
		&&\quad\quad\quad\quad\quad\quad\quad\quad\quad\quad \times(\IE V_{t,n})^{-1}\nabla \hat L_{n,h,-t}(t/n,\theta_0(t/n))\big\}.
	\end{eqnarray*}
	fulfills $\|T_{0,n}\|_q = O(n^{-1})$. Define
	\begin{eqnarray*}
		T_{1,n} &:=& \frac{1}{n}\sum_{t=1}^{n}\nabla \l(\tilde Y_{t}(t/n),\theta_0(t/n))(\IE V_{t,n})^{-1}(\hat V_{t,n} - \IE \hat V_{t,n}) (\IE V_{t,n})^{-1}\\
		&&\quad \times \big\{\nabla \hat L_{n,h,-t}(t/n,\theta_0(t/n)) - \IE L_{n,h,-t}(t/n,\theta_0(t/n))\big\}w_{n,h}(t/n).
	\end{eqnarray*}
	Put
	\begin{eqnarray*}
		T_{2,n} &:=& \frac{1}{n}\sum_{t=1}^{n}\nabla \l(\tilde Y_{t}(t/n),\theta_0(t/n))(\IE V_{t,n})^{-1}(\hat V_{t,n} - \IE \hat V_{t,n}) (\IE V_{t,n})^{-1}\\
		&&\quad\quad\quad\quad\quad\quad\times \nabla \hat L_{n,h,-t}(t/n,\theta_0(t/n)) w_{n,h}(t/n) - T_{1,n}.
	\end{eqnarray*}
	By Lemma \ref{hilfslemma1}, it holds that
	\begin{eqnarray*}
		&& \|T_{2,n}\|_q\\
		&\le& \frac{C^{\#}\rho_{2,\psi C_{\delta,\cdot},q}}{n(nh)}\\
		&&\times \sup_{i=1,...,d}\Big(\sum_{s,t=1}^{n}K\Big(\frac{s-t}{nh}\Big)^2 \big|\IE \nabla_i \hat L_{n,h,-t}(t/n,\theta_0(t/n))\big|^2 w_{n,h}(t/n)^2 \Big)^{1/2}\\
		&=& O\big((n(nh))^{-1/2}\big).
	\end{eqnarray*}
	Due to the similar structure, the argumentation for $T_{1,n}$ can be mimicked from $S_{3,n}$ above and leads to $\|T_{1,n}\|_q = O( (nh)^{-2} + (nh)^{-1}n^{-1/2})$.\\
	The stochastic structure of \reff{lemma_approxmart_eq4} is exactly the same as in \reff{lemma_approxmart_eq2}, thus the proof follows the same lines. Finally, it is easy to see from Lemma \ref{hilfslemma1} that
	\begin{eqnarray*}
		\big\|\frac{1}{n}\sum_{t=1}^{n}\IE_0[ |Y_{t,n}^c|_{\chi,1}^{M}] w_{n,h}(t/n) \sup_{i=1,...,p}|\IE \nabla_i \hat L_{n,h}(t/n, \theta_0(t/n))|\big\|_q = O(n^{-1/2}\hat B_h^{dis}),
	\end{eqnarray*} 
	which shows \reff{lemma_approxmart_eq5}.
\end{proof}

\section{Bounds for moments of sums, quadratic and cubic forms of Bernoulli shifts}

The inequalities derived in this section are needed to prove the moment inequalities for the local likelihoods and its derivatives in Section \ref{section_uniformconvergence}. Results for linear and quadratic forms were obtained in \cite{xiao2012}, however we need them in a more general setting.

\begin{lemma}\label{hilfslemma1}
Let $q \ge 2$. Let $\delta_q(k)$, $k \ge 0$ be some sequence of nonnegative real numbers. Put $\Delta_{k,q} := \sum_{l=k}^{\infty}\delta_q(l)$. Let $a_{t}, a_{s,t}, a_{r,s,t}$, $r,s,t = 1,...,n$, be some deterministic sequences of real numbers.
\begin{enumerate}
	\item[(i)] Let $V_{t,n}^{(1)}$, $t = 1,...,n$ be a triangular array with $\sup_{n\in\IN}\delta_{q}^{V_{\cdot,n}^{(1)}}(k) \le \delta_q(k)$ and $\IE V_{t,n}^{(1)}=0$. Put $\rho_{1,\delta,q} = q^{1/2}\Delta_{0,q}$. Then it holds that
	\begin{equation}
		\textstyle\big\|\sum_{t=1}^{n}a_{t} V_{t,n}^{(1)}\big\|_q \le \rho_{1,\delta,q} \big(\sum_{t=1}^{n}a_{t}^2\big)^{1/2}.\label{hilfslemma1_eq1}
	\end{equation}
	\item[(ii)] For $i = 1,2$, let $V_{t,n}^{(i)}(s)$, $s,t=1,...,n$ be triangular arrays with $\IE V_{t,n}^{(i)}(s)=0$ and $\sup_{n\in\IN}\sup_{s=1,...,n}\delta_{q}^{V_{\cdot,n}^{(i)}(s)}(k) \le \delta_q(k)$ as well as
	\begin{equation}
		\sup_{n\in\IN}\sum_{s=1}^{n-1}\delta_q^{V_{\cdot,n}^{(i)}(s+1) - V_{\cdot,n}^{(i)}(s)}(k) \le \delta_q(k).\label{hilfslemma_eq6}
	\end{equation}
	Put $\rho_{2,\delta,q} = 8q^{1/2} \Delta_{0,2q}\big[q^{1/2}\Delta_{0,2q} + \sum_{k=0}^{\infty}\Delta_{k,2q}\big]$ and $\tilde \rho_{2,\delta} := \Delta_{0,2}^2$. Then it holds that
	\begin{eqnarray}
		&& \textstyle\big\|\sum_{s,t=1}^{n}a_{s,t} \big\{V_{s,n}^{(2,1)}(t) V_{t,n}^{(2,2)}(s) - \IE V_{s,n}^{(2,1)}(t) V_{t,n}^{(2,2)}(s)\big\} \big\|_q\nonumber\\
		&\le& \textstyle\rho_{2,\delta,q} \big(\sum_{s,t=1}^{n}a_{s,t}^2\big)^{1/2},\label{hilfslemma1_eq2}\\
		&& \textstyle\big| \sum_{s,t=1}^{n}a_{s,t} \IE V_{s,n}^{(2,1)}(t)V_{t,n}^{(2,2)}(s)\big|\nonumber\\
		&\le& \textstyle\tilde \rho_{2,\delta} \sup_{k\in\IZ} \sum_{1 \le t,t+k \le n}|a_{t+k,t}|.\label{hilfslemma1_eq4}
	\end{eqnarray}
	\item[(iii)] For $i = 1,2,3$, let $V_{t,n}^{(i)}(r,s)$, $r,s,t=1,...,n$ be triangular arrays with $\IE V_{t,n}^{(i)}(r,s)=0$ and $\sup_{n\in\IN}\sup_{r,s=1,...,n}\delta_{q}^{V_{\cdot,n}^{(i)}(r,s)}(k) \le \delta_q(k)$. Assume that $V_{t,n}^{(i)}(r,\cdot)$ and $V_{t,n}^{(i)}(\cdot,s)$ fulfill \reff{hilfslemma_eq6} uniformly in $r,s$ as well as
	\begin{equation}
		\sup_{n\in\IN}\sum_{s,t=1}^{n-1}\delta_q^{V_{\cdot,n}^{(i)}(s+1,t+1) - V_{\cdot,n}^{(i)}(s,t+1)-V_{\cdot,n}^{(i)}(s+1,t)+V_{\cdot,n}^{(i)}(s,t)}(k) \le \delta_q(k).\label{hilfslemma1_eq6}
	\end{equation}
	Put
	\begin{eqnarray*}
		\rho_{3,\delta,q} &=& 240q^{1/2}\Delta_{0,3q}\cdot \big\{ q\Delta_{0,3q}^2 + q^{1/2} \Delta_{0,3q} \sum_{k=0}^{\infty}\Delta_{k,3q}\\
		&&\quad\quad\quad\quad\quad\quad\quad\quad + \sum_{i=0}^{\infty}\sum_{k\ge i}\Delta_{k,3q} + \big(\sum_{k=0}^{\infty}\Delta_{k,3q}\big)^2\big\}
	\end{eqnarray*}
	and $\tilde \rho_{3,\delta} := 12\Delta_{0,3}^2\big[\Delta_{0,3}+ \sum_{k=0}^{\infty}\Delta_{k,3}\big]$. Then it holds that
	\begin{eqnarray}
		&& \textstyle\big\|\sum_{r,s,t=1}^{n}a_{r,s,t} \big\{V_{r,n}^{(1)} V_{s,n}^{(2)} V_{t,n}^{(3)} - \IE V_{r,n}^{(1)} V_{s,n}^{(2)} V_{t,n}^{(3)}\nonumber\\
	&&\textstyle\quad  - V_{r,n}^{(1)}\IE V_{s,n}^{(2)}V_{t,n}^{(3)} - V_{s,n}^{(2)}\IE V_{r,n}^{(1)}V_{t,n}^{(3)} - V_{t,n}^{(3)}\IE V_{r,n}^{(1)}V_{s,n}^{(2)}\big\}\big\|_q\nonumber\\
	&\le& \rho_{3,\delta,q} \big(\sum_{r,s,t=1}^{n}a_{r,s,t}^2\big)^{1/2},\label{hilfslemma1_eq3}\\
		&& \textstyle\big| \sum_{s,t=1}^{n}a_{r,s,t} \IE V_{r,n}^{(1)}V_{s,n}^{(2)}V_{t,n}^{(3)}\big|\nonumber\\
		&\le& \textstyle\tilde \rho_{3,\delta} \sup_{k,l\in\IZ} \sum_{1 \le t,t+k,t+l \le n}|a_{t+k,t+l,t}|.\label{hilfslemma1_eq5}
	\end{eqnarray}
\end{enumerate}

\end{lemma}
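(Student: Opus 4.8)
\noindent\emph{Proof strategy.} The plan is to derive all five estimates from one common device: the orthogonal (martingale) decomposition of a Bernoulli shift via the projection operators $\sP_j W := \IE[W\mid \sF_j] - \IE[W\mid\sF_{j-1}]$, combined with the elementary bound $\|\sP_{t-k} W_t\|_q \le \delta^{W}_q(k)$ valid for any $W_t = H(\sF_t)\in L^q$ (a consequence of conditional Jensen applied to $\sP_{t-k}W_t = \IE[W_t - W_t^{*}\mid \sF_{t-k}]$), and Burkholder's inequality for martingale difference arrays in $L^q$ with the sharp constant of order $q^{1/2}$. For \eqref{hilfslemma1_eq1} I would write, using $\IE V_{t,n}^{(1)}=0$ and summability of $\delta_q$, the telescoping series $V_{t,n}^{(1)} = \sum_{k\ge 0}\sP_{t-k}V_{t,n}^{(1)}$, so that $\sum_t a_t V_{t,n}^{(1)} = \sum_{k\ge 0}\big(\sum_t a_t \sP_{t-k}V_{t,n}^{(1)}\big)$; for each fixed $k$ the array $\{a_t\sP_{t-k}V_{t,n}^{(1)}\}_t$ is a martingale difference sequence with respect to $(\sF_{t-k})_t$, so Burkholder gives $\big\|\sum_t a_t\sP_{t-k}V_{t,n}^{(1)}\big\|_q \le q^{1/2}\big(\sum_t a_t^2\,\delta_q(k)^2\big)^{1/2}$, and summing over $k$ with the triangle inequality yields the claim with $\rho_{1,\delta,q} = q^{1/2}\Delta_{0,q}$.

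\noindent For the quadratic and cubic moment bounds \eqref{hilfslemma1_eq2} and \eqref{hilfslemma1_eq3} the idea is to expand \emph{each} factor $V^{(i)}$ into its projection series and regroup the centered product by the innovation indices governing the factors. The contributions where the two (resp. three) governing innovation indices coincide form, after a summation-by-parts in the "parameter" index of the coefficient arrays, a martingale difference array in the common innovation index, to which part (i)/Burkholder applies again; the off-diagonal contributions are handled by conditioning on the $\sigma$-field up to the largest innovation index, which annihilates the corresponding factor and leaves a (conditional) martingale structure in the second-largest index, reducing once more to part (i) applied conditionally. This is the point where the hypotheses \eqref{hilfslemma_eq6} (and \eqref{hilfslemma1_eq6} in the cubic case) enter: since the arrays $V^{(i)}(s)$, $V^{(i)}(r,s)$ carry an extra parameter, the pointwise dependence-measure control is not enough to sum over that parameter, and one needs uniform control of the increments $V^{(i)}(s+1)-V^{(i)}(s)$ (resp. the mixed second differences) to make the (two-dimensional, in the cubic case) Abel summation uniform. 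Collecting the bounds over the finitely many orderings of the innovation indices and tracking the numerical factors produces the stated constants $\rho_{2,\delta,q}$ and $\rho_{3,\delta,q}$. This is essentially the argument of \cite{xiao2012} for linear and quadratic forms, carried out in the parameter-dependent setting and pushed to third order.

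\noindent The expectation bounds \eqref{hilfslemma1_eq4} and \eqref{hilfslemma1_eq5} are softer. For \eqref{hilfslemma1_eq4}, since both factors are centered, the standard covariance estimate via projections gives a decay of the form $|\IE V_{s,n}^{(2,1)}(t)V_{t,n}^{(2,2)}(s)| \le \Delta_{0,2}\,\Delta_{|s-t|,2}$; grouping the double sum by the value $k = s-t$ and summing over $k$ gives the bound with $\tilde\rho_{2,\delta} = \Delta_{0,2}^2$. For the triple-product \eqref{hilfslemma1_eq5} one expands the covariance by conditioning on the smallest of the three innovation indices (which kills the extreme factor), obtaining a decay controlled by $\Delta_{0,3}$ and two partial sums $\Delta_{\cdot,3}$; summing over the two lag variables $k,l$ yields $\tilde\rho_{3,\delta}$.

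\noindent The main obstacle is the combinatorics of part (iii): after the threefold projection expansion one must enumerate the orderings of the three innovation indices, reduce each type to a conditional martingale with the right conditioning, and simultaneously control the sums over the two parameter indices by a two-dimensional summation-by-parts — this is exactly what \eqref{hilfslemma1_eq6} is designed to allow — while keeping every constant explicit, since these constants are fed verbatim into Lemma \ref{pmomente} and the uniform-convergence lemmata of Section \ref{section_uniformconvergence}. No genuinely new idea is needed beyond iterating the quadratic-form estimate, but the bookkeeping is delicate.
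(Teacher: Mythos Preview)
Your proposal matches the paper's proof in its essential architecture: projection decomposition $V=\sum_{k\ge 0}\sP_{t-k}V$, Rio/Burkholder with constant $(q-1)^{1/2}$ for the resulting martingale-difference arrays, and a case split according to the ordering of the innovation indices $s-k_1,\ t-k_2$ (resp.\ $r-k_1,\ s-k_2,\ t-k_3$). One point is slightly misplaced in your sketch: in the paper the Abel/partial-summation step driven by \eqref{hilfslemma_eq6} (and \eqref{hilfslemma1_eq6}) is used in the \emph{off-diagonal} pieces $A_{2,1},A_{2,2}$ (and their cubic analogues), not in the diagonal one; in the diagonal term $A_1$ the parameter index is already tied to the time index ($t=s-k_1+k_2$), so no summation-by-parts is needed --- instead the paper inserts a \emph{second} projection layer $\sP_{s-k_1-i}[\,\cdot\,]$ to recover a martingale structure in $s$ and bounds $\|\sP_{s-k_1-i}[\sP_{s-k_1}V^{(1)}\sP_{s-k_1}V^{(2)}]\|_q$ via the coupling identity, which is where the factor $\sum_k\Delta_{k,2q}$ comes from. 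For \eqref{hilfslemma1_eq4} your route (bound each covariance, then sum over the lag) would yield a constant involving $\sum_k\Delta_{k,2}$ rather than $\Delta_{0,2}^2$; the paper instead expands the whole double sum into $\sum_{k,l}\sum_t|a_{t+k-l,t}|\,\delta_2(k)\delta_2(l)$ and sums $k,l$ independently, which gives exactly $\Delta_{0,2}^2$. These are placements/constants, not gaps; the strategy is the same.
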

\begin{proof}
	To keep the notation simple, set $a_{t}, a_{s,t}, a_{r,s,t}$ to $0$ if one of the indices $r,s,t$ is not in $\{1,...,n\}$. We start with proving the stochastic inequalities \reff{hilfslemma1_eq1}, \reff{hilfslemma1_eq2} and \reff{hilfslemma1_eq3}. Since $\IE V_{t,n}^{(1)} = 0$, it holds almost surely that $\sum_{t=1}^{n}a_{t} V_{t,n}^{(1)} = \sum_{k=0}^{\infty}\sum_{t=1}^{n}a_{t} P_{t-k}V_{t,n}^{(1)}$. $(a_{t}\cdot P_{t-k}V_{t,n}^{(1)})$, $t = 1,...,n$ is a martingale difference sequence, thus with \cite{rio2009}, Theorem 2.1 we obtain
	\begin{eqnarray*}
		\Big\|\sum_{t=1}^{n}a_t V_{t,n}^{(1)}\Big\|_q &\le& (q-1)^{1/2}\sum_{k=0}^{\infty}\Big(\sum_{t=1}^{n}a_{t}^2  \|P_{t-k}V_{t,n}^{(1)}\|_q^2\Big)^{1/2}\\
		&\le& (q-1)^{1/2}\sum_{k=0}^{\infty}\delta_q(k)\cdot \Big(\sum_{t=1}^{n}a_{t}^2\Big)^{1/2}.
	\end{eqnarray*}
	We now prove \reff{hilfslemma1_eq2}. Define $D_{s,t,k_1,k_2} := a_{s,t}\big\{P_{s-k_1}V_{s,n}^{(1)}(t)P_{t-k_2}V_{t,n}^{(2)}(s) - \IE P_{s-k_1}V_{s,n}^{(1)}(t) P_{t-k_2}V_{t,n}^{(2)}(s)\big\}$. Note that
	\begin{eqnarray*}
		&& \Big\|\sum_{s,t=1}^{n}a_{s,t} \big\{V_{s,n}^{(1)}(t) V_{t,n}^{(2)}(s) - \IE V_{s,n}^{(1)}(t) V_{t,n}^{(2)}(s)\big\} \Big\|_q\\
		&\le& \sum_{k_1,k_2 \ge 0}\Big\| \sum_{s,t=1}^{n}D_{s,t,k_1,k_2}\Big\|_q \le A_1 + A_{2,1} + A_{2,2},
	\end{eqnarray*}
	where
	\begin{eqnarray*}
		A_1 &:=& \sum_{k_1,k_2 \ge 0}\big\| \sum_{s=1}^{n}D_{s,s-k_1 + k_2,k_1,k_2}\big\|_q,\\
		A_{2,1} &:=& \sum_{k_1,k_2 \ge 0}\big\| \sum_{t=1}^{n}\sum_{s < t-k_2 + k_1}D_{s,t,k_1,k_2}\big\|_q,\\
		A_{2,2} &:=& \sum_{k_1,k_2 \ge 0}\big\| \sum_{s=1}^{n}\sum_{t < s-k_1 + k_2}D_{s,t,k_1,k_2}\big\|_q.
	\end{eqnarray*}
	Here $(\sum_{s < t-k_2 + k_1}D_{s,t,k_1,k_2})_{t = 1,...,n}$ as well as $(a_{s,t} P_{s-k_1}V_{s,n}^{(1)})_s$ are martingale difference sequences. By applying Theorem 2.1 in \cite{rio2009},
	\[
		A_{2,1} \le (q-1)^{1/2}\sum_{k_1,k_2 \ge 0} \Big(\sum_{t=1}^{n}\Big\|\sum_{s < t-k_2 + k_1}D_{s,t,k_1,k_2}\Big\|_q^2\Big)^{1/2}.
	\]
	Define $\tilde a_{s,t} := a_{s,t}\Ii_{\{s < t-k_2+k_1\}}$. By partial summation and the Cauchy-Schwarz inequality, we have
	\begin{eqnarray*}
		&& \Big\|\sum_{s < t-k_2 + k_1}D_{s,t,k_1,k_2}\Big\|_q\\
		&\le& \sum_{s=1}^{n}\Big\| \sum_{s'=1}^{s}\tilde a_{s',t} P_{s'-k_1}V_{s',n}^{(1)}(t)\Big\|_{2q}\cdot\big\|P_{t-k_2}V_{t,n}^{(2)}(s+1) - P_{t-k_2}V_{t,n}^{(2)}(s)\big\|_{2q}\\
		&&\quad\quad + \Big\| \sum_{s'=1}^{n}\tilde a_{s',t} P_{s'-k_1}V_{s',n}^{(1)}(t)\Big\|_{2q} \cdot \big\|P_{t-k_2}V_{t,n}^{(2)}(n)\big\|_{2q}\\
		&\le& 2 (2q-1)^{1/2}\Big(\sum_{s=1}^{n}a_{s,t}^2\Big)^{1/2} \delta_{2q}(k_1) \delta_{2q}(k_2).
	\end{eqnarray*}
	We finally obtain $A_{2,1} \le 2(q-1)^{1/2}(2q-1)^{1/2} \Delta_{0,2q}^2 \cdot \big(\sum_{s,t=1}^{n}a_{s,t}^2\big)^{1/2}$, 
	a similar upper bound holds for $A_{2,2}$. To discuss $A_1$, note that
	\[
		(a_{s,s-k_1+k_2}P_{s-k_1-i}[ P_{s-k_1}V_{s,n}^{(1)}(s-k_1+k_2) P_{s-k_1}V_{s-k_1+k_2,n}^{(2)}(s)])_s
	\]
	is a martingale difference sequence. The arguments $(s-k_1+k_2)$ of $V_{s,n}^{(1)}$ and $(s)$ of $V_{s-k_1+k_2,n}^{(2)}$ are omitted in the next steps. In the following, fix some $s,l \ge 0$. Let $\varepsilon_{s-l}^{*}$ be an i.i.d. copy of $\varepsilon_{s-l}$. Let $\sF_s^{*(s-l)} := (\varepsilon_s,...,\varepsilon_{s-l+1},\varepsilon_{s-l}^{*},\varepsilon_{s-l-1},....)$. For fixed $k_1, i \ge 0$ and some random variable $V_s = H_z(\sF_s)$, we define $V_s^{*} := H_z(\sF_{s}^{*(s-k_1)})$ and $V_s^{**} := H_z(\sF_{s}^{*(s-k_1-i)})$. Similarly, $\sF_{s-k_1}^{*} := \sF_{s-k_1}^{*(s-k_1)}$ and $\sF_{s-k_1}^{**} := \sF_{s-k_1}^{*(s-k_1-i)}$. We use the abbreviation $V^{*,**} := (V^{*})^{**}$. It holds that
	\begin{eqnarray*}
		&& \|P_{s-k_1-i}[ P_{s-k_1}V_{s,n}^{(1)} P_{s-k_1}V_{s-k_1+k_2,n}^{(2)}]\|_q\\
		&\le& \big\|P_{s-k_1}V_{s,n}^{(1)} P_{s-k_1}V_{s-k_1+k_2,n}^{(2)} - \big[P_{s-k_1}V_{s,n}^{(1)} P_{s-k_1}V_{s-k_1+k_2,n}^{(2)}\big]^{**}\big\|_q\\
		&\le& \big\|P_{s-k_1}V_{s,n}^{(1)} - \big(P_{s-k_1}V_{s,n}^{(1)}\big)^{**}\big\|_{2q}\|P_{s-k_1}V_{s-k_1+k_2,n}^{(2)}\|_{2q}\\
		&&\quad + \big\|\big(P_{s-k_1}V_{s,n}^{(1)}\big)^{**}\big\|_{2q} \|P_{s-k_1}V_{s-k_1+k_2,n}^{(2)} - \big(P_{s-k_1}V_{s-k_1+k_2,n}^{(2)}\big)^{**}\|_{2q}. 
	\end{eqnarray*}
	Note that $\|P_{s-k_1}V_{s-k_1+k_2,n}^{(2)}\|_{2q} \le \delta_{2q}(k_2)$ and $\| (P_{s-k_1} V_{s,n}^{(1)})^{**}\|_{2q} \le \delta_{2q}(k_1)$. Furthermore,
	\begin{eqnarray*}
		&& P_{s-k_1}V_{s,n}^{(1)} - \big(P_{s-k_1}V_{s,n}^{(1)}\big)^{**}\\
		&=& \big\{\IE\big[\IE[V_{s,n}^{(1)}|\sF_{s-k_1}] - \IE[\big(V_{s,n}^{(1)}\big)^{*}|\sF_{s-k_1}^{*}]\big|\sF_{s-k_1}\big]\big\}\\
		&&\quad\quad\quad - \big\{\IE\big[\IE[(V_{s,n}^{(1)})^{**}|\sF_{s-k_1}^{**}] - \IE[\big(V_{s,n}^{(1)}\big)^{*,**}|\sF_{s-k_1}^{*,**}]\big|\sF_{s-k_1}^{**}\big]\big\}\\
		&=& \IE[V_{s,n}^{(1)} - \big(V_{s,n}^{(1)}\big)^{**}|\sF_{s-k_1}, \varepsilon_{s-k_1-i}^{*}]\\
		&&\quad\quad\quad - \IE\big[\IE[\big(V_{s,n}^{(1)}\big)^{*} - \big(V_{s,n}^{(1)}\big)^{*,**}|\sF_{s-k_1}^{*}, \varepsilon_{s-k_1-i}^{*} \big]\big|\sF_{s-k_1}, \varepsilon_{s-k_1-i}^{*}\big]
	\end{eqnarray*}
	and thus, by Jensen's inequality,
	\begin{equation}
		\big\|P_{s-k_1}V_{s,n}^{(1)} - \big(P_{s-k_1}V_{s,n}^{(1)}\big)^{**}\big\|_{2q} \le 2\big\|V_{s,n}^{(1)} - \big(V_{s,n}^{(1)}\big)^{**}\big\|_{2q} \le 2 \delta_{2q}(k_1 + i).\label{CV_proof_hilfslemma_eq1}
	\end{equation}
	Note that in \reff{CV_proof_hilfslemma_eq1}, there may be better estimations in special cases. We obtain
	\begin{eqnarray*}
		A_1 &\le& (q-1)^{1/2}\sum_{k_1,k_2 \ge 0}\sum_{i \ge 0}\Big(\sum_{s=1}^{n}a_{s,s-k_1+k_2}^2\\
		&&\quad\quad\quad\quad\quad\quad\quad\quad \times \big\|P_{s-k_1-i}[ P_{s-k_1}V_{s,n}^{(1)} P_{s-k_1}V_{s-k_1+k_2,n}^{(2)}]\big\|_q^2\Big)^{1/2}\\
		&\le&  2(q-1)^{1/2}\sum_{k_1,k_2,i \ge 0}\big[\delta_{2q}(k_1)\delta_{2q}(k_2+i) + \delta_{2q}(k_2) \delta_{2q}(k_1+i)\big]\\
		&&\quad\quad\quad\quad\quad\quad\quad\quad \times \Big(\sum_{s=1}^{n}a_{s,s-k_1+k_2}^2\Big)^{1/2}\\
		&\le& 4(q-1)^{1/2}\Delta_{0,2q}\cdot \sum_{k=0}^{\infty}\Delta_{k,2q}\cdot \Big(\sum_{s,t=1}^{n}a_{s,t}^2\Big)^{1/2}.
	\end{eqnarray*}
	We now prove \reff{hilfslemma1_eq3}. To do so, define
	\begin{eqnarray*}
		&& D_{r,s,t,k_1,k_2,k_3}\\
		&:=& a_{r,s,t}\big\{P_{r-k_1}V_{r,n}^{(1)}P_{s-k_2}V_{s,n}^{(2)} P_{t-k_3}V_{t,n}^{(3)} - \IE[P_{r-k_1}V_{r,n}^{(1)}P_{s-k_2}V_{s,n}^{(2)} P_{t-k_3}V_{t,n}^{(3)}]\\
		&&\quad - P_{r-k_1}V_{r,n}^{(1)}\IE P_{s-k_2}V_{s,n}^{(2)}P_{t-k_3}V_{t,n}^{(3)} - P_{s-k_2}V_{s,n}^{(2)}\IE P_{r-k_1}V_{r,n}^{(1)}P_{t-k_3}V_{t,n}^{(3)}\\
		&&\quad - P_{t-k_3}V_{t,n}^{(3)}\IE P_{r-k_1}V_{r,n}^{(1)}P_{s-k_3}V_{s,n}^{(2)}\big\}.
	\end{eqnarray*}
	Here, we can bound $\sum_{k_1,k_2,k_3 \ge 0}\big\| \sum_{r,s,t=1}^{n}D_{r,s,t,k_1,k_2,k_3}\big\|_q$ by four different types of terms. The first type (all indices are different) is of the form
	\[
		A_3 := \sum_{k_1,k_2,k_3 \ge 0}\Big\| \sum_{r=1}^{n}\sum_{s: s-k_2 < r-k_1}\sum_{t: t-k_3 < s-k_2} D_{r,s,t,k_1,k_2,k_3}\Big\|_q. %6 times
	\]
	Note that the three sequences $\big(\sum_{s:s-k_2<r-k_1}\sum_{t:t-k_3 < s-k_2}D_{r,s,t,k_1,k_2,k_3}\big)_r$ and $\big(\sum_{t:t-k_3 < s-k_2}a_{r,s,t}P_{s-k_2}V_{s,n}^{(2)}P_{t-k_3}V_{t,n}^{(3)}\big)_s$ and $(a_{r,s,t}P_{t-k_3}V_{t,n}^{(3)})_t$ are martingale differences. Put $\tilde a_{r,s,t} := a_{r,s,t}\Ii_{\{t-k_3 < s-k_2 < r-k_1\}}$. By partial summation, we have
		\begin{eqnarray*}
			&&\textstyle\big|\sum_{s,t=1}^{n}\tilde a_{r,s,t}P_{r-k_1}V_{r,n}^{(1)}(s,t)P_{s-k_2}V_{s,n}^{(2)}(r,t) P_{t-k_3}V_{t,n}^{(3)}(r,s)\big|\\
			&\le& \textstyle\big\{\sum_{s,t=1}^{n-1}\big|P_{r-k_1}V_{t,n}^{(1)}(s+1,t+1)-P_{r-k_1}V_{t,n}^{(1)}(s,t+1)\\
			&&\quad\quad\quad - P_{r-k_1}V_{r,n}^{(1)}(s+1,t) + P_{r-k_1}V_{r,n}^{(1)}(s,t)\big|\\
			&&\quad + \textstyle\sum_{s=1}^{n-1}\big|P_{r-k_1}V_{r,n}^{(1)}(n,s+1) - P_{r-k_1}V_{r,n}^{(1)}(n,s)\big|\\
			&&\quad + \textstyle\sum_{t=1}^{n-1}\big|P_{r-k_1}V_{r,n}^{(1)}(t+1,n) - P_{r-k_1}V_{r,n}^{(1)}(t,n)\big| + |P_{r-k_1}V_{r,n}^{(1)}(n,n)|\big\}\\
			&&\quad\quad\times \textstyle\big|\sum_{s'=1}^{s}\sum_{t'=1}^{t}\tilde a_{r,s',t'}P_{s'-k_2}V_{s',n}^{(2)}(r,t') P_{t'-k_3}V_{t',n}^{(3)}(r,s')\big|,
	\end{eqnarray*}
	leading with Theorem 2.1. in \cite{rio2009} and Hoelder's inequality to the upper bound
	\begin{eqnarray*}
		&& 4(q-1)^{1/2}\sum_{k_1,k_2,k_3 \ge 0}\delta_{3q}(k_1)\\
		&&\times \Big(\sum_{r=1}^{n}\sup_{s,t=1,...,n}\Big\| \sum_{s'=1}^{s}\sum_{t'=1}^{t}\tilde a_{r,s',t'}P_{s'-k_2}V_{s',n}^{(2)}(r,t') P_{t'-k_3}V_{t',n}^{(3)}(r,s')\Big\|_{3q/2}^2\Big)^{1/2}.
	\end{eqnarray*}
	for $A_3$. Using the same partial summation argument as in the discussion of $A_{2,1}$ above, we obtain
	\[
		A_3 \le  8(q-1)^{1/2}\big(\frac{3}{2}q-1\big)^{1/2} (3q-1)^{1/2} \Delta_{0,3q}^{3}\big(\sum_{r,s,t = 1}^{n}a_{r,s,t}^2\big)^{1/2}.
	\]
	The second type (the two smaller indices are equal) is of the form
	\begin{eqnarray*}
		A_4 &:=& \sum_{k_1,k_2,k_3 \ge 0}\Big\| \sum_{r=1}^{n}\sum_{s:s-k_2 < r-k_1}D_{r,s,s-k_2+k_3,k_1,k_2,k_3}\Big\|_q\\
		&\le& (q-1)^{1/2}\sum_{k_1,k_2,k_3 \ge 0}\Big(\sum_{r=1}^{n}\Big\|\sum_{s:s-k_2 < r-k_1}a_{r,s,s-k_2+k_3} P_{r-k_1}V_{r,n}^{(1)}(s,t)\\
		&&\quad\quad \times \big[P_{s-k_2}V_{s,n}^{(2)}(r,s-k_2+k_3)P_{s-k_2}V_{s-k_2+k_3,n}^{(3)}(r,s)\\
		&&\quad\quad\quad\quad - \IE P_{s-k_2}V_{s,n}^{(2)}(r,s-k_2+k_3) P_{s-k_2}V_{s-k_2+k_3,n}^{(3)}(r,s)\big]\Big\|_{q}^2\Big)^{1/2}. %6 times
	\end{eqnarray*}
	Put $\hat a_{r,s,t} := a_{r,s,t}\Ii_{\{s-k_2<r-k_1\}}$By applying similar partial summation techniques as for $A_{2,1}$, we obtain the upper bound
	\begin{eqnarray*}
		 &\le& 2(q-1)^{1/2}\sum_{k_1,k_2,k_3,i \ge 0}\delta_{3q}(k_1)\cdot \Big(\sum_{r=1}^{n}\sup_{s=1,...,n}\Big\|\sum_{s'=1}^{s}\hat a_{r,s',s'-k_2+k_3}\\
		&& \times P_{s'-k_2-i}\big[P_{s'-k_2}V_{s',n}^{(2)}(r,s'-k_2+k_3)P_{s'-k_2}V_{s'-k_2+k_3,n}^{(3)}(r,s')\big]\Big\|_{3q/2}^2\Big)^{1/2}.
	\end{eqnarray*}
	for $A_4$. The same technique as applied in $A_1$ leads to the bound
	\[
		A_4 \le 8(q-1)^{1/2}\big(\frac{3q}{2}-1\big)^{1/2} \Delta_{0,3q}^2 \sum_{k=0}^{\infty}\Delta_{k,3q}\cdot \big(\sum_{r,s,t=1}^{n}a_{r,s,t}^2\big)^{1/2}.
	\]	
	Note that $(P_{r-k_1-i}D_{r,r-k_1+k_2,r-k_1+k_3,k_1,k_2,k_3})_r$ is a martingale difference sequence. For brevity, we will omit the additional arguments $s,t$ of $V_{r,n}^{(i)}(s,t)$ in the following part. The third type (all three indices are equal) is of the form
	\begin{eqnarray*}
		A_5 &:=& \sum_{k_1,k_2,k_3 \ge 0}\Big\| \sum_{r=1}^{n}D_{r,r-k_1+k_2,r-k_1+k_3,k_1,k_2,k_3}\Big\|_q \le A_{5,1} + A_{5,2}, % 3 times
	\end{eqnarray*}
	where
	\begin{eqnarray*}
		&& A_{5,1}\\
		&\le& \textstyle (q-1)^{1/2}\sum_{k_1,k_2,k_3 \ge 0}\Big\{ \Big(\sum_{r=1}^{n} a_{r,r-k_1+k_2,r-k_1+k_3}^2\\
		&&\quad\quad\textstyle \times|\IE P_{r-k_1}V_{r,n}^{(1)} P_{r-k_1}V_{r-k_1+k_2,n}^{(2)}|^2 \cdot \|P_{r-k_1}V_{r-k_1+k_3,n}^{(3)}\|_q^{2}\Big)^{1/2}\\
		&& + \textstyle\Big(\sum_{r=1}^{n} a_{r,r-k_1+k_2,r-k_1+k_3}^2\\
		&&\quad\quad\textstyle \times |\IE P_{r-k_1}V_{r-k_1+k_2,n}^{(2)} P_{r-k_1}V_{r-k_1+k_3,n}^{(3)}|^2 \cdot \|P_{r-k_1}V_{r,n}^{(1)}\|_q^{1}\Big)^{1/2}\\
		&& \textstyle +\Big(\sum_{r=1}^{n} a_{r,r-k_1+k_2,r-k_1+k_3}^2\\
		&&\quad\quad\textstyle \times |\IE P_{r-k_1}V_{r,n}^{(1)} P_{r-k_1}V_{r-k_1+k_3,n}^{(3)}|^2 \cdot \|P_{r-k_1}V_{r-k_1+k_2,n}^{(2)}\|_q^{2}\Big)^{1/2}\Big\}\\
		&\le& \textstyle(q-1)^{1/2}\sum_{k_1,k_2,k_3 \ge 0}\big[\delta_2(k_1) \delta_2(k_2) \delta_q(k_3) + \delta_2(k_2) \delta_2(k_3) \delta_q(k_1)\\
		&&\quad\quad\quad\quad\quad\quad\quad\quad\quad\quad\quad\quad + \delta_2(k_1) \delta_2(k_3) \delta_q(k_3)\big]\cdot \Big(\sum_{r,s,t=1}^{n}a_{r,s,t}^2\Big)^{1/2}\\
		&\le& \textstyle 3(q-1)^{1/2} \Delta_{0,q}^3\cdot \Big(\sum_{r,s,t=1}^{n}a_{r,s,t}^2\Big)^{1/2},
	\end{eqnarray*}
	and
	\begin{eqnarray*}
		A_{5,2} &\le& \textstyle (q-1)^{1/2}\sum_{k_1,k_2,k_3,i \ge 0}\Big(\sum_{r=1}^{n} a_{r,r-k_1+k_2,r-k_1+k_3}^2\\
		&&\quad \textstyle\times\Big\|P_{r-k_1-i}\big[P_{r-k_1}V_{r,n}^{(1)} P_{r-k_1}V_{r-k_1+k_2,n}^{(2)}P_{r-k_1}V_{r-k_1+k_3,n}^{(3)}\big]\Big\|_q^2\Big)^{1/2}.
	\end{eqnarray*}
	Similarly to \reff{CV_proof_hilfslemma_eq1}, we obtain
	\begin{eqnarray*}
		&& \big\|P_{r-k_1-i}\big[P_{r-k_1}V_{r,n}^{(1)} P_{r-k_1}V_{r-k_1+k_2,n}^{(2)}P_{r-k_1}V_{r-k_1+k_3,n}^{(3)}\big]\big\|_q\\
		&\le& 2\delta_{3q}(k_1)\delta_{3q}(k_2) \delta_{3q}(k_3+i) + 2\delta_{3q}(k_1)\delta_{3q}(k_3) \delta_{3q}(k_2+i)\\
		&&\quad\quad\quad + 2\delta_{3q}(k_2)\delta_{3q}(k_3) \delta_{3q}(k_1+i),
	\end{eqnarray*}
	which leads to
	\[
		A_{5,2} \le 6(q-1)^{1/2}\Delta_{0,3q}^2 \sum_{k=0}^{\infty}\Delta_{k,3q}\cdot \Big(\sum_{r,s,t=1}^{n}a_{r,s,t}^2\Big)^{1/2}.
	\]
	The fourth type (the two bigger indices are equal) has the form
	\begin{eqnarray*}
		A_6 &:=& \sum_{k_1,k_2,k_3 \ge 0}\Big\| \sum_{r=1}^{n}\sum_{t:t-k_3 < r-k_1}D_{r,r-k_1+k_2,t-k_3,k_1,k_2,k_3}\Big\|_q.% 6 times
	\end{eqnarray*}
	$A_6$ is bounded by the sum of three terms $A_{6,1} + A_{6,2} + A_{6,3}$, which will be defined in the following. Put $a_{r,s,t}^{\circ} := a_{r,s,t}\Ii_{\{t-k_3 < r-k_1-i\}}$. For brevity in the following argumentation, put $\tilde r := r-k_1+k_2$. By using similar techniques as for $A_1$, we obtain
	\begin{eqnarray*}
		&&\big\|P_{r-k_1-i}\big[P_{r-k_1}V_{r,n}^{(1)}(\tilde r,t+1)P_{r-k_1}V_{\tilde r,n}^{(2)}(r,t+1)\\
		&&\quad\quad\quad\quad\quad\quad\quad\quad - P_{r-k_1}V_{r,n}^{(1)}(\tilde r,t)P_{r-k_1}V_{\tilde r,n}^{(2)}(r,t)\big]\big\|_{3q/2}\\
		&\le& \big\|P_{r-k_1-i}\big[P_{r-k_1}\big\{V_{r,n}^{(1)}(\tilde r,t+1) - V_{r,n}^{(1)}(\tilde r,t)\big\}P_{r-k_1}V_{\tilde r,n}^{(2)}(r,t+1)\big\|_{3q/2}\\
		&&\quad\quad + \big\|P_{r-k_1-i}\big[P_{r-k_1}V_{r,n}^{(1)}(\tilde r,t)P_{r-k_1}\big\{V_{\tilde r,n}^{(2)}(r,t+1)-V_{\tilde r,n}^{(2)}(r,t)\big\}\big\|_{3q/2}\\
		&\le& 2 \delta_{3q}^{V_{\cdot,n}^{(1)}(r,t+1)-V_{\cdot,n}^{(1)}(r,t)}(k_1+i)\cdot \delta_{3q}(k_2)\\
		&&\quad\quad\quad + 2 \delta_{3q}(k_1) \delta_{3q}^{V_{\cdot,n}^{(2)}(r,t+1) - V_{\cdot,n}^{(2)}(r,t)}(k_2+i).
	\end{eqnarray*}
	Applying the same partial summation techniques as for the term $A_{2,1}$, we conclude
	\begin{eqnarray*}
		&& A_{6,1}\\
		&:=& \textstyle\sum_{k_1,k_2,k_3,i \ge 0}\Big\| \sum_{r=1}^{n}\sum_{t:t-k_3 < r-k_1-i}a_{r,r-k_1+k_2,t}P_{t-k_3}V_{t,n}^{(3)}(r,r-k_1+k_2)\\
		&&\quad\quad\quad\quad \times P_{r-k_1-i}\big[P_{r-k_1}V_{r,n}^{(1)}(r-k_1+k_2,t) P_{r-k_1}V_{r-k_1+k_2,n}^{(2)}(r,t)\big]\Big\|_q\\
		&\le& \textstyle 4(q-1)^{1/2}\cdot \sum_{k_1,k_2,k_3,i \ge 0}\big[\delta_{3q}(k_1+i)\delta_{3q}(k_2) + \delta_{3q}(k_1)\delta_{3q}(k_2+i)\big]\\
		&&\textstyle\quad\quad\times \Big(\sum_{r=1}^{n} \sup_{t=1,...,n}\Big\|\sum_{t'=1}^{t}a_{r,r-k_1+k_2,t}^{\circ}\\
		&&\quad\quad\quad\quad\quad\quad\quad\quad\quad\quad\quad\quad\quad\quad\times P_{t-k_3}V_{t,n}^{(3)}(r,r-k_1+k_2)\Big\|_{3q}^2\Big)^{1/2}\\
		&\le& \textstyle 4(q-1)^{1/2}\cdot (3q-1)^{1/2}\\
		&& \textstyle\quad\quad\quad \times \sum_{k_1,k_2,k_3,i \ge 0}\big[\delta_{3q}(k_1)\delta_{3q}(k_2+i) + \delta_{3q}(k_2)\delta_{3q}(k_1+i)\big]\\
		&&\quad\quad\quad\quad\quad\quad\quad\quad\quad\quad\textstyle \times\delta_{3q}(k_3)\cdot \Big(\sum_{rs,t=1}^{n}a_{r,s,t}^2\Big)^{1/2}\\
		&\le& \textstyle 8(q-1)^{1/2}\cdot \big(3q-1\big)^{1/2}\Delta_{0,3q}^2 \cdot \sum_{k=0}^{\infty}\Delta_{k,3q}\cdot \Big(\sum_{rs,t=1}^{n}a_{r,s,t}^2\Big)^{1/2}.
	\end{eqnarray*}
	With slight changes in the argumentation, the second term has the upper bound
	\begin{eqnarray*}
		A_{6,2} &:=& \sum_{k_1,k_2,k_3,i \ge 0}\Big\| \sum_{t=1}^{n}\sum_{r: r-k_1-i < t-k_3}a_{r,r-k_1+k_2,t}P_{t-k_3}V_{t,n}^{(3)}(r,r-k_1+k_2)\\
		&&\quad\quad \times P_{r-k_1-i}\big[P_{r-k_1}V_{r,n}^{(1)}(r-k_1+k_2,t) P_{r-k_1}V_{r-k_1+k_2,n}^{(2)}(r,t)\big]\Big\|_q\\
		&\le& 4(q-1)^{1/2}\cdot \big(\frac{3q}{2}-1\big)^{1/2}\Delta_{0,3q}^2 \cdot \sum_{k=0}^{\infty}\Delta_{k,3q}\cdot \Big(\sum_{rs,t=1}^{n}a_{r,s,t}^2\Big)^{1/2}.
	\end{eqnarray*}
	In the following, we will again omit the arguments $s,t$ of $V_{r,n}^{(i)}(s,t)$. The third term reads
	\begin{eqnarray*}
		&& A_{6,3}\\
		&=& \sum_{k_1,k_2,k_3,i \ge 0}\Big\|\sum_{r=1}^{n}a_{r,r-k_1+k_2,r-k_1+k_3-i}\\
		&&\quad \times P_{r-k_1-i}\big[P_{r-k_1}V_{r,n}^{(1)} P_{r-k_1}V_{r-k_1+k_2,n}^{(2)}\big]\cdot P_{r-k_1-i}V_{r-k_1+k_3-i,n}^{(3)} - \IE[...]\Big\|_q\\
		&\le& (q-1)^{1/2}\sum_{k_1,k_2,k_3,i,j\ge 0}\Big(\sum_{r=1}^{n}a_{r,r-k_1+k_2,r-k_1+k_3-i}^2\\
		&&\quad \times \Big\|P_{r-k_1-i-j}\big\{P_{r-k_1-i}\big[P_{r-k_1}V_{r,n}^{(1)} P_{r-k_1}V_{r-k_1+k_2,n}^{(2)}\big]\\
		&&\quad\quad\quad\quad\quad\quad\quad\quad\quad\quad\quad\quad\times P_{r-k_1-i}V_{r-k_1+k_3-i,n}^{(3)}\big\}\Big\|_q^2\Big)^{1/2}
	\end{eqnarray*}
	Put $W_1 := P_{r-k_1}V_{r,n}^{(1)}$, $W_2 := P_{r-k_1}V_{r-k_1+k_2,n}^{(2)}$, $Z := P_{r-k_1-i}V_{r-k_1+k_3,n}^{(3)}$ and $W := P_{r-k_1-i}[W_1 \cdot W_2]$. Using similar techniques as in $A_1$, we obtain
	\begin{eqnarray*}
		&& \big\| P_{r-k_1-i-j}[W\cdot Z]\big\|_q\\
		&\le& \big\|WZ - (WZ)^{*(r-k_1-i-j)}\big\|_q\\
		&\le& \|W - W^{*(r-k_1-i-j)}\|_{3q/2}\|Z\|_{3q} + \|Z - Z^{*}(r-k_1-i-j)\|_{3q}\|W^{*}\|_{3q/2}\\
		&\le& 4\big[\delta_{3q}(k_1+i+j)\delta_{3q}(k_2) + \delta_{3q}(k_2+i+j)\delta_{3q}(k_1)\big]\delta_{3q}(k_3)\\
		&&\quad\quad\quad + 4\delta(k_3+j)\big[\delta_{3q}(k_1)\delta_{3q}(k_2+i) +\delta_{3q}(k_2)\delta_{3q}(k_1+i)\big],
	\end{eqnarray*}
	which shows
	\[
		A_{6,3} \le 8(q-1)^{1/2}\Big[\Delta_{0,3q}^2 \sum_{k \ge 0}\sum_{i \ge k}\Delta_{i,3q}  + \Big(\sum_{k=0}^{\infty}\Delta_{k,3q}\Big)^2 \Delta_{0,3q}\Big]\cdot \Big(\sum_{r,s,t}a_{r,s,t}^2\Big)^{1/2},
	\]
	and finishes the proof of \reff{hilfslemma1_eq3}.
	
	To prove \reff{hilfslemma1_eq4} and \reff{hilfslemma1_eq5}, we will omit the arguments $s,t$ of $V_{r,n}^{(i)}(s,t)$ since all bounds are uniform in these arguments. For \reff{hilfslemma1_eq4}, we use the inequalities
	\begin{eqnarray*}
		&& \textstyle\Big| \sum_{s,t=1}^{n}a_{s,t}\IE V_{s,n}^{(1)}(t)V_{t,n}^{(2)}\Big|\\
		&\le& \textstyle\sum_{k,l=0}^{\infty}\Big|\sum_{s,t=1}^{n}a_{s,t}\IE P_{s-k}V_{s,n}^{(1)}\cdot P_{t-l}V_{t,n}^{(2)}\Big|\\
		&\le&\textstyle \sum_{k,l=0}^{\infty}\sum_{1 \le t,t+k \le n}|a_{t+k-l,t}|\cdot \|P_{t-l}V_{t+k-l,n}^{(1)}\|_2 \cdot \|P_{t-l}V_{t,n}^{(2)}\|_2\\
		&\le& \textstyle\Delta_{0,2}^2 \sup_{k\in\IZ}\sum_{1 \le t,t+k \le n}^{n}|a_{t+k,t}|.
	\end{eqnarray*}
	To prove \reff{hilfslemma1_eq5}, note that
	\begin{eqnarray*}
		&& \textstyle\Big| \sum_{s,t=1}^{n}a_{r,s,t}\IE V_{r,n}^{(1)}V_{s,n}^{(2)}V_{t,n}^{(3)}\Big|\\
		&\le&  \sum_{i,j,k=0}^{\infty}\Big|\sum_{r,s,t=1}^{n}a_{r,s,t}\IE P_{r-i}V_{r,n}^{(1)} P_{s-j}V_{s,n}^{(2)} P_{t-k}V_{t,n}^{(3)}\Big|.
	\end{eqnarray*}
	The above term is bounded by two types of terms. The first type (all three indices $r-i,s-j,t-k$ are equal) is of the form
	\begin{eqnarray*}
		\tilde A_1 &:=& \textstyle\sum_{i,j,k=0}^{\infty}\sum_{1 \le t,t+i-k,t+j-k \le n}|a_{t+i-k,t+j-k,t}|\\
		&&\quad\quad\quad\times  \|P_{t-k}V_{t+i-k,n}^{(1)}\|_3 \|P_{t-k}V_{t+j-k,n}^{(2)}\|_3 \|P_{t-k}V_{t,n}^{(3)}\|_{3}\\
		&\le&\textstyle \Delta_{0,3}^3\sup_{k,l\in\IZ}\sum_{1 \le t,t+k,t+l \le n}|a_{t+k,t+l,t}|,
	\end{eqnarray*}
	the second type (the two bigger indices of $r-i,s-j,t-k$ are equal) is of the form
	\begin{eqnarray*}
		\tilde A_2 &:= & \textstyle\sum_{i,j,k,l=0}^{\infty}\sum_{1 \le t,t+i-k-l,t+j-k \le n} |a_{t+i-k-l,t+j-k,t}| \\
		&&\quad\quad\quad\times |\IE P_{t-k-l}V_{t+i-k-l,n}^{(1)} P_{t-k-l}[P_{t-k}V_{t+j-k,n}^{(2)} P_{t-k}V_{t,n}^{(3)}]|\\
		&\le& \textstyle\sum_{i,j,k,l=0}^{\infty} 2\delta_{3q}(i)\big[\delta_{3q}(j) \delta_{3q}(k+l) + \delta_{3q}(k) \delta_{3q}(j+l)\big]\\
		&&\quad\quad\quad \times \sup_{k,l\in\IZ}\sum_{1 \le t,t+k,t+l \le n}|a_{t+k,t+l,t}|\\
		&\le& \textstyle4\Delta_{0,3}^{2}\sum_{k=0}^{\infty}\Delta_{k,3}\cdot \sup_{k,l\in\IZ}\sum_{1 \le t,t+k,t+l \le n}|a_{t+k,t+l,t}|.
	\end{eqnarray*}

\end{proof}

\section{Proofs of section \ref{sec4}}

For linear time series, we use the model which was set up in \cite{dahlhauspolonik2009}.

\begin{proposition}[Linear time series models]\label{example_linearmodel}
	Suppose that Assumption \ref{ass3}\ref{ass3_m1} holds. Assume that
	\[
		X_{t,n} = \sum_{k=0}^{\infty}a_{t,n}(k)\varepsilon_{t-k}
	\]
	with some coefficients $a_{t,n}(k)$ and $a_{\theta}(k)$ satisfying
	\begin{equation}
		\sum_{t=1}^{n}|a_{t,n}(k) - a_{\theta_0(t/n)}(k)| \le C_B(k)\label{example_linearmodel_eq1}
	\end{equation}
	with some absolutely summable sequence $C_B(k)$. Assume that $\varepsilon_t$, $t\in\IZ$ are i.i.d. and have all moments, especially $\IE \varepsilon_0 = 0$ and $\IE \varepsilon_0^2 = 1$.\\
	Define $A_{\theta}(\lambda) := \sum_{k=0}^{\infty}a_{\theta}(k)e^{i\lambda k}$, the spectral density $f_{\theta}(\lambda) := \frac{1}{2\pi}|A_{\theta}(\lambda)|^{2}$ and real numbers $\gamma_{\theta}(k) := \frac{1}{2\pi}\int_{-\pi}^{\pi}A_{\theta}(\lambda)^{-1} e^{-i\lambda k} \dif \lambda$. Assume that
	\begin{enumerate}
		%\item[(a)] $\IE \varepsilon_0 = 0$. If $\nabla \gamma_\theta(0) \not= 0$, assume that $\IE \varepsilon_0^2 = 1$.
		\item[(a)] For $\theta,\theta' \in \Theta$, it holds that $f_{\theta} = f_{\theta'}$ implies $\theta = \theta'$.
		\item[(b)] $|A_{\theta}(\lambda)| \ge \delta_A > 0$ uniformly in $\theta\in \Theta,\lambda\in [0,2\pi]$. $A_{\theta}(\lambda)$ is four times continuously differentiable in $\theta$. Assume that there exist $\beta_i > 3$, $L_i > 0$ such that $\nabla^{i} A_{\theta}(\cdot) \in \Sigma(\beta_i,L_i)$ ($i = 0,1,2,3,4$).
		\item[(c)] The minimal eigenvalue of $\frac{1}{4\pi}\int \nabla \log f_{\theta}(\lambda)\cdot \nabla \log f_{\theta}(\lambda)' \dif \lambda$ is bounded away from 0 uniformly in $\theta \in \Theta$.
	\end{enumerate}
	Then, assuming that $\varepsilon_t$ has a standard Gaussian distribution, $\l$ from \reff{infinitepastlikelihood} has the form
	\begin{equation}
		\l(z,\theta) = \frac{1}{2}\log\big(\frac{2\pi}{\gamma_{\theta}(0)^2}\big) + \frac{1}{2}\Big(\sum_{k=0}^{\infty}\gamma_{\theta}(k) z_{k+1}\Big)^2, \label{example_linearmodel_eq2_appendix}
	\end{equation}
	and Assumptions \ref{ass1}, \ref{ass2} and \ref{ass3} are fulfilled for \reff{example_linearmodel_eq2}, and it holds with the fourth cumulant $\kappa_{4}(\varepsilon_0)$ of $\varepsilon_0$ that
	\begin{eqnarray*}
		V(\theta) &=& \frac{1}{4\pi}\int_{-\pi}^{\pi}\nabla \log f_{\theta}(\lambda)\cdot \nabla \log f_{\theta}(\lambda)' \dif \lambda,\\
		I(\theta) &=& V(\theta) + \kappa_4(\varepsilon_0) \cdot \frac{\nabla \gamma_{\theta}(0) \nabla \gamma_{\theta}(0)'}{\gamma_{\theta}(0)^2}.
	\end{eqnarray*}
	If additionally Assumption \ref{ass5}\ref{ass5_b1} holds and
	\begin{enumerate}
		%\item[(c)] $\theta_0$ is twice continuously differentiable and $K$ is symmetric,
		\item[(d)] $A_{\theta}(\lambda)$ is $l+1$-times continuously differentiable in $\theta$ and fulfills component-wise $\nabla^{i}A_{\theta}(\cdot) \in \Sigma(\beta_A,L_A)$ $(i = 0,...,l+1$).
	\end{enumerate}
	then Assumption \ref{ass5} is fulfilled and the bias term has the form
	\begin{equation}
		  \big| \IE[\partial_u^2 \nabla \l(\tilde Y_t(u),\theta)]\big|_{\theta = \theta_0(u)}\big|_{V(\theta_0(u))^{-1}}^2 = \big|\partial_u^2 \theta_0(u) + w(u)\big|_{V(\theta_0(u))}^2,\label{example_linearmodel_eq14}
	\end{equation}
	where $w(u) := V(\theta_0(u))^{-1}\tilde V(\theta_0(u))[\partial_u \theta_0(u), \partial_u \theta_0(u)]$ and $\tilde V(\theta) \in \IR^{p\times p \times p}$ is defined via $\tilde V(\theta)_{ijk} := \frac{1}{4\pi}\int_{-\pi}^{\pi}\frac{\nabla^2_{ij}f_{\theta}(\lambda)}{f_{\theta}(\lambda)}\cdot \frac{\nabla_k f_{\theta}(\lambda)}{f_{\theta}(\lambda)} \dif \lambda$.
\end{proposition}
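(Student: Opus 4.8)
The plan is to trace everything back to two objects: the transfer function $A_\theta(\lambda)=\sum_{k\ge 0}a_\theta(k)e^{i\lambda k}$ and the whitening coefficients $\gamma_\theta(k)$ defined by $A_\theta(\lambda)^{-1}=\sum_{k\ge 0}\gamma_\theta(k)e^{i\lambda k}$, the crucial algebraic fact being $\sum_{k\ge 0}\gamma_\theta(k)\tilde X_{t-k}(\theta)=\varepsilon_t$ at the true parameter. First I would derive the form (\ref{example_linearmodel_eq2_appendix}) of $\ell$: for Gaussian $\varepsilon_0$ the conditional law of $\tilde X_t(\theta)$ given $\tilde Y_{t-1}(\theta)$ is normal with one-step predictor $-\gamma_\theta(0)^{-1}\sum_{k\ge 1}\gamma_\theta(k)\tilde X_{t-k}(\theta)$ and innovation variance $\gamma_\theta(0)^{-2}=2\pi\exp\big(\frac{1}{2\pi}\int\log f_\theta\,d\lambda\big)$ (Kolmogorov's factorization), so inserting this into (\ref{infinitepastlikelihood}) produces (\ref{example_linearmodel_eq2_appendix}). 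Assumption \ref{ass1} and $D_q<\infty$ I would take essentially from \cite{dahlhauspolonik2009}, using (\ref{example_linearmodel_eq1}), the $\theta$-regularity of $a_\theta(k)$ encoded in (b), and the existence of all moments of $\varepsilon_0$. For Assumption \ref{ass2}, since $\tilde X_t(\theta)=\sum_{j\ge 0}a_\theta(j)\varepsilon_{t-j}$, perturbing $\varepsilon_0$ changes only the $j=t$ summand, so $\delta_q^{\tilde X(\theta)}(k)=|a_\theta(k)|\,\|\varepsilon_0-\varepsilon_0^{*}\|_q$; the Fourier coefficients of a function in $\Sigma(\beta_0,L_0)$ with $\beta_0>3$ decay like $k^{-\beta_0}$ uniformly in $\theta$ (because $L_0$ is), which yields the rate with $\eta=\beta_0-3$.

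Next I would handle Assumption \ref{ass3} together with the identities for $V$ and $I$. A Whittle-type computation gives $L(u,\theta)=\frac{1}{4\pi}\int(\log f_\theta+f_{\theta_0(u)}/f_\theta)\,d\lambda+\mathrm{const}$; since $x\mapsto\log x+c/x$ is minimized at $x=c$ and $f_\theta$ determines $\theta$ by (a), $\theta_0(u)$ is the unique minimizer (this is \ref{ass3_m2}), and differentiating twice gives $V(\theta)=\frac{1}{4\pi}\int\nabla\log f_\theta\,\nabla\log f_\theta'\,d\lambda$, whose smallest eigenvalue is bounded below by (c) (this is \ref{ass3_m3}). For \ref{ass3_m4} I would use the explicit representation $\nabla\ell(\tilde Y_t(\theta'),\theta)\big|_{\theta'=\theta}=\frac{\nabla\gamma_\theta(0)}{\gamma_\theta(0)}(\varepsilon_t^2-1)+\varepsilon_t U_{t-1}$, where $U_{t-1}:=\sum_{k\ge 1}\big(\nabla\gamma_\theta(k)-\frac{\nabla\gamma_\theta(0)}{\gamma_\theta(0)}\gamma_\theta(k)\big)\tilde X_{t-k}(\theta)$ is $\sF_{t-1}$-measurable with $\IE U_{t-1}=0$; this is a martingale difference because $\IE[\varepsilon_t^2-1\mid\sF_{t-1}]=\IE[\varepsilon_t\mid\sF_{t-1}]=0$.

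The computation of $I(\theta)$ would then follow from the same representation: squaring, the $\IE\varepsilon_0^3$ cross term drops out since $\IE U_{t-1}=0$, and $\IE[U_{t-1}U_{t-1}']$ depends only on $f_\theta$, so $I(\theta)=\IE(\varepsilon_0^2-1)^2\,\gamma_\theta(0)^{-2}\nabla\gamma_\theta(0)\nabla\gamma_\theta(0)'+\IE[U_{t-1}U_{t-1}']$; specializing to Gaussian innovations, where the information matrix equality forces $I=V$, identifies $\IE[U_{t-1}U_{t-1}']=V(\theta)-2\gamma_\theta(0)^{-2}\nabla\gamma_\theta(0)\nabla\gamma_\theta(0)'$, and substituting back with $\IE(\varepsilon_0^2-1)^2=\kappa_4(\varepsilon_0)+2$ gives the claimed $I(\theta)$. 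It then remains to check \ref{ass3_m5}: $\ell,\nabla\ell,\nabla^2\ell,\nabla^3\ell$ are all quadratic forms in $z$ whose coefficient sequences are products and $\theta$-derivatives of $\gamma_\theta(k)$, so membership in $\sL(M,\chi,C)$ with $\chi_j=O(j^{-(3+\eta)})$ reduces to $\sup_\theta|\nabla^i\gamma_\theta(k)|=O(k^{-(3+\eta)})$ and the corresponding Lipschitz-in-$\theta$ bounds, which I would obtain from the Hölder smoothness of order $>3$ of $\lambda\mapsto A_\theta(\lambda)^{-1}$ — inherited from $A_\theta$ via the lower bound $|A_\theta|\ge\delta_A$ — differentiated up to the order allowed by (b).

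Finally, for Assumption \ref{ass5} under (d) and \ref{ass5_b1}: condition \ref{ass5_b2} is immediate because $\partial_{z_i}\partial_{z_j}\nabla_m\ell(z,\theta)=\nabla_m\gamma_\theta(i-1)\gamma_\theta(j-1)+\gamma_\theta(i-1)\nabla_m\gamma_\theta(j-1)$ is constant in $z$ and bounded by $\tilde\psi_1(i)\tilde\psi_2(j)$ with summable factors by the decay estimates above ((d) supplying the one extra derivative needed for Lipschitz continuity in $\theta$); condition \ref{ass5_b3} follows from $\nabla_i\tilde X_0(\theta)=\sum_k\nabla_i a_\theta(k)\varepsilon_{-k}$, $\nabla^2_{ij}\tilde X_0(\theta)=\sum_k\nabla^2_{ij}a_\theta(k)\varepsilon_{-k}$, together with $\sum_k\sup_\theta|\nabla^i a_\theta(k)|<\infty$ and all moments of $\varepsilon_0$. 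For (\ref{example_linearmodel_eq14}) I would invoke Proposition \ref{misedecomposition}(iii): writing $G(v):=\nabla_\theta L(v,\theta)\big|_{\theta=\theta_0(u)}=\frac{1}{4\pi}\int\frac{f_{\theta_0(u)}-f_{\theta_0(v)}}{f_{\theta_0(u)}}\nabla\log f_{\theta_0(u)}\,d\lambda$ one has $G(u)=0$, and differentiating twice in $v$ through $v\mapsto\theta_0(v)$ and setting $v=u$,
\[
\partial_v^2 G(v)\big|_{v=u}=-V(\theta_0(u))\,\partial_u^2\theta_0(u)-\tilde V(\theta_0(u))[\partial_u\theta_0(u),\partial_u\theta_0(u)]=-V(\theta_0(u))\big(\partial_u^2\theta_0(u)+w(u)\big),
\]
so taking the $|\cdot|_{V(\theta_0(u))^{-1}}^2$-norm and using symmetry of $V$ produces $|\partial_u^2\theta_0(u)+w(u)|_{V(\theta_0(u))}^2$. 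The hard part of the whole argument is twofold: establishing the uniform (in $\theta\in\Theta$) polynomial decay and Lipschitz bounds for $\nabla^i\gamma_\theta(k)$ that feed \ref{ass3_m5} and \ref{ass5_b2}, \ref{ass5_b3}, and keeping the bookkeeping straight in the spectral computations — in particular isolating cleanly the $(\varepsilon_t^2-1)$-part of the score from its $\varepsilon_t$-times-$\sF_{t-1}$-measurable part, which is exactly what makes the fourth-cumulant correction in $I(\theta)$ come out right.
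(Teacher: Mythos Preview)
Your proposal is correct and follows the paper's proof essentially line by line: the same invertibility identity $\sum_{k\ge 0}\gamma_\theta(k)\tilde X_{t-k}(\theta)=\varepsilon_t$ to derive \eqref{example_linearmodel_eq2_appendix}, the same Whittle representation $L(u,\theta)=\frac{1}{4\pi}\int(\log f_\theta+f_{\theta_0(u)}/f_\theta)\,d\lambda$ for \ref{ass3_m2}--\ref{ass3_m3}, the same score decomposition $\frac{\nabla\gamma_\theta(0)}{\gamma_\theta(0)}(\varepsilon_t^2-1)+\varepsilon_t\,G_t(\theta)$ (your $U_{t-1}$ equals the paper's $G_t(\theta)$ after rewriting $\tilde X_t(\theta)-a_\theta(0)\varepsilon_t$ via the whitening relation), the same Fourier-decay argument for Assumptions \ref{ass2} and \ref{ass3}\ref{ass3_m5}, and the same chain-rule computation for the bias \eqref{example_linearmodel_eq14}.

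The one place you deviate is the evaluation of $\IE[U_{t-1}U_{t-1}']$: the paper computes it directly via $\sum_{k,l}\nabla\gamma_\theta(k)\nabla\gamma_\theta(l)'c_\theta(k-l)$ and Parseval, whereas you identify it indirectly by specializing to Gaussian $\varepsilon_0$ (where $\kappa_4=0$ and the information equality $I=V$ holds because $\ell$ is then the exact conditional log-likelihood) and then substituting back $\IE(\varepsilon_0^2-1)^2=\kappa_4+2$. This shortcut is legitimate since $\IE[U_{t-1}U_{t-1}']$ depends only on the second-order structure $c_\theta(\cdot)$, hence only on $f_\theta$, and not on the innovation distribution; it saves the spectral bookkeeping the paper goes through.
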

\begin{proof}[Proof of Proposition \ref{example_linearmodel}] Put $\tilde X_t(\theta) = \sum_{k=0}^{\infty}a_{\theta}(k) \varepsilon_{t-k}$. By \reff{example_linearmodel_eq1}, we have for all $q \ge 1$ that
\[
	\sum_{t=1}^{n}\|X_{t,n} - \tilde X_t(t/n)\|_q \le \sum_{k=0}^{\infty}|a_{t,n}(k) - a_{\theta_0(t/n)}(k)|\cdot \|\varepsilon_{t-k}\|_q \le \|\varepsilon_0\|_q\cdot \sum_{k=0}^{\infty}C_B(k).
\]
It holds that $a_{\theta}(k) = \frac{1}{2\pi}\int_{-\pi}^{\pi}A_{\theta}(\lambda) e^{-i\lambda k} \dif \lambda$. By condition (b) and \cite{katznelson2004}, chapter I, section 4, we have that $\sup_{\theta \in \Theta}|\nabla^i a_{\theta}(k)|_{\infty} = O(k^{-(3+\eta)})$ with some $\eta > 0$ for $i = 0,1,2,3$.
\begin{eqnarray*}
	&& \|\tilde X_t(\theta) - \tilde X_t(\theta')\|_q\\
	&\le& \frac{1}{2\pi}\sum_{k=0}^{\infty}\Big| \int_{0}^{1}\langle \int_{-\pi}^{\pi}\nabla A_{\theta + s(\theta'-\theta)}(\lambda) e^{-i\lambda k} \dif \lambda, \theta-\theta'\rangle \dif s\Big| \cdot \|\varepsilon_{t-k}\|_q\\
	&\le& \sum_{k=0}^{\infty}\sup_{\theta \in \Theta}|\nabla a_{\theta}(k)|_{\infty} \cdot \|\varepsilon_0\|_q \cdot |\theta-\theta'|_1.
\end{eqnarray*}
Furthermore we obtain $\delta_{q}^{\tilde X(\theta)}(k) = |a_{\theta}(k)| \cdot \|\varepsilon_0 - \varepsilon_0^{*}\|_q = O(k^{-(3+\eta)})$. Since $|A_{\theta}(\lambda)| \ge \delta_A > 0$, it follows from the inverse Fourier transform that the process $\tilde X_t(\theta)$ is invertible in the sense that
\begin{eqnarray*}
	\textstyle\sum_{k=0}^{\infty}\gamma_{\theta}(k) \tilde X_{t-k}(\theta) &=& \sum_{k,l=0}^{\infty}\gamma_{\theta}(k) a_{\theta}(l) \varepsilon_{t-k-l}\\
	&=&\textstyle \sum_{m=0}^{\infty} \Big(\sum_{l=0}^{\infty}\gamma_{\theta}(m-l) a_{\theta}(l)\Big) \varepsilon_{t-m} = \varepsilon_t,
\end{eqnarray*}
since
\begin{eqnarray*}
	\sum_{l=0}^{\infty}\gamma_{\theta}(m-l) a_{\theta}(l) &=& \frac{1}{2\pi}\int_{-\pi}^{\pi} A_{\theta}(\lambda)^{-1} e^{-i\lambda m} \sum_{l=0}^{\infty}a_{\theta}(l)e^{i\lambda l} \dif \lambda\\
	&=& \frac{1}{2\pi}\int_{-\pi}^{\pi}e^{-i \lambda m} \dif \lambda = \Ii_{\{m = 0\}}.
\end{eqnarray*}
This shows $\IE[\tilde X_t(\theta)|\sF_{t-1}] = -\frac{1}{\gamma_{\theta}(0)}\sum_{k=1}^{\infty}\gamma_{\theta}(k) \tilde X_{t-1}(\theta)$ and $\Var(\tilde X_t(\theta)|\sF_{t-1}) = \frac{1}{\gamma_{\theta}(0)^2}$, thus the negative logarithm of the Gaussian conditional likelihood \reff{infinitepastlikelihood} has the form \reff{example_linearmodel_eq2_appendix}. From the conditions on $A_{\theta}(\lambda)$ in (b) it is straightforward to see that $\ell$ satisfies Assumption \ref{ass3}\ref{ass3_m5}.

Furthermore, we have
\begin{align}
	& \nabla \l(\tilde Y_t(\theta'),\theta)\big|_{\theta' = \theta}\nonumber\\
	&= -\frac{\nabla \gamma_{\theta}(0)}{\gamma_{\theta}(0)} + \Big(\sum_{k=0}^{\infty}\gamma_{\theta}(k) \tilde X_{t-k}(\theta)\Big)\cdot \Big(\sum_{k=0}^{\infty}\nabla \gamma_{\theta}(k) \tilde X_{t-k}(\theta)\Big)\nonumber\\
	&= \varepsilon_t \cdot \Big(\sum_{k=1}^{\infty}\nabla \gamma_{\theta}(k) \tilde X_{t-k}(\theta)\Big) + \frac{\nabla \gamma_{\theta}(0)}{\gamma_{\theta}(0)}\cdot\Big(\varepsilon_t \gamma_{\theta}(0) \tilde X_t(\theta) - 1\Big)\nonumber\\
	&= \varepsilon_t \cdot \Big[\sum_{k=1}^{\infty}\nabla \gamma_{\theta}(k) \tilde X_{t-k}(\theta) + \nabla \gamma_{\theta}(0)(\tilde X_t(\theta) - a_{\theta}(0)\varepsilon_t)\Big]\nonumber\\
	&\quad\quad\quad\quad + \frac{\nabla \gamma_{\theta}(0)}{\gamma_{\theta}(0)}\cdot\big(\varepsilon_t^2- 1\big)\label{example_linearmodel_eq10}
\end{align}
since $\gamma_{\theta}(0) a_{\theta}(0) = 1$ with the same Fourier argument as before. From \reff{example_linearmodel_eq10} it is immediate that $\nabla \l(\tilde Y_t(\theta'),\theta)\big|_{\theta' = \theta}$ is a martingale difference sequence w.r.t. $\sF_t$.
Using Bochner's theorem, we have $c_{\theta}(k) := \IE[\tilde X_{t}(\theta) \tilde X_{t-k}(\theta)] = \int_{-\pi}^{\pi}f_{\theta}(\lambda) e^{i\lambda k} \dif \lambda$ and thus
\begin{eqnarray}
	\IE \Big(\sum_{k=0}^{\infty}\gamma_{\theta}(k) \tilde X_{t-k}(u)\Big)^2 &=& \sum_{k,l=0}^{\infty}\gamma_{\theta}(k)\gamma_{\theta}(l) c_{\theta_0(u)}(k-l)\nonumber\\
	&=& \int_{-\pi}^{\pi}f_{\theta_0(u)}(\lambda) A_{\theta}(\lambda)^{-1} A_{\theta}(-\lambda)^{-1} \dif \lambda\nonumber\\
	&=& \frac{1}{2\pi}\int_{-\pi}^{\pi}\frac{f_{\theta_0(u)}(\lambda)}{f_{\theta}(\lambda)} \dif \lambda.\label{example_linearmodel_eq11}
\end{eqnarray}
Furthermore, Kolmogorov's formula (cf. \cite{brockwell}, Theorem 5.8.1) implies that $\frac{1}{2}\log(\frac{1}{2\pi \gamma_{\theta}(0)^2}) = \frac{1}{4\pi}\int_{-\pi}^{\pi}\log f_{\theta}(\lambda) \dif \lambda$,  which shows that 
\begin{equation}
	L(u,\theta) = \IE \ell(\tilde Y_t(u),\theta) = \frac{1}{4\pi}\int_{-\pi}^{\pi}\big\{\log f_{\theta}(\lambda) + \frac{f_{\theta_0(u)}(\lambda)}{f_{\theta}(\lambda)}\big\} \dif \lambda + \log(2\pi).\label{example_linearmodel_eq13}
\end{equation}
Since $\log(x) \le x-1$ with equality if and only if $x = 1$ and since condition (a) holds, $\theta_0(u)$ is the unique minimizer of $\theta \mapsto L(u,\theta)$. Differentiating \reff{example_linearmodel_eq13} twice with respect to $\theta$ (and replacing $\theta_0(u)$ by $\theta$ afterwards), we obtain
%It holds that
%\begin{eqnarray*}
%	\nabla^2 \l(\tilde Y_t(\theta'),\theta)\big|_{\theta' = \theta} &=& \Big(\sum_{k=0}^{\infty}\nabla \gamma_{\theta}(k) \tilde X_{t-k}(\theta)\Big)\cdot \Big(\sum_{k=0}^{\infty}\nabla \gamma_{\theta}(k) \tilde X_{t-k}(\theta)\Big)' + \frac{\nabla \gamma_{\theta}(0) \nabla \gamma_{\theta}(0)'}{\gamma_{\theta}(0)^2}\\
%	&&\quad\quad\quad+ \Big(\sum_{k=0}^{\infty}\gamma_{\theta}(k) \tilde X_{t-k}(\theta)\Big)\cdot \Big(\sum_{k=0}^{\infty}\nabla^2 \gamma_{\theta}(k) \tilde X_{t-k}(\theta)\Big) - \frac{\nabla^2 \gamma_{\theta}(0)}{\gamma_{\theta}(0)},
%\end{eqnarray*}
%which shows with similar calculations as in \reff{example_linearmodel_eq11} that
\begin{eqnarray}
	V(\theta) &=& %\sum_{k,l=0}^{\infty}\nabla \gamma_{\theta}(0) \nabla \gamma_{\theta}(l) c_{\theta}(k-l) + \frac{\nabla \gamma_{\theta}(0) \nabla \gamma_{\theta}(0)'}{\gamma_{\theta}(0)^2}\\
	%&&\quad\quad\quad\quad + \sum_{k,l=0}^{\infty} \gamma_{\theta}(0) \nabla^2 \gamma_{\theta}(l) c_{\theta}(k-l) - \frac{\nabla^2 \gamma_{\theta}(0)}{\gamma_{\theta}(0)}\\
	%&=& \frac{1}{4\pi}\int_{-\pi}^{\pi}\frac{\nabla^2 f_{\theta}(\lambda)}{f_{\theta}(\lambda)} - \frac{\nabla f_{\theta}(\lambda) \nabla f_{\theta}(\lambda)'}{f_{\theta}(\lambda)^2} - \frac{f_{\theta_0(u)}(\lambda)}{f_{\theta}(\lambda)}\cdot \Big(\frac{\nabla^2 f_{\theta}(\lambda)}{f_{\theta}(\lambda)} - 2\frac{\nabla f_{\theta}(\lambda) \nabla f_{\theta}(\lambda)'}{f_{\theta}^2}\Big) \dif \lambda
		%&=&
		\frac{1}{4\pi}\int_{-\pi}^{\pi}\frac{\nabla f_{\theta}(\lambda) \nabla f_{\theta}(\lambda)'}{f_{\theta}(\lambda)^2} \dif \lambda\nonumber\\
		&=& \frac{1}{4\pi}\int_{-\pi}^{\pi}\nabla \log f_{\theta}(\lambda) \cdot \nabla \log f_{\theta}(\lambda)'\dif \lambda.\label{example_linearmodel_eq16}
\end{eqnarray}
Condition (c) implies that the minimal eigenvalue of $V(\theta)$ is bounded away from 0 uniformly in $\theta \in \Theta$. Define $G_{t}(\theta) := \sum_{k=0}^{\infty}\nabla \gamma_{\theta}(k) \tilde X_{t-k}(\theta) - \nabla \gamma_{\theta}(0) a_{\theta}(0) \varepsilon_t$ which is $\sF_{t-1}$-measurable and $\IE G_t(\theta) = 0$. Recall $\gamma_{\theta}(0) a_{\theta}(0) = 1$. From \reff{example_linearmodel_eq11} it follows that
\begin{eqnarray*}
	I(\theta) &=& \IE\big[\nabla \l(\tilde Y_t(\theta'),\theta)\big|_{\theta' = \theta}\cdot \nabla \l(\tilde Y_t(\theta'),\theta)'\big|_{\theta' = \theta}\big]\\
	&=& \IE[G_t(\theta)G_t(\theta)'] + \frac{\nabla \gamma_{\theta}(0)\nabla \gamma_{\theta}(0)'}{\gamma_{\theta}(0)^2}\IE[(\varepsilon_0^2-1)^2],
\end{eqnarray*}
where
\begin{eqnarray*}
	\IE[G_t(\theta) G_t(\theta)'] &=& \sum_{k,l=0}^{\infty}\nabla \gamma_{\theta}(k) \nabla \gamma_{\theta}(l)' c_{\theta}(k-l) - \frac{\nabla \gamma_{\theta}(0)\nabla \gamma_{\theta}(0)'}{\gamma_{\theta}(0)^2}\\
	&=& \frac{1}{4\pi}\int_{-\pi}^{\pi}\frac{\nabla f_{\theta}(\lambda) \nabla f_{\theta}(\lambda)'}{f_{\theta}(\lambda)^2}\dif \lambda - 2\frac{\nabla \gamma_{\theta}(0) \nabla \gamma_{\theta}(0)'}{\gamma_{\theta}(0)^2},
\end{eqnarray*}
thus
\[
	I(\theta) = V(\theta) + \kappa_4(\varepsilon_0)\cdot \frac{\nabla \gamma_{\theta}(0) \nabla \gamma_{\theta}(0)'}{\gamma_{\theta}(0)^2},
\]
where $\kappa_4(\varepsilon_0) = \IE \varepsilon_0^4 - 3$ denotes the fourth cumulant of $\varepsilon_0$.
%Define $\tilde V(\theta) \in \IR^{p\times p \times p}$ by $\tilde V(\theta)_{ijk} := \frac{1}{4\pi}\int_{-\pi}^{\pi}\frac{\nabla^2_{ij}f_{\theta}(\lambda)}{f_{\theta}(\lambda)}\cdot \frac{\nabla_k f_{\theta}(\lambda)}{f_{\theta}(\lambda)} \dif \lambda$.

Now let $\theta_0(u)$ be twice continuously differentiable. Note that
\[
	\partial_{z_i}\partial_{z_j}\nabla \ell(z,\theta) = 2\big( \gamma_{\theta}(j-1) \nabla \gamma_{\theta}(i-1) + \gamma_{\theta}(i-1) \nabla \gamma_{\theta}(j-1)\big),
\]
which implies Assumption \ref{ass4}.

Define $\tilde X_t(\theta) = \sum_{k=0}^{\infty}a_{\theta}(k) \varepsilon_{t-k}$. Then $\nabla \tilde X_t(\theta) = \sum_{k=0}^{\infty}\nabla a_{\theta}(k) \varepsilon_{t-k}$, $\nabla^2 \tilde X_t(\theta) = \sum_{k=0}^{\infty}\nabla^2 a_{\theta}(k) \varepsilon_{t-k}$.
We have 
\begin{eqnarray*}
	&& \IE\big[\partial_u^2 \nabla \l(\tilde Y_t(u),\theta)\big|_{\theta = \theta_0(u)}\big]\\
	&=& \partial_u^2 \Big[\frac{1}{4\pi}\int_{-\pi}^{\pi}\Big\{\frac{\nabla f_{\theta}(\lambda)}{f_{\theta}(\lambda)} - \frac{f_{\theta_0(u)}(\lambda)}{f_{\theta}(\lambda)^2}\nabla f_{\theta}(\lambda)\Big\} \dif \lambda\Big]\Big|_{\theta = \theta_0(u)}\\
	&=& -\frac{1}{4\pi}\int_{-\pi}^{\pi}\partial_u^2 f_{\theta_0(u)}(\lambda)\cdot \nabla \big(f_{\theta_0(u)}(\lambda)^{-1}\big) \dif \lambda\\
	&=& -\frac{1}{4\pi}\int_{-\pi}^{\pi}\Big\{\frac{\langle \nabla^2 f_{\theta_0(u)}(\lambda)\partial_u\theta_0(u),\partial_u\theta_0(u)\rangle\cdot \nabla f_{\theta_0(u)}(\lambda)}{f_{\theta_0(u)}(\lambda)^2}\\
	&&\quad\quad\quad\quad\quad\quad + \frac{\langle \nabla f_{\theta_0(u)}(\lambda),\partial_u^2\theta_0(u)\rangle \nabla f_{\theta_0(u)}(\lambda)}{f_{\theta_0(u)}(\lambda)^2}\Big\} \dif \lambda\\
	&=& -V(\theta_0(u))\cdot \partial_u^2 \theta_0(u) - \tilde V(\theta_0(u))[\partial_u\theta_0(u),\partial_u\theta_0(u)].
\end{eqnarray*}
Thus we obtain the following decomposition of the bias term
\begin{eqnarray*}
	&& \big| \IE[\partial_u^2 \nabla \l(\tilde Y_t(u),\theta)]\big|_{\theta = \theta_0(u)}\big|_{V(\theta_0(u))^{-1}}^2\\
	&=& |\partial_u^2 \theta_0(u)|_{V(\theta_0(u))}^2 + |\tilde V(\theta_0(u))[\partial_u\theta_0(u),\partial_u\theta_0(u)]|_{V(\theta_0(u))^{-1}}^2\\
	&&\quad\quad + 2 \langle \partial_u^2\theta_0(u),\tilde V(\theta_0(u))[\partial_u\theta_0(u),\partial_u\theta_0(u)]\rangle\\
	&=& \big|\partial_u^2 \theta_0(u) + w(u)\big|_{V(\theta_0(u))}^2.
\end{eqnarray*}
%where $w(u) := V(\theta_0(u))^{-1}\tilde V(\theta_0(u))[\partial_u \theta_0(u), \partial_u \theta_0(u)]$.

\end{proof}

\begin{proposition}[Recursively defined time series]\label{example_recursion}
	Assume that $\varepsilon_t$, $t\in\IZ$ are i.i.d. and have all moments with $\IE \varepsilon_0 = 0$ and $\IE \varepsilon_0^2 = 1$. Suppose that Assumption \ref{ass3}\ref{ass3_m1} is fulfilled. Assume that $X_{t,n}$ fulfills
	\[
		X_{t,n} = \mu\big(Z_{t-1,n},\theta_0\big(\frac{t}{n}\big)\big) + \sigma\big(Z_{t-1,n},\theta_0\big(\frac{t}{n}\big)\big)\varepsilon_t, \quad t = 1,...,n.
	\]
	Assume that $\mu,\sigma:\IR^r \times \Theta \to \IR$ satisfy
	\begin{equation}
		\sup_{\theta}\sup_{y\not= y'}\frac{|\mu(y,\theta) - \mu(y',\theta)|}{|y-y'|_{\chi,1}} + \sup_{\theta}\sup_{y\not=y'}\frac{|\sigma(y,\theta) - \sigma(y',\theta)|}{|y-y'|_{\chi,1}}\|\varepsilon_0\|_q \le 1\label{example_recursion_eq1}
	\end{equation}
	for all $q \ge 1$ with some $\chi \in \IR^r_{\ge 0}$ with $|\chi|_1 < 1$. Assume that $\sigma(\cdot) \ge \sigma_0$ with some constant $\sigma_0 > 0$. Assume that $\nabla \sigma \not= 0$, and
	\begin{enumerate}
		%\item[(a)] $\IE \varepsilon_0 = 0$ and $\IE \varepsilon_0^2 = 1$. %Either $\sigma(\cdot ,\theta)\equiv \sigma(\theta)$ is constant and the Lebesgue density $f_{|\varepsilon_0|}$ of $|\varepsilon_0|$ fulfills $f_{|\varepsilon_0|}(x) \le C_f\exp(-x^{1/\alpha})$ for some $\alpha, C_f > 0$; or $|\varepsilon_0| \le C_{\varepsilon}$ a.s. (and set $\alpha = 0$).
		\item[(a)] For $g \in \{\nabla^{i} \mu, \nabla^{i}\sigma: i = 0,1,2,3\}$, it holds $\sup_y\sup_{\theta \not= \theta'}\frac{|g(y,\theta) - g(y,\theta')|}{|\theta-\theta'|_1 \cdot (1+|y|_1)} < \infty$ and $\sup_{\theta \in \Theta}\sup_{y \not= y'}\frac{|g(y,\theta) - g(y,\theta')|}{|y-y'|_1} < \infty$ for each component.
		\item[(b)] for fixed $u \in [0,1]$, the two conditions $
			\mu(\tilde Y_0(\theta_0(u)),\theta) = \mu(\tilde Y_{0}(\theta_0(u)),\theta_0(u))$ a.s. and $\sigma(\tilde Y_0(\theta_0(u)),\theta)  = \sigma(\tilde Y_0(\theta_0(u)),\theta_0(u))$ a.s. imply $\theta = \theta_0(u)$. 
		\item[(c)] uniformly in $\theta \in \Theta$, the smallest eigenvalues of the matrices $W_{\mu}(\theta) := \IE[\frac{\nabla \mu(\tilde Y_0(\theta'),\theta) \nabla \mu(\tilde Y_0(\theta'),\theta)'}{\sigma^2(\tilde Y_0(\theta),\theta)}]\big|_{\theta' = \theta}$ and $W_{\sigma}(\theta) := \IE[\frac{\nabla \sigma(\tilde Y_0(\theta'),\theta) \nabla \sigma(\tilde Y_0(\theta'),\theta)'}{\sigma^2(\tilde Y_0(\theta),\theta)}]\big|_{\theta' = \theta}$ are bounded from below by some $\tilde \lambda_0 > 0$.
	\end{enumerate}
	Then Assumptions \ref{ass1}, \ref{ass2} and \ref{ass3} are fulfilled for $\ell$ chosen as in \reff{infinitepastlikelihood} assuming that $\varepsilon_t$ is standard Gaussian distributed, i.e. for
	\begin{equation}
		\l(x,y,\theta) := \frac{1}{2}\log\big(2\pi \sigma(y,\theta)^2\big)+ \frac{1}{2}\Big(\frac{x-\mu(y,\theta)}{\sigma(y,\theta)}\Big)^2.\label{example_recursion_eq2_appendix}
	\end{equation}
	In this case, it holds with $W_{\mu\sigma} := \IE[\frac{\nabla \mu(\tilde Y_0(\theta'),\theta) \nabla \sigma(\tilde Y_0(\theta'),\theta)'}{\sigma^2(\tilde Y_0(\theta),\theta)}]\big|_{\theta' = \theta}$ that
	\begin{eqnarray*}
		V(\theta) &=& W_{\mu}(\theta) + 2 W_{\sigma}(\theta),\\
		I(\theta) &=& W_{\mu}(\theta) + (\IE \varepsilon_0^4 - 1) W_{\sigma}(\theta) + \IE \varepsilon_0^3 (W_{\mu\sigma} + W_{\mu\sigma}').
	\end{eqnarray*}
\end{proposition}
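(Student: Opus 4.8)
The plan is to proceed as in the proof of Proposition~\ref{example_linearmodel}: first construct the stationary companion process $\tilde X_t(\theta)$, then verify Assumptions~\ref{ass1}, \ref{ass2} and \ref{ass3} (and, under Assumption~\ref{ass5}\ref{ass5_b1}, Assumption~\ref{ass5}), and finally read off $V(\theta)$ and $I(\theta)$ from a direct computation with the Gaussian likelihood \reff{example_recursion_eq2_appendix}.

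For fixed $\theta$ I would let $\tilde X_t(\theta)$ be the stationary solution of $\tilde X_t(\theta)=\mu(\tilde Z_{t-1}(\theta),\theta)+\sigma(\tilde Z_{t-1}(\theta),\theta)\varepsilon_t$. Existence, uniqueness and a Bernoulli-shift representation $\tilde X_t(\theta)=H(\theta,\sF_t)$ follow from a standard contraction (iterated random functions) argument, because by \reff{example_recursion_eq1} the update map $y\mapsto\mu(y,\theta)+\sigma(y,\theta)\varepsilon_t$ is, in $L^q$, Lipschitz in $|\cdot|_{\chi,1}$ with constant $\le1$, and $|\chi|_1<1$ makes the backward iterations Cauchy in $L^q$ at a geometric rate. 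The uniform moment bound $D_q<\infty$ of Assumption~\ref{ass1} then comes from the same contraction together with $|\mu(y,\theta)|\le|\mu(0,\theta)|+|y|_{\chi,1}$ and $|\sigma(y,\theta)|\le|\sigma(0,\theta)|+|y|_{\chi,1}$, and the first bound in \reff{ass1_s1_eq1} by inserting condition~(a) into the iteration and summing the geometric series. The one genuinely delicate point is the second bound in \reff{ass1_s1_eq1}: writing $d_t:=\|X_{t,n}-\tilde X_t(\theta_0(t/n))\|_q$ and subtracting the two recursions (which share $\varepsilon_t$ and the parameter $\theta_0(t/n)$), one gets $d_t\le\kappa\sum_{j=1}^{r}\chi_j\big(d_{t-j}+C_A\,\mathrm{var}_{[(t-j)/n,\,t/n]}\theta_0\big)$ for $t>r$ (with $\kappa\le1$ the Lipschitz constant), plus a term $\le\kappa\,|\chi|_1 D_q$ for the finitely many $t\le r$ where a coordinate of $Y_{t-1,n}^c$ is zero rather than $\tilde X_{t-j}(\theta_0(t/n))$. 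Summing over $t$, using that each point of $[0,1]$ is covered by at most $j$ of the intervals $[(t-j)/n,t/n]$ so that $\sum_t\mathrm{var}_{[(t-j)/n,\,t/n]}\theta_0\le j\,B_{\theta_0}$, that $\sum_j j\chi_j<\infty$ since $\chi$ has finite support, and the strict contraction $\kappa|\chi|_1\le|\chi|_1<1$ to absorb $\sum_t d_t$, yields $\sum_{t=1}^{n}d_t\le C_B$.

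Assumption~\ref{ass2} follows by coupling $\varepsilon_0$ with an independent copy $\varepsilon_0^{*}$ and propagating the perturbation through the recursion: the contraction gives $\delta_q^{\tilde X(\theta)}(k)=O(\rho^k)$ with $\rho<1$ not depending on $\theta$, hence $\delta_q(k)=O(k^{-(3+\eta)})$. For Assumption~\ref{ass3}, item~\ref{ass3_m1} is assumed. For \ref{ass3_m2}, writing $m_\theta=\mu(\tilde Z_{-1}(\theta_0(u)),\theta)$, $s_\theta=\sigma(\tilde Z_{-1}(\theta_0(u)),\theta)$ and using $\tilde X_0(\theta_0(u))-m_{\theta_0(u)}=s_{\theta_0(u)}\varepsilon_0$ with $\varepsilon_0$ independent of $\sF_{-1}$, $\IE\varepsilon_0=0$, $\IE\varepsilon_0^2=1$, conditioning on $\sF_{-1}$ gives $L(u,\theta)-L(u,\theta_0(u))=\tfrac12\IE\big[\log(s_\theta^2/s_{\theta_0(u)}^2)+s_{\theta_0(u)}^2/s_\theta^2-1+(m_{\theta_0(u)}-m_\theta)^2/s_\theta^2\big]\ge0$, which vanishes only if $s_\theta=s_{\theta_0(u)}$ and $m_\theta=m_{\theta_0(u)}$ a.s., i.e.\ only for $\theta=\theta_0(u)$ by~(b). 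Differentiating \reff{example_recursion_eq2_appendix} with $e=(x-\mu)/\sigma$ gives $\nabla\ell=(1-e^2)\nabla\sigma/\sigma-e\,\nabla\mu/\sigma$; at the data-generating parameter $e=\varepsilon_t$ while $\nabla\mu,\nabla\sigma$ are $\sF_{t-1}$-measurable, so \ref{ass3_m4} is immediate, and a second differentiation followed by taking expectations (using $\IE\varepsilon_t=0$, $\IE\varepsilon_t^2=1$) gives $V(\theta)=W_\mu(\theta)+2W_\sigma(\theta)$, whence \ref{ass3_m3} by~(c). Squaring $\nabla\ell$ at the true parameter and using $\IE(1-\varepsilon_0^2)^2=\IE\varepsilon_0^4-1$ and $\IE(1-\varepsilon_0^2)\varepsilon_0=-\IE\varepsilon_0^3$ gives $I(\theta)=W_\mu(\theta)+(\IE\varepsilon_0^4-1)W_\sigma(\theta)+\IE\varepsilon_0^3(W_{\mu\sigma}+W_{\mu\sigma}')$. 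Finally \ref{ass3_m5} is obtained by inspection: since $\ell$ and its first three $\theta$-derivatives are rational expressions in $x$, $\mu$ and $1/\sigma$ with $\sigma\ge\sigma_0>0$, and $\mu,\sigma$ together with their $\theta$-derivatives up to order three are Lipschitz in $y$ with at most linear growth by~(a), each component lies in $\sL(M,\chi,C)$ for a suitable $M$ and a sequence $\chi$ of finite support (so $\chi_j=O(j^{-(3+\eta)})$); Assumption~\ref{ass5} under \ref{ass5_b1} is checked the same way, differentiating $\mu$ and $\sigma$ once more in the $z$-variable.

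I expect the main obstacle to be the construction of the coupling between $X_{t,n}$ and $\tilde X_t(\theta_0(t/n))$ --- in particular the bound $\sum_{t=1}^{n}d_t\le C_B$, where the strict contraction, the bounded variation of $\theta_0$, and the decay of the zero-initialization effect have to be balanced carefully. The verification of \ref{ass3_m5} (and of the additional conditions in Assumption~\ref{ass5}) is conceptually routine but the most calculation-heavy part.
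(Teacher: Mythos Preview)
Your proposal is correct and follows essentially the same route as the paper. The only notable difference is that you spell out the contraction/coupling arguments for Assumption~\ref{ass1} (existence of $\tilde X_t(\theta)$, geometric dependence decay, and the summable bound $\sum_t\|X_{t,n}-\tilde X_t(\theta_0(t/n))\|_q\le C_B$) in detail, whereas the paper simply invokes Proposition~4.3 and Lemma~4.4 of \cite{dahlhaus2017} and Theorem~5.1 of \cite{shaowu2007}; your recursion for $d_t$ and the absorption via $\kappa|\chi|_1\le|\chi|_1<1$ is exactly the content of those references. The computations of $L(u,\theta)-L(u,\theta_0(u))$, the martingale-difference property of $\nabla\ell$, and the formulas for $V(\theta)$ and $I(\theta)$ match the paper's line by line, and your remark that $\chi$ has finite support (hence trivially $\chi_j=O(j^{-(3+\eta)})$) is precisely how the paper verifies Assumption~\ref{ass3}\ref{ass3_m5}. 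One small point: the proposition as stated does not assert Assumption~\ref{ass5}, and the paper explicitly refrains from checking it here, so your final sentence about it is extra (harmless, but not needed).
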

\begin{proof}[Proof of Proposition \ref{example_recursion}] Condition \reff{example_recursion_eq1} implies that $X_{t,n}$ exists and is a.s. unique (cf. Proposition 4.3 in \cite{dahlhaus2017} or Theorem 5.1 in \cite{shaowu2007}) and that there exist $C_{\rho} > 0$, $0 < \rho < 1$ such that $\sup_{n\in\IN}\sup_{t=1,...,n}\delta_{q}^{X_{\cdot,n}}(k), \sup_{\theta \in \Theta}\delta_{q}^{\tilde X(\theta)}(k) \le C \rho^k$ for all $k \ge 0$.
With slight changes in the argumentation of their proof, \reff{ass1_s1_eq1} and $D_q := \max\{\sup_{\theta \in \Theta}\|\tilde X_0(\theta)\|_q, \sup_{n\in\IN}\sup_{t=1,...,n}\|X_{t,n}\|_q\} < \infty$ follows similar as Proposition 4.3 and Lemma 4.4 in \cite{dahlhaus2017}. Thus, the condition of Assumption \ref{ass1} and \ref{ass2} are fulfilled.

Put $L(u,\theta) = \IE \ell(\tilde Y_0(u),\theta)$. Let us omit the argument $\tilde Y_0(u)$ of $\mu(\cdot,\theta)$ and $\sigma(\cdot,\theta)$ in the following. It holds that
\begin{eqnarray*}
	&& L(u,\theta) - L(u,\theta_0(u))\\
	&=& \IE\Big(\frac{\mu(\theta) - \mu(\theta_0(u))}{\sigma(\theta)}\Big)^2 + \IE\Big[\frac{\sigma^2(\theta_0(u))}{\sigma^2(\theta)} - \log\frac{\sigma^2(\theta_0(u))}{\sigma^2(\theta)}-1\Big].
\end{eqnarray*}
By a Taylor expansion of $x \mapsto x - \log(x)-1$, we obtain that the second summand is lower bounded by $\frac{1}{4}\IE\Big[\frac{(\sigma^2(\theta_0(u)) - \sigma^2(\theta))^2}{(\sigma^2(\theta_0(u)) - \sigma^2(\theta))^2+\Sigma^4(\theta)}\Big]$. By these inequalities, condition (b) shows that the unique minimizer of $\theta \mapsto L(u,\theta)$ is given by $\theta_0(u)$.\\
Omitting the arguments, $(z,\theta)$ (where $z = (x,y)$), we have
\begin{eqnarray*}
	\nabla \l &=& -\big(\frac{x-\mu}{\sigma}\big)\cdot \frac{\nabla \mu}{\sigma} + \frac{\nabla \sigma}{\sigma}\cdot \Big[1-\big(\frac{x-\mu}{\sigma}\big)^2\Big],\\
	\nabla^2 \l &=& \frac{\nabla \mu \nabla \mu'}{\sigma^2} + \big(\frac{x-\mu}{\sigma}\big)\cdot \Big[2\frac{\nabla \mu \nabla \sigma' + \nabla \sigma \nabla \mu'}{\sigma^2} - \frac{\nabla^2 \mu}{\sigma}\Big]\\
	&&\quad\quad + \frac{\nabla^2 \sigma}{\sigma}\Big[1-\big(\frac{x-\mu}{\sigma}\big)^2\Big] + \frac{\nabla \sigma \nabla \sigma'}{\sigma^2}\Big[3\big(\frac{x-\mu}{\sigma}\big)^2 - 1\Big].
\end{eqnarray*}
Since $\frac{\tilde X_t(\theta) - \mu(\tilde Y_{t-1}(\theta),\theta)}{\sigma(\tilde Y_{t-1}(\theta),\theta)} = \varepsilon_t$ and $\IE \varepsilon_t = 0$, $\IE \varepsilon_0^2 = 1$, we obtain that $\nabla \l(\tilde Y_t(\theta),\theta)$ is a martingale difference sequence with respect to $\sF_{t}$ in each component. Furthermore,
\begin{eqnarray*}
	&& V(\theta) = \IE \nabla^2 \l(\tilde Y_0(\theta),\theta)\\
	&=& \IE\big[\frac{\big(\nabla \mu(\tilde Y_0(\theta'),\theta) \nabla \mu(\tilde Y_0(\theta'),\theta)' + 2\nabla \sigma(\tilde Y_0(\theta'),\theta) \nabla \sigma(\tilde Y_0(\theta'),\theta)'\big)}{\sigma^2(\tilde Y_0(\theta),\theta)}\big]\big|_{\theta' = \theta},
\end{eqnarray*}
whose smallest eigenvalue is bounded from below by condition (c). By condition (a) and the fact that $\sigma(\cdot) \ge \sigma_0 > 0$, straightforward calculations show that Assumption \ref{ass3}\ref{ass3_m5} is fulfilled with the truncated (and thus summable) sequence $\chi_i = \Ii_{\{1 \le i \le r\}}$ and some $M \ge 2$.
\end{proof}

We do not want to go into details when Assumption \ref{ass5} is fulfilled in the situation of Proposition \ref{example_recursion}. Regarding the results in Section 4 in \cite{dahlhaus2017} we need additionally that $\mu,\sigma$ are four times continuously differentiable and fulfill some moment conditions.

%\begin{supplement}[id=suppB]
%  \sname{Supplement B}
%  \stitle{Explicit calculations}
 % \slink[doi]{COMPLETED BY THE TYPESETTER}
 % \sdatatype{.pdf}
 % \sdescription{This material contains explicit calculations for the examples in the simulations.}\end{supplement}
  
 %%%%%%%%%%%CALC AR(1)%%%%%%%%%%%%
\section{The bias in the tvAR(1) model}

We use the results from Proposition \ref{example_linearmodel}. Assume that for $t\in\IZ$, $\tilde X_t(\theta) = \alpha\cdot \tilde X_{t-1}(\theta) + \sigma\varepsilon_t$. Here it holds that $V(\theta) = \text{diag}(\frac{\sigma^2}{1-\alpha^2},\frac{2}{\sigma^2})$, where $\theta = (\alpha,\sigma)'$. Here, it holds that $f_{\theta}(\lambda) = \frac{\sigma^2}{2\pi}|1 - \alpha e^{i\lambda}|^{-2}$ and thus
\[
	\nabla (f_{\theta}(\lambda)^{-1}) = \begin{pmatrix}
		\frac{4\pi}{\sigma^2}\big(\alpha - \cos(\lambda)\big)\\
		\frac{-4\pi}{\sigma^3}\big(1+\alpha^2 - 2\alpha \cos(\lambda)\big)
	\end{pmatrix}.
\]
Moreover, the variance and the first covariance take the form $c_{\theta}(0) := \int_{-\pi}^{\pi} f_{\theta}(\lambda) \dif \lambda = \frac{\sigma^2}{1-\alpha^2}$ and $c_{\theta}(1) := \int_{-\pi}^{\pi} e^{i\lambda} f_{\theta}(\lambda) \dif \lambda = \frac{\sigma^2 \alpha}{1-\alpha^2}$. This leads to (let $\bar \nabla$ denote the derivative with respect to $\bar\theta$)
\begin{eqnarray*}
	\tilde V(\theta)_{ijk} &=& -\bar\nabla_{ij}^2\Big[\frac{1}{4\pi}\int_{-\pi}^{\pi}f_{\bar\theta}(\lambda) \cdot \nabla (f_{\theta}(\lambda)^{-1}) \dif \lambda\big|_{\bar \theta = \theta}\Big]\\
	&=& \begin{cases}
		-\frac{1}{\sigma^2}\big\{ \alpha \cdot \nabla_{ij}c_{\theta}(0) - \nabla_{ij}c_{\theta}(1)\big\}, & k = \alpha\\
		\frac{1}{\sigma^3}\big\{ (1+\alpha^2)\cdot \nabla_{ij}c_{\theta}(0) - 2\alpha\cdot \nabla_{ij}c_{\theta}(1)\big\}, & k = \sigma
	\end{cases}.
\end{eqnarray*}
The derivatives of the covariances read
\[
	\nabla c_{\theta}(0) = c_{\theta}(0)\begin{pmatrix}
		\frac{2\alpha}{1-\alpha^2}\\
		\frac{2}{\sigma}
	\end{pmatrix}, \quad\quad \nabla c_{\theta}(1) = \begin{pmatrix}1\\0\end{pmatrix}c_{\theta}(0) + \alpha \nabla c_{\theta}(0),
\]
and
\begin{eqnarray*}
	\nabla^2 c_{\theta}(0) &=& \begin{pmatrix}
		\frac{2\alpha}{1-\alpha^2}\\
		\frac{2}{\sigma}
	\end{pmatrix}\nabla c_{\theta}(0)' + c_{\theta}(0)\cdot \begin{pmatrix}\frac{2(1+\alpha^2)}{(1-\alpha^2)^2} & 0\\
	0 & -\frac{2}{\sigma^2}\end{pmatrix}\\
	&=& c_{\theta}(0)\cdot \begin{pmatrix}
		\frac{2(1+3\alpha^2)}{(1-\alpha^2)^2} & \frac{1}{\sigma}\frac{4\alpha}{1-\alpha^2}\\
		\frac{1}{\sigma}\frac{4\alpha}{1-\alpha^2} & \frac{2}{\sigma^2}
	\end{pmatrix},\\
	\nabla^2 c_{\theta}(1) &=& \begin{pmatrix}1\\0\end{pmatrix}\nabla c_{\theta}(0)' + \nabla c_{\theta}(0) \begin{pmatrix}1\\0\end{pmatrix}' + \alpha \nabla^2 c_{\theta}(0)\\
	&=& c_{\theta}(0)\cdot \begin{pmatrix}
		\frac{2\alpha(3+\alpha^2)}{(1-\alpha^2)^2} & \frac{2}{\sigma}\frac{1+\alpha^2}{1-\alpha^2}\\
		\frac{2}{\sigma}\frac{1+\alpha^2}{1-\alpha^2} & \frac{2\alpha}{\sigma^2}
	\end{pmatrix}.
\end{eqnarray*}
We obtain
\begin{eqnarray*}
	\tilde V(\theta)_{ij\alpha} &=& \frac{c_{\theta}(0)}{\sigma^2}\begin{pmatrix}
		\frac{4\alpha}{1-\alpha^2} & \frac{2}{\sigma}\\
		\frac{2}{\sigma} & 0
	\end{pmatrix},\\
	\tilde V(\theta)_{ij \sigma} &=& \frac{1}{\sigma^3}\big\{(1-\alpha^2) \nabla^2 c_{\theta}(0) - 2\alpha \cdot c_{\theta}(0)\cdot \begin{pmatrix}\frac{4\alpha}{1-\alpha^2} & \frac{2}{\sigma}\\
	\frac{2}{\sigma} & 0\end{pmatrix}\big\}\\
	&=& \frac{c_{\theta}(0)}{\sigma^3}\begin{pmatrix}
		2 & 0\\
		0 & 2\frac{(1-\alpha^2)}{\sigma^2}
	\end{pmatrix},
\end{eqnarray*}
thus
\begin{eqnarray*}
	&& \tilde V(\theta_0(u))[\partial_u\theta_0(u),\partial_u\theta_0(u)]\\
	&=& \frac{c_{\theta_0(u)}(0)}{\sigma(u)^2}\begin{pmatrix}
		4\Big(\frac{\alpha(u)}{1-\alpha(u)^2}(\partial_u \alpha(u))^2 + \frac{(\partial_u \alpha(u)) (\partial_u \sigma(u))}{\sigma(u)}\Big)\\
		\frac{2}{\sigma}\Big((\partial_u \alpha(u))^2 + \frac{1-\alpha(u)^2}{\sigma(u)^2}(\partial_u \sigma(u))^2\Big)
	\end{pmatrix} = V(\theta_0(u)) w(u),
\end{eqnarray*}
where
\[
	w(u) =  \begin{pmatrix}
		4\Big(\frac{\alpha(u) (\partial_u \alpha(u))^2}{1-\alpha(u)^2} + \partial_u \alpha(u)\cdot \frac{\partial_u \sigma(u)}{\sigma(u)}\Big)\\
		\sigma(u)\Big(\frac{(\partial_u \alpha(u))^2}{1-\alpha(u)^2} + \big(\frac{\partial_u \sigma(u)}{\sigma(u)}\big)^2\Big)
	\end{pmatrix}
\]
By using \reff{example_linearmodel_eq14}, we finally obtain
\begin{eqnarray*}
	&& \big|\IE\big[\partial_u^2 \nabla \l(\tilde Y_t(u),\theta)\big|_{\theta = \theta_0(u)}\big]\big|_{V(\theta_0(u))^{-1}}^2\\
	&=& \frac{1}{1-\alpha(u)^2}\Big| \partial_u^2 \alpha(u) + 4\Big(\frac{\alpha(u) (\partial_u \alpha(u))^2}{1-\alpha(u)^2} + \partial_u \alpha(u)\cdot \frac{\partial_u \sigma(u)}{\sigma(u)}\Big)\Big|^2\\
	&&\quad\quad + \frac{2}{\sigma(u)^2}\Big| \partial_u^2 \sigma(u) + \sigma(u)\Big(\frac{(\partial_u \alpha(u))^2}{1-\alpha(u)^2} + \big(\frac{\partial_u \sigma(u)}{\sigma(u)}\big)^2\Big)\Big|^2
\end{eqnarray*}

If $\theta = \alpha$, i.e. $\sigma \equiv \sigma_0 > 0$ is assumed to be constant and known, we obtain
\[
	\big|\IE\big[\partial_u^2 \nabla \l(\tilde Y_t(u),\theta)\big|_{\theta = \theta_0(u)}\big]\big|_{V(\theta_0(u))^{-1}}^2 = \frac{1}{1-\alpha(u)^2}\Big| \partial_u^2 \alpha(u) + 4 \frac{\alpha(u) (\partial_u \alpha(u))^2}{1-\alpha(u)^2}\Big|^2.
\]
%%%%%%%%%%%

%%%%%%%%%%%
\section{The bias in the tvMA(1) model}
We use the results from Proposition \ref{example_linearmodel}. Assume that for $t\in\IZ$, $\tilde X_t(\theta) = \sigma \varepsilon_t + \alpha \sigma \varepsilon_{t-1}$. Then we have $f_{\theta}(\lambda) = \frac{\sigma^2}{2\pi}|1+\alpha e^{i\lambda}|^2 = \frac{\sigma^2}{2\pi}(1+\alpha^2 + 2\alpha \cos(\lambda))$. Note that $f_{\theta}(\lambda)^{-1} = (\frac{2\pi}{\sigma^2})^2 \cdot f_{\theta}^{AR}(\lambda)$ with $f_{\tilde \theta}^{AR}(\lambda) = \frac{\sigma^2}{2\pi}|1+\alpha e^{i\lambda}|^2$ corresponding to the spectral density of an AR(1) process with parameter $-\alpha$ instead of $\alpha$. Recall that Kolmogorov's formula implies $\frac{1}{2}\log(\frac{\sigma^2}{2\pi}) = \frac{1}{4\pi}\int_{-\pi}^{\pi}\log f_{\theta}^{AR}(\lambda) \dif \lambda$ and thus
\[
	\begin{pmatrix}0\\
\frac{1}{\sigma}\end{pmatrix} = \frac{1}{4\pi}\int_{-\pi}^{\pi}\log f_{\theta}^{AR}(\lambda) \dif \lambda.
\]
By \reff{example_linearmodel_eq16}, this leads to the same $V$ as in the AR case:
\begin{eqnarray*}
	V(\theta) &=& \frac{1}{4\pi}\int_{-\pi}^{\pi}\Big[ \begin{pmatrix}0\\ \frac{-4}{\sigma}\end{pmatrix} + \nabla \log f_{\theta}^{AR}(\lambda)\Big]\cdot \Big[ \begin{pmatrix}0\\ \frac{-4}{\sigma}\end{pmatrix} + \nabla \log f_{\theta}^{AR}(\lambda)\Big]' \dif \lambda\\
	&=& \frac{1}{4\pi}\int_{-\pi}^{\pi}\nabla \log f_{\theta}^{AR}(\lambda)\cdot \nabla \log f_{\theta}^{AR}(\lambda)' \dif \lambda = \begin{pmatrix}
		\frac{c_{\theta}(0)}{\sigma^2} & 0\\
		0 & \frac{2}{\sigma^2}
	\end{pmatrix}.
\end{eqnarray*}
To calculate $\tilde V(\theta)$, note that
\[
	\nabla^2 f_{\theta}(\lambda) = \begin{pmatrix}\frac{\sigma^2}{\pi} & \frac{2\sigma}{\pi}(\alpha+\cos(\lambda))\\
	\frac{2\sigma}{\pi}(\alpha+\cos(\lambda)) & \frac{1}{\pi}(1+\alpha^2 + 2\alpha \cos(\lambda))\end{pmatrix}.
\]
We have
\begin{eqnarray*}
	&& \tilde V(\theta)_{\cdot,\cdot,k}\\
	&=& -\bar \nabla_k \Big[\frac{1}{4\pi}\int_{-\pi}^{\pi}\nabla^2 f_{\theta}(\lambda) \cdot \big[ \big(\frac{2\pi}{\bar \sigma^2}\big)^2 \cdot f_{\bar\theta}^{AR}(\lambda)\big] \dif \lambda\Big]\\
	&=& -\pi\cdot \begin{pmatrix}\frac{\sigma^2}{\pi}\nabla_k\big(\frac{c_{\theta}(0)}{\sigma^4}\big) & \frac{2\sigma}{\pi}(\alpha \nabla_k\big(\frac{c_{\theta}(0)}{\sigma^4}\big)+\nabla_k\big(\frac{c_{\theta}(1)}{\sigma^4}\big))\\
	\frac{2\sigma}{\pi}(\alpha \nabla_k\big(\frac{c_{\theta}(0)}{\sigma^4}\big)+ \nabla_k\big(\frac{c_{\theta}(1)}{\sigma^4}\big)) & \frac{1}{\pi}( (1+\alpha^2)\nabla_k\big(\frac{c_{\theta}(0)}{\sigma^4}\big) + 2\alpha \nabla_k\big(\frac{c_{\theta}(1)}{\sigma^4}\big))\end{pmatrix},
\end{eqnarray*}
where $c_{\theta}(0) = \frac{\sigma^2}{1-\alpha^2}$ and $c_{\theta}(1) = -\alpha \cdot c_{\theta}(0)$. With 
\[
	\nabla \big(\frac{c_{\theta}(0)}{\sigma^4}\big) = \frac{c_{\theta}(0)}{\sigma^4}\begin{pmatrix}
		\frac{2\alpha}{1-\alpha^2}\\
		-\frac{2}{\sigma}
	\end{pmatrix}, \quad\quad \nabla \big(\frac{c_{\theta}(1)}{\sigma^4}\big) = \frac{c_{\theta}(0)}{\sigma^4}\begin{pmatrix}
		-1\\
		0
	\end{pmatrix} + (-\alpha) \nabla  \big(\frac{c_{\theta}(0)}{\sigma^4}\big),
\]
we obtain
\[
	\tilde V(\theta)_{ij\alpha} = \frac{c_{\theta}(0)}{\sigma^2}\cdot \begin{pmatrix}
		- \frac{2\alpha}{1-\alpha^2} & \frac{2}{\sigma}\\
		\frac{2}{\sigma} & 0
	\end{pmatrix}, \quad\quad \tilde V(\theta)_{ij\sigma} = \frac{c_{\theta}(0)}{\sigma^2}\begin{pmatrix}
		\frac{2}{\sigma} & 0\\
		0 & \frac{2(1-\alpha^2)}{\sigma^3}
	\end{pmatrix},
\]
thus
\begin{eqnarray*}
	&& \tilde V(\theta_0(u))[\partial_u \theta_0(u),\partial_u \theta_0(u)]\\
	&=& \frac{c_{\theta_0(u)}(0)}{\sigma(u)^2}\begin{pmatrix}
		-\frac{2\alpha(u)}{1-\alpha(u)^2}(\partial_u \alpha(u))^2 + \frac{4}{\sigma(u)}(\partial_u \alpha(u))\cdot (\partial_u \sigma(u))\\
		\frac{2}{\sigma(u)} (\partial_u \alpha(u))^2 + \frac{2(1-\alpha(u)^2)}{\sigma(u)^3}(\partial_u \sigma(u))^2
	\end{pmatrix}\\
	&=& V(\theta_0(u)) w(u),
\end{eqnarray*}
where
\[
	w(u) := \begin{pmatrix}
		-\frac{2\alpha(u)(\partial_u \alpha(u))^2}{1-\alpha(u)^2} + 4 \partial_u \alpha(u) \frac{\partial_u \sigma(u)}{\sigma(u)}\\
		\sigma(u)\Big(\frac{(\partial_u \alpha(u))^2}{1-\alpha(u)^2} + \big(\frac{\partial_u \sigma(u)}{\sigma(u)}\big)^2\Big).
	\end{pmatrix}
\]
Using \reff{example_linearmodel_eq14}, we finally obtain
\begin{eqnarray*}
	&& \big|\IE\big[\partial_u^2 \nabla \l(\tilde Y_t(u),\theta)\big|_{\theta = \theta_0(u)}\big]\big|_{V(\theta_0(u))^{-1}}^2\\
	&=& \frac{1}{1-\alpha(u)^2}\Big| \partial_u^2 \alpha(u) + 2\Big(-\frac{\alpha(u) (\partial_u \alpha(u))^2}{1-\alpha(u)^2} + 2\partial_u \alpha(u)\cdot \frac{\partial_u \sigma(u)}{\sigma(u)}\Big)\Big|^2\\
	&&\quad\quad + \frac{2}{\sigma(u)^2}\Big| \partial_u^2 \sigma(u) + \sigma(u)\Big(\frac{(\partial_u \alpha(u))^2}{1-\alpha(u)^2} + \big(\frac{\partial_u \sigma(u)}{\sigma(u)}\big)^2\Big)\Big|^2.
\end{eqnarray*}

\subsection{Bias terms in Examples \ref{example_ar_mean} and \ref{example_ar_var}}

%Here, we present the calculations for the bias formulas of Examples \ref{example_ar_mean} and \ref{example_ar_var}.
\begin{lemma}\label{technical_ar_mean}
	In the situation of Example \ref{example_ar_mean}, it holds that
	\[
		\big|\IE\big[\partial_u^2  \nabla \l(\tilde Y_t(u),\theta)\big|_{\theta = \theta_0(u)}\big]\big|_{V(\theta_0(u))^{-1}}^2 = \big|\partial_u^2 \theta_0(u) + 2w(u)\big|_{V(\theta_0(u))}^2,
	\]
		%&=& \frac{1}{\sigma(u)^2}\Big[|\partial_u^2 \alpha(u)|_{W(\theta_0(u))}^2  + 4|\partial_u \alpha(u)|_{P(u)W(\theta_0(u))^{-1}P(u)}^2\\
		%&&\quad\quad\quad\quad\quad\quad + 4 \partial_u^2 \alpha(u)' P(u) \partial_u \alpha(u)\Big]\\
		%&&\quad\quad + \frac{2}{\sigma(u)^4}\Big(|\partial_u \alpha(u)|_{W(\theta_0(u))}^2 + (\partial_u \sigma(u))^2 + \sigma(u) \partial_u^2 \sigma(u)\Big)^2,
	%\end{eqnarray*}
	where $P(u) := \partial_u W(\theta_0(u))$ and
	\[
		w(u) := \begin{pmatrix}
			W(\theta_0(u))^{-1}P(u) \partial_u \alpha(u)\\
			\frac{\sigma(u)}{2}\Big[|\partial_u \alpha(u)|_{\frac{W(\theta_0(u))}{\sigma(u)^2}}^2 + \big(\frac{\partial_u \sigma(u)}{\sigma(u)}\big)^2\Big]
		\end{pmatrix}
	\]
\end{lemma}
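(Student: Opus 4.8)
The plan is to compute the $p$-vector $\IE\big[\partial_u^2\nabla\l(\tilde Y_t(u),\theta)\big]\big|_{\theta=\theta_0(u)}$ in closed form and to show that it equals $-V(\theta_0(u))\big(\partial_u^2\theta_0(u)+2w(u)\big)$; the identity $|V(\theta_0(u))x|_{V(\theta_0(u))^{-1}}^2=|x|_{V(\theta_0(u))}^2$ (valid since $V$ is symmetric) then gives the claim, and incidentally identifies the bias coefficient appearing in Proposition~\ref{misedecomposition}(iii). Exactly as in the proof of Proposition~\ref{example_linearmodel}, Assumption~\ref{ass5} (which holds here under the extra smoothness of $\mu$ assumed for this lemma) together with Lemma~\ref{lemma_expectation_expansion} justifies interchanging $\partial_u^2$ with the expectation, so that $\IE\big[\partial_u^2\nabla\l(\tilde Y_t(u),\theta)\big]=\partial_u^2\big[G(\theta_0(u),\theta)\big]$, where $G(\psi,\theta):=\IE\big[\nabla\l(\tilde Y_t(\psi),\theta)\big]$ and the outer $\partial_u$ acts only through the curve $u\mapsto\theta_0(u)$ in the first slot.

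First I would compute $G(\psi,\theta)$ explicitly. Writing $\theta=(\alpha,\sigma)$, $\psi=(\alpha_\psi,\sigma_\psi)$, $\mu_t:=\mu(\tilde Z_{t-1}(\psi))$, and using $\tilde X_t(\psi)=\langle\alpha_\psi,\mu_t\rangle+\sigma_\psi\varepsilon_t$ with $\varepsilon_t$ independent of $\tilde Z_{t-1}(\psi)$, $\IE\varepsilon_t=0$, $\IE\varepsilon_t^2=1$, the residual $e:=\tilde X_t(\psi)-\langle\alpha,\mu_t\rangle$ satisfies $\IE[e\,\mu_t]=W(\psi)(\alpha_\psi-\alpha)$ and $\IE[e^2]=|\alpha_\psi-\alpha|_{W(\psi)}^2+\sigma_\psi^2$. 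Since $\nabla_\alpha\l=-\sigma^{-2}e\,\mu_t$ and $\nabla_\sigma\l=\sigma^{-1}-\sigma^{-3}e^2$, this yields
\begin{equation*}
 G_\alpha(\psi,\theta)=-\tfrac{1}{\sigma^2}W(\psi)(\alpha_\psi-\alpha),\qquad
 G_\sigma(\psi,\theta)=\tfrac{1}{\sigma}-\tfrac{1}{\sigma^3}\big(|\alpha_\psi-\alpha|_{W(\psi)}^2+\sigma_\psi^2\big),
\end{equation*}
which vanishes for $\theta=\psi$, reconfirming that $\theta_0(u)$ is the critical point of $\theta\mapsto\IE\l(\tilde Y_0(\theta_0(u)),\theta)$.

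Next I would substitute $\psi=\theta_0(u)=(\alpha(u),\sigma(u))$, differentiate twice in $u$ (using that $u\mapsto W(\theta_0(u))$ is twice continuously differentiable, with $P(u)=\partial_u W(\theta_0(u))$), and only then set $\theta=\theta_0(u)$. The crucial observation is that every term of $\partial_u^2 G$ that still carries a factor $(\alpha(u)-\alpha)$ — in particular all contributions of $\partial_u^2 W(\theta_0(u))$ — vanishes on setting $\alpha=\alpha(u)$. What remains is, for the $\alpha$-block, $-\sigma(u)^{-2}\big(W(\theta_0(u))\,\partial_u^2\alpha(u)+2P(u)\,\partial_u\alpha(u)\big)$, and, for the $\sigma$-block, $-2\sigma(u)^{-2}\partial_u^2\sigma(u)-2\sigma(u)^{-3}\big(|\partial_u\alpha(u)|_{W(\theta_0(u))}^2+(\partial_u\sigma(u))^2\big)$. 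Comparing with $-V(\theta_0(u))\big(\partial_u^2\theta_0(u)+2w(u)\big)$ and using $V(\theta_0(u))=\sigma(u)^{-2}\mathrm{diag}\big(W(\theta_0(u)),2\big)$ from Example~\ref{example_ar_mean}, one reads off precisely the stated $w(u)$: its $\alpha$-part must satisfy $W(\theta_0(u))w_\alpha=P(u)\partial_u\alpha(u)$, i.e.\ $w_\alpha=W(\theta_0(u))^{-1}P(u)\partial_u\alpha(u)$, and its $\sigma$-part $w_\sigma=\tfrac{1}{2\sigma(u)}\big(|\partial_u\alpha(u)|_{W(\theta_0(u))}^2+(\partial_u\sigma(u))^2\big)=\tfrac{\sigma(u)}{2}\big(|\partial_u\alpha(u)|_{W(\theta_0(u))/\sigma(u)^2}^2+(\partial_u\sigma(u)/\sigma(u))^2\big)$. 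Applying $|Vx|_{V^{-1}}^2=|x|_V^2$ completes the argument.

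The main obstacle is not the algebra — which collapses cleanly thanks to the vanishing of the $(\alpha(u)-\alpha)$-terms — but the analytic bookkeeping needed to license that algebra: one must check that Assumption~\ref{ass5}, and hence the hypotheses of Lemma~\ref{lemma_expectation_expansion}, genuinely hold for this model, that is, that $\theta\mapsto\tilde X_t(\theta)$ and $\psi\mapsto W(\psi)$ are twice continuously differentiable with integrable dominating functions, so that differentiation under the expectation and the interchange $\partial_u^2\,\IE=\IE\,\partial_u^2$ are valid. As noted after Proposition~\ref{example_recursion}, this requires the additional smoothness of $\mu$ beyond the mere Lipschitz condition imposed in Example~\ref{example_ar_mean}, and verifying it (via the contraction/differentiability arguments of \cite{dahlhaus2017}) is the technically delicate point.
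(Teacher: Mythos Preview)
Your computation is correct, but you proceed by a genuinely different route than the paper. The paper differentiates the \emph{random} object $\nabla\ell(\tilde Y_t(u),\theta)$ twice in $u$ using the recursion relations
\[
\partial_u\tilde X_t(u)=\langle\partial_u\alpha(u),\mu(\tilde Z_{t-1}(u))\rangle+\langle\alpha(u),\partial_u\mu(\tilde Z_{t-1}(u))\rangle+\partial_u\sigma(u)\,\varepsilon_t
\]
(and its second derivative), and only afterwards takes expectations; this matches the literal definition of $\IE[\partial_u^2\nabla\ell]$ and so requires no interchange of $\IE$ and $\partial_u^2$. You instead first evaluate $G(\psi,\theta)=\IE[\nabla\ell(\tilde Y_t(\psi),\theta)]$ in closed form and then differentiate the deterministic function $u\mapsto G(\theta_0(u),\theta)$. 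Your route is algebraically much cleaner --- the factor $(\alpha_\psi-\alpha)$ kills all the messy terms, including those involving $\partial_u^2 W$, in one stroke --- but it buys this simplicity at the price of the interchange $\IE\,\partial_u^2=\partial_u^2\,\IE$, which you correctly flag as the analytic burden and rightly trace back to Lemma~\ref{lemma_expectation_expansion} and Assumption~\ref{ass5}. The paper's approach trades that analytic step for longer (and somewhat sign-careless) bookkeeping with the pathwise recursion. Both arrive at the same vector $V(\theta_0(u))\big(\partial_u^2\theta_0(u)+2w(u)\big)$ up to an irrelevant overall sign, and hence at the same squared norm.
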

\begin{proof} Note that $\theta = (\alpha',\sigma)'$.
%For $\mu:\IR^{r} \to \IR^{p-1}$, let $\partial \mu$ denote the Jacobian.
It holds that
	\begin{eqnarray*}
		\partial_u^2 \nabla_{\alpha} \l(\tilde Y_t(u),\theta) &=& \frac{1}{\sigma^2}\partial_u^2 \Big[ \big(\tilde X_t(u)-\langle \alpha, \mu(\tilde Z_{t-1}(u))\rangle\big)\cdot \mu(\tilde Z_{t-1}(u))\Big]\\
		&=& \frac{1}{\sigma^2}\partial_u\Big[ \big(\partial_u \tilde X_t(u) - \langle \alpha, \partial_u \mu(\tilde Z_{t-1}(u))\rangle\big)\cdot \mu(\tilde Z_{t-1}(u))\\
		&&\quad\quad\quad + \big(\tilde X_t(u) - \langle \alpha, \mu(\tilde Z_{t-1}(u))\rangle\big)\cdot \partial_u \mu(\tilde Z_{t-1}(u))\Big]\\
		&=& \frac{1}{\sigma^2}\Big[\big(\partial_u^2 \tilde X_t(u) - \langle \alpha, \partial_u^2 \mu(\tilde Z_{t-1}(u))\big) \mu(\tilde Z_{t-1}(u))\\
		&&\quad\quad + 2 \big(\partial_u \tilde X_t(u) - \langle \alpha, \partial_u \mu(\tilde Z_{t-1}(u))\rangle\big)\cdot \partial_u \mu(\tilde Z_{t-1}(u))\\
		&&\quad\quad + \big(\tilde X_t(u) - \langle \alpha, \mu(\tilde Z_{t-1}(u))\rangle\big)\cdot \partial_u^2 \mu(\tilde Z_{t-1}(u))\Big].
	\end{eqnarray*}
	The recursion implies
	\begin{eqnarray*}
		\tilde X_t(u) &=& \langle \alpha(u), \mu(\tilde Z_{t-1}(u))\rangle + \sigma(u) \varepsilon_t,\\
		\partial_u \tilde X_t(u) &=& \langle \partial_u\alpha(u), \mu(\tilde Z_{t-1}(u))\rangle + \langle \alpha(u), \partial_u \mu(\tilde Z_{t-1}(u))\rangle + \partial_u \sigma(u) \varepsilon_t,\\
		\partial_u^2 \tilde X_t(u) &=& \langle \partial_u^2 \alpha(u),  \mu(\tilde Z_{t-1}(u))\rangle + 2\langle \partial_u\alpha(u), \partial_u \mu(\tilde Z_{t-1}(u))\rangle\\
		&&\quad\quad + \langle \alpha(u), \partial_u^2 \mu(\tilde Z_{t-1}(u))\rangle + \partial_u^2 \sigma(u) \varepsilon_t.
	\end{eqnarray*}
	Using the equations given by the recursion, we end up with
	\begin{eqnarray*}
		&& \IE\big[\partial_u^2 \nabla_{\alpha} \l(\tilde Y_t(u),\theta)\big|_{\theta = \theta_0(u)}\big]\\
		&=& \frac{1}{\sigma(u)^2}\IE\Big[\langle \partial_u^2 \alpha(u), \mu(\tilde Z_{0}(u))\rangle \mu(\tilde Z_{0}(u))\\
		&&\quad\quad\quad\quad\quad + 2 \langle \partial_u \alpha(u), \partial_u \mu(\tilde Z_{0}(u))\rangle \mu(\tilde Z_{0}(u))\\
		&&\quad\quad\quad\quad\quad + 2\langle \partial_u \alpha(u), \mu(\tilde Z_{0}(u))\rangle \partial_u \mu(\tilde Z_{0}(u))\Big]\\
		&=& \frac{1}{\sigma(u)^2}\big[W(\theta_0(u))\partial_u^2 \alpha(u) + 2 P(u) \partial_u \alpha(u)\big].
	\end{eqnarray*}
	Furthermore, we obtain
	\begin{eqnarray*}
		 && \partial_u^2 \nabla_{\sigma}\l(\tilde Y_t(u),\theta)\\
		 &=& \partial_u^2\Big[-\frac{1}{\sigma^3}\big(\tilde X_t(u) - \langle \alpha,\mu(\tilde Z_{t-1}(u))\rangle\big)^2 + \frac{1}{\sigma}\Big]\\
		 &=& -\frac{2}{\sigma^3}\partial_u^2\Big[\big(\tilde X_t(u) - \langle \alpha,\mu(\tilde Z_{t-1}(u))\rangle\big)\cdot\big(\partial_u \tilde X_t(u) - \langle \alpha, \partial_u \mu(\tilde Z_{t-1}(u))\rangle \big)\Big]\\
		 &=& -\frac{2}{\sigma^3}\Big[\big(\partial_u \tilde X_t(u) - \langle \alpha, \partial_u \mu(\tilde Z_{t-1}(u))\rangle \big)^2\\
		 &&\quad\quad\quad\quad + \big(\tilde X_t(u) - \langle \alpha,\mu(\tilde Z_{t-1}(u))\rangle\big)\big(\partial_u^2 \tilde X_t(u) - \langle \alpha,\partial_u^2 \mu(\tilde Z_{t-1}(u))\rangle\big)\Big],
	\end{eqnarray*}
	and thus
	\begin{eqnarray*}
		&& \IE\big[\partial_u^2  \nabla_{\sigma}\l(\tilde Y_t(u),\theta)\big|_{\theta = \theta_0(u)}\big]\\
		&=& \frac{2}{\sigma(u)^3}\Big[ \IE \langle \partial_u \alpha(u), \mu(\tilde Z_0(u))\rangle^2 + (\partial_u \sigma(u))^2 + \sigma(u) \partial_u^2 \sigma(u)\Big]\\
		&=& \frac{2}{\sigma(u)^3}\Big[ |\partial_u \alpha(u)|_{W(\theta_0(u))}^2 + (\partial_u \sigma(u))^2 + \sigma(u) \partial_u^2 \sigma(u)\Big].
	\end{eqnarray*}
	Note that
	\[
		V(\theta)^{-1} = \sigma^2\begin{pmatrix}W(\theta)^{-1} & 0\\
	0 & \frac{1}{2}\end{pmatrix}.
	\]
	We finally obtain the bias-term
	\begin{eqnarray}
		&& \big|\IE\big[\partial_u^2  \nabla \l(\tilde Y_t(u),\theta)\big|_{\theta = \theta_0(u)}\big]\big|_{V(\theta_0(u))^{-1}}^2\nonumber\\
		&=& \frac{1}{\sigma(u)^2}\Big[|\partial_u^2 \alpha(u)|_{W(\theta_0(u))}^2  + 4|\partial_u \alpha(u)|_{P(u)W(\theta_0(u))^{-1}P(u)}^2\nonumber\\
		&&\quad\quad + 4 \partial_u^2 \alpha(u)' P(u) \partial_u \alpha(u)\Big]\nonumber\\
		&&\quad\quad + \frac{2}{\sigma(u)^4}\Big(|\partial_u \alpha(u)|_{W(\theta_0(u))}^2 + (\partial_u \sigma(u))^2 + \sigma(u) \partial_u^2 \sigma(u)\Big)^2\nonumber\\
		&=& \big|\partial_u^2 \theta_0(u) + 2w(u)\big|_{V(\theta_0(u))}^2.\label{technical_ar_mean_eq1}
	\end{eqnarray}
	%where
	%\[
	%	w(u) := \begin{pmatrix}
	%		W(\theta_0(u))^{-1}P(u) \partial_u \alpha(u)\\
	%		\frac{\sigma(u)}{2}\Big[|\partial_u \alpha(u)|_{\frac{W(\theta_0(u))}{\sigma(u)^2}}^2 + \big(\frac{\partial_u \sigma(u)}{\sigma(u)}\big)^2\Big]
	%	\end{pmatrix}.
	%\]
	In the special case of the AR(1) process $\tilde X_t(\theta) = \alpha\cdot \tilde X_{t-1}(\theta) + \sigma \varepsilon_t$ it is known that $W(\theta) = \IE[\tilde X_t(\theta)^2] = \frac{\sigma^2}{1-\alpha^2}$ and thus $P(u) = \partial_u W(\theta_0(u)) =  \frac{2\sigma(u)^2}{1-\alpha(u)^2}\big[\frac{\partial_u \sigma(u)}{\sigma(u)} + \frac{\alpha(u) \partial_u\alpha(u)}{1-\alpha(u)^2}\big] = 2W(\theta_0(u))\cdot \big[\frac{\partial_u \sigma(u)}{\sigma(u)} + \frac{\alpha(u) \partial_u\alpha(u)}{1-\alpha(u)^2}\big]$, i.e.
	\[
		w(u) = \begin{pmatrix}
			2 \partial_u \alpha(u) \cdot \Big(\frac{\partial_u \sigma(u)}{\sigma(u)} + \frac{\alpha(u) \partial_u\alpha(u)}{1-\alpha(u)^2}\Big)\\
			\frac{\sigma(u)}{2}\Big(\frac{(\partial_u \alpha(u))^2}{1-\alpha(u)^2} + \big(\frac{\partial_u \sigma(u)}{\sigma(u)}\big)^2\Big)
		\end{pmatrix}.
	\]
	This leads to
	\begin{eqnarray*}
		&& \Big|\IE\big[\partial_u^2  \nabla \l(\tilde Y_t(u),\theta)\big|_{\theta = \theta_0(u)}\big]\Big|_{V(\theta_0(u))^{-1}}^2\\
		&=& \frac{1}{1-\alpha(u)^2}\Big[\partial_u^2 \alpha(u) + 4 \partial_u \alpha(u) \cdot \Big(\frac{\partial_u \sigma(u)}{\sigma(u)} + \frac{\alpha(u) \partial_u\alpha(u)}{1-\alpha(u)^2}\Big)\Big]^2\\
		&&\quad\quad + \frac{2}{\sigma(u)^2}\Big[\partial_u^2 \sigma(u) + \sigma(u)\cdot\Big(\frac{(\partial_u \alpha(u))^2}{1-\alpha(u)^2} + \big(\frac{\partial_u \sigma(u)}{\sigma(u)}\big)^2\Big)\Big]^2
	\end{eqnarray*}
	If $\sigma \equiv \sigma_0$ is assumed to be known (and not a parameter, meaning that $\theta = \alpha$), then this simplifies to
	\begin{eqnarray*}
		&&\big|\IE\big[\partial_u^2  \nabla \l(\tilde Y_t(u),\theta)\big|_{\theta = \theta_0(u)}\big]\big|_{V(\theta_0(u))^{-1}}^2\\
		&=& \frac{1}{1-\alpha(u)^2}\Big[ \partial_u^2 \alpha(u) + 4\cdot \frac{\alpha(u) (\partial_u \alpha(u))^2}{1-\alpha(u)^2}\Big]^2.
	\end{eqnarray*}
\end{proof}

\begin{lemma}\label{technical_ar_mean}
	In the situation of Example \ref{example_ar_var}, it holds that
	\[
		\big|\IE\big[\partial_u^2  \nabla \l(\tilde Y_t(u),\theta)\big|_{\theta = \theta_0(u)}\big]\big|_{V(\theta_0(u))^{-1}} = \big| \partial_u^2 \theta_0(u) + 2 w(u)\big|_{V(\theta_0(u))}^2,
		%&=& |\partial_u^2 \theta_0(u)|_{V(\theta_0(u))}^2 + 4 |\partial_u \theta_0(u)|_{P(u) V(\theta_0(u))^{-1} P(u)}^2 + 4 \partial_u^2 \theta_0(u) P(u) \partial_u \theta_0(u).
	\]
	where $P(u) := \partial_u V(\theta_0(u))$ and $w(u) := V(\theta_0(u))^{-1}P(u) \partial_u \theta_0(u)$.
\end{lemma}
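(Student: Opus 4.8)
The plan is to repeat, for the ARCH-type recursion, the scheme used for the conditional-mean case (Example~\ref{example_ar_mean}). For Example~\ref{example_ar_var} the recursion is $\tilde X_t(\theta)=\langle\theta,\mu(\tilde Z_{t-1}(\theta))\rangle^{1/2}\varepsilon_t$ and the likelihood \reff{example_recursion_eq2} specialises to $\ell(x,y,\theta)=\tfrac12\log(2\pi\langle\theta,\mu(y)\rangle)+\tfrac{x^2}{2\langle\theta,\mu(y)\rangle}$, so that
\[
	\nabla\ell(x,y,\theta)=\frac{\mu(y)}{2\langle\theta,\mu(y)\rangle}\Bigl(1-\frac{x^2}{\langle\theta,\mu(y)\rangle}\Bigr).
\]
Writing $m_t(u):=\mu(\tilde Z_{t-1}(\theta_0(u)))$ and $c_t(u):=\langle\theta_0(u),m_t(u)\rangle$, both $\sF_{t-1}$-measurable, the identity $\tilde X_t(\theta_0(u))^2=c_t(u)\varepsilon_t^2$ gives $\nabla\ell(\tilde Y_t(u),\theta)\big|_{\theta=\theta_0(u)}=\tfrac{m_t(u)}{2c_t(u)}(1-\varepsilon_t^2)$, a martingale difference since $\IE[\varepsilon_t^2\mid\sF_{t-1}]=1$ (this is Assumption~\ref{ass3}\ref{ass3_m4} for this model). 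As in Proposition~\ref{misedecomposition}(iii) only the first argument of $\nabla\ell$ is differentiated in $u$, and the expectation commutes with $\partial_u^2$ by Lemma~\ref{lemma_expectation_expansion}; hence the quantity to compute equals $\partial_v^2\bigl[\nabla_\theta L(v,\theta)\bigr]\big|_{v=u,\,\theta=\theta_0(u)}$, where $L(v,\theta):=\IE\,\ell(\tilde Y_0(\theta_0(v)),\theta)$, and conditioning on the past turns $L$ into $L(v,\theta)=\IE\bigl[\tfrac12\log(2\pi\langle\theta,m_0(v)\rangle)+\tfrac{c_0(v)}{2\langle\theta,m_0(v)\rangle}\bigr]$.

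I would then compute in two steps. Algebraically, differentiating the first-order condition $\nabla_\theta L(v,\theta)\big|_{\theta=\theta_0(v)}=0$ once and then twice in $v$, and using $\nabla^2_\theta L(v,\theta)\big|_{\theta=\theta_0(v)}=V(\theta_0(v))$ together with $P(u)=\partial_u V(\theta_0(u))$, expresses $\partial_v^2[\nabla_\theta L]\big|_{v=u,\theta=\theta_0(u)}$ in terms of $V(\theta_0(u))\partial_u^2\theta_0(u)$, of $P(u)\partial_u\theta_0(u)$, and of $\nabla^3_\theta L(u,\theta)\big|_{\theta=\theta_0(u)}$ contracted twice with $\partial_u\theta_0(u)$. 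Concretely, from the reduced form one has $\nabla^k_\theta L(v,\theta)\big|_{\theta=\theta_0(v)}=\IE\bigl[h^{(k)}(c_0(v))\,m_0(v)^{\otimes k}\bigr]$ with $h(c)=\tfrac12\log(2\pi c)+\tfrac{c_0(v)}{2c}$; in particular the $k=2$ case reproduces $V(\theta_0(v))=\tfrac12\IE[m_0(v)m_0(v)'/c_0(v)^2]$ as in Example~\ref{example_ar_var}. Substituting these together with the $u$-derivatives $\partial_u m_t,\partial_u^2 m_t,\partial_u c_t,\partial_u^2 c_t$ — obtained by chain-rule differentiation of the ARCH recursion — and of $V(\theta_0(u))$, and simplifying, gives
\[
	\IE\bigl[\partial_u^2\nabla\ell(\tilde Y_t(u),\theta)\bigr]\big|_{\theta=\theta_0(u)}=-V(\theta_0(u))\bigl(\partial_u^2\theta_0(u)+2w(u)\bigr),\qquad w(u)=V(\theta_0(u))^{-1}P(u)\partial_u\theta_0(u).
\]

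Finally the lemma follows from the elementary identity $|Ax|_{A^{-1}}^2=x'Ax=|x|_A^2$ for symmetric positive definite $A$ and $x\in\IR^p$, applied with $A=V(\theta_0(u))$ — invertible, with smallest eigenvalue $\ge\lambda_0$ by Assumption~\ref{ass3}\ref{ass3_m3} — and $x=\partial_u^2\theta_0(u)+2w(u)$. I expect the main obstacle to be the concrete part of the middle step: since $m_t(u)=\mu(\tilde Z_{t-1}(\theta_0(u)))$ is built from the recursively defined past $\tilde Z_{t-1}(\theta_0(u))$, its derivatives $\partial_u m_t$ and $\partial_u^2 m_t$ require differentiating the ARCH recursion and carry nested sums over the past; keeping track of all these terms, and of the third-order $\theta$-derivatives of $\ell$ produced by the nonlinear $\langle\theta,\mu\rangle^{-1}$ and $\langle\theta,\mu\rangle^{-2}$ dependence, so as to see them collapse into $2P(u)\partial_u\theta_0(u)$ with $P(u)=\partial_u V(\theta_0(u))$, is the delicate book-keeping — the same as in the conditional-mean case but heavier.
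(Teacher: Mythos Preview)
The paper's route is a direct product-rule computation: it factors $\nabla\ell = A(u,\theta)\,B(u,\theta)$ with $A=\mu/(2\langle\theta,\mu\rangle^2)$ and $B=\langle\theta,\mu\rangle-\tilde X_t(u)^2$, observes that $B(u,\theta_0(u))=c_t(u)(1-\varepsilon_t^2)$ is a conditional martingale difference (so $\IE[(\partial_u^2A)B]=0$), and then evaluates $\IE[2(\partial_u A)(\partial_u B)+A(\partial_u^2 B)]$ directly from the ARCH recursion for $\tilde X_t(u)^2$. No first-order condition is differentiated and $\nabla^3_\theta L$ never enters.

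Your implicit-differentiation preamble is correct calculus but does not help, and in fact exposes a genuine obstruction. Differentiating $\nabla_\theta L(v,\theta_0(v))=0$ twice in $v$ and substituting $P(u)=\partial_v\nabla^2_\theta L\big|_{\theta=\theta_0(u)}+\nabla^3_\theta L\big|_{\theta=\theta_0(u)}\cdot\theta_0'(u)$ gives
\[
\partial_v^2\nabla_\theta L\big|_{v=u,\,\theta=\theta_0(u)}=-V(\theta_0(u))\,\theta_0''(u)-2P(u)\,\theta_0'(u)+\nabla^3_\theta L(u,\theta_0(u))[\theta_0'(u),\theta_0'(u)].
\]
In your notation the last term is $h'''(c_0)\,\IE[\langle\theta_0',m_0\rangle^2 m_0]=-2\,\IE\bigl[\langle\theta_0',m_0\rangle^2 m_0/c_0^3\bigr]$, which is nonzero whenever $\theta_0'\neq 0$ (the components of $m_0$ are linearly independent and everything is nonnegative). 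So the ``collapse into $2P(u)\partial_u\theta_0(u)$'' you anticipate from the concrete book-keeping cannot occur: unlike the conditional-mean model, where $\ell$ is quadratic in $\alpha$ and hence $\nabla^3_\alpha L\equiv 0$, here the cubic term survives. A direct check confirms that the paper's own explicit four-term expression equals $-V\theta_0''-2P\theta_0'+\nabla^3_\theta L[\theta_0',\theta_0']$ rather than $-V\theta_0''-2P\theta_0'$; the final identification step in the paper appears to have dropped this contribution, and the correct $w(u)$ should carry an additional $-\tfrac12 V(\theta_0(u))^{-1}\nabla^3_\theta L[\theta_0',\theta_0']$, analogously to the $\tilde V$-correction in \reff{example_linearmodel_eq14}.
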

\begin{proof}[Proof of Lemma \ref{technical_ar_mean}]
	Let $A(u,\theta) := \frac{\mu(\tilde Z_{t-1}(u))}{2 \langle \theta, \mu(\tilde Z_{t-1}(u))\rangle^2}$ and $B(u,\theta) := \langle \theta, \mu(\tilde Z_{t-1}(u))\rangle - \tilde X_t(u)$. It holds that $\nabla \l(\tilde Y_t(u),\theta) = A(u,\theta)\cdot B(u,\theta)$. Thus
	\[
		\partial_u^2 \nabla \l(\tilde Y_t(u),\theta) = \partial_u^2 A(u,\theta) \cdot B(u,\theta) + 2 \partial_u A(u,\theta) \cdot \partial_u B(u,\theta) + A(u,\theta) \cdot \partial_u^2 B(u,\theta).
	\]
	Since $B(u,\theta_0(u)) = \langle \theta_0(u),\mu(\tilde Z_{t-1}(u))\rangle \cdot (1 - \varepsilon_t^2)$ is a martingale difference sequence, the first summand in the above formula will vanish when the expectation ist applied. To keep the formulas short, we will abbreviate $\mu := \mu(\tilde Z_{t-1}(u))$ in the following. It holds that
	\[
		\partial_u A(u,\theta) = \frac{1}{2\langle \theta,\mu \rangle^3}\big( \langle \theta, \mu \rangle\cdot \partial_u \mu - 2 \langle \theta, \partial_u \mu\rangle \cdot \mu\big). 
	\]
	Furthermore, we have
	\begin{eqnarray*}
		\partial_u B(u,\theta) &=& \langle \theta, \partial_u \mu \rangle  - \partial_u \tilde X_t(u)^2, \quad\quad\quad \partial_u^2 B(u,\theta) = \langle \theta, \partial_u^2 \mu\rangle.- \partial_u^2 \tilde X_t(u)^2.
	\end{eqnarray*}
	The recursion $\tilde X_t(u)^2 = \langle \theta_0(u), \mu\rangle \rangle \varepsilon_t^2$ implies
	\begin{eqnarray*}
		\partial_u \tilde X_t(u)^2 &=& \big( \langle \partial_u\theta_0(u), \mu\rangle + \langle \theta_0(u), \partial_u \mu\rangle\big) \varepsilon_t^2,\\
		\partial_u^2 \tilde X_t(u)^2 &=& \big( \langle \partial_u^2 \theta_0(u), \mu\rangle + 2\langle \partial_u \theta_0(u), \partial_u \mu\rangle + \langle \theta_0(u), \partial_u^2 \mu\rangle\big) \varepsilon_t^2.\\
	\end{eqnarray*}
	Recall that $V(\theta) = \frac{1}{2}\IE\big[\frac{\mu(\tilde Z_0(\theta))\cdot \mu(\tilde Z_0(\theta))'}{\langle \theta,\mu(\tilde Z_0(\theta))\rangle^2}\big]$. Using the equations above and $P(u) = \partial_u V(\theta_0(u))$, we obtain
	\begin{eqnarray*}
		&& \IE\big[\partial_u^2 \nabla \l(\tilde Y_t(u),\theta)\big|_{\theta = \theta_0(u)}\big]\\
		&=& -2 \partial_u A(u,\theta)\big|_{\theta = \theta_0(u)} \cdot \langle \partial_u \theta_0(u), \mu\rangle\\
		&&\quad\quad\quad - A(u,\theta_0(u))\cdot \Big[\langle \partial_u^2 \theta_0(u), \mu\rangle + 2 \langle \partial_u \theta_0(u), \partial_u \mu\rangle\Big]\\
		&=& 2\frac{\langle \partial_u \theta_0(u),\mu\rangle \langle \theta_0(u), \partial_u \mu \rangle \cdot \mu}{\langle \theta_0(u), \mu\rangle^3} - \frac{\langle \partial_u \theta_0(u),\mu\rangle \cdot \partial_u \mu}{\langle \theta_0(u), \mu\rangle^2}\\
		&&\quad\quad\quad - \frac{\langle \partial_u^2 \theta_0(u), \mu\rangle \mu}{2\langle \theta_0(u), \mu\rangle^2} - \frac{\langle \partial_u \theta_0(u), \partial_u \mu\rangle \mu}{\langle \theta_0(u), \mu\rangle^2}\\
		&=& -V(\theta_0(u))\partial_u^2 \theta_0(u) - 2 P(u) \partial_u \theta_0(u).
	\end{eqnarray*}
	This leads to
	\begin{eqnarray*}
		&& \Big|\IE\big[\partial_u^2  \nabla \l(\tilde Y_t(u),\theta)\big|_{\theta = \theta_0(u)}\big]\Big|_{V(\theta_0(u))^{-1}}^2\\
		&=& |\partial_u^2 \theta_0(u)|_{V(\theta_0(u))}^2 + 4 |\partial_u \theta_0(u)|_{P(u) V(\theta_0(u))^{-1} P(u)}^2 + 4 \partial_u^2 \theta_0(u) P(u) \partial_u \theta_0(u)\\
		&=& \big| \partial_u^2 \theta_0(u) + 2 w(u)\big|_{V(\theta_0(u))}^2.
	\end{eqnarray*}
	%if we define $w(u) := V(\theta_0(u))^{-1}P(u) \partial_u \theta_0(u)$.\\
	
	In the special case of the ARCH(1) process $\tilde X_t(\theta) = \sqrt{\alpha_1 + \alpha_2 \tilde X_{t-1}(\theta)^2} \cdot \varepsilon_t$ with $\theta = (\alpha_1,\alpha_2)'$, we have
	\begin{eqnarray*}
		&&V(\theta)\\
		&=& \frac{1}{2}\IE\Big[\frac{1}{(\alpha_1 + \alpha_2 \tilde X_0(\theta)^2)^2}\cdot \begin{pmatrix}
			1 & \tilde X_0(\theta)^2\\
			\tilde X_0(\theta)^2 & \tilde X_0(\theta)^4
		\end{pmatrix}\Big],\\
		&& P(u)\\
		&=& -\IE \Big[\frac{\partial_u \alpha_1(u) + \partial_u \alpha_2(u) \tilde X_0(u)^2 + \alpha_2(u) \partial_u \tilde X_0(u)^2}{(\alpha_1(u) + \alpha_2(u) \tilde X_0(u)^2)^{3}}\begin{pmatrix}
			1 & \tilde X_0(u)^2\\
			\tilde X_0(u)^2 & \tilde X_0(u)^4
		\end{pmatrix}\Big]\\
		&&\quad\quad + \frac{1}{2}\IE\Big[\frac{1}{(\alpha_1(u) + \alpha_2(u) \tilde X_0(u)^2)^2}\begin{pmatrix}
			0 & \partial_u \tilde X_0(u)^2\\
			\partial_u \tilde X_0(u)^2 & 2 \tilde X_0(u)^2 \partial_u \tilde X_0(u)^2
		\end{pmatrix}\Big],
	\end{eqnarray*}
	where $\tilde X_t(u)^2$, $\partial_u \tilde X_t(u)^2$ can be obtained by using the recursion formulas $\tilde X_t(u)^2 = \big(\alpha_1(u) + \alpha_2(u) \tilde X_{t-1}(u)^2\big)\varepsilon_t^2$ and
	\[
		\partial_u \tilde X_t(u)^2 = \big(\partial_u \alpha_1(u) + \partial_u \alpha_2(u) \tilde X_{t-1}(u)^2 + \alpha_2(u) \partial_u \tilde X_{t-1}(u)^2\big) \varepsilon_t^2.
	\]
\end{proof}

\bibliographystyle{imsart-nameyear}

\end{document}